\newif \ifdraft
\newif\ifarxiv
\renewcommand{\itemautorefname}{\@gobble}
\theoremstyle{plain}
\newtheorem{theorem}{Theorem}[section]
\newtheorem*{theorem*}{Theorem}
\newtheorem{theoremX}{Theorem}
\renewcommand{\thetheoremX}{\Alph{theoremX}}
\newtheorem{prop}[theorem]{Proposition}
\newtheorem{corollary}[theorem]{Corollary}
\newtheorem{lemma}[theorem]{Lemma}
\theoremstyle{definition}
\newtheorem{defn}[theorem]{Definition}
\newtheorem{example}[theorem]{Example}
\newtheorem{question}[theorem]{Question}
\newtheorem*{conj*}{Conjecture}
\newtheorem{remark}[theorem]{Remark}
\RenewCommandCopy{\theHtheorem}{\thetheorem}
\RenewCommandCopy{\theHtheoremX}{\thetheoremX}
\RenewCommandCopy{\theHprop}{\theprop}
\RenewCommandCopy{\theHcorollary}{\thecorollary}
\RenewCommandCopy{\theHlemma}{\thelemma}
\RenewCommandCopy{\theHdefn}{\thedefn}
\RenewCommandCopy{\theHexample}{\theexample}
\RenewCommandCopy{\theHobs}{\theobs}
\RenewCommandCopy{\theHconj}{\theconj}
\RenewCommandCopy{\theHremark}{\theremark}
\newcounter{Hequation}
\g@addto@macro\equation{\stepcounter{Hequation}}
\DeclareMathOperator{\codim}{codim}
\DeclareMathOperator{\im}{Im}
\DeclareMathOperator{\Int}{Int}
\title{On the Focal Locus of Submanifolds of a Finsler Manifold}
\author{
	Aritra Bhowmick\thanks{
		Kerala School of Mathematics, Kozhikode, Kerala, India.
		\href{avowmix@gmail.com}{\texttt{avowmix@gmail.com}}, 
		\href{aritra@ksom.res.in}{\texttt{aritra@ksom.res.in}}
	}
	\and
	Sachchidanand Prasad\thanks{
		School of Mathematics, Jilin University, China;
		Mathematisches Institut, G\"ottingen University, G\"ottingen, Germany
		\texttt{\href{mailto:sachchidanand.prasad1729@gmail.com}{sachchidanand.prasad1729@gmail.com}}
	}
}
\newcommand{\setsubjclass}[1]{\def\thesubjclass{#1}}
\newcommand{\setkeywords}[1]{\def\thekeywords{#1}}
\newcommand{\printclassification}{%
	\renewcommand{\thefootnote}{}%
	\footnotetext{\textbf{2020 Mathematics Subject Classification.} \thesubjclass.}%
	\footnotetext{\textbf{Keywords.} \thekeywords.}%
	\renewcommand{\thefootnote}{\arabic{footnote}}%
}
\begin{document}

\date{}
\maketitle
\printclassification

\vspace{-1em}
\begin{abstract}
	In this article, we investigate the focal locus of closed (not necessarily compact) submanifolds in a forward complete Finsler manifold. The main goal is to show that the associated normal exponential map is \emph{regular} in the sense of F.W. Warner (\textit{Am. J. of Math.}, 87, 1965). As a consequence, we show that the normal exponential is non-injective near any tangent focal point. Extending the ideas of Warner, we study the connected components of the regular focal locus. This allows us to identify an open and dense subset, on which the focal time maps are smooth, provided they are finite. We explicitly compute the derivative at a point of differentiability. As an application of the local form of the normal exponential map, following R.L. Bishop's work (\textit{Proc. Amer. Math. Soc.}, 65, 1977), we express the tangent cut locus as the closure of a certain set of points, called the separating tangent cut points. This strengthens the results from the present authors' previous work (\textit{J. Geom. Anal.}, 34, 2024).
\end{abstract}

\tableofcontents

\section{Introduction}
One of the primary aspects of Riemannian geometry is the study of \emph{geodesics}, which are \emph{locally} distance minimizing curves. A geodesic always arises as the solution to a second-order initial value problem, which lets us define the \emph{exponential map}. Given a complete Riemannian manifold, it is a smooth map from the tangent bundle to the base manifold. The singularities of the exponential map are of particular interest, as they correspond to points where a nontrivial variation of geodesics collapses infinitesimally, i.e., where the associated variational vector field (also known as \emph{Jacobi fields}) vanishes at the endpoint. For a given submanifold, one can define the normal bundle of it, and the restriction of the exponential map is then called the \emph{normal} exponential map. The \emph{focal locus} of the submanifold consists of the critical values of the normal exponential map. A closely related concept is that of the \emph{cut locus}, originally introduced by Henri Poincar\'{e} \cite{Poin05}. The cut locus of a submanifold consists of points, beyond which a globally distance-minimizing geodesic from the submanifold fails to be distance-minimizing. Both the cut locus and the focal locus have been studied extensively in the literature, see \cite{Mye35,Mye36,Kob67,Thom72,Buc77,Wol79,Sak96}.

Finsler manifolds are a natural generalization of the Riemannian ones, which were first studied by P. Finsler in his 1918 dissertation, later reprinted in \cite{Fin51}. A \emph{Finsler metric} on a manifold is a parametrized collection of Minkowski norms on each tangent space, which allows us to measure the length of a tangent vector. With this generality, we encounter certain challenges as well, primarily stemming from the fact that we cannot measure the angle between two tangent vectors, unlike the case of a Riemannian metric. Still, most of the results in Riemannian geometry can be translated to Finsler geometry, with suitable modifications. See \cite{AbaPat94,Bao2000,Shen01,Ohta2021,BhoPra2023} for a survey of results. In particular, the notions of cut and focal loci have their counterpart in a Finsler manifold.

Compared to Riemannian geometry, research on submanifolds in Finsler manifolds has been relatively limited \cite{Rund59,Ben98,MoShen01,Li11,Javaloyes2015}. One of the first hurdles to cross is that in the absence of an inner product, the suitable generalization of a normal bundle of a submanifold is no longer a vector bundle, and it is only a topological manifold (see \autoref{defn:normalCone}). In \cite{BhoPra2023}, we have systematically studied the cut locus of a submanifold in a Finsler manifold, and have shown that most of the well-known results in the Riemannian context still hold true. In the same spirit, in this article we study the focal locus of a submanifold in a Finsler manifold.\medskip

The primary goal of this article is to show that the normal exponential map associated with a submanifold in a Finsler manifold is \emph{regular} in the sense of F.W. Warner \cite{Warner1965}, as proved in \autoref{thm:normalExponentialWarnerRegular}. Along the way, we obtain \autoref{thm:regularTangentFocalPointsSubmanifold}, where we show that the set $\mathcal{F}^{\textrm{reg}}$ of \emph{regular} tangent focal points (see \autoref{defn:regularFocalLocus}) are open and dense in the set of all tangent focal points. In fact, we prove that $\mathcal{F}^{\textrm{reg}}$ is an embedded codimension $1$ submanifold of the normal bundle. Then, in \autoref{thm:kernelContainedInTangent} we show that, for any focal point of multiplicity $k \ge 2$, the connected component of $\mathcal{F}^{\textrm{reg}}$ containing it admits a foliation of dimension $k$ (see \autoref{rmk:expMapsLeavesToPoint}). Both \autoref{thm:regularTangentFocalPointsSubmanifold} and \autoref{thm:kernelContainedInTangent}, were stated without proofs in \cite{Hebda81} for submanifolds of a Riemannian manifold. For completeness, we give detailed proofs in the Finsler setup. As a consequence, in \autoref{thm:normalExponentialLocalForm}, we obtain local normal forms for the normal exponential map near certain regular tangent focal points. This culminates in the following interesting result.

\begin{theoremX}[\autoref{thm:normalExponentialNonInjective}]
    The normal exponential map of a submanifold in a (forward) complete Finsler manifold is not injective in any neighborhood of a tangent focal point.
\end{theoremX}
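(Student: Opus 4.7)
My plan is to leverage the local normal forms obtained in \autoref{thm:normalExponentialLocalForm}, combined with the Warner-style regularity of the normal exponential established in \autoref{thm:normalExponentialWarnerRegular}. The strategy has three stages: first reduce the statement to the case of a \emph{regular} tangent focal point, then read off non-injectivity from each of the possible local normal forms, and finally push the conclusion back to an arbitrary tangent focal point by density.

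\textbf{Reduction to regular focal points.} Warner regularity of the normal exponential implies, as in \cite{Warner1965}, that the regular tangent focal points form an open and dense subset of the tangent focal locus. Hence, given an arbitrary tangent focal point $v$ in the normal cone and any open neighborhood $U$ of $v$, I may choose a regular tangent focal point $v' \in U$. If I can produce two distinct points in any arbitrarily small neighborhood of $v'$ on which the normal exponential agrees, then in particular such a neighborhood lies inside $U$, yielding non-injectivity on $U$. Thus it suffices to handle a regular tangent focal point.

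\textbf{Inspecting the local normal forms.} At a regular tangent focal point, \autoref{thm:normalExponentialLocalForm} provides coordinates in which the normal exponential assumes one of Warner's standard forms, dictated by whether the kernel of its differential is transverse to the tangent focal locus or contained in it. In the transverse case the normal exponential is, up to smooth change of coordinates, a fold along the kernel direction, hence two-to-one on every punctured neighborhood. In the non-transverse case, where by \autoref{thm:kernelContainedInTangent} the kernel sits inside the tangent to the tangent focal locus, the normal form shows that moving infinitesimally along the kernel direction traces out a one-parameter family of distinct tangent focal vectors that are all mapped to the \emph{same} focal point; this again gives non-injectivity in every neighborhood. Either way, the conclusion follows by inspection of the explicit expressions.

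\textbf{Expected obstacle.} The only real work is in verifying that the Finsler normal forms given by \autoref{thm:normalExponentialLocalForm} can indeed be manipulated to exhibit coinciding image points, since in the Finsler setting one lacks the symmetry and inner-product structure used implicitly in Warner's original Riemannian argument. The key inputs compensating for this are \autoref{thm:regularTangentFocalPointsSubmanifold} and \autoref{thm:kernelContainedInTangent}, which pin down the structure of the kernel of the differential of the normal exponential; once these are invoked, the coincidence-of-images argument is essentially the same as in the Riemannian case. Combining this with the density reduction gives the theorem.
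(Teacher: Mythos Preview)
Your approach is essentially the paper's, but your case analysis at a regular focal point is incomplete. The dichotomy ``kernel transverse to the tangent focal locus'' versus ``kernel contained in the tangent to the focal locus'' does exhaust all regular focal points, but the normal forms of \autoref{thm:normalExponentialLocalForm} do \emph{not}: case \autoref{thm:normalExponentialLocalForm:2} requires not merely $\mathcal{K}_{\mathbf{v}} \subset T_{\mathbf{v}}\mathcal{F}^{\textrm{reg}}$ but $\mathbf{v} \in \Int \mathcal{C}(1)$ (see \autoref{rmk:noNormalForm}). For a multiplicity-one regular focal point $\mathbf{v} \in \mathcal{C}(1) \setminus \Int\mathcal{C}(1)$ no normal form is available, and your ``move along the kernel direction'' argument also breaks down, since the distribution $\mathbf{u} \mapsto \mathcal{K}_{\mathbf{u}} \cap T_{\mathbf{u}}\mathcal{C}$ need not have constant rank near such a $\mathbf{v}$, so there is no leaf through $\mathbf{v}$ to which you can restrict $\mathcal{E}$.

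The paper closes this gap by the same density trick you already use to pass from singular to regular focal points: the exceptional set $\mathcal{C}(1) \setminus \Int\mathcal{C}(1)$ is nowhere dense in $\mathcal{C}$, so any neighborhood of such a $\mathbf{v}$ contains a point $\mathbf{v}'$ covered by one of \autoref{thm:normalExponentialLocalForm:1}, \autoref{thm:normalExponentialLocalForm:2}, or \autoref{thm:normalExponentialLocalForm:3}, and non-injectivity near $\mathbf{v}'$ gives non-injectivity on the original neighborhood. Once you insert this extra density step, your argument coincides with the paper's.
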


Let us point out why the above result is quite striking. In general, a smooth map can have singularities, and yet be globally injective. As an example, consider the map $x \mapsto x^3$ on $\mathbb{R}$, which has a singularity at the origin, and yet is globally injective; indeed, it is a homeomorphism. On the other hand, by the inverse function theorem, a non-singular map is locally injective. The above theorem can be thought of as a partial converse to this statement for the normal exponential map. This was originally proved for the exponential map $\exp_p : T_p M \rightarrow M$ at a point $p$ of an analytic Finsler manifold in \cite{MorLit32} and later for $C^\infty$ manifold in \cite{Sav43}. Warner proved the same for any smooth Riemannian or Finsler manifold, using a different method. Warner's approach led to a detailed study of the exponential map, which is interesting in itself. In recent years, similar results following Warner's approach have been proved for the \emph{sub-Riemannian} exponential maps \cite{BorKlin23,BorKlin24}. The non-injectivity of the normal exponential map near nondegenerate focal points of a submanifold in a semi-Riemannian manifold was proved in \cite{PiccionePortaluriTausk2004}, albeit via a different approach.\medskip

Building upon the ideas of \cite{Warner1965}, in \autoref{thm:regularFocalLocusComponent} we study the connected components of the regular tangent focal locus, and associate a tuple of positive integers to each component. This allows us to refine the previous density result (\autoref{thm:regularTangentFocalPointsSubmanifold}), and thereby generalize a theorem of R. L. Bishop \cite[Theorem A]{Bishop77} where only the case of first tangent focal points was considered. In particular, we show in \autoref{thm:regularJthTangentFocalLocus} that the set $\mathcal{F}^{\textrm{reg}}_j$ of regular $j^{\textrm{th}}$-tangent focal points is also open and dense in the set of $j^{\textrm{th}}$-tangent focal points (see \autoref{defn:jthTangentFocalLocus}). We also study the higher order focal time maps (\autoref{defn:focalTime}), which are seen to be continuous (\autoref{cor:focalTimeContinuous}). In \cite{Itoh2001}, assuming finiteness, these maps were shown to be locally Lipschitz continuous in the Riemannian setup, whence they are differentiable almost everywhere by Rademacher's theorem. In the same article, the authors noted that the focal time maps are differentiable at a focal point where the focal multiplicity is locally constant. We strengthen this by identifying an open dense subset of unit normal vectors, on which the focal time maps are smooth. We have the following result.
\begin{theoremX}[\autoref{thm:smoothnessOfFocalTime}]
    Given a submanifold in a forward complete Finsler manifold, if the $j^{\text{th}}$-focal time map $\lambda_j$ is finite, then $\lambda_j$ is smooth on an open and dense subset of the unit normal bundle.
\end{theoremX}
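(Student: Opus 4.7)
The strategy is to exhibit an open dense subset of the unit normal bundle on which smoothness of $\lambda_j$ follows from the local normal form for the normal exponential map near regular tangent focal points (\autoref{thm:normalExponentialLocalForm}) combined with the implicit function theorem. Let $\Omega_j$ denote the set of unit normal vectors $v$ such that $\lambda_j(v)\, v$ is a regular tangent focal point and is the $j^{\text{th}}$ tangent focal point along the ray $t \mapsto t v$. I plan to show: (a) $\Omega_j$ is open, (b) $\lambda_j$ is smooth on $\Omega_j$, and (c) $\Omega_j$ is dense in the unit normal bundle.

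For (a) and (b), fix $v_0 \in \Omega_j$ and put $t_0 := \lambda_j(v_0)$. \autoref{thm:normalExponentialLocalForm} furnishes adapted charts near $t_0 v_0$ in which the regular tangent focal locus appears as a smooth submanifold of the normal bundle, transverse to the radial direction at $t_0 v_0$. Pulling its local defining equation back along the ray map $(v,t)\mapsto t v$ and invoking the implicit function theorem produces a smooth function $\tilde\lambda$ on a neighborhood $U$ of $v_0$ with $\tilde\lambda(v_0) = t_0$ such that $\tilde\lambda(v)\, v$ lies on the regular focal locus for all $v \in U$. Continuity of $\lambda_j$ (\autoref{cor:focalTimeContinuous}) then forces $\tilde\lambda = \lambda_j$ after shrinking $U$: the count of focal times along a ray before a fixed regular focal point is locally constant by the component structure in \autoref{thm:regularFocalLocusComponent}, so $\lambda_j(v) = \tilde\lambda(v)$ for $v$ near $v_0$. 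This simultaneously yields openness of $\Omega_j$ and smoothness of $\lambda_j$ on it, and differentiating the implicit relation recovers the derivative formula computed earlier in the paper.

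The main obstacle is (c). Given $v_0$ in the complement of $\Omega_j$, the failure is of one of two types: either (i) $\lambda_j(v_0)\, v_0$ is an irregular tangent focal point, or (ii) it is regular but coincides with $\lambda_i(v_0)\, v_0$ for some $i < j$, so that several focal times collapse at $v_0$. For (i), the irregular part of the tangent focal locus is, via \autoref{thm:regularFocalLocusComponent}, contained in a countable union of strata of dimension strictly less than the top-dimensional regular components; a transversality argument --- perturbing $v_0$ within the unit normal bundle along a direction complementary to these strata --- steers the ray off them, so $v_0$ is approximated by $v$ with $\lambda_j(v)\, v$ lying on a top-dimensional regular stratum. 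For (ii), once restricted to the open set where both $\lambda_i$ and $\lambda_j$ are locally smooth (by step (b) applied to each index individually), the coincidence locus $\{\lambda_i = \lambda_j\}$ is closed with empty interior, hence nowhere dense. The hard part will be making the transversality argument in (i) precise, since the geometry of the singular strata inside the tangent focal locus --- and how generic radial rays from the zero section meet them --- is governed by the pair of Warner indices attached to each component in \autoref{thm:regularFocalLocusComponent}, and one must verify that such rays can indeed be perturbed to hit only the regular part while preserving the ordinal count $j$.
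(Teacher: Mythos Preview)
Your steps (a) and (b) are essentially the paper's argument: the transversality of $\mathcal{F}^{\textrm{reg}}$ to the rays (which is really \autoref{thm:regularTangentFocalPointsSubmanifold}, not the full normal form \autoref{thm:normalExponentialLocalForm}) makes the projection $\pi:\hat\nu\to S(\nu)$ a local diffeomorphism on each component, and $\lambda_j = F\circ(\pi|_{\mathcal{C}})^{-1}$ is then visibly smooth.

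The problem is in (c). Your definition of $\Omega_j$ is too small: the second clause (``is the $j^{\text{th}}$ tangent focal point along the ray'') forces $\lambda_{j-1}(v)<\lambda_j(v)$, and this is what creates your case (ii). But the argument you give there is false. If a component $\mathcal{C}\subset\mathcal{F}^{\textrm{reg}}$ has type $(k,i_0)$ with $k\ge 2$ and $i_0\le j-1<j<i_0+k$, then on the open set $\pi(\mathcal{C})\subset S(\nu)$ one has $\lambda_{j-1}\equiv\lambda_j$; the coincidence locus has nonempty interior, so it is not nowhere dense. The fix is simply to drop that clause and take $\Omega_j=\{v\in S(\nu): \lambda_j(v)\,v\in\mathcal{F}^{\textrm{reg}}\}$, exactly the set $\mathcal{R}_j$ the paper uses. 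Then case (ii) does not arise at all, because smoothness of $\lambda_j$ near such $v$ is already delivered by (b).

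For case (i) your transversality sketch is vaguer than necessary and would require controlling the singular strata of $\mathcal{F}$, which the paper never does. The paper's route is much shorter: it proves directly that $\mathcal{F}^{\textrm{reg}}_j$ is dense in $\mathcal{F}_j$ (\autoref{thm:regularJthTangentFocalLocus}) by the same multiplicity--descent argument used for $\mathcal{F}^{\textrm{reg}}\subset\mathcal{F}$. Given a singular $\mathbf{v}\in\mathcal{F}_j$, property \ref{R3} produces in any neighborhood a ray carrying at least two distinct focal points; by continuity of $\lambda_j$ exactly one of them lies in $\mathcal{F}_j$, and its multiplicity is strictly smaller. Iterating reaches multiplicity $1$, hence a regular point in $\mathcal{F}_j$. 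Once $\overline{\mathcal{F}^{\textrm{reg}}_j}=\mathcal{F}_j$ and (under the finiteness hypothesis) $\pi(\mathcal{F}_j)=S(\nu)$, continuity of $\pi$ gives $S(\nu)=\pi(\overline{\mathcal{F}^{\textrm{reg}}_j})\subset\overline{\pi(\mathcal{F}^{\textrm{reg}}_j)}=\overline{\mathcal{R}_j}$, which is the density you want.
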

In \autoref{prop:focalTimeSmooth}, we have also computed the derivative of $\lambda_j$ explicitly at a point of differentiability as in the above theorem.\medskip

Next, we look at the \emph{first} tangent focal locus in particular, which, as the name suggests, consists of the first tangent focal locus encountered along a geodesic emanating from a submanifold. It is well-known that if a point is in the cut locus, then either it is the endpoint of at least two distinct distance minimizing geodesics, or it is a first focal locus. Furthermore, the points where two or more distance minimizing geodesics meet, called \emph{separating points} in this article, are dense in the cut locus \cite{BhoPra2023}. Both the cut points and separating points have their counterparts in the normal bundle, respectively called the \emph{tangent} cut points, and the \emph{separating} tangent cut points. In the same vein as \cite{Bishop77}, we prove the following.
\begin{theoremX}[\autoref{thm:tangentSepIsDense}]
    Given a compact submanifold of a forward complete Finsler manifold, the set of tangent cut points is the closure of the set of separating tangent cut points.
\end{theoremX}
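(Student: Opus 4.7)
The plan is to prove the two inclusions $\overline{\mathrm{STCL}} \subseteq \mathrm{TCL}$ and $\mathrm{TCL} \subseteq \overline{\mathrm{STCL}}$, where $\mathrm{STCL}$ denotes the set of separating tangent cut points and $\mathrm{TCL}$ denotes the tangent cut locus. The first inclusion is immediate: every separating tangent cut point is by definition a tangent cut point, and $\mathrm{TCL}$ is closed in $NM'$ by continuity of the cut time function on the unit normal bundle (valid since $M'$ is compact and $M$ is forward complete), as in \cite{BhoPra2023}.

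For the reverse inclusion, I would fix $v \in \mathrm{TCL}$; if $v$ is itself separating, there is nothing to do. Otherwise $q := \exp(v)$ admits a unique minimizing geodesic from $M'$, and the standard cut-point dichotomy (cut equals separating or first focal) forces $v$ to be a first tangent focal point. I would then invoke \autoref{thm:normalExponentialNonInjective} to obtain distinct sequences $v_n^{(1)}, v_n^{(2)} \in NM'$ with $v_n^{(i)} \to v$ and $\exp(v_n^{(1)}) = \exp(v_n^{(2)}) =: q_n \to q$. Setting $u_n^{(i)} := v_n^{(i)}/|v_n^{(i)}|$ and ordering so that $|v_n^{(1)}| \leq |v_n^{(2)}|$, two subcases arise. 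If along a subsequence $|v_n^{(1)}| = |v_n^{(2)}| = d(M', q_n)$, then both $v_n^{(i)}$ are separating tangent cut points converging to $v$ and we are done. Otherwise, the geodesic in the direction $u_n^{(2)}$ fails to minimize up to time $|v_n^{(2)}|$, so the cut time satisfies $\sigma(u_n^{(2)}) \leq |v_n^{(1)}| \to |v|$; by continuity of $\sigma$ on the unit normal bundle, the associated tangent cut points $w_n := \sigma(u_n^{(2)})\, u_n^{(2)}$ converge to $v$. At each $w_n$ the cut-point dichotomy applies once more: either $w_n$ is itself separating (and we are done), or $w_n$ is a pure first focal cut point.

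The hard part will be eliminating the scenario in which every $w_n$ is pure focal. In this case the local cut hypersurface coincides with the first focal hypersurface near $v/|v|$, and the local normal form of the normal exponential at the regular focal point $v$ established in \autoref{thm:normalExponentialLocalForm}, combined with \autoref{thm:normalExponentialNonInjective} applied at each $w_n$, must be used to produce non-injective pairs lying within this common hypersurface; such a pair automatically consists of two distinct minimizing tangent vectors to the same point, yielding separating tangent cut points converging to $v$. Making this final step rigorous requires a careful analysis of the fold-type structure of $\exp$ on the regular tangent focal set, adapting Bishop's point-case argument \cite{Bishop77} to the Finsler submanifold setting through the local structure developed earlier in the paper.
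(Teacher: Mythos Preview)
Your first inclusion is fine and in fact cleaner than the paper's own argument: closedness of $\widetilde{\mathrm{Cu}}(N)$ via continuity of $\rho$ immediately gives $\overline{\widetilde{\mathrm{Se}\kern .01cm}(N)} \subset \widetilde{\mathrm{Cu}}(N)$.

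The gap is precisely in the ``hard part'' you flag, and your proposed resolution does not close it. Non-injective pairs produced by \autoref{thm:normalExponentialNonInjective} are not automatically minimizing, so reapplying it at each $w_n$ just reproduces the same dichotomy one level down and you face an infinite regress. You also assume $\mathbf{v}$ is a \emph{regular} focal point when invoking the local normal form, but nothing in your setup guarantees this; and the claim that ``the local cut hypersurface coincides with the first focal hypersurface'' does not follow from having pure focal $w_n$ only along the particular sequence of directions $u_n^{(2)}$.

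The paper breaks the regress by distinguishing focal types of $\mathbf{v}$, and the key idea you are missing is this: when $\mathbf{v}$ is a regular focal point of type \ref{thm:normalExponentialLocalForm:1} or \ref{thm:normalExponentialLocalForm:2}, one does not take arbitrary non-injective pairs near $\mathbf{v}$, but rather a sequence $\mathbf{v}_i \to \mathbf{v}$ along the foliation leaf through $\mathbf{v}$ in $\mathcal{F}^{\textrm{reg}}$, so that $\exp^\nu(\mathbf{v}_i) = \exp^\nu(\mathbf{v})$ for \emph{every} $i$. The dichotomy then closes in one step: if $\rho(\widehat{\mathbf{v}}_i) = \lambda_1(\widehat{\mathbf{v}}_i)$ for some $i$, then $\mathbf{v}_i \in \widetilde{\mathrm{Cu}}(N)$ has the same image as $\mathbf{v}$ and $\mathbf{v}$ is already separating; otherwise $\rho(\widehat{\mathbf{v}}_i) < \lambda_1(\widehat{\mathbf{v}}_i)$ forces the cut point $\mathbf{w}_i = \rho(\widehat{\mathbf{v}}_i)\widehat{\mathbf{v}}_i$ to lie strictly before the first tangent focal point, hence $\mathbf{w}_i \in \widetilde{\mathrm{Se}\kern .01cm}(N)$ automatically. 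Type \ref{thm:normalExponentialLocalForm:3} is excluded by \autoref{prop:dimensionOfConjugateCutPoints} (such focal points are never tangent cut points), and the remaining focal cut points (singular, or the nowhere-dense residual of \autoref{rmk:noNormalForm}) are handled by the density of $\mathcal{F}^{\textrm{reg}}_1$ in $\mathcal{F}_1$ (\autoref{thm:regularJthTangentFocalLocus}) together with a diagonal argument.
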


As a corollary, we reprove part of \cite[Theorem 4.8]{BhoPra2023}, which states that the separating set is dense in the cut locus (see \autoref{rmk:cutIsClosureOfSep}).

\vspace{1em}
\noindent\textbf{Notations and Conventions.} Throughout this article, boldface symbols, e.g. $\mathbf{u},\mathbf{v},\mathbf{x},\mathbf{y}$, will always denote a tangent vector. For any $\mathbf{v} \ne 0$, the unit vector is denoted as $\widehat{\mathbf{v}} \coloneqq \frac{\mathbf{v}}{F(\mathbf{v})}$, where $F$ is the Finsler metric. The unique geodesic with initial velocity $\mathbf{v}$ is denoted by $\gamma_{\mathbf{v}}$ (\autoref{eq:geodesic}). The normal exponential map restricted to the complement of the zero section is denoted as $\mathcal{E}$, which is a smooth map (\autoref{eq:restrictedExponential}). Given any bundle $E$,  by $X \in \Gamma E$ we shall mean that $X$ is a section of $E$ defined locally over an unspecified open set of the base.

\vspace{1em}
\noindent\textbf{Organization of the article.} In \autoref{sec:preliminaries}, we recall some necessary concepts from Finsler geometry. Then, in \autoref{sec:focalLocus}, we state and prove some useful results about the focal locus of a submanifold, that are well-known in the Riemannian context. Next, in \autoref{sec:warnerRegularity} we prove the main results of this article. We have deferred some technical results involving second order tangent vectors needed in this section to the \autoref{sec:appendix}. Finally, in \autoref{sec:openQuestions} we pose a few open questions that have arisen from this work.

\section{Preliminaries on Finsler Geometry} \label{sec:preliminaries}
In this section, we collect a few definitions and results from Finsler geometry. We refer to \cite{Bao2000,Shen01,Ohta2021,Peter06,Javaloyes2014,Javaloyes2014_Correction,Javaloyes2015,Javaloyes2020,AlexAlvesJav24,BhoPra2023} as primary references.

\subsection{Finsler Metric}
\begin{defn}\label{defn:FinslerMetric}
    Let $M$ be a smooth manifold, and $TM$ denotes its tangent bundle. A \textit{Finsler metric} on $M$ is a continuous function $F: TM \to \mathbb{R}$ satisfying the following properties.
    \begin{enumerate}
        \item $F$ is smooth on $\widehat{TM} \coloneqq TM \setminus 0$.
        \item For any $p\in TM$, the restriction $F_p \coloneqq  F\big|_{T_pM}$ is a Minkowski norm, i.e.,
        \begin{itemize}
            \item for any $\lambda>0$ and $\mathbf{v}\in T_pM\setminus\{0\}$, we have $F_p(\lambda \mathbf{v})=\lambda F_p(\mathbf{v})$, and 
            \item for all $\mathbf{v}\in T_pM\setminus\{0\}$, the symmetric tensor $g_{\mathbf{v}}$ on $T_pM$, called the \emph{fundamental tensor}, is positive definite, where 
            \begin{equation}\label{eq:fundamentalTensor}
                g_{\mathbf{v}}(\mathbf{v}_1,\mathbf{v}_2) \coloneqq  \left.\dfrac{1}{2} \dfrac{\partial^2}{\partial s_1 \partial s_2} \right|_{s_1=s_2=0} \left(F_p(\mathbf{v} + s_1\mathbf{v}_1 + s_2\mathbf{v}_2)\right)^2.
            \end{equation}
        \end{itemize}
    \end{enumerate}
    $F$ is \emph{reversible} if $F(-\mathbf{v}) = F(\mathbf{v})$ holds for all $\mathbf{v} \in \widehat{TM}$.
\end{defn}

For $\mathbf{v} \in T_p M \setminus 0$, the associated \emph{Cartan tensor} on $T_p M$ is a symmetric $3$-tensor defined as
\begin{equation}\label{eq:cartanTensor}
    C_{\mathbf{v}}(\mathbf{v}_1,\mathbf{v}_2,\mathbf{v}_3) \coloneqq  \left. \frac{1}{4} \frac{\partial^3}{\partial s_1 \partial s_2 \partial s_3} \right|_{s_1=s_2=s_3=0} \left( F_p(\mathbf{v} + s_1 \mathbf{v}_1 + s_2 \mathbf{v}_2 + s_3 \mathbf{v}_3) \right)^2.
\end{equation}
For each $\mathbf{v}\in T_p M \setminus 0$ and $\mathbf{u}, \mathbf{w}\in T_p M$, we have the following relations
\begin{equation}\label{eq:cartanTensorRelation}
    C_{\mathbf{v}}(\mathbf{v}, \mathbf{u},\mathbf{w}) = C_{\mathbf{v}}(\mathbf{u}, \mathbf{v}, \mathbf{w}) = C_{\mathbf{v}}(\mathbf{u}, \mathbf{w}, \mathbf{v}) = 0.
\end{equation}
We extend the definition of the fundamental tensor and the Cartan tensor to vector fields. For any $V\in \Gamma \widehat{TM}$ and $X,Y,Z \in \Gamma TM$ defined near $p \in M$, we denote
\[g_V(X, Y)(p) \coloneqq  g_{V_p}(X_p, Y_p), \qquad C_V(X, Y, Z)(p) \coloneqq C_{V_p}\left( X_p, Y_p, Z_p \right).\]

\subsubsection{Chern Connection} Unlike the Levi-Civita connection in the Riemannian context, we do not get a canonical connection that is both torsion free and metric compatible in a suitable sense. In this article, we consider the Chern connection, which is a family of torsionless connections on a Finsler manifold.

\begin{defn}\label{defn:chernConnection}\cite{Rademacher2004, Javaloyes2014}
    For each $V \in \Gamma \widehat{TM}$, we have a unique affine connection \[\nabla^V: \Gamma TM \otimes \Gamma TM \rightarrow \Gamma TM,\] called the \emph{Chern connection}, satisfying the following conditions for any $X, Y, Z \in \Gamma TM$.
    \begin{itemize}
        \item (Torsion freeness) $\nabla^V_X Y - \nabla^V_Y X = [X, Y]$.
        \item (Almost metric compatibility) $X (g_V(Y,Z)) = g_V(\nabla^V_X Y, Z) + g_V(Y, \nabla^V_X Z) + 2 C_V(\nabla^V_X V, Y, Z)$.
    \end{itemize}
\end{defn}

The value of $\nabla^V_X Y|_p$ depends on the values of $V$ and $X$ only at $p$, and consequently $\nabla^{\mathbf{v}}_{\mathbf{x}} Y|_p$ is well-defined for any vector $\mathbf{v}, \mathbf{x} \in T_p M$ with $\mathbf{v} \ne 0$, and for any vector field $Y$ defined near $p$. In particular, $\nabla^V$ can be recovered from the Christoffel symbols, as considered in \cite{Bao2000}. Similarly, given a curve $\gamma : [a, b] \rightarrow M$ and $V \in \Gamma \gamma^*\widehat{TM}$, one can define $\nabla^V_X Y \in \Gamma \gamma^*TM$ for any $X, Y \in \Gamma \gamma^*TM$. This leads to defining a covariant derivative along a $\gamma$.
\begin{defn}\label{defn:covariantDerivative}\cite{Javaloyes2014}
    Given a curve $\gamma : [a,b] \rightarrow  M$ and $W \in \Gamma \gamma^*\widehat{TM}$, the \emph{covariant derivative} along $\gamma$ is defined as \[D^W_\gamma : \Gamma \gamma^* TM \rightarrow \Gamma \gamma^* TM,\]
    which satisfies the following.
    \begin{itemize}
        \item For any $X, Y \in \Gamma \gamma^*TM$ and scalars $\alpha, \beta \in \mathbb{R}$ we have 
        \[D^W_\gamma(\alpha X + \beta Y) = \alpha D^W_\gamma X + \beta D^W_\gamma Y.\]

        \item For any $X \in \Gamma \gamma^*TM$ and a smooth function $f : [a,b] \rightarrow \mathbb{R}$, we have 
        \[D^W_\gamma(f X) = \frac{df}{dt} X + f D^W_\gamma X.\]
        
        \item For any $X, Y \in \Gamma \gamma^* TM$, we have \[\frac{d}{dt}g_W(X,Y) = g_W \left( D^W_{\gamma} X, Y \right) + g_W \left( X, D^W_\gamma Y \right) + 2 C_W \left( D^W_\gamma W, X, Y \right).\]
    \end{itemize}
\end{defn}

If for some curve $\gamma$ we have $\dot\gamma(t) \ne 0$ for all time, then we shall use the notations 
\[\dot X \coloneqq D^{\dot \gamma}_\gamma X, \quad \ddot X \coloneqq D^{\dot g}_\gamma \dot X = D^{\dot \gamma}_\gamma D^{\dot \gamma}_\gamma X, \qquad \text{for any $X \in \Gamma \gamma^*TM$},\]
provided the curve $\gamma$ is understood from the context. A vector field $X \in \Gamma \gamma^*TM$ is said to be \emph{parallel} with respect to $\gamma$ if $\dot X = 0$. In particular, $\gamma$ itself is called parallel if $\ddot \gamma = 0$, which are precisely the \emph{geodesics}.

\subsection{Geodesics and Jacobi Fields}
Let us denote the space of piecewise smooth paths $\gamma : [a,b] \rightarrow M$ as $\mathcal{P} = \mathcal{P}([a,b])$. We have two functionals defined on this path space, namely the length and the energy functionals.
\[\begin{aligned}
    L : \mathcal{P} &\longrightarrow \mathbb{R} \\
    \gamma &\longmapsto \int_a^b F(\dot \gamma(t)) dt
\end{aligned} \qquad , \qquad \begin{aligned}
    E : \mathcal{P} &\longrightarrow \mathbb{R} \\
    \gamma &\longmapsto \frac{1}{2}\int_a^b F(\dot \gamma(t))^2 dt.
\end{aligned}\]
A curve $\gamma \in \mathcal{P}$ is a \emph{geodesic} if it is a critical point of the energy functional with respect to proper variations. Recall that given a piecewise smooth vector field $W \in \Gamma \gamma^* TM$, a \emph{$W$-variation} of $\gamma$ is a piecewise smooth map $\Lambda : (-\epsilon, \epsilon) \times [a,b] \rightarrow M$ satisfying \[\Lambda(0,t) = \gamma(t), \quad \left.\frac{\partial}{\partial s}\right|_{s=0} \Lambda(s, t) = W(t),\]
for all $(s,t)$ in the domain. The variation is \emph{proper} if $W(a) = 0 = W(b)$, or equivalently, if $\Lambda(s, a) = \gamma(a)$ and $\Lambda(s, b) = \gamma(b)$ for all $s$. Geodesics are \emph{locally} distance minimizing, where we have the Finsler distance between $p, q \in M$ defined as
    \[d(p, q) = \inf \left\{ L(\gamma) \;\middle|\; \gamma \in \mathcal{P}, \; \gamma(a) = p, \; \gamma(b) = q \right\}.\]
In general, geodesics fail to be globally distance minimizing. We say a geodesic $\gamma$ is a \emph{global distance minimizer} (or simply a \emph{minimizer}) if $\gamma$ is unit-speed, and $d(\gamma(a), \gamma(b)) = b - a = L(\gamma)$.

Geodesics are precisely the solutions to an initial value problem known as the geodesic equation (e.g., \cite[Equation 3.15]{Ohta2021}), which can be written succinctly as $\ddot \gamma = 0$ with the notation introduced earlier. As such, geodesics are always smooth, and given a vector $\mathbf{v} \in T_p M$, we have a unique maximal geodesic $\gamma_{\mathbf{v}} : [0,\ell] \rightarrow  M$ satisfying 
\begin{equation}\label{eq:geodesic}
    \gamma_{\mathbf{v}}(0) = p, \qquad \dot \gamma_{\mathbf{v}}(0) = \mathbf{v}.
\end{equation}
Note that, unlike Riemannian geometry, due to the asymmetry of the Finsler metric, the reversed curve $\bar{\gamma}:[0, \ell] \rightarrow M$ defined by $\bar{\gamma}(t) = \gamma(\ell - t)$ need not be a geodesic. Nevertheless, $\bar{\gamma}$ is a geodesic with initial velocity $-\dot\gamma(\ell)$ for the \emph{reverse Finsler metric} $\bar{F}$ defined by $\bar{F}(\mathbf{v}) = F(-\mathbf{v})$ for all $\mathbf{v} \in TM$ \cite[Section 2.5]{Ohta2021}.

A Finsler manifold $(M, F)$ is said to be \emph{forward complete} if for all $\mathbf{v} \in TM$, the geodesic $\gamma_{\mathbf{v}}$ is defined for all time $[0, \infty)$. We say $(M, F)$ is \emph{backward complete} if $(M, \bar{F})$ is forward complete. By the Hopf-Rinow theorem \cite[Theorem 3.21]{Ohta2021}, if a Finsler manifold $(M, F)$ is either forward or backward complete, then given any two points $p, q \in M$ there exists a minimizer with respect to $F$, and another (possibly distinct) minimizer with respect to $\bar{F}$ joining $p$ to $q$. Throughout this article, we shall assume $(M, F)$ to be a forward complete Finsler manifold, unless otherwise stated.

\begin{defn}\label{defn:exponentialMap}
    The \emph{exponential map} $\exp : TM \rightarrow  M$ at a point $p \in M$ is defined as $\exp_p(\mathbf{v}) = \gamma_{\mathbf{v}}(1)$ for any $\mathbf{v} \in T_p M$, where $\gamma_{\mathbf{v}} : [0,\infty) \rightarrow M$ is the unique geodesic starting at $p$ with initial velocity $\mathbf{v}$.
\end{defn}

It follows from the theory of ordinary differential equations that the exponential map is smooth on $\widehat{TM}$, but only $C^1$ on the whole tangent bundle, see \cite[Section 5.3]{Bao2000} for details.

\subsubsection{Jacobi Fields}
Given a geodesic $\gamma : [a,b] \rightarrow M$, consider a \emph{geodesic variation} $\Lambda : (-\epsilon, \epsilon) \times [a,b] \rightarrow M$ of $\gamma$, that is, for each $-\epsilon < s < \epsilon$ we require the curve $\Lambda_s : [a,b] \rightarrow M$ given by $\Lambda_s(t) \coloneqq \Lambda(s, t)$ to be a geodesic.

\begin{defn}\label{defn:JacobiField}
    Let $\gamma : [a,b] \rightarrow M$ be a geodesic. A vector field $J \in \Gamma \gamma^* TM$ along $\gamma$ is called a \emph{Jacobi field} along $\gamma$ if there exists a geodesic variation $\Lambda : (-\epsilon, \epsilon) \times [a,b] \rightarrow M$ of $\gamma$ satisfying 
    \[\left. \frac{\partial}{\partial s} \right|_{s=0} \Lambda(s,t) = J(t), \quad t\in [a,b].\]
\end{defn}

In order to characterize Jacobi fields as the solution to the Jacobi equation, recall first the \emph{curvature tensor} associated with the Chern connection given as
\begin{equation}\label{eq:curvatureTensor}
    R^V(X,Y) Z \coloneqq \nabla^V_X \nabla^V_Y Z - \nabla^V_Y \nabla^V_X Z - \nabla^V_{[X,Y]} Z, \quad X,Y,Z \in \Gamma TM, \, V \in \Gamma \widehat{TM}.
\end{equation}
In general, $R^V|_p$ depends on the value of $V$ on a neighborhood of $p$. To remedy this, consider the \emph{vertical derivative} of $\nabla$ given as 
\begin{equation}\label{eq:verticalDerivative}
    P_V(X, Y, Z) = \left. \frac{\partial}{\partial t}\right|_{t=0} \nabla^{V + t Z}_X Y, \quad X, Y, Z \in \Gamma TM, \, V \in \Gamma \widehat{TM}.
\end{equation}
Then, the \emph{Chern curvature tensor} is defined as \cite[Equation 5.2]{AlexAlvesJav24}
\begin{equation}\label{eq:chernCurvature}
    R_V(X, Y) Z \coloneqq R^V(X, Y)Z - P_V \left( Y, Z, \nabla^V_X V \right) + P_{V} \left( X, Z, \nabla^V_Y V \right), \quad X, Y, Z \in \Gamma TM, \, V \in \Gamma \widehat{TM}.
\end{equation}
It follows that $R_V|_p$ depends on the value of $V$ at $p$ only.
\begin{prop}\label{prop:jacobiEquation} \cite[Proposition 2.11]{Javaloyes2020}
    Given a geodesic $\gamma : [a, b] \rightarrow M$, a vector field $J \in \Gamma \gamma^*TM$ is a Jacobi field if and only if it satisfies the \emph{Jacobi equation} 
    \begin{equation}\label{eq:JacobiEquation}
        D^{\dot\gamma}_\gamma D^{\dot\gamma}_\gamma J - R_{\dot \gamma}\left( \dot \gamma, J \right) \dot \gamma = 0.
    \end{equation}
\end{prop}

Since the Jacobi equation is a second order ODE, given the initial data, $\mathbf{u}, \mathbf{v} \in T_{\gamma(a)}M$, there exists a unique Jacobi field $J$ along $\gamma$ satisfying, $J(a) = \mathbf{u}$ and $\dot{J}(a) = D^{\dot\gamma}_\gamma J(a) = \mathbf{v}$. In particular, the collection of all Jacobi fields along $\gamma$ forms a vector space of dimension $2 \dim M$. Let us make an important observation regarding the zeros of a Jacobi field.

\begin{lemma}\label{lemma:zerosOfJacobiField}
	Given a non-zero Jacobi field $J$ along a geodesic $\gamma : [0, \ell] \rightarrow M$, if $J(t_0) = 0$ for some $0 \le t_0 \le \ell$, then $\dot{J}(t_0) \ne 0$. Moreover, the zeros of $J$ are isolated.
\end{lemma}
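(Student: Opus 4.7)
The plan is to reduce both claims to standard facts about second-order linear ODEs by trivializing the bundle $\gamma^* TM$ through a parallel frame. Pick a basis $\{E_1,\dots,E_n\}$ of $T_{\gamma(0)} M$ and extend it along $\gamma$ by parallel transport with respect to $D^{\dot\gamma}_\gamma$, so that $\dot E_i \equiv 0$ for every $i$. The existence of such a frame follows from solving the (linear, first order) parallel transport equation, which is an ODE on $[0,\ell]$. Linear independence is preserved because parallel transport is an isomorphism on fibers, a consequence of the Leibniz rule in \autoref{defn:covariantDerivative}.

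Writing $J(t) = \sum_i J^i(t)\,E_i(t)$, the Leibniz rule gives $\dot J = \sum_i (J^i)'\,E_i$ and $\ddot J = \sum_i (J^i)''\,E_i$, so the Jacobi equation \eqref{eq:JacobiEquation} becomes a second-order linear ODE system
\begin{equation*}
    (J^i)''(t) \;=\; \sum_j A^i_j(t)\,J^j(t), \qquad i=1,\dots,n,
\end{equation*}
where $A^i_j(t)$ are the components of $R^\gamma(\dot\gamma,\cdot)\dot\gamma$ in the frame $\{E_i\}$. Since the coefficients are smooth on $[0,\ell]$, the Picard--Lindelöf theorem applies and solutions are uniquely determined by the initial data $(J^i(t_0),(J^i)'(t_0))$. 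If both $J(t_0)=0$ and $\dot J(t_0)=0$, then all initial data vanish, so $J\equiv 0$, contradicting that $J$ is non-vanishing. This proves $\dot J(t_0)\neq 0$.

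For the isolatedness of zeros, fix $t_0$ with $J(t_0)=0$. By the first part, $\dot J(t_0)\neq 0$, so at least one coefficient $(J^i)'(t_0)$ is nonzero. A first-order Taylor expansion in the single real variable $t$ then yields
\begin{equation*}
    J^i(t) \;=\; (t-t_0)(J^i)'(t_0) \;+\; o(t-t_0),
\end{equation*}
so $J^i(t)\neq 0$ for all $t$ in some punctured neighborhood of $t_0$. Since the $E_i$'s are linearly independent along $\gamma$, this forces $J(t)\neq 0$ in that punctured neighborhood, establishing the isolatedness.

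The only non-routine point is the identification $\dot J = \sum (J^i)' E_i$ in a parallel frame; this is where one must invoke the Leibniz rule of \autoref{defn:covariantDerivative} together with $\dot E_i=0$. Once this is in place, everything reduces to standard linear ODE theory, and the subtleties of the Finsler Chern connection (almost-metric compatibility, $g_{\dot\gamma}$-dependence, Cartan tensor corrections) play no role in the argument.
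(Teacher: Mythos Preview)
Your proof is correct. For the isolatedness claim, your argument is essentially the same as the paper's: both pass to a parallel frame, write $J = \sum J^i E_i$, and observe that some component has a simple zero at $t_0$.

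For the first claim, however, your route is genuinely different and more elementary. The paper treats $t_0 = 0$ by uniqueness of Jacobi fields, but for $t_0 > 0$ it takes a detour: it invokes the \emph{reverse} Finsler metric $\bar F$, reverses the geodesic variation to obtain a Jacobi field $\bar J$ along $\bar\gamma(t) = \gamma(t_0 - t)$ with respect to $\bar F$, and then applies uniqueness at the new initial time $t = 0$ to conclude $\bar J \equiv 0$. You instead trivialize via a global parallel frame and apply ODE uniqueness directly at the interior point $t_0$, which works uniformly for all $t_0 \in [0,\ell]$ and avoids any mention of the reverse metric. Your approach is the standard one in the Riemannian setting and carries over unchanged; the paper's detour through $\bar F$ is unnecessary here, though it does foreshadow the role of the reverse metric elsewhere in the article.
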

\begin{proof}
	\ifarxiv
    Suppose $J(t_0) = 0$ for some $0 \le t_0 \le \ell$. If possible, suppose $\dot{J}(t_0) = 0$. If $t_0 = 0$, then we must have $J \equiv 0$, by the uniqueness of Jacobi fields. Suppose $t_0 > 0$. 	We have geodesic variation $\Lambda : (-\epsilon, \epsilon) \times [0, \ell] \rightarrow M$ such that $J(t) = \frac{\partial}{\partial s} |_{s=0} \Lambda(s, t)$. Define $\bar{\Lambda} : (-\epsilon, \epsilon) \times [0, t_0] \rightarrow M$ by $\bar{\Lambda}(s, t) = \bar{\Lambda}(s, t_0 - t)$. Then, $\bar{\Lambda}$ is a geodesic variation with respect to the reverse Finsler metric $\bar{F}$. In particular, $\bar{J}(t) \coloneqq \frac{\partial}{\partial s} |_{s=0} \bar{\Lambda}(s, t) = -J(t)$ is a Jacobi field along the geodesic $\bar{\gamma}(t) = \gamma(t_0 - t)$ defined on $[0, t_0]$, with respect to $\bar{F}$. By our assumption, 
	\[0 = \left.\frac{D^\gamma}{dt}\right|_{t=t_0} J(t) = \left.\frac{\partial^2}{\partial t\partial s}\right|_{(t_0,0)} \Lambda(s, t) = - \left.\frac{\partial^2}{\partial t\partial s}\right|_{(0,0)} \bar{\Lambda}(s, t) = - \left.\frac{D^{\bar{\gamma}}}{dt}\right|_{t = 0} \bar{J}(t).\]
	But then $\bar{J} \equiv 0$ along $\bar{\gamma}$, as the initial conditions are $\bar{J}(0) = 0$ and $\frac{D^{\bar{\gamma}}}{dt}|_{t=0} \bar{J}(t) = 0$. Consequently, we have $J \equiv 0$ along $\gamma$, a contradiction. Hence, $J(t_0) = 0$ implies $\dot{J}(t_0) \ne 0$.
    \else
    Since a Jacobi field $J$ is given as the solution to the second order ODE \autoref{eq:JacobiEquation}, it follows that $J(t_0) = 0$ and $\dot J(t_0) = 0$ forces $J$ to be identically zero. This proves the first part.
    \fi

	Next, let us fix some frame of parallel vector fields $\left\{ e_1(t), \dots, e_n(t) \right\}$ along $\gamma$, near $t_0$, where $n = \dim M$. We can write, $J = \sum_{i = 1}^n J^i(t) e_i (t)$ near $t_0$. Since $\dot{e}_i = 0$, we have $0 \ne \dot{J}(t_0) = \sum \dot J^i(t_0) e_i(t_0)$. Hence, for some $1 \le i_0 \le n$ we get $J^{i_0}(t_0) = 0$ but $\dot J^{i_0}(t_0) \ne 0$. Consequently, $J^{i_0}$ is locally injective near $t_0$, and in particular, $J^{i_0}$ is non-zero in a deleted neighborhood of $t_0$. It follows that zeros of $J$ are isolated.
\end{proof}

We now have the following useful result (compare \cite[Lemma 2.3]{Warner1965}).

\begin{lemma}\label{lemma:jacobiFieldMultipleOfNonZeroField}
    Let $J$ be a non-zero Jacobi field defined along a geodesic $\gamma : [a, b] \rightarrow M$. Suppose $J(t_0) = 0$ for some $a < t_0 < b$. Then, there exists a smooth vector field $Y(t)$ along $\gamma$, such that $J(t) = (t- t_0)Y(t)$, and $Y \ne 0$ in a neighborhood of $t_0$. In particular, $\dot J(t_0) = Y(t_0)$.
\end{lemma}
\begin{proof}
    Since $J$ is nonzero, and $J(t_0) = 0$, by \autoref{lemma:zerosOfJacobiField} we have $\dot J(t_0) \ne 0$. Let us consider some frame of parallel vector fields $E_1, \dots , E_{n}$ along $\gamma$. We can then write $J(t) = \sum f_i(t) E_i(t)$ for some smooth functions $f_i$ along $\gamma$. As $J(t_0) = 0 \Rightarrow f_i(t_0) = 0$ for all $i = 1,\dots ,n$, we can uniquely write $f_i(t) = (t - t_0) \tilde{f}_i(t)$ for some smooth functions $\tilde{f}_i$ along $\gamma$. Indeed, considering $g_i(s) = f_i\left( (t- t_0)s + t_0 \right)$, we have 
    \[f_{i}(t) = g_i(1) - g_i(0) = \int_0^1 g_i^\prime(s)ds = (t - t_0)\underbrace{\int_0^1 f_i^\prime\left( (t-t_0)s + t_0 \right) ds}_{\tilde{f}_i}.\]
    Consequently, we have $J(t) = (t - t_0) Y(t)$, where $Y = \sum \tilde{f}_i E_i$ is a smooth vector field along $\gamma$. As $\dot E_i = 0$, we have $\dot J(t) = \sum f_i^\prime(t) E_i(t)$. Since $\dot J(t_0) \ne 0$, there is some $1 \le i_0 \le k$ so that 
    \[f_{i_0}^\prime(t_0) \ne 0 \Rightarrow \tilde{f}_{i_0}(t_0) \ne 0 \Rightarrow Y(t_0) \ne 0.\]
    Thus, $Y$ is nonvanishing near $t_0$.
\end{proof}

\subsection{Submanifolds in Finsler Manifolds}
Given a submanifold $N$ of a Finsler manifold $(M, F)$, we consider the normal cone bundle as the natural replacement for normal bundles.

\begin{defn}\label{defn:normalCone}
    Given a submanifold $N \subset M$, the set $$\nu_p = \nu_p(N) = \big\{ \mathbf{v} \in T_p M \setminus \left\{ 0 \right\} \;\big|\; g_{\mathbf{v}}(\mathbf{v},\mathbf{w}) = 0 \; \forall \mathbf{w} \in T_p N\big\} \cup \left\{ 0 \right\}$$
    is called the \emph{normal cone} of $N$ at $p \in N$. The set $\nu = \nu(N) = \cup_{p \in N} \nu_p(N)$ is called the \emph{normal cone bundle} of $N$. The \emph{unit normal cone bundle} of $N$ is denoted as $S(\nu) = \cup_{p \in N} S(\nu_p)$, where $S(\nu_p) \coloneqq \left\{ \mathbf{v} \in \nu_p \;\middle|\; F_p(\mathbf{v}) = 1 \right\}$.
\end{defn}

It should be noted that $\nu(N)$ is \emph{not} a vector bundle in general; in fact, it is a cone bundle. That is to say for any $0 \ne \mathbf{v} \in \nu$ we have $\lambda \mathbf{v} \in \nu$ for all $\lambda \ge 0$. We shall denote $\hat{\nu}_p \coloneqq \nu_p \setminus \{ 0 \}$, and $\hat{\nu} \coloneqq \cup_{p\in N} \hat{\nu}_p$ is then called the \emph{slit cone bundle}. It follows that $\hat{\nu}$ (resp. $S(\nu)$) is a smooth submanifold of $TM$ of dimension $\dim N + \codim N = \dim M$ (resp. $\dim M - 1$), whereas $\nu$ is only a topological submanifold. In \autoref{prop:tangentSpaceOfNormalConeBundle}, we shall see a natural way to identify the tangent space of $\hat{\nu}$ at some $\mathbf{v}$.

\begin{defn}\label{defn:normalExponentialMap}
    Given $N \subset M$, the \emph{normal exponential map} $\exp^\nu: \nu(N) \rightarrow  M$ is defined as the restriction of the exponential map to the cone bundle $\nu(N)$. We shall denote the restriction to the smooth submanifold $\hat{\nu} \subset TM$ as 
    \begin{equation}\label{eq:restrictedExponential}
        \mathcal{E} \coloneqq \exp^\nu|_{\hat{\nu}} : \hat{\nu} \rightarrow M,
    \end{equation}
    which is then a smooth map.
\end{defn}

Recall the \emph{Legendre transformation} $\mathcal{L} : TM \rightarrow T^*M$ \cite[Section 3.2]{Ohta2021}, which is a fiber-wise homeomorphism, restricting to a $C^\infty$-diffeomorphism in the complement of the zero sections. For any $\mathbf{v}, \mathbf{w} \in TM$ we have
\begin{equation}\label{eq:legendreTransformation}
    \mathcal{L}(\mathbf{v})(\mathbf{w}) = 
    \begin{cases}
        g_{\mathbf{v}}\left( \mathbf{v}, \mathbf{w} \right), \qquad \mathbf{v} \ne 0, \\
        0, \qquad \mathbf{v} = 0.
    \end{cases}
\end{equation}
It is easily seen that $\mathcal{L}$ maps $\nu$ bijectively onto the annihilator bundle of $TN$. Thus, the cone bundle $\nu$ is fiber-wise (non-linearly) homeomorphic to a vector bundle, and $\hat{\nu}$ is $C^\infty$-diffeomorphic to the complement of the zero section in the annihilator bundle. As a consequence, for any $\mathbf{n} \in \hat{\nu}$, we can define a smooth local extension $\tilde{\mathbf{n}} \in \Gamma \hat{\nu}$ of $\mathbf{n}$ via $\mathcal{L}$.

Given a $0 \ne \mathbf{n} \in \nu_p$ we have a direct sum decomposition of $T_p N$ defined using the fundamental tensor $g_{\mathbf{n}}$ which is nondegenerate. In particular, for any $\mathbf{v} \in T_p M$ with $p \in N$, we can uniquely write 
\begin{equation}\label{eq:directSum}
    \mathbf{v} = \mathbf{v}^{\top_{\mathbf{n}}} + \mathbf{v}^{\perp_{\mathbf{n}}} \in T_p N \oplus \left( T_p N \right)^{\perp_{g_{\mathbf{n}}}}.
\end{equation}
We then have the \emph{second fundamental form} of $N$ in the direction of $\mathbf{n}$ defined as the symmetric tensor
\begin{equation}\label{eq:secondFundamentalForm}
    \begin{aligned} 
        \Pi^{\mathbf{n}} : T_p N \odot T_p N &\longrightarrow \left( T_p N \right)^{\perp_{g_{\mathbf{n}}}} \\
        \left( \mathbf{x}, \mathbf{y} \right) &\longmapsto  - \left( \nabla^{\tilde{\mathbf{n}}}_X Y |_{p} \right)^{\perp_{\mathbf{n}}},
    \end{aligned}
\end{equation}
where $X, Y \in \Gamma TN$ and $\tilde{\mathbf{n}} \in \Gamma \hat{\nu}$ are  arbitrary local extensions of $\mathbf{x}, \mathbf{y}, \mathbf{n}$ respectively. The map is well-defined, as the assignment can be seen to be $C^\infty(N)$-linear. Furthermore, as the Chern connection is torsion-free, we have 
\[\Pi^{\mathbf{n}}\left( \mathbf{x}, \mathbf{y} \right) - \Pi^{\mathbf{n}}\left( \mathbf{x}, \mathbf{y} \right) = \left( \nabla^{\mathbf{n}}_X Y|_p - \nabla^{\mathbf{n}}_Y X|_p \right)^{\perp_{\mathbf{n}}} = \left( [X, Y]_p \right)^{\perp_{\mathbf{n}}} = 0.\]
In other words, $\Pi^{\mathbf{n}}$ is a symmetric linear map. Taking adjoint with respect to $g_{\mathbf{n}}$, we define the \emph{shape operator} $A_{\mathbf{n}} : T_p N \rightarrow T_p N$ of $N$ along $\mathbf{n}$ via the equation
\begin{equation}\label{eq:shapeOperator}
    g_{\mathbf{n}}\left( A_{\mathbf{n}} \mathbf{x}, \mathbf{y} \right) = g_{\mathbf{n}}\left( \mathbf{n}, \Pi^{\mathbf{n}}\left( \mathbf{x}, \mathbf{y} \right) \right), \quad \mathbf{x}, \mathbf{y} \in T_p N.
\end{equation}
\begin{lemma}\label{lemma:shapeOperatorFormula}
    For any $0 \ne \mathbf{n} \in \nu_p$ and $\mathbf{x} \in T_p N$, we have 
    \[A_{\mathbf{n}}(\mathbf{x}) = \left( \nabla^{\tilde{\mathbf{n}}}_X \tilde{\mathbf{n}}|_p \right)^{\top_{\mathbf{n}}},\]
    where $X \in \Gamma TN, \tilde{\mathbf{n}} \in \Gamma \hat{\nu}$ are arbitrary local extensions of $\mathbf{x}, \mathbf{n}$ respectively.
\end{lemma}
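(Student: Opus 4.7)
The plan is to derive the identity by differentiating the defining relation of the normal cone along the direction $X$ and exploiting the symmetries of the Cartan tensor in (\ref{eq:cartanTensorRelation}) to kill the anomalous term in the almost metric compatibility of the Chern connection.

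First, I would fix a local extension $\tilde{\mathbf{n}} \in \Gamma \hat{\nu}$ of $\mathbf{n}$ (produced, e.g., via the Legendre transform $\mathcal{L}$ as noted right after (\ref{eq:legendreTransformation})) together with local extensions $X, Y \in \Gamma TN$ of $\mathbf{x}, \mathbf{y}$. By the very definition of the normal cone bundle, the function $q \mapsto g_{\tilde{\mathbf{n}}_q}(\tilde{\mathbf{n}}_q, Y_q)$ vanishes identically on the common domain in $N$, and since $X$ is tangent to $N$, we get $X\bigl(g_{\tilde{\mathbf{n}}}(\tilde{\mathbf{n}}, Y)\bigr) \equiv 0$.

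Next, I would plug $V = \tilde{\mathbf{n}}$ and the pair $(\tilde{\mathbf{n}}, Y)$ into the almost metric compatibility formula of \autoref{defn:chernConnection}:
\[
0 = X\bigl(g_{\tilde{\mathbf{n}}}(\tilde{\mathbf{n}}, Y)\bigr) = g_{\tilde{\mathbf{n}}}\bigl(\nabla^{\tilde{\mathbf{n}}}_X \tilde{\mathbf{n}}, Y\bigr) + g_{\tilde{\mathbf{n}}}\bigl(\tilde{\mathbf{n}}, \nabla^{\tilde{\mathbf{n}}}_X Y\bigr) + 2 C_{\tilde{\mathbf{n}}}\bigl(\nabla^{\tilde{\mathbf{n}}}_X \tilde{\mathbf{n}}, \tilde{\mathbf{n}}, Y\bigr).
\]
The Cartan tensor term here has $\tilde{\mathbf{n}}$ in the middle slot, and thus vanishes by (\ref{eq:cartanTensorRelation}). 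Evaluating at $p$ and using that $\mathbf{n} \in (T_p N)^{\perp_{g_{\mathbf{n}}}}$ (so that $g_{\mathbf{n}}(\mathbf{n}, \cdot)$ detects only the $\perp_{\mathbf{n}}$ component) gives
\[
g_{\mathbf{n}}\bigl(\nabla^{\tilde{\mathbf{n}}}_X \tilde{\mathbf{n}}|_p, \mathbf{y}\bigr) = -g_{\mathbf{n}}\bigl(\mathbf{n}, \nabla^{\tilde{\mathbf{n}}}_X Y|_p\bigr) = -g_{\mathbf{n}}\bigl(\mathbf{n}, (\nabla^{\tilde{\mathbf{n}}}_X Y|_p)^{\perp_{\mathbf{n}}}\bigr) = g_{\mathbf{n}}\bigl(\mathbf{n}, \Pi^{\mathbf{n}}(\mathbf{x}, \mathbf{y})\bigr),
\]
where the last equality is the definition (\ref{eq:secondFundamentalForm}) of $\Pi^{\mathbf{n}}$. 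By (\ref{eq:shapeOperator}), the right-hand side is $g_{\mathbf{n}}(A_{\mathbf{n}} \mathbf{x}, \mathbf{y})$.

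To conclude, I would note that $\mathbf{y} \in T_p N$ is arbitrary, so the identity $g_{\mathbf{n}}(A_{\mathbf{n}} \mathbf{x} - \nabla^{\tilde{\mathbf{n}}}_X \tilde{\mathbf{n}}|_p, \mathbf{y}) = 0$ holds for all such $\mathbf{y}$. Since $g_{\mathbf{n}}$ is nondegenerate on $T_p N$ (indeed positive definite) and $A_{\mathbf{n}} \mathbf{x} \in T_p N$, decomposing $\nabla^{\tilde{\mathbf{n}}}_X \tilde{\mathbf{n}}|_p$ via (\ref{eq:directSum}) shows that $A_{\mathbf{n}} \mathbf{x}$ equals the $T_p N$-component $(\nabla^{\tilde{\mathbf{n}}}_X \tilde{\mathbf{n}}|_p)^{\top_{\mathbf{n}}}$, as claimed. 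The main subtlety is simply recognizing that the Cartan tensor correction to metric compatibility disappears here thanks to (\ref{eq:cartanTensorRelation}); everything else is a direct transcription of the Riemannian argument.
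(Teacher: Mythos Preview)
Your proposal is correct and follows essentially the same route as the paper's own proof: differentiate the orthogonality relation $g_{\tilde{\mathbf{n}}}(\tilde{\mathbf{n}},Y)=0$ along $X$, expand via almost metric compatibility, kill the Cartan term with (\ref{eq:cartanTensorRelation}), and conclude by nondegeneracy of $g_{\mathbf{n}}|_{T_pN}$. The only cosmetic difference is that the paper phrases the extension of $\mathbf{y}$ as $Y\in\Gamma TM$ satisfying $g_{\tilde{\mathbf{n}}}(\tilde{\mathbf{n}},Y)=0$ on $N$, whereas you take $Y\in\Gamma TN$ directly; since $\tilde{\mathbf{n}}\in\Gamma\hat{\nu}$ this amounts to the same constraint.
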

\begin{proof}
    For any $\mathbf{y} \in T_p N$ get an extension $Y \in \Gamma TM$ so that $g_{\tilde{\mathbf{n}}}\left( \tilde{\mathbf{n}}, Y \right) = 0$ on points of $N$. As $X \in \Gamma TN$, on points of $N$ we then have
    \[0 = X\left( g_{\tilde{\mathbf{n}}} \left( \tilde{\mathbf{n}}, Y \right) \right) = g_{\tilde{\mathbf{n}}}\left( \nabla^{\tilde{\mathbf{n}}}_X \tilde{\mathbf{n}}, Y \right) + g_{\tilde{\mathbf{n}}}\left( \tilde{\mathbf{n}}, \nabla^{\tilde{\mathbf{n}}}_X Y \right) + 2C_{\tilde{\mathbf{n}}}\left( \nabla^{\tilde{\mathbf{n}}}_X \tilde{\mathbf{n}}, \tilde{\mathbf{n}}, Y \right).\]
    The last term vanishes by \autoref{eq:cartanTensorRelation}. Hence, evaluating at $p$ we have
    \begin{align*}
        g_{\mathbf{n}}\left( A_{\mathbf{n}} \mathbf{x}, \mathbf{y} \right) 
        &= g_{\mathbf{n}} \left( \mathbf{n}, \Pi^{\mathbf{n}}\left( \mathbf{x}, \mathbf{y} \right) \right) \\
        &= g_{\mathbf{n}} \left( \mathbf{n}, - \left( \nabla^{\tilde{\mathbf{n}}}_X Y|_p \right)^{\perp_{\mathbf{n}}} \right) \\
        &= g_{\mathbf{n}} \left( \mathbf{n}, -\nabla^{\tilde{\mathbf{n}}}_X Y|_p \right), \qquad \text{as } g_{\mathbf{n}}\left( \mathbf{n}, -\left( \nabla^{\tilde{\mathbf{n}}}_X Y|_p \right)^{\top_{\mathbf{n}}} \right) = 0 \\
        &= g_{\mathbf{n}}\left( \nabla^{\tilde{\mathbf{n}}}_X \tilde{\mathbf{n}}|_p, \mathbf{y} \right) \\
        &= g_{\mathbf{n}}\left( \left( \nabla^{\tilde{\mathbf{n}}}_X \tilde{\mathbf{n}}|_p \right)^{\top_{\mathbf{n}}}, \mathbf{y} \right), \qquad \text{as } g_{\mathbf{n}}\left( \left( \nabla^{\tilde{\mathbf{n}}}_X \tilde{\mathbf{n}}|_p \right)^{\perp_{\mathbf{n}}}, \mathbf{y} \right) = 0.
    \end{align*}
    Since $\mathbf{y} \in T_p N$ is arbitrary and $g_{\mathbf{n}}|_{T_p N}$ is nondegenerate, we have the claim.
\end{proof}

\subsection{\texorpdfstring{$N$}{N}-Geodesics and \texorpdfstring{$N$}{N}-Jacobi Fields}
Given a submanifold $N \subset M$, we have the subspace of piecewise smooth paths \[\mathcal{P}_N = \mathcal{P}_N([a,b]) = \{ \gamma \in \mathcal{P}([a,b]) \;|\; \gamma(a) \in N\}\] starting at $N$. For any $\gamma\in\mathcal{P}_N$ we say $\gamma$ is a curve joining $N$ to $\gamma(b)$. Given $q \in M$, the distance from $N$ to $q$ is then defined as
    \[d(N,q) \coloneqq \inf \left\{ L(\gamma) \;\middle|\; \gamma\in\mathcal{P}_N, \; \gamma(b) = q \right\}.\]
We identify the tangent space of $\mathcal{P}_N$ at a given $\gamma\in \mathcal{P}_N$ as the infinite dimensional vector space consisting of piecewise smooth vector fields $W \in \Gamma \gamma^* TM$ with $W(a) \in T_{\gamma(a)} N$. Given $W \in T_\gamma\mathcal{P}_N$, a \emph{$W$-variation} is then a piecewise smooth map $\Lambda : (-\epsilon,\epsilon) \times [a,b] \rightarrow  M$ satisfying
\[\Lambda(0, t) = \gamma(t), \quad  \left. \frac{\partial}{\partial s} \right|_{s=0} \Lambda(s, t) = W(t), \quad  \Lambda(s, a) \in N,\]
for all $(s,t)$ in the domain of the definition. The variation is called \emph{proper}, if furthermore, we have $W(b) = 0$, or equivalently if $\Lambda(s, b) = \gamma(b)$ for all $s$.

\begin{defn}\label{defn:NSegment}
    A piecewise $C^1$ curve $\gamma : [a,b] \rightarrow M$ in $\mathcal{P}(N)$ is called an \emph{$N$-geodesic} if $\gamma$ is a critical point of the restricted energy functional $E|_{\mathcal{P}(N)}$ with respect to proper variations. An $N$-geodesic $\gamma$ is called an \emph{$N$-segment} joining $N$ to $\gamma(b)$ if $\gamma$ is unit-speed, and $d(N, \gamma(b)) = b - a = L(\gamma)$.
\end{defn}

We recall the following useful lemma, which can be compared to \cite[Theorem 5.16, pg. 24]{Bus55}. 

\begin{lemma}\label{lemma:convergentSubsequenceNSegment} \cite[Lemma 3.9]{BhoPra2023}
    Suppose, $\gamma_i : [0, \ell_i] \rightarrow M$ are unit-speed minimizers joining $p_i = \gamma_i(0)$ to $q_i = \gamma_i(\ell_i)$, where $\ell_i = d(p_i, q_i)$. Suppose $\ell_i \rightarrow \ell$. If either
    \begin{center}
        (a) $p_i \rightarrow p$ and $F$ is forward complete, \kern.5cm or \kern.5cm (b) $q_i \rightarrow q$ and $F$ is backward complete,
    \end{center}
    then a subsequence of $\gamma_i$ converges uniformly to a minimizer $\gamma$, which satisfies $L(\gamma) = \ell$.
\end{lemma}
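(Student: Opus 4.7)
The plan is to reduce the statement to a continuous-dependence argument for the geodesic ODE, after extracting a convergent subsequence of the initial velocities by a compactness argument; the minimizing property of the limit curve then follows from continuity of the Finsler distance function together with the assumption $\ell_i \to \ell$.

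For case~(a), I would write $\gamma_i(t) = \exp_{p_i}(t \mathbf{v}_i)$ with $\mathbf{v}_i \coloneqq \dot\gamma_i(0) \in S_{p_i}M$. Since $p_i \to p$, the $\mathbf{v}_i$ eventually lie in the unit sphere bundle $S(TM)|_K$ over a compact neighborhood $K$ of $p$, which is itself compact, because $F$ is continuous and positively homogeneous, so $S(TM)|_K$ is homeomorphic to a Riemannian unit sphere bundle over $K$ via fiberwise radial reparametrization. Passing to a subsequence, $\mathbf{v}_i \to \mathbf{v}$ for some unit $\mathbf{v} \in T_p M$. Forward completeness of $F$ makes $\gamma(t) \coloneqq \exp_p(t \mathbf{v})$ defined on $[0, \ell]$, and smoothness of $\exp$ on $\widehat{TM}$ together with $\ell_i \to \ell$ yields uniform convergence $\gamma_i \to \gamma$ on $[0, \ell]$ (after extending each $\gamma_i$ slightly beyond $\ell_i$ by forward completeness to absorb the discrepancy $\ell_i \ne \ell$). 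In particular $L(\gamma) = \ell$ and $\gamma$ is unit-speed. To verify that $\gamma$ is a minimizer, set $q \coloneqq \gamma(\ell)$ and use the asymmetric triangle inequality to bound $|d(p_i, q_i) - d(p, q)|$ by a sum of the small directed distances between $p, p_i$ and $q, q_i$ in both orientations. Since $d$ is continuous under forward completeness, combining with $d(p_i, q_i) = \ell_i \to \ell$ yields $d(p, q) = \ell = L(\gamma)$, so $\gamma$ is indeed a minimizer.

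For case~(b), the reverse-metric trick applies: the reversed curves $\bar\gamma_i(t) \coloneqq \gamma_i(\ell_i - t)$ are unit-speed $\bar F$-minimizers from $q_i$ to $p_i$, and backward completeness of $F$ is precisely forward completeness of $\bar F$. Thus case~(b) reduces to case~(a) applied to $\{\bar\gamma_i\}$ in the Finsler manifold $(M, \bar F)$, and reversing the limit curve produces the desired $F$-minimizer $\gamma$ as the uniform limit of the corresponding subsequence of $\gamma_i$. The main technical obstacle throughout is the asymmetry of $d$: one must carefully track which of $d(x, y)$ or $d(y, x)$ appears in each triangle-inequality estimate, and which completeness hypothesis justifies the corresponding continuity of $d$. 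This is essentially bookkeeping and does not alter the skeleton of the argument, but it is the step where a naive translation of the Riemannian proof would break down.
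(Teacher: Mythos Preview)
The paper does not actually prove this lemma; it is stated with a citation to \cite[Lemma 3.9]{BhoPra2023} and no proof environment follows. So there is no in-paper argument to compare your proposal against.

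That said, your approach is the standard one and is essentially correct: extract a convergent subsequence of initial velocities using compactness of the unit sphere bundle over a compact neighborhood of $p$, invoke continuous dependence of geodesics on initial data (via smoothness of $\exp$ on $\widehat{TM}$) together with forward completeness to get uniform convergence to a limit geodesic $\gamma$, and then deduce the minimizing property from continuity of the Finsler distance. The reduction of case~(b) to case~(a) via the reverse metric $\bar F$ is exactly right. One small remark: continuity of $d$ on $M\times M$ does not actually require completeness---it holds on any Finsler manifold because the manifold topology coincides with that induced by the symmetrized distance---so your invocation of forward completeness for the continuity step is harmless but unnecessary; completeness is needed only to ensure the limit geodesic is defined on all of $[0,\ell]$ and to extend the $\gamma_i$ when $\ell_i < \ell$.
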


As an immediate consequence, we get the following.

\begin{prop}\cite[Proposition 3.10]{BhoPra2023}\label{prop:existenceOfNSegements}
    Suppose $(M, F)$ is a forward complete Finsler manifold, and $N$ is a closed submanifold of $M$. Let $\gamma_i : [0, \ell_i] \rightarrow M$ be $N$-segments joining $N$ to $q_i \coloneqq  \gamma_i(\ell_i)$. Suppose further that either
    \begin{equation}\label{eq:hypothesisH}
        \tag{$\mathsf{H}$}
        \text{(a) $N$ is compact, \kern.5cm or \kern.5cm (b) $F$ is backward complete as well.}
    \end{equation}
    If $\ell_i \rightarrow \ell$ and $q_i \rightarrow q$, then there exists an $N$-segment $\gamma : [0, \ell]\rightarrow M$ joining $N$ to $q$, with a subsequence $\gamma_i \rightarrow \gamma$. In particular, for any $q \in M$ there exists an $N$-segment $\gamma$ joining $N$ to $q$, with $d(N, q) = L(\gamma)$.
\end{prop}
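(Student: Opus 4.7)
The plan is to reduce the proposition to \autoref{lemma:convergentSubsequenceNSegment}. As a preliminary reduction, observe that each $N$-segment $\gamma_i$ is in particular a unit-speed minimizer from $p_i \coloneqq \gamma_i(0) \in N$ to $q_i$, since the inequalities $\ell_i = d(N, q_i) \leq d(p_i, q_i) \leq L(\gamma_i) = \ell_i$ collapse to equalities. So \autoref{lemma:convergentSubsequenceNSegment} may be applied directly to the sequence $(\gamma_i)$.

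In case (a), I would first use compactness of $N$ to pass to a subsequence with $p_i \to p \in N$ and then apply \autoref{lemma:convergentSubsequenceNSegment}(a) using forward completeness; in case (b) I would apply \autoref{lemma:convergentSubsequenceNSegment}(b) directly using $q_i \to q$. In either case, after a further subsequence, $\gamma_i$ converges uniformly to a minimizer $\gamma : [0, \ell] \to M$ with $L(\gamma) = \ell$. Uniform convergence gives $\gamma(0) = \lim p_i \in N$ (using that $N$ is closed) and $\gamma(\ell) = \lim q_i = q$.

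To finish, I would verify that (i) $d(N, q) = \ell$ and (ii) $\gamma$ is an $N$-geodesic. For (i), the bound $d(N, q) \leq L(\gamma) = \ell$ is immediate; for the reverse, the triangle inequality gives $\ell_i = d(N, q_i) \leq d(N, q) + d(q, q_i)$, and since $q_i \to q$ in the manifold topology forces $d(q, q_i) \to 0$, letting $i \to \infty$ yields $\ell \leq d(N, q)$. For (ii), I would show that $\gamma$ in fact minimizes energy on $\mathcal{P}_N([0, \ell])$ among curves ending at $q$: any such competitor $\sigma$ has $L(\sigma) \geq d(N, q) = \ell$, so Cauchy--Schwarz gives $E(\sigma) \geq L(\sigma)^2/(2\ell) \geq \ell/2 = E(\gamma)$. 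Hence $\gamma$ is a critical point of $E|_{\mathcal{P}_N}$ with respect to proper variations, fulfilling \autoref{defn:NSegment}.

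The 'in particular' clause follows by choosing $p_i \in N$ with $d(p_i, q) \to d(N, q)$, obtaining unit-speed minimizers $\gamma_i$ from $p_i$ to $q$ by forward Hopf--Rinow, and running the same convergence argument with $q_i \coloneqq q$ constant. The only delicate point in the proof is step (i), where the asymmetry of the Finsler distance requires one to exploit that $q_i \to q$ in the manifold topology gives $d(q, q_i) \to 0$ (the forward direction from $q$), and not merely $d(q_i, q) \to 0$; every other step is essentially a bookkeeping reduction to the cited lemma.
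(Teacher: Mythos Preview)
The paper does not include a proof of this proposition; it is quoted verbatim from the authors' earlier work \cite[Proposition 3.10]{BhoPra2023}. Your argument is correct and is the natural one: reduce to \autoref{lemma:convergentSubsequenceNSegment} via the two cases of hypothesis (\hyperref[eq:hypothesisH]{$\mathsf{H}$}), then verify that the limiting minimizer is an $N$-segment.

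One small remark on step (ii): since \autoref{lemma:convergentSubsequenceNSegment} already delivers $\gamma$ as a (smooth, unit-speed) geodesic, you could bypass the Cauchy--Schwarz energy comparison and instead invoke the first variation formula directly: the fact that $L(\gamma) = d(N,q)$ forces the boundary term at $t=0$ to vanish for every variation tangent to $N$, i.e.\ $\dot\gamma(0) \in \nu_{\gamma(0)}$, which together with the geodesic equation characterizes $N$-geodesics. Your energy-minimization route is equally valid, just slightly more roundabout. Your observation about the asymmetric triangle inequality in step (i) --- that one needs $d(q,q_i)\to 0$ rather than $d(q_i,q)\to 0$ --- is exactly the right point to flag, and it holds because forward balls generate the manifold topology.
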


We would like to point out that the hypothesis (\hyperref[eq:hypothesisH]{$\mathsf{H}$}) is automatically satisfied for a closed (not necessarily compact) submanifold of a complete \emph{Riemannian} manifold, since the notion of forward and backward completeness coincides there. On the other hand, in a forward but not backward complete Finsler manifold, the distance from a non-compact closed submanifold to a point may not even be achieved on the submanifold, see \cite[Remark 3.11]{BhoPra2023} for an example. Thus, while dealing with cut locus of a submanifold (\autoref{defn:cutLocusN}), it is natural to assume the hypothesis (\hyperref[eq:hypothesisH]{H}).

\subsubsection{\texorpdfstring{$N$}{N}-Jacobi Fields}
Given a unit-speed $N$-geodesic $\gamma : [a,b]\rightarrow M$, a variation $\Lambda : (-\epsilon, \epsilon) \times [a,b] \rightarrow M$ is said to be an \emph{$N$-geodesic variation} if for each $-\epsilon < s < \epsilon$, the curve $\Lambda_s : [a,b] \rightarrow M$ is an $N$-geodesic.

\begin{defn}\label{defn:NJacobiField}
    Let $\gamma : [a,b] \rightarrow M$ be a unit-speed $N$-geodesic. A vector field $J \in \Gamma\gamma^*TM$ is called an $N$-Jacobi field of $\gamma$ if there exists an $N$-geodesic variation $\Lambda:(-\epsilon,\epsilon)\times [a,b]\rightarrow M$ satisfying 
    \[\left.\frac{\partial}{\partial s}\right|_{s=0} \Lambda(s,t) = J(t), \quad t\in[a,b].\]
\end{defn}

We have the following characterization of an $N$-Jacobi field via the Jacobi equation.

\begin{prop}\label{prop:NJacobiEquation}\cite[Proposition 3.5]{AlexAlvesJav24}
    Given an $N$-geodesic $\gamma:[a,b]\rightarrow M$ with initial velocity $\mathbf{n} = \dot \gamma(a)$, a vector field $J \in \Gamma\gamma^*TM$ is an $N$-Jacobi field if and only if it satisfies the following initial value problem
    \begin{equation}\label{eq:NJacobiEquation}
        D^{\dot\gamma}_\gamma D^{\dot\gamma}_\gamma J - R_{\dot\gamma}(\dot\gamma, J) \dot\gamma = 0, \quad J(a)\in T_{\gamma(a)} N, \quad D^{\dot\gamma}_\gamma J(a) - A_{\mathbf{n}} \left( J(a) \right) \in \left( T_p N \right)^{\perp_{g_{\mathbf{n}}}}.
    \end{equation}
\end{prop}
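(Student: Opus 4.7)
The plan is to prove both implications by relating $N$-geodesic variations to smooth curves in the slit normal cone bundle $\hat\nu$, exploiting the identification of $T_{\mathbf{n}}\hat\nu$ afforded by \autoref{prop:tangentSpaceOfNormalConeBundle} and the Legendre transformation.

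For the forward direction, suppose $J = \partial_s \Lambda|_{s=0}$ for some $N$-geodesic variation $\Lambda$. Since each $\Lambda_s$ is in particular a geodesic, the standard argument via torsion-freeness of the Chern connection (with reference vector $\dot\gamma$) yields the Jacobi equation for $J$. The condition $J(a) \in T_{\gamma(a)} N$ is immediate from $\Lambda(s, a) \in N$. For the transversality condition, set $W(s) \coloneqq \partial_t \Lambda(s, a)$. Because each $\Lambda_s$ is an $N$-geodesic, $W(s) \in \nu_{\Lambda(s,a)}$, so $W: (-\epsilon, \epsilon) \to \hat\nu$ is a smooth curve with $W(0) = \mathbf{n}$. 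Commuting mixed covariant derivatives with reference vector $\dot\gamma$ yields $\dot J(a) = D^{\mathbf{n}}_s W|_{s=0}$. Locally extending $W$ to a section $\tilde W \in \Gamma \hat\nu$ via the Legendre transformation, the right-hand side becomes $\nabla^{\tilde W}_{J(a)} \tilde W|_{\gamma(a)}$, whose $g_{\mathbf{n}}$-tangential part is $A_{\mathbf{n}}(J(a))$ by \autoref{lemma:shapeOperatorFormula}, which gives the transversality condition.

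For the converse, given $J$ satisfying the IVP, the strategy is to exhibit an $N$-geodesic variation whose variation field has the same initial data $(J(a), \dot J(a))$ as $J$; uniqueness of Jacobi fields will then force equality. Pick a smooth curve $W: (-\epsilon, \epsilon) \to \hat\nu$ with $W(0) = \mathbf{n}$, and set $\Lambda(s, t) \coloneqq \exp(\pi(W(s)), (t - a) W(s))$. Since $W(s) \in \nu$, each $\Lambda_s$ is automatically an $N$-geodesic, and the resulting variation field $\tilde J \coloneqq \partial_s \Lambda|_{s=0}$ is a Jacobi field satisfying $\tilde J(a) = d\pi|_{\mathbf{n}}(W'(0))$ and $\dot{\tilde J}(a) = D^{\mathbf{n}}_s W|_{s=0}$, by the same commutation argument. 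Matching with $J$ reduces to producing $W'(0) \in T_{\mathbf{n}} \hat\nu$ with prescribed base projection $J(a)$ and vertical component realizing $\dot J(a)$. A short computation gives $d\mathcal{L}|_{\mathbf{n}}(\mathbf{v})(\cdot) = g_{\mathbf{n}}(\mathbf{v}, \cdot)$, so under the Legendre diffeomorphism $\hat\nu \cong TN^\circ \setminus 0$ the vertical tangent space $T_{\mathbf{n}} \hat\nu_{\gamma(a)}$ is identified with $(T_{\gamma(a)} N)^{\perp_{g_{\mathbf{n}}}}$. Since the tangential part of $D^{\mathbf{n}}_s W|_{s=0}$ is forced to be $A_{\mathbf{n}}(J(a))$ by \autoref{lemma:shapeOperatorFormula}, the prescription is solvable precisely when $\dot J(a) - A_{\mathbf{n}}(J(a)) \in (T_{\gamma(a)} N)^{\perp_{g_{\mathbf{n}}}}$, which is exactly the hypothesis.

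The main obstacle is the interchange of mixed covariant derivatives $D_s \partial_t \Lambda = D_t \partial_s \Lambda$ in the Finsler setting, where the Chern connection is not canonical but depends on a reference vector; the identity holds when the reference is taken to be $\partial_t \Lambda$ and relies on the torsion-freeness condition in \autoref{defn:chernConnection}. A secondary technicality is that $D^{\mathbf{n}}_s W|_{s=0}$ must be read as a covariant derivative along the curve $V(s) \coloneqq \pi(W(s))$ in $N$; this is handled by smoothly extending $W$ to a local section of $\hat\nu$ via the Legendre transformation, which is essential both for invoking \autoref{lemma:shapeOperatorFormula} in the forward direction and for the free prescription of derivatives in the converse.
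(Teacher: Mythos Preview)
The paper does not prove this proposition; it is quoted from \cite{Javaloyes2015} without proof. Your argument is a correct independent proof. The forward direction is standard once the commutation identity $D^{\partial_t\Lambda}_t \partial_s \Lambda = D^{\partial_t\Lambda}_s \partial_t \Lambda$ (torsion-freeness of the Chern connection with reference $\partial_t \Lambda$) is in place, and your use of \autoref{lemma:shapeOperatorFormula} to read off the $g_{\mathbf{n}}$-tangential part of $D^{\mathbf{n}}_s W|_{s=0}$ is exactly right. The converse is the more substantial half, and your key observation---that the fiber derivative of $\mathcal{L}$ at $\mathbf{n}$ is $\mathbf{w} \mapsto g_{\mathbf{n}}(\mathbf{w},\cdot)$ (the Cartan term vanishes by \autoref{eq:cartanTensorRelation}), so the vertical tangent space $T_{\mathbf{n}}\hat\nu_{\gamma(a)}$ is identified with $(T_{\gamma(a)} N)^{\perp_{g_{\mathbf{n}}}}$---is precisely what is needed to realize the prescribed $\dot J(a)$ as $D^{\mathbf{n}}_s W|_{s=0}$ for a suitable curve $W$ in $\hat\nu$.

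One caution: your opening sentence invokes \autoref{prop:tangentSpaceOfNormalConeBundle}, which appears \emph{after} the present proposition in the paper. There is no genuine circularity---that proposition relies only on the variational \autoref{defn:NJacobiField}, not on the IVP characterization you are proving---but your detailed argument does not actually use it, working instead directly with the Legendre transform. You should drop that reference from the plan to avoid the appearance of a forward dependency.
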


\begin{example}\label{example:jacobiField}
    Given an $N$-geodesic $\gamma : [a,b]\rightarrow M$, consider the vector field $J(t) = t \dot\gamma(t)$ along $\gamma$. Then, $D^{\dot\gamma}_\gamma J = \dot \gamma + t \ddot \gamma = \dot \gamma \Rightarrow D^{\dot\gamma}_\gamma D^{\dot\gamma}_\gamma J = \ddot \gamma = 0$. On the other hand, $R_{\dot \gamma}(\dot \gamma, J)\dot \gamma = t R_{\dot \gamma}(\dot \gamma, \dot \gamma) \dot \gamma = 0$ by the anti-symmetry of the tensor. Thus, $J$ satisfies \autoref{eq:JacobiEquation}. Also, $J(a) = 0 \in T_{\gamma(a)}N$ and $D^{\dot\gamma}_\gamma J(a) = \dot \gamma(a) \in \nu_{\gamma(a)}$. Since, $A_{\dot \gamma(a)}\left( J(a) \right) = 0$, it follows that $J$ satisfies \autoref{eq:NJacobiEquation}, i.e., $J$ is an $N$-Jacobi field along $\gamma$. Note that $J(b) \ne 0$.
\end{example}

As an $N$-Jacobi field is itself a Jacobi field, by \autoref{lemma:zerosOfJacobiField}, zeros of $N$-Jacobi fields are isolated as well. We shall need the following result.
\begin{lemma}\label{lemma:jacobiFieldAdjoint}
    Given an $N$-geodesic $\gamma : [0, \ell] \rightarrow M$ and two $N$-Jacobi fields $J, K$ along $\gamma$, we have $g_{\dot \gamma}(J, \dot K) = g_{\dot \gamma}(\dot J, K)$.
\end{lemma}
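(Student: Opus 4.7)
The plan is to show that $f(t) \coloneqq g_{\dot\gamma}(J, \dot K) - g_{\dot\gamma}(\dot J, K)$ is identically zero by first checking that $f'(t) \equiv 0$ and then verifying $f(0) = 0$. This mirrors the standard Riemannian argument, but we need to be careful about the Cartan tensor correction terms and the shape-operator contribution at $t=0$ coming from \autoref{prop:NJacobiEquation}.

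For the derivative, I would apply the almost-metric compatibility from \autoref{defn:covariantDerivative} with $W = \dot\gamma$. Since $\gamma$ is a geodesic, we have $D^{\dot\gamma}_\gamma \dot\gamma = \ddot\gamma = 0$, so the correction term $2C_{\dot\gamma}(\ddot\gamma, \cdot, \cdot)$ vanishes identically, reducing the product rule to
\begin{equation*}
    \frac{d}{dt} g_{\dot\gamma}(X, Y) = g_{\dot\gamma}(\dot X, Y) + g_{\dot\gamma}(X, \dot Y)
\end{equation*}
for any $X, Y \in \Gamma\gamma^*TM$. Applying this to both terms in $f(t)$, the mixed $g_{\dot\gamma}(\dot J, \dot K)$ terms cancel, leaving $f'(t) = g_{\dot\gamma}(J, \ddot K) - g_{\dot\gamma}(\ddot J, K)$. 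Using the Jacobi equation \autoref{eq:JacobiEquation} to substitute $\ddot J = R^\gamma(\dot\gamma, J)\dot\gamma$ and $\ddot K = R^\gamma(\dot\gamma, K)\dot\gamma$, the vanishing of $f'(t)$ reduces to the self-adjointness of the flag-curvature operator $X \mapsto R^\gamma(\dot\gamma, X)\dot\gamma$ with respect to $g_{\dot\gamma}$; this is the standard symmetry $g_{\dot\gamma}(R^\gamma(\dot\gamma, J)\dot\gamma, K) = g_{\dot\gamma}(J, R^\gamma(\dot\gamma, K)\dot\gamma)$ of the Finsler curvature, which I would cite from \cite{Javaloyes2014, Javaloyes2014_Correction}.

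For the initial value, set $\mathbf{n} = \dot\gamma(0)$ and use the $N$-Jacobi initial conditions to write $\dot J(0) = A_{\mathbf{n}} J(0) + \xi_J$ and $\dot K(0) = A_{\mathbf{n}} K(0) + \xi_K$ with $J(0), K(0) \in T_p N$ and $\xi_J, \xi_K \in (T_p N)^{\perp_{g_{\mathbf{n}}}}$. The $g_{\mathbf{n}}$-orthogonality kills the $\xi$-terms since $J(0), K(0)$ are tangent to $N$, leaving
\begin{equation*}
    f(0) = g_{\mathbf{n}}(J(0), A_{\mathbf{n}} K(0)) - g_{\mathbf{n}}(A_{\mathbf{n}} J(0), K(0)).
\end{equation*}
This vanishes because $A_{\mathbf{n}}$ is $g_{\mathbf{n}}$-self-adjoint on $T_p N$, which follows directly from the symmetry of $\Pi^{\mathbf{n}}$ in \autoref{eq:secondFundamentalForm} together with the defining relation \autoref{eq:shapeOperator}.

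The main subtlety, compared with the Riemannian case, is the bookkeeping of Cartan-tensor corrections; the crucial observation that makes the proof go through unchanged is that $\ddot\gamma = 0$ along a geodesic, which removes all such corrections from the product rule for $g_{\dot\gamma}$. After that, the only nontrivial inputs are two self-adjointness statements (for the flag-curvature operator and for the shape operator), both of which are standard in the Finsler literature cited in the paper.
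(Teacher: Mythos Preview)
Your proposal is correct and follows essentially the same approach as the paper's proof: both show that $f(t)=g_{\dot\gamma}(J,\dot K)-g_{\dot\gamma}(\dot J,K)$ is constant using the vanishing Cartan correction along a geodesic together with the symmetry of the curvature operator, and then verify $f(0)=0$ via the symmetry of the second fundamental form (the paper phrases this last step directly through $\Pi^{\mathbf{n}}$, while you equivalently use the $g_{\mathbf{n}}$-self-adjointness of $A_{\mathbf{n}}$).
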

\begin{proof}
    Since $\gamma$ is a geodesic, we have $D^{\dot \gamma}_\gamma \dot \gamma = 0$. Hence, for any two vector fields $X, Y \in \Gamma \gamma^*TM$ we have 
    \[\frac{d}{dt} g_{\dot \gamma}\left( X, Y \right) = g_{\dot \gamma} ( \dot X, Y ) + g_{\dot \gamma}( X, \dot Y ) + 2 \underbrace{C_{\dot \gamma}\left( D^{\dot \gamma}_\gamma \dot \gamma, X, Y  \right)}_0.\]
    Also, using \autoref{eq:cartanTensorRelation}, we get from \cite[Proposition 3.1]{Javaloyes2020} the symmetry 
    \[g_{\dot \gamma} \left( R_{\dot \gamma}\left( \dot \gamma, U \right) \dot \gamma, V \right) = g_{\dot \gamma} \left( R_{\dot \gamma} \left( \dot \gamma , V \right)\dot \gamma, U \right), \quad U, V \in \Gamma \gamma^*TM.\]
    Now, for two Jacobi fields $J, K$ along $\gamma$ we compute
	\begin{align*}
		&\;\frac{d}{dt} \left[ g_{\dot \gamma} ( J, \dot K ) - g_{\dot \gamma} ( \dot J, K )\right] \\
		=&\; g_{\dot \gamma}( J, \ddot K ) - g_{\dot \gamma}( \ddot J, K ) \\
		=&\; g_{\dot \gamma} \left( J, R_{\dot \gamma}(\dot \gamma, K) \dot \gamma \right) - g_{\dot \gamma} \left( R_{\dot \gamma}(\dot \gamma, J) \dot \gamma, K \right), \quad \text{by \autoref{eq:NJacobiEquation}} \\
		=&\; g_{\dot \gamma} \left( J, R_{\dot \gamma}(\dot \gamma, K) \dot \gamma \right) - g_{\dot \gamma} \left( R_{\dot \gamma}(\dot \gamma, K) \dot \gamma, J \right), \quad  \text{by symmetry of the curvature tensor} \\
		=&\; 0.
	\end{align*}
    Evaluating at $t = 0$, from \autoref{eq:NJacobiEquation} and  \autoref{lemma:shapeOperatorFormula} we have
    \[g_{\mathbf{v}}( J(0), \dot K(0) ) = g_{\mathbf{v}} \left( J(0), A_{\mathbf{v}}\left( K(0) \right) \right) = g_{\mathbf{v}}\left( \mathbf{v}, \Pi^{\mathbf{v}}\left( J(0), K(0) \right)\right).\]
    Since the second fundamental form $\Pi^{\mathbf{v}}$ is symmetric, we get $g_{\dot\gamma}( J(0), \dot K(0) ) - g_{\dot \gamma} ( \dot J(0), K(0) ) = 0$. Consequently, 
	$g_{\dot \gamma}(J, \dot K) = g_{\dot \gamma}(\dot J, K)$ holds for all time $t$.
\end{proof}

\subsection{Cut Locus of a Submanifold}
The \emph{cut locus of a point} $p \in M$ is the set consisting of all $q \in M$ such that there exists a minimizer from $p$ to $q$, any extension of which fails to be distance minimizing. We denote the cut locus of $p$ by $\mathrm{Cu}(p)$. Generalizing this notion to an arbitrary submanifold, we have the following definition.

\begin{defn}\label{defn:cutLocusN}
    Given a submanifold $N \subset M$, the \emph{cut locus} of $N$, denoted $\mathrm{Cu}(N)$, consists of points $q \in M$ such that there exists an $N$-segment joining $N$ to $q$, whose extension fails to be an $N$-segment. Given $\mathbf{v} \in S(\nu)$, the \emph{cut time} of $\mathbf{v}$ is defined as 
    \begin{equation}\label{eq:cutTime}
        \rho(\mathbf{v}) \coloneqq \sup \left\{ t \;\middle|\; d(N, \gamma_{\mathbf{v}}(t \mathbf{v})) = t \right\},
    \end{equation}
    where $\gamma_{\mathbf{v}}(t) = \exp^\nu(t \mathbf{v})$ is the unique $N$-geodesic with initial velocity $\mathbf{v}$.
\end{defn}

Note that we allow $\rho$ to take the value $\infty$, although if $M$ is compact then $\rho(\mathbf{v}) < \infty$ for any $\mathbf{v} \in S(\nu)$. The map $\rho: S(\nu ) \rightarrow  [0, \infty]$ is continuous \cite[Theorem 4.7]{BhoPra2023}. It follows from \cite{Alves2019} that $\rho$ is always strictly positive for any closed $N$, not necessarily compact. The \emph{tangent cut locus} of $N$ is defined as 
\begin{equation}\label{eq:tangentCutLocus}
    \widetilde{\mathrm{Cu}}(N) \coloneqq \left\{ \rho(\mathbf{v}) \mathbf{v} \;\middle|\; \mathbf{v}\in S(\nu), \; \rho(\mathbf{v}) \ne \infty \right\} \subset \nu.
\end{equation}
It follows from \autoref{defn:cutLocusN} that $\mathrm{Cu}(N) = \exp^\nu(\widetilde{\mathrm{Cu}}(N))$, as we have assumed $F$ to be forward complete.
    
In order to characterize the points of $\mathrm{Cu}(N)$, we introduce the notion of separating sets, originally called the \emph{several geodesics set} in \cite{Wol79}. In the terminology of \cite{Bishop77}, this is also known as \emph{ordinary} cut points, see \autoref{defn:separatingTangentCutPoint}.
\begin{defn}\label{defn:separtingSet}
    Given a submanifold $N \subset M$, a point $p \in M$ is said to be a \emph{separating point} of $N$ if there exist at least two distinct $N$-segments joining $N$ to $p$. The collection of all separating points of $N$ is called the \emph{separating set} of $N$, denoted $\mathrm{Se}(N)$.
\end{defn}

Under hypotheses (\hyperref[eq:hypothesisH]{$\mathsf{H}$}), it follows that $\mathrm{Cu}(N) = \overline{\mathrm{Se}(N)}$ \cite[Theorem 4.8]{BhoPra2023}. In order to describe the points in $\mathrm{Cu}(N) \setminus \mathrm{Se}(N)$, we need the notion of focal points of $N$, introduced in the next section.

\section{Focal Locus of a Submanifold}\label{sec:focalLocus}
In this section, we recall the focal locus of a submanifold, and the Morse index form of an $N$-geodesic. The main result in this section is \autoref{prop:indexLocallyConstant}, where we show that the index of $N$-geodesics is locally constant. Let us recall the definition.

\begin{defn}\label{defn:tangentFocalLocus}
    Given a submanifold $N \subset M$, a vector $\mathbf{v}\in \hat{\nu}$ is said to be a \emph{tangent focal point} of $N$ if 
    \[d_{\mathbf{v}}(\exp^\nu|_{\hat{\nu}}) : T_{\mathbf{v}} \hat{\nu} \rightarrow T_{\exp^\nu(\mathbf{v})}M\]
    is degenerate, i.e., if $\mathbf{v}$ is a critical point of $\exp^\nu|_{\hat{\nu}}$. The nullity of the map is known as the \emph{multiplicity} of the tangent focal point $\mathbf{v}$. The set of tangent focal points, called the \emph{tangent focal locus} of $N$, will be denoted as $\mathcal{F} = \mathcal{F}(N) \subset \hat{\nu}$.
\end{defn}

The \emph{focal locus} of $N$ is then defined as the image of the tangent focal locus under the $\exp^\nu$ map. In other words, the focal locus of $N$ consists of the critical values of the (restricted) normal exponential map $\mathcal{E} = \exp^\nu|_{\hat{\nu}}$. The \emph{first focal time} for $\mathbf{v} \in S(\nu)$ is defined as 
\begin{equation}\label{eq:firstFocalTime}
    \lambda(\mathbf{v}) \coloneqq \inf \left\{ t \;\middle|\; \text{$d_{t\mathbf{v}}\left( \exp^\nu|_{\hat{\nu}} \right)$ is degenerate} \right\}.
\end{equation}
The $N$-geodesic $\gamma_{\mathbf{v}}$ cannot be an $N$-segment beyond  $\lambda(\mathbf{v})$ \cite[Lemma 4.4]{BhoPra2023}, and in particular we have 
\begin{equation}\label{eq:cutTimeFocalTime}
    \rho(\mathbf{v}) \le \lambda(\mathbf{v}), \quad \forall \; \mathbf{v} \in S(\nu).
\end{equation}
Furthermore, under hypotheses (\hyperref[eq:hypothesisH]{$\mathsf{H}$}), we have a (non-exclusive) dichotomy: a point in $\mathrm{Cu}(N)$ is either a point in $\mathrm{Se}(N)$ or it is a \emph{first} focal locus along some $N$-segment \cite[Theorem 4.6]{BhoPra2023}.

Focal locus can be naturally characterized via $N$-Jacobi fields with vanishing endpoint \cite[Prop. 4.5]{Zhao2017}. For $\mathbf{v} \in \hat{\nu}$, denote the kernel 
\begin{equation}\label{eq:kernelExponential}
   \mathcal{K}_{\mathbf{v}} \coloneqq  \ker d_{\mathbf{v}}\mathcal{E} = \ker\Big( d_{\mathbf{v}}\left( \exp^\nu|_{\hat{\nu}} \right) : T_{\mathbf{v}} \hat{\nu} \rightarrow T_{\exp^\nu(\mathbf{v})} M = T_{\gamma_{\mathbf{v}}(1)} M \Big).
\end{equation}
On the other hand, consider the space of $N$-Jacobi fields
\[\mathfrak{J}_{\mathbf{v}} \coloneqq \left\{ J \;\middle|\; \text{$J$ is an $N$-Jacobi field along $\gamma_{\mathbf{v}}$}\right\},\]
and its subspace 
\[\mathfrak{K}_{\mathbf{v}} \coloneqq \left\{ J \;\middle|\; \text{$J$ is an $N$-Jacobi field along $\gamma_{\mathbf{v}}$, with $J(1) = 0$}\right\}.\]
For any $\mathbf{x} \in T_{\mathbf{v}} \hat{\nu}$, consider a curve $\alpha : (-\epsilon, \epsilon) \rightarrow \hat{\nu}$ such that $\alpha(0) = \mathbf{v}$ and $\dot\alpha(0) = \mathbf{x}$. Since $\nu$ is a cone bundle, we then have a family of curves 
\[\Lambda(s, t) \coloneqq \exp^\nu\left( t \alpha(s) \right), \quad 0 \le t \le 1, \; -\epsilon < s < \epsilon,\]
which is clearly an $N$-geodesic variation. In particular, 
\[J_{\mathbf{x}}(t) = \left.\frac{\partial}{\partial s}\right|_{s=0} \Lambda(s, t), \quad 0 \le t \le 1,\]
is an $N$-Jacobi field along the $N$-geodesic $\gamma_{\mathbf{v}}(t) = \exp^\nu(t\mathbf{v})$. We thus have the map
\begin{equation}\label{eq:canonicalIsoJacobi}
	\begin{aligned}
        \Phi : T_{\mathbf{v}} \hat{\nu} &\rightarrow \mathfrak{J}_{\mathbf{v}}\\
        \mathbf{x} &\mapsto J_{\mathbf{x}}.
    \end{aligned}
\end{equation}

\begin{prop}\label{prop:tangentSpaceOfNormalConeBundle}
	$\Phi$ is a well-defined linear isomorphism $T_{\mathbf{v}} \hat{\nu} \cong \mathfrak{J}_{\mathbf{v}}$, which restricts to an isomorphism $\mathcal{K}_{\mathbf{v}} \cong \mathfrak{K}_{\mathbf{v}}$.
\end{prop}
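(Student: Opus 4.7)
The plan is to establish well-definedness and linearity of $\Phi$ through a single compact formula, then use a dimension count together with injectivity to deduce bijectivity, and finally read off the kernel-to-kernel restriction from the same formula evaluated at $t = 1$.

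\emph{Well-definedness and linearity.} For each fixed $t \in [0, 1]$, consider the smooth map $F_t : \hat{\nu} \to M$ given by $F_t(\mathbf{w}) = \exp^\nu(t \mathbf{w}) = \gamma_{\mathbf{w}}(t)$. Then $\Lambda(s, t) = F_t(\alpha(s))$, so the chain rule gives
\[
J_{\mathbf{x}}(t) \;=\; \left.\frac{d}{ds}\right|_{s=0} F_t(\alpha(s)) \;=\; d_{\mathbf{v}} F_t(\mathbf{x}).
\]
This formula depends only on $\mathbf{x}$, not on the chosen curve $\alpha$, and is linear in $\mathbf{x}$. Hence $\Phi$ is a well-defined linear map into $\mathfrak{J}_{\mathbf{v}}$.

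\emph{Bijectivity.} The ambient manifold $\hat{\nu}$ has dimension $\dim N + \codim N = \dim M$, so $\dim T_{\mathbf{v}} \hat{\nu} = \dim M$. On the other hand, \autoref{prop:NJacobiEquation} shows that an $N$-Jacobi field along $\gamma_{\mathbf{v}}$ is uniquely determined by $J(0) \in T_{\pi(\mathbf{v})} N$ together with the normal component of $\dot{J}(0)$ in $(T_{\pi(\mathbf{v})} N)^{\perp_{g_{\mathbf{v}}}}$, whence $\dim \mathfrak{J}_{\mathbf{v}} = \dim N + \codim N = \dim M$ as well. It therefore suffices to prove $\Phi$ is injective. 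Suppose $J_{\mathbf{x}} \equiv 0$. Evaluating at $t = 0$ and using $F_0 = \pi|_{\hat{\nu}}$ yields $d_{\mathbf{v}} \pi(\mathbf{x}) = 0$, so $\mathbf{x}$ is vertical, i.e., tangent to the fiber $\hat{\nu}_{\pi(\mathbf{v})} \subset T_{\pi(\mathbf{v})} M$. For such $\mathbf{x}$ we may take $\alpha$ entirely inside this fiber, so that $\Lambda(s, t) = \exp_{\pi(\mathbf{v})}(t \alpha(s))$ becomes a pointed geodesic variation. A direct calculation with the Chern connection along $\gamma_{\mathbf{v}}$, using the reference vector $\dot{\gamma}_{\mathbf{v}}$, then identifies $\dot{J}_{\mathbf{x}}(0)$ with $\mathbf{x}$ under the canonical isomorphism $T_{\mathbf{v}}\bigl(T_{\pi(\mathbf{v})}M\bigr) \cong T_{\pi(\mathbf{v})}M$, forcing $\mathbf{x} = 0$.

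\emph{Restriction to kernels.} Taking $t = 1$ in $J_{\mathbf{x}}(t) = d_{\mathbf{v}} F_t(\mathbf{x})$ and noting $F_1 = \mathcal{E}$ gives $d_{\mathbf{v}} \mathcal{E}(\mathbf{x}) = J_{\mathbf{x}}(1)$. Therefore $\mathbf{x} \in \mathcal{K}_{\mathbf{v}}$ if and only if $J_{\mathbf{x}}(1) = 0$, i.e., $\Phi(\mathbf{x}) \in \mathfrak{K}_{\mathbf{v}}$; since $\Phi$ is a linear isomorphism, it restricts to the claimed isomorphism $\mathcal{K}_{\mathbf{v}} \cong \mathfrak{K}_{\mathbf{v}}$. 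The main obstacle is the injectivity step — specifically the identification $\dot{J}_{\mathbf{x}}(0) = \mathbf{x}$ for vertical $\mathbf{x}$ — which requires careful bookkeeping of the Chern connection's reference-vector dependence and is the only place where genuinely Finslerian (as opposed to Riemannian) considerations enter.
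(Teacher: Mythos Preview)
Your argument is correct and takes a genuinely different route from the paper. The paper establishes well-definedness by computing the initial data $J_{\mathbf{x}}(0) = d\pi(\mathbf{x})$ and $\dot J_{\mathbf{x}}(0) = D^{\dot\gamma}_c\alpha(0)$ explicitly (invoking \cite[Eq.~(3)]{Javaloyes2014} to see the latter depends only on $\mathbf{x} = \dot\alpha(0)$), checks linearity on initial data, and then proves \emph{surjectivity} directly: any $N$-Jacobi field comes from an $N$-geodesic variation $\Xi(s,t) = \exp^\nu(t\beta(s))$, so $J = \Phi(\dot\beta(0))$. Your one-line formula $J_{\mathbf{x}}(t) = d_{\mathbf{v}} F_t(\mathbf{x})$ dispatches well-definedness and linearity more cleanly, and the dimension count $\dim\mathfrak{J}_{\mathbf{v}} = \dim N + \codim N$ (read off from the constraints in \autoref{prop:NJacobiEquation}) is a legitimate substitute for the paper's surjectivity argument. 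Both routes handle the kernel restriction identically via $J_{\mathbf{x}}(1) = d_{\mathbf{v}}\mathcal{E}(\mathbf{x})$.

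One correction: what you flag as ``the main obstacle'' is in fact routine and \emph{not} specifically Finslerian. Once $\alpha$ lies in the fiber $\hat\nu_p$, the base curve $c(s) = \Lambda(s,0) \equiv p$ is constant, and the covariant derivative of a vector field along a constant curve reduces to the ordinary derivative in the vector space $T_p M$, since every connection term carries a factor $\dot c = 0$ --- the Chern reference vector never enters. Thus $\dot J_{\mathbf{x}}(0) = D^{\dot\gamma}_c\alpha(0) = \dot\alpha(0) = \mathbf{x}$ immediately, with no bookkeeping required. You should state this computation rather than defer it, and drop the remark about Finslerian subtleties.
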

\begin{proof}
	Let us denote $c(s) = \Lambda(s, 0) = \pi \circ \alpha(s)$, where $\pi : \nu \rightarrow N$ is the projection. Clearly, $c$ is a curve in $N$ passing through $p = \pi(\mathbf{v})$. Observe that $\partial_t|_{t = 0} \Lambda(s, t) = \alpha(s)$, as $\Lambda(s, \_)$ is an $N$-geodesic with initial velocity $\alpha(s)$. We compute the following.
	\begin{align*}
		J_{\mathbf{x}}(0) &= \left. \frac{\partial}{\partial s} \right|_{s=0} \exp^\nu (0 \cdot \alpha(s)) = \left. \frac{\partial}{\partial s} \right|_{s=0} c(s) = \dot c(0) = d\pi(\dot\alpha(0)) = d\pi(\mathbf{x}).\\[1em]
		D^{\dot \gamma}_\gamma J_{\mathbf{x}} (0) &= D^{\dot \gamma}_\gamma |_{t = 0} \partial_s \Lambda(s, t) = D^{\dot \gamma}_{c}|_{s = 0} \partial_t|_{t = 0} \Lambda(s, t) = D^{\dot \gamma}_c \alpha (0). 
	\end{align*}
	Now, $D^{\dot \gamma}_c \alpha (0)$ is determined by the first jet of $\alpha$ at $0$, i.e., by $\mathbf{x} = \dot\alpha(0)$ (see for example, \cite[Eq (3)]{Javaloyes2014}). Hence, $J_{\mathbf{x}}$ is uniquely determined by $\mathbf{x}$, and in particular, $\Phi$ is a well-defined map. Note that for $\mathbf{x} \in \mathcal{K}_{\mathbf{v}} = \ker d_{\mathbf{v}} \mathcal{E}$, we have 
	\[J_{\mathbf{x}}(1) = \left. \frac{\partial}{\partial s} \right|_{s=0} \exp^\nu(\alpha (s)) = d_{\mathbf{v}} \mathcal{E} (\dot \alpha(0)) = d_{\mathbf{v}} \mathcal{E}(\mathbf{x}) = 0.\]
	Consequently, $\Phi$ restricts to a map $\mathcal{K}_{\mathbf{v}} \rightarrow \mathfrak{K}_{\mathbf{v}}$.

	Let us show that $\Phi$ is linear. Clearly, 
	\[J_{a\mathbf{x} + b\mathbf{y}}(0) = d\pi(a\mathbf{x} + b\mathbf{y}) = a \, d\pi(\mathbf{x}) + b \, d\pi(\mathbf{y}) = a J_{\mathbf{x}}(0) + b J_{\mathbf{y}}(0).\]
	Similarly, it follows from \cite[Eq (3)]{Javaloyes2014} that $D^{\dot\gamma}_\gamma J_{a \mathbf{x} + b \mathbf{y}}(0) = a D^{\dot\gamma}_\gamma J_{\mathbf{x}}(0) + b D^{\dot\gamma}_\gamma J_{\mathbf{y}}(0)$. But then, from the uniqueness of Jacobi fields, it follows that $J_{a \mathbf{x} + b \mathbf{y}} = a J_{\mathbf{x}} + b J_{\mathbf{y}}$. Consequently, $\Phi$ is linear.

	If $\Phi(\mathbf{x}) = 0$, i.e., if $J_{\mathbf{x}}$ is the $0$ vector field, we have $J_{\mathbf{x}}(0) = 0$ and $\dot J_{\mathbf{x}}(0) = 0$. This implies $\mathbf{x}=0$, proving the injectivity of $\Phi$. On the other hand, suppose $J \in \mathfrak{J}_{\mathbf{v}}$ is an $N$-Jacobi field. Then, $J$ is given by an $N$-geodesic variation, say, $\Xi : (-\epsilon, \epsilon) \times [0, 1] \rightarrow M$. We have a curve $\beta$ in $\hat{\nu}$ such that $\Xi(s, t) = \exp^\nu(t\beta(s))$. The above computation then shows that $J = J_{\mathbf{x}}$, where $\mathbf{x} = \dot \beta(0)$. If $J(1) = 0$, we have $\mathbf{x} \in \mathcal{K}_{\mathbf{v}}$, since $d_{\mathbf{v}}\mathcal{E}(\mathbf{x}) = J_{\mathbf{x}}(1) = 0$. Thus, $\Phi$ is a linear isomorphism, restricting to an isomorphism $\mathcal{K}_{\mathbf{v}} \cong \mathfrak{K}_{\mathbf{v}}$, concluding the proof.
\end{proof}

\begin{remark}\label{rmk:maximumFocalMultiplicity}
    It is immediate that given a tangent focal point $\mathbf{v} \in \hat{\nu}$, the focal multiplicity $\dim \mathcal{K}_{\mathbf{v}} \le \dim T_{\mathbf{v}} \hat{\nu} = \dim M$. On the other hand, from \autoref{example:jacobiField}, we have a Jacobi field $J \in \mathfrak{J}_{\mathbf{v}} \setminus \mathfrak{K}_{\mathbf{v}}$. Thus, it follows from \autoref{prop:tangentSpaceOfNormalConeBundle} that $\dim \mathcal{K}_{\mathbf{v}} = \dim \mathfrak{K}_{\mathbf{v}} \lneq \dim \mathfrak{J}_{\mathbf{v}} = \dim M$.
\end{remark}

Using \autoref{prop:tangentSpaceOfNormalConeBundle}, we get the following useful result, which is analogous to \cite[Lemma 4.9]{Sak96} in the Riemannian context.
\begin{lemma}\label{lemma:jacobiFrameAtFocalPoint}
    Suppose for $\mathbf{v} \in S(\nu)$, and let $k = \dim \mathcal{K}_{\ell \mathbf{v}}$ for some $\ell > 0$. Then, there exists a frame of $N$-Jacobi fields $\left\{ J_1,\dots, J_n \right\}$ along $\gamma_{\mathbf{v}}$, where $n = \dim M$, such that the following holds.
    \begin{enumerate}[label=(\arabic*)]
        \item \label{lemma:jacobiFrameAtFocalPoint:1} The subspaces $\mathrm{Span}\langle \dot J_1(\ell), \dots ,\dot J_k(\ell) \rangle$ and $\mathrm{Span}\langle J_{k+1}(\ell), \dots , J_n(\ell) \rangle$ are $g_{\ell \mathbf{v}}$ orthogonal.
        \item \label{lemma:jacobiFrameAtFocalPoint:2} $T_{\gamma_{\mathbf{v}}(\ell)} M = \mathrm{Span}\langle \dot J_1(\ell), \dots, \dot J_k(\ell), J_{k+1}(\ell), \dots , J_n(\ell) \rangle$.
        \item \label{lemma:jacobiFrameAtFocalPoint:3} $\left\{ J_i(t) \right\}_{i=1}^n$ is a basis of $T_{\gamma_{\mathbf{v}}(t)} M$ for a deleted neighborhood $0 < |t - \ell| < \epsilon$.
    \end{enumerate}
    In particular, focal points of $N$ along $\gamma_{\mathbf{v}}$ are discrete.
\end{lemma}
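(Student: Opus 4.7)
The plan is to use the canonical isomorphism $\Phi$ of \autoref{prop:tangentSpaceOfNormalConeBundle}, which, after reparameterizing $\gamma_{\ell\mathbf{v}}$ to $\gamma_{\mathbf{v}}$, identifies $\mathcal{K}_{\ell\mathbf{v}}$ with the $k$-dimensional subspace $\mathfrak{K} \subset \mathfrak{J}_{\mathbf{v}}$ of $N$-Jacobi fields along $\gamma_{\mathbf{v}}$ vanishing at $t = \ell$, where $\dim \mathfrak{J}_{\mathbf{v}} = n$. I would pick any basis $J_1, \dots, J_k$ of $\mathfrak{K}$ and then extend it to a basis $\{J_1, \dots, J_n\}$ of $\mathfrak{J}_{\mathbf{v}}$ with $\{J_i(\ell)\}_{i > k}$ chosen to be a basis of the $g_{\ell\mathbf{v}}$-orthogonal complement $V^{\perp_{g_{\ell\mathbf{v}}}}$ of $V := \mathrm{Span}\{\dot J_1(\ell), \dots, \dot J_k(\ell)\}$.

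The first step is to note that $\dim V = k$: any relation $\sum c_i \dot J_i(\ell) = 0$, combined with $\sum c_i J_i(\ell) = 0$, would force $\sum c_i J_i \equiv 0$ by uniqueness of Jacobi fields, so all $c_i = 0$. The second, and main, step is to identify the image of the evaluation map $\mathrm{ev}_\ell : \mathfrak{J}_{\mathbf{v}} \to T_{\gamma_{\mathbf{v}}(\ell)} M$ with $V^{\perp_{g_{\ell\mathbf{v}}}}$. Applying \autoref{lemma:jacobiFieldAdjoint} to each $J_i$ (with $i \le k$, so $J_i(\ell) = 0$) against any $K \in \mathfrak{J}_{\mathbf{v}}$ gives $g_{\ell\mathbf{v}}(\dot J_i(\ell), K(\ell)) = g_{\ell\mathbf{v}}(J_i(\ell), \dot K(\ell)) = 0$, so $\mathrm{Im}(\mathrm{ev}_\ell) \subseteq V^{\perp_{g_{\ell\mathbf{v}}}}$; a dimension count ($\dim \ker \mathrm{ev}_\ell = k$ against $\dim V^{\perp_{g_{\ell\mathbf{v}}}} = n - k$) yields equality. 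Choosing $J_{k+1}, \dots, J_n \in \mathfrak{J}_{\mathbf{v}}$ so that $\{J_i(\ell)\}_{i>k}$ is a basis of $V^{\perp_{g_{\ell\mathbf{v}}}}$ then delivers (1) and (2) simultaneously.

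For (3), I would fix a parallel frame along $\gamma_{\mathbf{v}}$ near $\ell$ and study the determinant $\Delta(t)$ of the coefficient matrix of $J_1(t), \dots, J_n(t)$ in this frame. The Taylor expansion $J_i(t) = (t - \ell)\dot J_i(\ell) + O((t-\ell)^2)$ for $i \le k$ lets me pull out a factor of $(t - \ell)$ from each of the first $k$ columns, giving $\Delta(t) = (t-\ell)^k \widetilde\Delta(t)$, where $\widetilde\Delta(\ell)$ is the determinant of the basis $\{\dot J_1(\ell), \dots, \dot J_k(\ell), J_{k+1}(\ell), \dots, J_n(\ell)\}$ guaranteed by (2). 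By continuity, $\widetilde\Delta(t) \ne 0$, and hence $\Delta(t) \ne 0$, on some deleted neighborhood $0 < |t - \ell| < \epsilon$. The claim that focal times of $N$ along $\gamma_{\mathbf{v}}$ are discrete then follows immediately: a focal time $t_0$ in this deleted neighborhood would produce a nontrivial $J = \sum c_i J_i \in \mathfrak{J}_{\mathbf{v}}$ with $J(t_0) = 0$, contradicting linear independence of $\{J_i(t_0)\}$.

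The main obstacle I anticipate is the orthogonality identification in the second paragraph: pinning down $\mathrm{Im}(\mathrm{ev}_\ell) = V^{\perp_{g_{\ell\mathbf{v}}}}$ is precisely what simultaneously enforces the $g_{\ell\mathbf{v}}$-orthogonality in (1) and the spanning property in (2). \autoref{lemma:jacobiFieldAdjoint} is the Finsler analogue of the self-adjoint structure of the Jacobi operator that makes this argument go through, and its correct invocation — with the direction-dependent fundamental tensor $g_{\ell\mathbf{v}}$ replacing a Riemannian inner product — is the nontrivial ingredient that distinguishes this proof from the Riemannian case.
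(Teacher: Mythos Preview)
Your proposal is correct and follows essentially the same route as the paper: both use $\Phi$ to produce the frame, invoke \autoref{lemma:jacobiFieldAdjoint} for the orthogonality $g_{\ell\mathbf{v}}(\dot J_i(\ell), K(\ell)) = 0$, and then count dimensions to get (1)--(2). For (3) the paper defines auxiliary fields $Y_i(t) = J_i(t)/(t-\ell)$ for $i \le k$ and argues by continuity that $\{Y_i(t)\}$ stays a basis near $\ell$, which is your determinant factorization $\Delta(t) = (t-\ell)^k\widetilde\Delta(t)$ in slightly different clothing; the only organizational difference is that you identify $\mathrm{Im}(\mathrm{ev}_\ell) = V^{\perp_{g_{\ell\mathbf{v}}}}$ first and then choose $J_{k+1},\dots,J_n$ accordingly, whereas the paper extends the basis of $\mathcal{K}_{\ell\mathbf{v}}$ arbitrarily and verifies the orthogonality afterward.
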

\begin{proof}
    Fix a basis $\mathcal{K}_{\ell\mathbf{v}} = \ker d_{\ell \mathbf{v}} \mathcal{E} = \mathrm{Span}\langle \mathbf{x}_1,\dots ,\mathbf{x}_k \rangle$, and extend it to a basis $T_{\ell \mathbf{v}} \hat{\nu} = \mathrm{Span}\langle \mathbf{x}_1,\dots ,\mathbf{x}_n \rangle$. Consider the $N$-Jacobi fields $J_i \coloneqq J_{\mathbf{x}_i} = \Phi(\mathbf{x}_i)$ along $\gamma_{\mathbf{v}}$. By \autoref{prop:tangentSpaceOfNormalConeBundle}, $\left\{ J_i \right\}$ forms a basis of $\mathfrak{J}_{\mathbf{v}}$, and we have $J_1(\ell) = \dots = J_k(\ell) = 0$. Suppose, if possible, $\sum_{i=1}^k a^i \dot J_i(\ell) = 0$ for some scalars $a^i$. Consider the $N$-Jacobi field $J = \sum_{i=1}^k a^i J_i$. Then, $\dot J(\ell) = \sum_{i=1}^k a^i \dot J_i(\ell) = 0$. By \autoref{lemma:zerosOfJacobiField}, we have $J = 0$, which forces $a^1 = \dots = a^k = 0$. Thus, $\left\{ \dot J_1(\ell), \dots , \dot J_k(\ell) \right\}$ are linearly independent. Next, suppose $\sum_{i = k+1}^n b^i J_i (\ell) = 0$ for some scalars $b^i$. Consider the $N$-Jacobi field $\bar{J} = \sum_{i > k} b^i J_i$. As $\Phi$ in \autoref{eq:canonicalIsoJacobi} is a linear isomorphism, we have $\bar{J} = \Phi(\mathbf{y})$, where $\mathbf{y} = \sum_{i > k} b^i \mathbf{x}_i$. Now, $d_{\ell \mathbf{v}}\mathcal{E}(\mathbf{y}) = \bar{J}(\ell) = \sum_{i > k} b^i J_i(\ell) = 0$ implies $\mathbf{y} \in \mathcal{K}_{\ell \mathbf{v}}$. By our choice of $\mathbf{x}_i$, we must have $b^{k+1} = \dots = b^n = 0$. Thus, $\left\{ J^{k+1}(\ell), \dots , J^n(\ell) \right\}$ are linearly independent as well. Lastly, for some $J = \sum_{i=1}^k a^i J_i$ and some $\bar{J} = \sum_{i > k} b^i J_i$, we have from \autoref{lemma:jacobiFieldAdjoint}
    \[g_{\ell \mathbf{v}}\left( \dot J(\ell), \bar{J}(\ell) \right) = g_{\ell \mathbf{v}}\left( J(\ell), \dot{\bar{J}}(\ell) \right) = 0.\]
    Consequently, $\mathrm{Span}\langle \dot J_1(\ell),\dots, \dot J_k(\ell) \rangle$ and $\mathrm{Span}\langle J_{k+1}(\ell), \dots, J_n(\ell) \rangle$ are $g_{\ell \mathbf{v}}$ orthogonal, which proves \autoref{lemma:jacobiFrameAtFocalPoint:1}. A simple dimension counting then shows that $T_{\gamma_{\mathbf{v}}(\ell)} M = \mathrm{Span}\langle \dot J_1(\ell), \dots, \dot J_k(\ell), J_{k+1}(\ell), \dots , \dot J_n (\ell) \rangle$, proving \autoref{lemma:jacobiFrameAtFocalPoint:2}.
    

    Now, for $1 \le i \le k$, we have $J_i(\ell) = 0$ and $\dot J_i(\ell) \ne 0$, i.e., $J_i(t)$ has a $0$ of order $1$ at $t = \ell$. It follows from \autoref{lemma:jacobiFieldMultipleOfNonZeroField} that we can uniquely write $J_i(t) = (t - \ell) Y_i(t)$, where $Y_i(t)$ is a smooth vector field along $\gamma_{\mathbf{v}}$, nonvanishing near $\ell$. Observe that, $\dot J_i(\ell) = Y_i(\ell) = \lim_{t \rightarrow \ell} \frac{J_i(t)}{t - \ell}$. Thus, near $\ell$ we have the smooth nonvanishing vector fields
    \begin{equation}\label{eq:nonZeroMultipleOfJacobi}
        Y_i(t) = 
    \begin{cases}
        \frac{J_i(t)}{t - \ell}, \quad t \ne \ell \\
        \dot J_i(\ell), \quad t = \ell.
    \end{cases}
    \end{equation}
    Set $Y_i = J_i$ for $k < i \le n$. Clearly, $\mathrm{Span}\langle Y_i(t) \rangle = \mathrm{Span}\langle J_i(t) \rangle$ for $t$ in a deleted neighborhood of $\ell$. Now, $\left\{ Y_i(\ell) \right\}$ are linearly independent by \autoref{lemma:jacobiFrameAtFocalPoint:2}, and hence, in some neighborhood $|t - \ell| < \epsilon$, we have $\left\{ Y_i(t) \right\}$ are linearly independent as well. But then for the deleted neighborhood $0 < |t - \ell| < \epsilon$, we get $\left\{ J_i(t) \right\}$ are linearly independent, which proves \autoref{lemma:jacobiFrameAtFocalPoint:3}.
    
    Lastly, suppose $t_0 \mathbf{v}$ is a focal point of $N$ along $\gamma_{\mathbf{v}}$, for some $0 < |t_0 - \ell| < \epsilon$, where $\epsilon > 0$ as before. Then there exists some non-vanishing $N$-Jacobi field $J$ along $\gamma_{\mathbf{v}}$ with $J(t_0) = 0$. Write $J = \sum c^i J_i$ for some scalars $c^i$. We have $\sum c^i J_i(t_0) = J(t_0) = 0$, which implies $c^i = 0$, as $t_0 \ne \ell$. This is a contradiction, and hence, there are no focal points of $N$ along $\gamma_{\mathbf{v}}$ for $0 < |t - \ell| < \epsilon$. Thus, focal points of $N$ along $\gamma_{\mathbf{v}}$ are discrete, which concludes the proof.
\end{proof}

\subsection{Index Form and the Morse Index Theorem}
From the second variation formula of the restricted energy functional $E|_{\mathcal{P}_N}$, we get the index form.

\begin{defn}\cite{Javaloyes2014, Zhao2017} \label{defn:NIndexForm}
    Let $\gamma : [a,b] \rightarrow M$ be a unit-speed $N$-geodesic. Then, the \emph{index form} $\mathcal{I}_\gamma$ is defined for $X, Y \in T_\gamma \mathcal{P}_N$ as 
    \begin{equation}\label{eq:indexForm}
        \mathcal{I}_\gamma(X,Y) \coloneqq \int_a^b \left[ g_{\dot\gamma}(D^{\dot\gamma}_\gamma X, D^{\dot\gamma}_\gamma Y) - g_{\dot\gamma}\left( R_{\dot\gamma}(\dot\gamma, X) Y, \dot \gamma \right) \right] - g_{\dot\gamma(a)} \left( \pi^\perp \nabla^{\dot\gamma(a)}_X Y, \dot\gamma(a) \right),
    \end{equation}
    where $\pi^\perp$ denotes projection in the canonical splitting $T_{\gamma(a)}M = T_{\gamma(a)}N \oplus \left( T_{\gamma(a)}N \right)^{\perp_{g_{\dot\gamma(a)}}}$ on to the second component.
\end{defn}

It follows that $\mathcal{I}_\gamma$ is a symmetric $2$-form, and the kernel of the index form $\mathcal{I}_\gamma$ consists of precisely the $N$-Jacobi fields $J$ along $\gamma$, with $J(b) = 0$. In particular, $\gamma$ is a \emph{nondegenerate} critical point (i.e., an $N$-geodesic) of the energy functional, precisely when there are no $N$-Jacobi fields along $\gamma$ vanishing at the endpoint. In other words, $\gamma$ is nondegenerate if and only if $\gamma(b)$ is not a focal point of $N$ along $\gamma$. 

\begin{defn}\label{defn:index}
    The \emph{index} of an $N$-geodesic $\gamma$ is defined as 
    \[\mathrm{Ind}(\gamma) \coloneqq \max \left\{ \dim K \;\middle|\; \text{$\mathcal{I} _\gamma$ is negative definite restricted to some $K \subset T_\gamma\mathcal{P}_N$} \right\}.\]    
\end{defn}

By the Morse index theorem for Finsler submanifolds \cite{Peter06,Lu24}, it follows that for the unit-speed $N$-geodesic $\gamma : [0, T] \rightarrow M$ given as $\gamma(t) = \exp^\nu(t \mathbf{v})$, such that $\gamma(T)$ is not a focal point of $N$ along $\gamma$, one has 
\begin{equation}\label{eq:morseIndexFormula}
    \mathrm{Ind}(\gamma) = \sum_{0 < t < T} \dim \mathcal{K}_{t\mathbf{v}} = \sum_{0 < t < T} \dim \ker \left( d_{t\mathbf{v}} \left( \exp^\nu|_{\hat{\nu}} \right) \right) < \infty.
\end{equation}
In other words, $\mathrm{Ind}(\gamma)$ equals the number of focal points of $N$ along $\gamma$, counted with multiplicities, excluding the endpoint. Note that by \autoref{lemma:jacobiFrameAtFocalPoint}, only finitely many non-zero terms can appear in the sum. For a fixed unit vector $\mathbf{v} \in S(\nu)$, based on the index of $\gamma_{\mathbf{v}}$, we can now define the following.

\begin{defn}\label{defn:focalTime}
    Given $N \subset M$ and $\mathbf{v} \in S(\nu)$, the \emph{$k^{\text{th}}$ focal time} in the direction of $\mathbf{v}$ is defined as 
    \[\lambda_k(\mathbf{v}) \coloneqq \sup \left\{ t \;\middle|\; \mathrm{Ind}(\gamma_{\mathbf{v}}|_{[0, t]}) \le k - 1 \right\},\]
    where $\gamma_{\mathbf{v}} : [0, \infty) \rightarrow M$ is the unit speed $N$-geodesic given by $\gamma_{\mathbf{v}}(t) = \exp^\nu(t \mathbf{v})$. The \emph{$k^{\text{th}}$ focal locus} of $N$ is defined as the set $\left\{ \exp^\nu\left( \lambda_k(\mathbf{v}) \mathbf{v} \right) \mid \mathbf{v} \in S(\nu), \lambda_k(\mathbf{v}) \ne \infty \right\}$.
\end{defn}

Clearly, $\lambda_1(\mathbf{v}) = \lambda(\mathbf{v})$ as defined in \autoref{eq:firstFocalTime}. In general, we have $0 < \lambda_1(\mathbf{v}) \le \lambda_2(\mathbf{v}) \le \dots $. If $\gamma_{\mathbf{v}}(T)$ is not a focal point of $N$ along $\gamma_{\mathbf{v}}$, we can immediately see 
\begin{equation}\label{eq:indexFormulaNoFocal}
    \mathrm{Ind}(\gamma_{\mathbf{v}}|_{[0, T]}) = \sum \dim \mathcal{K}_{\lambda_k(\mathbf{v})\mathbf{v}},
\end{equation}
where the sum runs over the finite set $\left\{ \lambda_k(\mathbf{v}) < T \right\}$. We now prove a useful result about the index being locally constant near a non-focal point, analogous to \cite[Prop. 1.2]{Itoh2001}.

\begin{prop}\label{prop:indexLocallyConstant}
    Suppose, for $\mathbf{v}_0 \in S(\nu)$, we have $\gamma_{\mathbf{v}_0}(T)$ is not a focal point of $N$ along $\gamma_{\mathbf{v}_0}$. Then, there exists a neighborhood $\mathbf{v}_0 \in U \subset S(\nu)$ such that, 
    \[\mathrm{Ind}(\gamma_{\mathbf{v}}|_{[0, T]}) = \mathrm{Ind}(\gamma_{\mathbf{v}_0}|_{[0, T]}), \qquad \text{for all } \mathbf{v} \in U.\]
\end{prop}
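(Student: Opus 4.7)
The plan is to reduce the (infinite-dimensional) index form $\mathcal{I}_{\gamma_{\mathbf{v}}}$ to a continuously varying family of finite-dimensional symmetric bilinear forms via a broken Jacobi-field construction, and then invoke the fact that the index of a non-degenerate symmetric bilinear form is locally constant under continuous perturbation.

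First I would use \autoref{lemma:jacobiFrameAtFocalPoint} to note that focal times along $\gamma_{\mathbf{v}_0}$ in $(0, T)$ are isolated, hence finitely many, say $0 < t_1 < \dots < t_m < T$, and by \autoref{eq:morseIndexFormula} we have $\mathrm{Ind}(\gamma_{\mathbf{v}_0}|_{[0, T]}) = \sum_i \dim \mathcal{K}_{t_i \mathbf{v}_0}$. Next I would choose a partition $0 = s_0 < s_1 < \dots < s_\ell = T$ such that $s_1 < t_1$ (hence no $N$-focal point occurs in $(0, s_1]$) and, for every $j \geq 1$, the restricted geodesic $\gamma_{\mathbf{v}_0}|_{[s_{j-1}, s_j]}$ admits no conjugate point of $\gamma_{\mathbf{v}_0}(s_{j-1})$ in $(s_{j-1}, s_j]$. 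Such a partition exists because the first conjugate time is bounded below along any compact piece of a geodesic. Both partition conditions are open in $\mathbf{v}$ by smooth dependence of the Jacobi ODE on initial data, so they persist for $\mathbf{v}$ in some neighborhood $U$ of $\mathbf{v}_0$ in $S(\nu)$.

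For each $\mathbf{v} \in U$, let $V_{\mathbf{v}} \subset T_{\gamma_{\mathbf{v}}}\mathcal{P}_N$ denote the subspace of continuous piecewise vector fields $W$ along $\gamma_{\mathbf{v}}|_{[0, T]}$ that are $N$-Jacobi on $[0, s_1]$, Jacobi on each $[s_{j-1}, s_j]$ for $j \geq 2$, and satisfy $W(T) = 0$. By the standard broken Jacobi-field reduction used in the proof of the Morse index theorem (cf.\ \cite{Peter06, Lu24}), $\mathcal{I}_{\gamma_{\mathbf{v}}}$ on $T_{\gamma_{\mathbf{v}}}\mathcal{P}_N$ and its restriction to $V_{\mathbf{v}}$ share the same index and nullity: the algebraic complement of $V_{\mathbf{v}}$, consisting of fields vanishing at every $s_j$, decomposes the index form as a sum of fixed-endpoint pieces on each subinterval, each of which is positive definite precisely by the no-conjugate-points condition on the partition. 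Smooth dependence of the Jacobi ODE on parameters allows us to identify the family $\{V_{\mathbf{v}}\}_{\mathbf{v} \in U}$ with a trivial bundle (for instance, by transporting a basis at $\mathbf{v}_0$ along a smoothly chosen section), so that $\mathcal{I}_{\gamma_{\mathbf{v}}}|_{V_{\mathbf{v}}}$ corresponds to a continuously varying symmetric bilinear form $Q_{\mathbf{v}}$ on a fixed finite-dimensional vector space.

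At $\mathbf{v} = \mathbf{v}_0$, the nullity of $Q_{\mathbf{v}_0}$ equals $\dim \mathfrak{K}_{\mathbf{v}_0} = \dim \mathcal{K}_{T \mathbf{v}_0}$ by \autoref{prop:tangentSpaceOfNormalConeBundle}, which is zero by hypothesis. Since the index of a non-degenerate symmetric bilinear form is locally constant under continuous perturbation, shrinking $U$ if necessary gives $\mathrm{Ind}(\gamma_{\mathbf{v}}|_{[0, T]}) = \mathrm{Ind}(Q_{\mathbf{v}}) = \mathrm{Ind}(Q_{\mathbf{v}_0}) = \mathrm{Ind}(\gamma_{\mathbf{v}_0}|_{[0, T]})$ for all $\mathbf{v} \in U$. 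The main obstacle will be carrying out the broken Jacobi reduction together with the trivialization of $\{V_{\mathbf{v}}\}$ uniformly in $\mathbf{v}$; both ultimately rest on continuous dependence of solutions of the linear Jacobi equation on initial data and parameters, combined with the openness of the partition conditions noted above.
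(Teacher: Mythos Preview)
Your argument is correct but takes a genuinely different route from the paper. You reduce to a finite-dimensional problem via the classical broken Jacobi-field construction: choose a partition with no focal/conjugate points on subintervals, identify the space $V_{\mathbf{v}}$ of broken $N$-Jacobi fields with a fixed finite-dimensional vector space varying continuously in $\mathbf{v}$, and then use that the index of a nondegenerate symmetric bilinear form is locally constant. This is the textbook approach (Milnor, and the Finsler versions you cite). The paper instead argues directly on the infinite-dimensional side: one inequality comes from extending a negative-definite frame continuously to nearby geodesics, while the reverse inequality is obtained by contradiction via a Grassmannian compactness argument, tracking the limits of the kernels $\mathcal{K}_{\mu_k(\mathbf{v}_j)\mathbf{v}_j}$ and using \autoref{lemma:jacobiFieldAdjoint} to show that limiting subspaces at coinciding focal times are $g_{\dot\gamma}$-orthogonal, hence form a direct sum inside $\mathcal{K}_{t_0\mathbf{v}_0}$.

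Your approach is shorter and more conceptual once the broken Jacobi machinery from \cite{Peter06,Lu24} is granted; the only point to make fully explicit is the trivialization of $\{V_{\mathbf{v}}\}$, which is most cleanly done via the evaluation isomorphism $W \mapsto (W(s_1),\dots,W(s_{\ell-1}))$ into $\bigoplus_j T_{\gamma_{\mathbf{v}}(s_j)}M$, a genuine vector bundle over $U$ that is locally trivial. The paper's approach, while longer, is more self-contained and yields as a byproduct explicit information about how focal multiplicities accumulate under limits (the direct-sum statement for the $Z_k$), which is in the spirit of the later analysis of the regular focal locus even if not literally reused.
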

\begin{proof}
    Let us first show that $\mathrm{Ind}(\gamma_{\mathbf{v}}|_{[0, T]}) \ge \mathrm{Ind}(\gamma_{\mathbf{v}_0}|_{[0, T]})$ for $\mathbf{v}$ sufficiently near $\mathbf{v}_0$. As $S(\nu)$ is a smooth manifold of dimension $n - 1$, consider a chart $\mathbf{v}_0 \in U \subset S(\nu)$, where $U \cong \mathbb{D}^{n-1}$. Then, we have an $(n-1)$-dimensional $N$-geodesic variation given as 
    \begin{align*}
        \Lambda : U \times [0, T] &\longrightarrow M \\
        (\mathbf{v}, t) &\longmapsto \exp^\nu(t \mathbf{v}).
    \end{align*}
    Clearly, $\gamma_{\mathbf{v}} = \Lambda(\mathbf{v}, \_)$. Suppose for some $X \in \Gamma \gamma_{\mathbf{v}_0}^*TM$, we have $I_{\gamma_{\mathbf{v}_0}}(X, X) < 0$. Since $U$ is contractible, we can get an extension, say, $\tilde{X} \in \Gamma \Lambda^* TM$. Denoting $\tilde{X}_{\mathbf{v}}(t) \coloneqq \tilde{X}(\mathbf{v}, t)$, we see that $\tilde{X}_{\mathbf{v}} \in \Gamma \gamma_{\mathbf{v}}^* TM$. Since the index form (\autoref{eq:indexForm}) is continuous, shrinking $U$ if necessary, we may assume that $I_{\gamma_{\mathbf{v}}}(\tilde{X}_{\mathbf{v}}, \tilde{X}_{\mathbf{v}}) < 0$ for all $\mathbf{v} \in U$. Now, suppose $k_0 \coloneqq \mathrm{Ind}(\gamma_{\mathbf{v}_0}|_{[0, T]})$. Choose a frame $X^1, \dots , X^{k_0} \in \Gamma \gamma_{\mathbf{v}_0}^*TM$ spanning a maximal subspace on which $I_{\gamma_{\mathbf{v}_0}}$ is negative definite, and thus satisfying
    \[I_{\gamma_{\mathbf{v}_0}}(X^i, X^i) < 0, \qquad 1 \le i \le k_0.\]
    As described above, shrinking $U$ as necessary for finitely many times, we have $\tilde{X}^i_{\mathbf{v}} \in \Gamma \gamma_{\mathbf{v}}^* TM$ such that 
    \[I_{\gamma_{\mathbf{v}}}\left( \tilde{X}^i_{\mathbf{v}}, \tilde{X}^i_{\mathbf{v}} \right) < 0, \qquad 1 \le i \le k_0, \; \mathbf{v} \in U.\]
    Since $\left\{ X^i \right\}$ are linearly independent, possibly shrinking $U$ further, we have $\left\{ \tilde{X}^i_{\mathbf{v}} \right\}$ are linearly independent for all $\mathbf{v} \in U$. But then, $\mathrm{Ind}(\gamma_{\mathbf{v}}|_{[0, T]}) \ge k_0 = \mathrm{Ind}(\gamma_{\mathbf{v}_0}|_{[0, T]})$ for all $\mathbf{v} \in U$. Note that this inequality is true irrespective of whether $\gamma_{\mathbf{v}_0}(T)$ is a focal point of $N$ along $\gamma_{\mathbf{v}_0}$ or not.\medskip

    Since by hypothesis $d_{T\mathbf{v}_0} \mathcal{E}$ is nonsingular, we can assume that $d_{T \mathbf{v}} \mathcal{E}$ is nonsingular for all $\mathbf{v} \in U$ as well. Thus, $\gamma_{\mathbf{v}}(T)$ is not a focal point of $N$ along $\gamma_{\mathbf{v}}$ for all $\mathbf{v}\in U$. We show that $\mathrm{Ind}(\gamma_{\mathbf{v}})$ is constant on some neighborhood of $\mathbf{v}_0$ in $U$. If not, choose some sequence of distinct $\mathbf{v}_j \in U$ such that $\mathrm{Ind}(\gamma_{\mathbf{v}_j}|_{[0, T]}) \ne \mathrm{Ind}(\gamma_{\mathbf{v}_0}|_{[0, T]})$. Passing to a subsequence, we may assume that $\mathbf{v}_j \rightarrow \mathbf{v}_0$, and
    \begin{equation}\label{eq:indexLocallyConstant:contradiction} \tag{$*$}
        \mathrm{Ind}(\gamma_{\mathbf{v}_j}|_{[0, T]}) > \mathrm{Ind}(\gamma_{\mathbf{v}_0}|_{[0, T]}) \quad \forall j.
    \end{equation}
    Suppose, for each $j$, there are precisely $r_j$ many distinct focal times of $N$ along $\gamma_{\mathbf{v}_j}$ appearing in $(0, T)$, and let us rename them as $0 < \mu_1(\mathbf{v}_j) < \dots < \mu_{r_j}(\mathbf{v}_j) < T$. Note that for each $j$ we must have at least the first focal time $\lambda_1(\mathbf{v}_j)$ appears in $(0, T)$, as (\hyperref[eq:indexLocallyConstant:contradiction]{$*$}) implies $\mathrm{Ind}(\gamma_{\mathbf{v}_j}|_{[0, T]}) \ge 1$. Thus, $r_j \ge 1$. Denote, 
    \[K(j, k) \coloneqq \mathcal{K}_{\mu_k(\mathbf{v}_j)\mathbf{v}} = \ker \left( d_{\mu_k(\mathbf{v}_j)\mathbf{v}_j} \mathcal{E} \right).\]
    By the Morse index theorem, we have 
    \begin{equation}\label{eq:indexLocallyConstant:indexAtVj} \tag{$*_j$}
        \mathrm{Ind}\left( \gamma_{\mathbf{v}_j}|_{[0, T]} \right) = \sum_{k = 1}^{r_j} \dim K(j, k), \qquad  \text{for all } j \ge 1.
    \end{equation}
    As $\dim K(j, 1) \in \left\{ 1,\dots , n -1 \right\}$, we must have $\dim K(j,1)$ is a constant for infinitely values of $j$. Thus, passing to a subsequence, we may assume that $\eta_1 \coloneqq \dim K(j, 1)$ for all $j$. Let us now consider $K(j, 1)$ as points in the Grassmann bundle $\textrm{Gr}_{\eta_1}(T\hat{\nu})$. Since the base-points $\mathbf{v}_j \rightarrow \mathbf{v}_0$, passing to a subsequence, we may assume that $K(j, 1)$ converges to an $\eta_1$-dimensional subspace, say, $Z_1 \subset T_{t_1 \mathbf{v}_0} \hat{\nu}$ for some time $t_1 > 0$. Clearly, $\mu_1(\mathbf{v}_j) \mathbf{v}_j \rightarrow t_1 \mathbf{v}_0 \Rightarrow \mu_1(\mathbf{v}_j) \rightarrow t_1$, as $F(\mathbf{v}_j) = 1 = F(\mathbf{v}_0)$. Since $\mu_1(\mathbf{v}_j) < T$, it follows that $t_1 \le T$ in the limit. Now, $d_{\mu_1(\mathbf{v}_j)} \mathcal{E}$ vanishes on $K(j, 1)$, and hence by continuity, $d_{t_1 \mathbf{v}_0} \mathcal{E}$ vanishes on the limit subspace $Z_1$ as well. In particular, $t_1 \mathbf{v}_0$ is a focal point of $N$ along $\gamma_{\mathbf{v}_0}$. Since, by assumption, $\gamma_{\mathbf{v}_0}(T)$ is not a focal point, we must have $0 < t_1 < T$. Also, note that $\mathrm{Ind}(\gamma_{\mathbf{v}_0}|_{[0, T]}) \ge \dim Z_1 = \eta_1$. Inductively, passing on to further subsequences as necessary, for some $k \ge 1$, we assume that the following holds for all $j \ge 1$.
    \begin{itemize}
        \item The $k^{\text{th}}$-distinct focal time $\mu_k(\mathbf{v}_j)$ is defined in $(0, T)$ for infinitely many $j$. 
        \item For infinitely many $j$, there is a constant value, say, $\eta_k \coloneqq \dim K(j, k)$ such that 
        \[\eta_1 + \dots + \eta_k \le \mathrm{Ind}(\gamma_{\mathbf{v}_0}|_{[0, T]}).\]
        \item $K(j, k) \rightarrow Z_k \subset T_{t_k \mathbf{v}_0} \hat{\nu}$, where $\dim Z_k = \eta_k$, and $0 < t_k < T$.
    \end{itemize}
    Since $\mu_k(\mathbf{v}_j) < \mu_{k+1}(\mathbf{v}_j)$ holds for all $j$, in the limit we get $t_k \le t_{k+1}$, and moreover, $0 < t_k < T$. For distinct values of $t_k$, each $Z_k$ as above are then subspaces of the tangent spaces at distinct points of $\hat{\nu}$ along the ray $R_{\mathbf{v}_0} = \left\{ t \mathbf{v}_0 \;\middle|\; t > 0 \right\}$. On the other hand, if a particular value of $t_k$ is repeated more than once, we shall see that the corresponding subspaces $Z_k$ form a direct sum. Thus, the total number of $t_k$, counting multiplicity, cannot exceed $\mathrm{Ind}\left( \gamma_{\mathbf{v}_0} |_{[0, T]} \right)$, and hence, the above induction terminates after finitely many steps.

    Let us now consider the situation $t_{k_1} = \dots = t_{k_s} = t_0$ for some $s \ge 2$. We show that $Z_{k_1} + \dots + Z_{k_s}$ is a direct sum in the tangent space $T_{t_0 \mathbf{v}_0} \hat{\nu}$. Clearly, $\sum Z_{k_i} \subset \mathcal{K}_{t_0 \mathbf{v}_0}$. Fix some $\alpha \ne \beta \in \left\{ k_1, \dots , k_s \right\}$. For some $\mathbf{a} \in Z_{\alpha}$ and $\mathbf{b} \in Z_{\beta}$, get sequences $\mathbf{a}_j \in K(j, \alpha)$ and $\mathbf{b}_j \in K(j, \beta)$ such that $\mathbf{a}_j \rightarrow \mathbf{a}, \mathbf{b}_j \rightarrow \mathbf{b}$. Consider the $N$-Jacobi fields $J_{\mathbf{a}_j} = \Phi(\mathbf{a}_j), J_{\mathbf{b}_j} = \Phi(\mathbf{b}_j)$ along $\gamma_{\mathbf{v}_j}$. By \autoref{lemma:jacobiFieldAdjoint}, we have $g_{\dot\gamma_{\mathbf{v}_j}}\left( \dot J_{\mathbf{a}_j}, J_{\mathbf{b}_j} \right) = g_{\dot\gamma_{\mathbf{v}_j}}\left( J_{\mathbf{a}_j}, \dot J_{\mathbf{b}_j} \right)$ for all time $t$. As $J_{\mathbf{a}_j}\left( t \right) = 0$ for $t = \mu_\alpha(\mathbf{v}_j)$, we have 
    \begin{equation}\label{eq:indexLocallyConstant::dotJajJbjIsZero}\tag{$\dagger$}
        g_{\dot \gamma_{\mathbf{v}_j}(\mu_\alpha(\mathbf{v}_j))} \left( \dot J_{\mathbf{a}_j} \left( \mu_\alpha(\mathbf{v}_j) \right), J_{\mathbf{b}_j}\left( \mu_\alpha(\mathbf{v}_j) \right) \right) = 0, \qquad j \ge 1.
    \end{equation}
    On the other hand, it follows from \autoref{lemma:jacobiFieldMultipleOfNonZeroField}, that there are smooth vector fields $\bar{J}_{\mathbf{b}_j}$ along $\gamma_{\mathbf{v}_j}$, and $\bar{J}_{\mathbf{b}}$ along $\gamma_{\mathbf{v}_0}$, satisfying 
    \[\bar{J}_{\mathbf{b}_j}(t) = 
    \begin{cases}
        \frac{J_{\mathbf{b}_j}(t)}{t - \mu_\beta(\mathbf{v}_j)} , \quad t \ne \mu_\beta(\mathbf{v}_j) \\[1em]
        \dot J_{\mathbf{b}_j} \left( \mu_\beta(\mathbf{v}_j) \right), \quad t = \mu_\beta(\mathbf{v}_j),
    \end{cases}, \qquad \bar{J}_{\mathbf{b}}(t) = 
    \begin{cases}
        \frac{J_{\mathbf{b}}(t)}{t - t_0} , \quad t \ne t_0 \\[1em]
        \dot J_{\mathbf{b}} \left( t_0 \right), \quad t = t_0,
    \end{cases}\]
    As $\mu_\alpha(\mathbf{v}_j) - \mu_\beta(\mathbf{v}_j) \ne 0$, from (\hyperref[eq:indexLocallyConstant::dotJajJbjIsZero]{$\dagger$}) we have, 
    \[g_{\dot\gamma_{\mathbf{v}_j}(\mu_\alpha(\mathbf{v}_j))}\left( \dot J_{\mathbf{a}_j} \left( \mu_\alpha(\mathbf{v}_j) \right), \bar{J}_{\mathbf{b}_j} \left( \mu_\alpha(\mathbf{v}_j) \right) \right) = 0, \qquad j \ge 1.\]
    Since $\mu_\alpha(\mathbf{v}_j) \rightarrow t_\alpha = t_0$ and $\mu_\beta(\mathbf{v}_j) \rightarrow t_\beta = t_0$ as $j \rightarrow \infty$, taking limit we get 
    \[0 = g_{\dot \gamma_{\mathbf{v}_0}(t_0)}\left( \dot J_{\mathbf{a}} (t_0), \bar{J}_{\mathbf{b}} (t_0) \right) = g_{\dot \gamma_{\mathbf{v}_0}(t_0)}\left( \dot J_{\mathbf{a}}(t_0), \dot J_{\mathbf{b}}(t_0) \right).\]
    Now, it follows from \autoref{prop:tangentSpaceOfNormalConeBundle} and \autoref{lemma:jacobiFrameAtFocalPoint} that the linear map
    \begin{align*}
        \Psi : \mathcal{K}_{t_0 \mathbf{v}_0} &\longrightarrow T_{\gamma_{\mathbf{v}_0}(t_0)} M \\
        \mathbf{x} &\longmapsto \dot J_{\mathbf{x}}(t_0)
    \end{align*}
    is \emph{injective}. The above discussion then shows that for any $\alpha \ne \beta \in \left\{ k_1,\dots, k_s \right\}$, the subspaces $\Psi(Z_\alpha), \Psi(Z_\beta)$ are $g_{\dot\gamma_{\mathbf{v}_0}(t_0)}$-orthogonal. Consequently, $\sum_{i=1}^s \Psi\left( Z_{k_i} \right)$ is a direct sum. As $\Psi$ is injective, it follows that $\sum_{i=1}^s Z_{k_i}$ is a direct sum as well.
    
    We can now assume that the inductive process stops at, say, $r\textsuperscript{th}$-step. Consequently, counting multiplicity, there are precisely $r$-many focal times of $N$ along $\gamma_{\mathbf{v}_j}$ appearing in $(0, T)$ for each $j$. Now, we have obtained that 
    \begin{equation}\label{eq:indexLocallyConstant:indexAtV0}\tag{$*_0$}
        \mathrm{Ind}(\gamma_{\mathbf{v}_0}|_{[0, T]}) \ge \sum_{k=1}^r \dim Z_k = \sum_{k=1}^r \eta_k.
    \end{equation}
    But then for each $j \ge 1$ we get from (\hyperref[eq:indexLocallyConstant:indexAtVj]{$*_j$}) and (\hyperref[eq:indexLocallyConstant:indexAtV0]{$*_0$}) that
    \[\mathrm{Ind}(\gamma_{\mathbf{v}_j}|_{[0, T]}) = \sum_{k=1}^r \dim K(j, k) = \sum_{k=1}^r \dim Z_k \le \mathrm{Ind}((\gamma_{\mathbf{v}_0}|_{[0, T]})),\]
    which is a contradiction to (\ref{eq:indexLocallyConstant:contradiction}). Hence, we have some neighborhood of $\mathbf{v}_0$ in $S(\nu)$ on which $\mathrm{Ind}(\gamma_{\mathbf{v}}|_{[0, T]})$ is constant. This concludes the proof.
\end{proof}

As an immediate corollary, we get the following.

\begin{corollary}\label{cor:focalTimeContinuous}
    For each $k\ge 1$, the functions $\lambda_k : S(\nu) \rightarrow (0, \infty]$ is continuous.
\end{corollary}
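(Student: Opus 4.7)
The plan is to establish continuity of $\lambda_k$ at each $\mathbf{v}_0 \in S(\nu)$ by proving both lower and upper semicontinuity, with the topology on $(0,\infty]$ being the usual one-point compactification, and in each case pivoting on \autoref{prop:indexLocallyConstant}. A preliminary observation to note is that for a fixed $\mathbf{v}$, the function $t \mapsto \mathrm{Ind}(\gamma_{\mathbf{v}}|_{[0,t]})$ is monotone non-decreasing: any admissible variation field on $[0,t_1]$ vanishing at $t_1$ extends by zero to any larger interval $[0,t_2]$, preserving the value of the index form. Consequently, $\lambda_k(\mathbf{v}) \le T$ whenever $\mathrm{Ind}(\gamma_{\mathbf{v}}|_{[0,T]}) \ge k$, and $\lambda_k(\mathbf{v}) \ge T$ whenever $\mathrm{Ind}(\gamma_{\mathbf{v}}|_{[0,T]}) \le k-1$.

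For lower semicontinuity, fix any $T < \lambda_k(\mathbf{v}_0)$. Since focal points along $\gamma_{\mathbf{v}_0}$ are discrete by \autoref{lemma:jacobiFrameAtFocalPoint}, we may (after possibly replacing $T$ by a nearby larger value still strictly below $\lambda_k(\mathbf{v}_0)$) assume that $\gamma_{\mathbf{v}_0}(T)$ is not a focal point. Then $\mathrm{Ind}(\gamma_{\mathbf{v}_0}|_{[0,T]}) \le k - 1$ by the definition of $\lambda_k$, and \autoref{prop:indexLocallyConstant} furnishes a neighborhood $U$ of $\mathbf{v}_0$ in $S(\nu)$ on which $\mathrm{Ind}(\gamma_{\mathbf{v}}|_{[0,T]}) = \mathrm{Ind}(\gamma_{\mathbf{v}_0}|_{[0,T]}) \le k - 1$, hence $\lambda_k(\mathbf{v}) \ge T$ for all $\mathbf{v} \in U$. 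Letting $T \nearrow \lambda_k(\mathbf{v}_0)$ (or $T \to \infty$ if $\lambda_k(\mathbf{v}_0) = \infty$) yields $\liminf_{\mathbf{v} \to \mathbf{v}_0} \lambda_k(\mathbf{v}) \ge \lambda_k(\mathbf{v}_0)$.

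For upper semicontinuity, assume $\lambda_k(\mathbf{v}_0) < \infty$ (otherwise nothing to prove). Fix any $T > \lambda_k(\mathbf{v}_0)$ and again use the discreteness of focal points along $\gamma_{\mathbf{v}_0}$ to arrange that $\gamma_{\mathbf{v}_0}(T)$ is not a focal point. The definition of $\lambda_k$ forces $\mathrm{Ind}(\gamma_{\mathbf{v}_0}|_{[0,T]}) \ge k$ (otherwise $T$ would lie in the supremum set). Applying \autoref{prop:indexLocallyConstant} again, on a neighborhood of $\mathbf{v}_0$ we have $\mathrm{Ind}(\gamma_{\mathbf{v}}|_{[0,T]}) = \mathrm{Ind}(\gamma_{\mathbf{v}_0}|_{[0,T]}) \ge k$, which by the monotonicity observation gives $\lambda_k(\mathbf{v}) \le T$. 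Letting $T \searrow \lambda_k(\mathbf{v}_0)$ concludes $\limsup_{\mathbf{v} \to \mathbf{v}_0} \lambda_k(\mathbf{v}) \le \lambda_k(\mathbf{v}_0)$.

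The only delicate point is the choice of $T$ avoiding focal points of $\gamma_{\mathbf{v}_0}$ while staying arbitrarily close to $\lambda_k(\mathbf{v}_0)$ from both sides; this is the main place where \autoref{lemma:jacobiFrameAtFocalPoint} enters. Everything else reduces to combining the monotonicity of the index in the time parameter with the local-constancy assertion of \autoref{prop:indexLocallyConstant}.
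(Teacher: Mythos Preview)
Your proof is correct and follows essentially the same approach as the paper: both arguments choose non-focal times $T$ on either side of $\lambda_k(\mathbf{v}_0)$ (using the discreteness of focal points along a geodesic), then invoke \autoref{prop:indexLocallyConstant} to propagate the index inequalities to a neighborhood of $\mathbf{v}_0$. The only cosmetic difference is that the paper handles both sides at once via $T\pm\epsilon$, while you phrase it as separate lower and upper semicontinuity steps; your explicit remark on the monotonicity of $t\mapsto\mathrm{Ind}(\gamma_{\mathbf{v}}|_{[0,t]})$ is a helpful clarification that the paper leaves implicit.
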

\begin{proof}
    Fix some $\mathbf{v}_0 \in S(\nu)$. Suppose $\lambda_k(\mathbf{v}_0) = T < \infty$. Then, for $\epsilon > 0$ small, we have from \autoref{lemma:jacobiFrameAtFocalPoint} that $\gamma_{\mathbf{v}_0}(T+\epsilon)$ and $\gamma_{\mathbf{v}_0}(T-\epsilon)$ are not focal points of $N$ along $\gamma_{\mathbf{v}_0}$. It follows from \autoref{prop:indexLocallyConstant} that there exists some neighborhood $\mathbf{v}_0 \in U \subset S(\nu)$ such that for all $\mathbf{v} \in U$ we have
    \[\mathrm{Ind}(\gamma_{\mathbf{v}}|_{[0, T- \epsilon]}) = \mathrm{Ind}(\gamma_{\mathbf{v}_0}|_{[0, T-\epsilon]}) \le k - 1, \quad \text{and}\quad \mathrm{Ind}(\gamma_{\mathbf{v}}|_{[0, T+\epsilon]}) = \mathrm{Ind}(\gamma_{\mathbf{v}_0}|_{[0, T+\epsilon]})\ge k.\]
    Consequently, for all $\mathbf{v} \in U$ we have $T - \epsilon \le \lambda_k(\mathbf{v}) \le T + \epsilon$. Since $\epsilon > 0$ is arbitrary, we see that $\lambda_k$ is continuous at $\mathbf{v}_0$ if $\lambda_k(\mathbf{v}_0) < \infty$.

    Now suppose $\lambda_k(\mathbf{v}_0) = \infty$. Then, for all $T$ large we have $\mathrm{Ind}(\gamma_{\mathbf{v}_0}|_{[0, T]}) \le k - 1$, and we may also assume that $\gamma_{\mathbf{v}_0}(T)$ is not a focal point of $N$ along $\gamma_{\mathbf{v}_0}$. Then, for some neighborhood $U \subset S(\nu)$ of $\mathbf{v}_0$ we have, $\mathrm{Ind}(\gamma_{\mathbf{v}}|_{[0, T]}) = \mathrm{Ind}(\gamma_{\mathbf{v}_0}|_{[0, T]}) \le k - 1$, and hence $\lambda_k(\mathbf{v}) \ge T$, for all $\mathbf{v} \in U$. Since $T$ is arbitrary, we have $\lambda_k$ is continuous at $\mathbf{v}_0$ with $\lambda_{k}(\mathbf{v}) = \infty$ as well. This concludes the proof.
\end{proof}

\section{Warner's Regularity of the Normal Exponential Map} \label{sec:warnerRegularity}
Suppose $N$ is a closed submanifold of a forward complete Finsler manifold $(M, F)$. In this section, we prove that $\mathcal{E} = \exp^\nu|_{\hat{\nu}}$ is regular in the sense of Warner \cite{Warner1965} while modifying the author's definition suitably. Then, we introduce the notion of regular tangent focal locus, and deduce a normal form for $\mathcal{E}$ near such points. Firstly, for any $\mathbf{v} \in \hat{\nu}$, let us denote the ray 
\begin{equation}\label{eq:ray}
	R_{\mathbf{v}} = \left\{ t \mathbf{v} \;\middle|\; t > 0 \right\} \subset \hat{\nu}.
\end{equation}
We also need the following definition.
\begin{defn}\label{defn:radialConvexity}
	A subset $C \subset \hat{\nu}$ is said to be \emph{radially convex} if $C$ is connected, and if for each $\mathbf{v} \in C$ the subset $\left\{ t \;\middle|\; \exp^\nu(t\mathbf{v}) \in C \right\} \subset (0, \infty)$ is connected.
\end{defn}

We can now prove the following, generalizing \cite[Theorem 4.5 and 5.1]{Warner1965} simultaneously.
\begin{theorem}\label{thm:normalExponentialWarnerRegular}
	Given a closed submanifold $N$ of a forward complete Finsler manifold $(M, F)$, the restricted normal exponential map $\mathcal{E} : \hat{\nu} \rightarrow M$ satisfies the following three regularity properties at each $\mathbf{v} \in \hat{\nu}$.
	\begin{enumerate}[label=(R\arabic*)]
		\item \label{R1} The derivative $d_{\mathbf{v}} \mathcal{E}$ is nonvanishing on $T_{\mathbf{v}}R_{\mathbf{v}}$.
		\item \label{R2} The map 
		\begin{align*}
			\mathcal{K}_{\mathbf{v}} &\longrightarrow T_{\mathcal{E}(\mathbf{v})}M/\mathop{\mathrm{Im}}d_{\mathbf{v}}\mathcal{E} \\
			\mathbf{x} &\longmapsto \dot J_{\mathbf{x}}(1)
		\end{align*}
		is a linear isomorphism.
		\item \label{R3} There exists a radially convex open neighborhood $\mathbf{v} \in U \subset \hat{\nu}$, such that for each $\mathbf{u} \in U$, the number of critical points (counted with multiplicity) of $\mathcal{E}$ on $R_{\mathbf{u}} \cap U$ is a constant independent of $\mathbf{u}$, and equals to the dimension of $\mathcal{K}_{\mathbf{v}} = \ker d_{\mathbf{v}}\mathcal{E}$.
	\end{enumerate}
\end{theorem}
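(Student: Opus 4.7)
The plan is to verify the three conditions separately. Properties \ref{R1} and \ref{R2} reduce to statements about Jacobi fields along the single geodesic $\gamma_{\mathbf{v}}$, while \ref{R3} requires the local constancy of the index from \autoref{prop:indexLocallyConstant} together with the Morse index formula \autoref{eq:morseIndexFormula}. The main obstacle will be the setup for \ref{R3}: choosing a neighborhood for which the two bracketing times remain simultaneously nonfocal while the indices stay constant.

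For \ref{R1}, parametrize $R_{\mathbf{v}}$ near $\mathbf{v}$ by $\alpha(s) = (1+s)\mathbf{v}$; its image under $\mathcal{E}$ is $s \mapsto \gamma_{\mathbf{v}}(1+s)$, and so $d_{\mathbf{v}}\mathcal{E}$ sends the tangent to $R_{\mathbf{v}}$ at $\mathbf{v}$ to $\dot\gamma_{\mathbf{v}}(1) \ne 0$.

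For \ref{R2}, observe first that a dimension count via $\dim T_{\mathbf{v}}\hat\nu = \dim M$ yields $\dim \mathcal{K}_{\mathbf{v}} = \dim \bigl(T_{\mathcal{E}(\mathbf{v})}M / \im d_{\mathbf{v}}\mathcal{E}\bigr)$, so it suffices to establish injectivity. Suppose $\mathbf{x} \in \mathcal{K}_{\mathbf{v}}$ satisfies $\dot J_{\mathbf{x}}(1) \in \im d_{\mathbf{v}}\mathcal{E}$. For every $\mathbf{z} \in T_{\mathbf{v}}\hat\nu$, \autoref{lemma:jacobiFieldAdjoint} evaluated at $t = 1$ together with $J_{\mathbf{x}}(1) = 0$ gives
\[
g_{\dot\gamma_{\mathbf{v}}(1)}\bigl(\dot J_{\mathbf{x}}(1),\, J_{\mathbf{z}}(1)\bigr) \;=\; g_{\dot\gamma_{\mathbf{v}}(1)}\bigl(J_{\mathbf{x}}(1),\, \dot J_{\mathbf{z}}(1)\bigr) \;=\; 0.
\]
Thus $\dot J_{\mathbf{x}}(1)$ is $g_{\dot\gamma_{\mathbf{v}}(1)}$-orthogonal to $\im d_{\mathbf{v}}\mathcal{E} = \{J_{\mathbf{z}}(1) \mid \mathbf{z} \in T_{\mathbf{v}}\hat\nu\}$; since $\dot J_{\mathbf{x}}(1)$ also lies in this subspace and $g_{\dot\gamma_{\mathbf{v}}(1)}$ is positive definite, $\dot J_{\mathbf{x}}(1) = 0$. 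Combined with $J_{\mathbf{x}}(1) = 0$, \autoref{lemma:zerosOfJacobiField} forces $J_{\mathbf{x}} \equiv 0$, and then $\mathbf{x} = 0$ by \autoref{prop:tangentSpaceOfNormalConeBundle}.

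For \ref{R3}, write $s_0 := F(\mathbf{v})$, $\hat{\mathbf{v}} := \mathbf{v}/s_0$, and use the diffeomorphism $\hat\nu \cong S(\nu) \times (0,\infty)$ given by $\mathbf{u} \mapsto (\hat{\mathbf{u}}, F(\mathbf{u}))$, under which rays correspond to vertical fibers. By \autoref{lemma:jacobiFrameAtFocalPoint}, focal times along $\gamma_{\hat{\mathbf{v}}}$ are isolated, so pick $0 < t_- < s_0 < t_+$ such that no point of $[t_-,t_+] \setminus \{s_0\}$ is a focal time of $\gamma_{\hat{\mathbf{v}}}$. Since $d_{t_\pm \hat{\mathbf{v}}}\mathcal{E}$ is nonsingular, openness of the nonsingular locus and \autoref{prop:indexLocallyConstant} yield a connected open neighborhood $V \subset S(\nu)$ of $\hat{\mathbf{v}}$ on which $t_\pm \hat{\mathbf{u}}$ remains a regular point of $\mathcal{E}$ and
\[
\mathrm{Ind}\bigl(\gamma_{\hat{\mathbf{u}}}|_{[0,t_\pm]}\bigr) \;=\; \mathrm{Ind}\bigl(\gamma_{\hat{\mathbf{v}}}|_{[0,t_\pm]}\bigr), \qquad \hat{\mathbf{u}} \in V.
\]
Define $U := \{s\hat{\mathbf{u}} \mid \hat{\mathbf{u}} \in V,\ s \in (t_-, t_+)\} \subset \hat\nu$. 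Then $U$ is an open connected neighborhood of $\mathbf{v}$; for any $\mathbf{u} = s\hat{\mathbf{u}} \in U$ the set $\{t > 0 \mid t\mathbf{u} \in U\} = (t_-/s,\, t_+/s)$ is an interval, giving radial convexity. Finally, $R_{\mathbf{u}} \cap U$ is the arc $\{r\hat{\mathbf{u}} \mid r \in (t_-,t_+)\}$, and since $t_\pm \hat{\mathbf{u}}$ is nonfocal, \autoref{eq:morseIndexFormula} gives
\[
\sum_{r\hat{\mathbf{u}} \in R_{\mathbf{u}} \cap U} \dim \mathcal{K}_{r\hat{\mathbf{u}}} \;=\; \mathrm{Ind}\bigl(\gamma_{\hat{\mathbf{u}}}|_{[0,t_+]}\bigr) - \mathrm{Ind}\bigl(\gamma_{\hat{\mathbf{u}}}|_{[0,t_-]}\bigr) \;=\; \dim \mathcal{K}_{\mathbf{v}},
\]
which is the constant independent of $\mathbf{u}$ asserted by \ref{R3}.
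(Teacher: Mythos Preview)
Your proof is correct and follows essentially the same route as the paper: \ref{R1} via the radial curve, \ref{R2} via the orthogonality relation of \autoref{lemma:jacobiFieldAdjoint} (the paper packages this through \autoref{lemma:jacobiFrameAtFocalPoint}, but the content is identical), and \ref{R3} via discreteness of focal times plus two applications of \autoref{prop:indexLocallyConstant} and the Morse index formula. The only cosmetic difference is that for \ref{R2} you argue injectivity plus a dimension count directly, rather than invoking the frame of \autoref{lemma:jacobiFrameAtFocalPoint}; both yield the same conclusion from the same ingredients.
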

\begin{proof}
	Consider the curve $\eta(t) = t\mathbf{v}$, so that $\eta(1) = \mathbf{v}$ and $T_{\mathbf{v}}R_{\mathbf{v}} = \mathrm{Span}\langle \dot \eta(1) \rangle$. Then, 
	\[d_{\mathbf{v}}\mathcal{E}(\dot\eta(1)) = \left. \frac{d}{dt} \right|_{t=1} (\mathcal{E} \circ \eta) = \left. \frac{d}{dt} \right|_{t=1} \exp^\nu(t \mathbf{v}) = \mathbf{v} \ne 0,\]
	which proves \autoref{R1}.

	For \autoref{R2}, pick a basis $\mathcal{K}_{\mathbf{v}} = \mathrm{Span}\langle \mathbf{x}_1, \dots , \mathbf{x}_k \rangle$ and extend it to a basis $T_{\mathbf{v}} \hat{\nu} = \mathrm{Span}\langle \mathbf{x}_1,\dots, \mathbf{x}_n \rangle$. It follows from \autoref{prop:tangentSpaceOfNormalConeBundle} that $d_{\mathbf{v}} \mathcal{E}(\mathbf{x}_i) = J_{\mathbf{x}_i}(1)$. But then \autoref{R2} follows immediately from \autoref{lemma:jacobiFrameAtFocalPoint} and \autoref{prop:tangentSpaceOfNormalConeBundle}.
 
	Lastly, we show \autoref{R3}. If $\mathbf{v}$ is not a focal locus of $N$ along $\gamma_{\widehat{\mathbf{v}}}$, then one can easily choose a radially convex open neighborhood $\mathbf{v} \in U \subset \hat{\nu}$ so that $d_{\mathbf{u}}\mathcal{E}$ is nonsingular for each $\mathbf{u} \in U$. The claim is then immediate. Otherwise, suppose $\mathbf{v}=T\widehat{\mathbf{v}}$ is a focal point, $T = F(\mathbf{v})$.  Since by \autoref{lemma:jacobiFrameAtFocalPoint} focal points along $\gamma_{\widehat{\mathbf{v}}}$ are discrete, we have some $\epsilon > 0$ so that $T \widehat{\mathbf{v}}$ is the only focal point of $N$ along $\gamma_{\widehat{\mathbf{v}}}$ for $|T - t| \le \epsilon$. Applying \autoref{prop:indexLocallyConstant} twice for both $\gamma_{\widehat{\mathbf{v}}}(T - \epsilon)$ and $\gamma_{\widehat{\mathbf{v}}}(T + \epsilon)$, we have a neighborhood $\widehat{\mathbf{v}} \in \hat{U} \subset S(\nu)$ so that for all $\widehat{\mathbf{u}}\in \hat{U}$ we have
	\begin{itemize}
		\item $\gamma_{\widehat{\mathbf{u}}}(T \pm \epsilon)$ are not focal points of $N$ along $\gamma_{\widehat{\mathbf{u}}}$,
		\item $\mathrm{Ind}(\gamma_{\widehat{\mathbf{u}}}|_{[0, T - \epsilon]}) = \mathrm{Ind}(\gamma_{\widehat{\mathbf{v}}}|_{[0, T - \epsilon]})$, and
		\item $\mathrm{Ind}(\gamma_{\widehat{\mathbf{u}}}|_{[0, T + \epsilon]}) = \mathrm{Ind}(\gamma_{\widehat{\mathbf{v}}}|_{[0, T + \epsilon]})$.
	\end{itemize}
	Consider the radially convex set 
	\[U = \left\{ t\widehat{\mathbf{u}} \;\middle|\; T - \epsilon < t < T + \epsilon, \; \widehat{\mathbf{u}} \in \hat{U}\right\} \subset \hat{\nu},\]
	which is open by an application of the invariance of domain. For any $\mathbf{u} \in U$, we have $R_{\mathbf{u}} \cap U = \left\{ t \widehat{\mathbf{u}} \;\middle|\; T - \epsilon < t < T + \epsilon \right\}$, where $\widehat{\mathbf{u}} = \frac{\mathbf{u}}{F(\mathbf{u})} \in \hat{U}$. By the Morse index theorem, the number of focal points counted with multiplicity on $R_{\mathbf{u}} \cap U$ then equals 
	\begin{align*}
		\mathrm{Ind}(\gamma_{\widehat{\mathbf{u}}}|_{[0, T + \epsilon]}) - \mathrm{Ind}(\gamma_{\widehat{\mathbf{u}}}|_{[0, T - \epsilon]}) &= \mathrm{Ind}(\gamma_{\widehat{\mathbf{v}}}|_{[T + \epsilon]}) - \mathrm{Ind}(\gamma_{\widehat{\mathbf{v}}}|_{[0, T - \epsilon]}) \\
		&= \dim \mathcal{K}_{T\widehat{\mathbf{v}}} \\
		&= \dim \mathcal{K}_{\mathbf{v}}.
	\end{align*}
	This shows \autoref{R3}, concluding the proof.
\end{proof}

\begin{remark}\label{rmk:warnerRegularity2}
	In \cite{Warner1965}, the author used the notion of second order tangent vectors to state the regularity condition \autoref{R2}. In the \autoref{sec:appendix}, we show that our condition \autoref{R2} is directly comparable to (\hyperref[warnerR2]{R2'}), which appears in \cite{Warner1965} (\autoref{prop:secondOrderTangent}). See also \cite[Section 8.1]{ArdoyGuijaro11} for a similar discussion.
\end{remark}

Let us observe the following easy consequence of \autoref{R1}.
\begin{prop}\label{prop:identificationKernelToUnitBundle}
    Suppose $\mathbf{v} \in \hat{\nu}$ is a tangent focal point, and let $0 \ne \mathbf{x} \in \mathcal{K}_{\mathbf{v}}$. Denote $\widehat{\mathbf{x}} \coloneqq d\pi(\mathbf{x}) \in T_{\widehat{\mathbf{v}}} S(\nu)$, where $\pi : \hat{\nu} \rightarrow S(\nu)$ is the map $\mathbf{u} \mapsto \frac{\mathbf{u}}{F(\mathbf{u})}$. Then, $J_{\widehat{\mathbf{x}}}(F(\mathbf{v})) = 0$.
\end{prop}
\begin{proof}
    Let $\tau : (-\epsilon, \epsilon) \rightarrow \hat{\nu}$ be a curve with $\tau(0) = \mathbf{v}$ and $\tau^\prime(0) = \mathbf{x}$. Then, $\hat{\tau}(s) \coloneqq \pi\left( \tau(s) \right) = \frac{\tau(s)}{F(\tau(s))}$ is a curve in $S(\nu)$ with $\hat{\tau}^\prime(0) = d\pi(\mathbf{x}) = \widehat{\mathbf{x}}$. Since $\mathcal{K}_{\mathbf{v}} \cap T_{\mathbf{v}} R_{\mathbf{v}} = 0$ by \autoref{R1}, and $\ker d_{\mathbf{v}}\pi = T_{\mathbf{v}} R_{\mathbf{v}}$, it follows that $\widehat{\mathbf{x}} = d\pi(\mathbf{x}) \ne 0$. Without loss of generality, we can then assume that $\tau : (-\epsilon, \epsilon) \rightarrow \hat{\nu}$ and $\hat{\tau} : (-\epsilon, \epsilon) \rightarrow S(\nu)$ are embedded curves, each intersecting a ray exactly once. Then, $\tau(s) = F(\tau(s))\hat{\tau}(s)$ holds. We now have the two $N$-geodesic variations 
    \[\hat{\Lambda}(s,t) \coloneqq \exp^\nu(t \hat{\tau}(s)), \qquad \Lambda(s,t) \coloneqq \exp^\nu(t \tau(s)) = \exp^\nu\left( tF(\tau(s)) \hat{\tau}(s)\right) = \hat{\Lambda}(s, t F(\tau(s))).\]
    In particular, 
    \[J_{\widehat{\mathbf{x}}}(F(\mathbf{v})) = J_{\widehat{\mathbf{x}}}\left( F(\tau(0)) \right) = \partial_s|_{s=0}\hat{\Lambda}(s, F(\tau(0)))  = \partial_s|_{s=0}\Lambda(s, 1) = J_{\mathbf{x}}(1) = 0.\qedhere\]
\end{proof}

\subsection{Regular Tangent Focal Locus}
In view of \autoref{R3}, one can define a certain regularity for tangent focal points.
\begin{defn}\label{defn:regularFocalLocus}
	A tangent focal point $\mathbf{v} \in \hat{\nu}$ is said to be \emph{regular} if there exists a radially convex open neighborhood $\mathbf{v} \in U \subset \hat{\nu}$ such that for each $\mathbf{u} \in U$, the map $\mathcal{E}$ has at most one critical point on $R_{\mathbf{u}} \cap U$, where the multiplicity necessarily equals that of $\mathbf{v}$. The set of regular tangent focal points, called the \emph{regular tangent focal locus}, will be denoted as $\mathcal{F}^{\textrm{reg}} \subset \mathcal{F}$. A tangent focal point that is not regular is called a \emph{singular} focal point, and the set of all singular tangent focal points will be denoted as $\mathcal{F}^{\textrm{sing}} \subset \mathcal{F}$.
\end{defn}

\begin{example}\label{example:multiplicityOneIsRegular}
	If $\mathbf{v} \in \hat{\nu}$ is a tangent focal point of multiplicity $1$, then by \autoref{R3}, we have a radially convex open set, say $\mathbf{v} \in U \subset \hat{\nu}$ such that for each $\mathbf{u} \in U$, the number of critical points, counted with multiplicity, of $\mathcal{E}$ on $R_{\mathbf{u}} \cap U$ equals exactly $1$. Thus, there is a unique focal point on $R_{\mathbf{u}} \cap U$, and hence $\mathbf{v} \in \mathcal{F}^{\textrm{reg}}$.
\end{example}

\begin{example}\label{example:constantFocalTimeIsRegular}
	Suppose for some $\mathbf{v} \in S(\nu)$, the focal multiplicity at the $k^{\text{th}}$ focal time is locally constant. That is, we have $\dim \mathcal{K}_{\lambda_k(\mathbf{u}) \mathbf{u}} = k_0$ for all $\mathbf{u} \in S(\nu)$ near $\mathbf{v}$. Fix some radially convex neighborhood, say, $U \subset \hat{\nu}$ of $\lambda_k(\mathbf{v})\mathbf{v}$, as in \autoref{R3}. Denote, $\hat{U} = \left\{ \widehat{\mathbf{u}} \coloneqq \frac{\mathbf{u}}{F(\mathbf{u})}  \;\middle|\; \mathbf{u} \in U \right\}$. By the continuity of $\lambda_k$ (\autoref{cor:focalTimeContinuous}), shrinking $U$ and $\hat{U}$ if necessary, for all $\widehat{\mathbf{u}} \in \hat{U}$ we may assume that  $\lambda_k(\widehat{\mathbf{u}})\widehat{\mathbf{u}} \in U$, and also they have focal multiplicity $k_0$. But then clearly $\lambda_k(\mathbf{v})\mathbf{v}$ is a regular focal point. 
\end{example}

We shall need the following technical result.
\begin{lemma}\label{lemma:localCoordinatesAtFocalPoint}
	Suppose $\mathbf{v} \in \hat{\nu}$ is a tangent focal point of multiplicity $k$. Then, there exist coordinate charts $(U, x^1,\dots ,x^n)$ and $(V, y^1,\dots ,y^n)$ around $\mathbf{v}$ and $\gamma_{\mathbf{v}}(1) = \mathcal{E}(\mathbf{v})$ respectively, satisfying 
	\[d_{t\mathbf{v}} \mathcal{E} \left( \left. \frac{\partial}{\partial x^i} \right|_{t\mathbf{v}} \right) = f_i(t) \left. \frac{\partial}{\partial y^i} \right|_{\mathcal{E}(t\mathbf{v})}, \quad \text{ for $t$ with $t\mathbf{v} \in U$,}\]
	where $f_i(t)$ are smooth functions satisfying the following.
	\begin{itemize}
		\item For $1 \le i \le k$, the only zeros of $f_i(t)$ are at $t = 1$, and furthermore, $\dot f_i(1) > 0$.
		\item $f_i(t) \ne 0$ for $k + 1 \le i \le n$.
	\end{itemize}
\end{lemma}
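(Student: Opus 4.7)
The strategy is to diagonalize $d\mathcal{E}$ along the ray $R_{\mathbf{v}}$ by aligning source and target charts with a Jacobi frame arising as in \autoref{lemma:jacobiFrameAtFocalPoint}. The key ingredient is the choice of basis $\mathbf{x}_1, \dots, \mathbf{x}_n$ of $T_{\mathbf{v}} \hat\nu$: take $\mathbf{x}_1, \dots, \mathbf{x}_k$ as a basis of $\mathcal{K}_{\mathbf{v}}$, and \emph{crucially} let $\mathbf{x}_n$ be the radial vector at $\mathbf{v}$, \emph{i.e.}, the tangent to the curve $s \mapsto (1+s)\mathbf{v}$ inside $\hat\nu$. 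This extension to a full basis is legitimate because \autoref{R1} forces $\mathbf{v} \notin \mathcal{K}_{\mathbf{v}}$. Set $J_i = \Phi(\mathbf{x}_i)$. Arguing exactly as in the proof of \autoref{lemma:jacobiFrameAtFocalPoint} but applied directly at focal time $t = 1$, one obtains $J_i(1) = 0$ with $\dot J_i(1) \ne 0$ for $i \le k$, and that $\dot J_1(1), \dots, \dot J_k(1), J_{k+1}(1), \dots, J_n(1)$ form a basis of $T_{\mathcal{E}(\mathbf{v})} M$. Moreover, \autoref{example:jacobiField} applied to the radial variation gives $J_n(t) = t\dot\gamma_{\mathbf{v}}(t)$. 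Define $Y_i(t) = J_i(t)/(t-1)$ for $1 \le i \le k$ (smoothly extended at $t = 1$ by the simple-zero property of \autoref{lemma:zerosOfJacobiField}), and $Y_i = J_i$ for $i > k$.

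For the source chart, pick a smooth map $\tilde\sigma\colon W \subset \mathbb{R}^{n-1} \to \hat\nu$ with $\tilde\sigma(0) = \mathbf{v}$ and $d\tilde\sigma|_0(\partial_{a^i}) = \mathbf{x}_i$ for $1 \le i \le n-1$. Since $\{\mathbf{x}_1, \dots, \mathbf{x}_{n-1}\}$ is complementary to the radial direction $\mathbf{x}_n$, the map $F(a, t) \coloneqq t \tilde\sigma(a)$ is a local diffeomorphism near $(0, 1)$, yielding coordinates $(x^1, \dots, x^n) = (a^1, \dots, a^{n-1}, t)$ on a neighborhood $U$ of $\mathbf{v}$ in which $t\mathbf{v}$ has coordinates $(0, \dots, 0, t)$. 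For the target chart, employ the Fermi-type construction
\[
\Psi(w^1, \dots, w^{n-1}, s) = \exp_{\gamma_{\mathbf{v}}(s)} \!\left( \sum_{j=1}^{n-1} w^j Y_j(s) \right),
\]
which is a local diffeomorphism at $(0, 1)$ because its differential there has columns $Y_1(1), \dots, Y_{n-1}(1), \dot\gamma_{\mathbf{v}}(1) = Y_n(1)$, a basis of $T_{\mathcal{E}(\mathbf{v})} M$. Setting $(y^1, \dots, y^n) = (w^1, \dots, w^{n-1}, s)$, along $\gamma_{\mathbf{v}}$ one reads off $\partial_{y^i}|_{\gamma_{\mathbf{v}}(t)} = Y_i(t)$ for $i < n$ and $\partial_{y^n}|_{\gamma_{\mathbf{v}}(t)} = \dot\gamma_{\mathbf{v}}(t)$.

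Using the description of $\Phi$ from \autoref{eq:canonicalIsoJacobi}, a direct calculation gives $d_{t\mathbf{v}}\mathcal{E}\bigl(\partial_{x^i}|_{t\mathbf{v}}\bigr) = J_i(t)$ for $i < n$ and $d_{t\mathbf{v}}\mathcal{E}\bigl(\partial_{x^n}|_{t\mathbf{v}}\bigr) = \dot\gamma_{\mathbf{v}}(t)$. Substituting $J_i(t) = (t-1) Y_i(t)$ for $i \le k$ and $J_i = Y_i$ for $k < i < n$ yields the desired normal form with $f_i(t) = t - 1$ for $1 \le i \le k$ (so $\dot f_i(1) = 1 > 0$ and the only zero in $U$ is at $t = 1$) and $f_i(t) \equiv 1$ for $k < i \le n$. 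The main technical care lies in the bookkeeping: tracing the radial tangent vector through the cone structure of $\hat\nu$, confirming that $\tilde\sigma$ can be realized with the prescribed derivative (a standard transversal-submanifold construction since $\{\mathbf{x}_1, \dots, \mathbf{x}_{n-1}\}$ is a complement to $R_{\mathbf{v}}$), and checking smoothness of $Y_i$ at $t = 1$.
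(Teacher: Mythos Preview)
Your approach is essentially the same as the paper's: both build the charts from the Jacobi frame of \autoref{lemma:jacobiFrameAtFocalPoint}, rescale the kernel Jacobi fields by $(t-1)^{-1}$ to obtain the $Y_i$, and then realize coordinate vector fields along the ray and geodesic that match these frames, yielding $f_i(t)=t-1$ for $i\le k$ and $f_i\equiv 1$ otherwise. The paper simply cites \cite[Lemma~2.4]{Warner1965} for the chart construction, whereas you spell out an explicit product chart on $\hat\nu$ and a Fermi-type chart on $M$; your additional choice of $\mathbf{x}_n$ as the radial vector is convenient but not essential.

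One genuine caveat in the Finsler setting: if the map $\exp$ in your Fermi construction
\[
\Psi(w,s)=\exp_{\gamma_{\mathbf{v}}(s)}\Bigl(\textstyle\sum_{j=1}^{n-1} w^j Y_j(s)\Bigr)
\]
is the \emph{Finsler} exponential, then $\Psi$ is only $C^1$ along $\{w=0\}$, since the Finsler exponential is $C^\infty$ only away from the zero section (see the remark after \autoref{defn:exponentialMap}). This would not produce a smooth chart. The fix is immediate and does not affect your argument: replace $\exp$ by the exponential map of any auxiliary Riemannian metric, or by any other smooth local diffeomorphism with the prescribed differential along $\gamma_{\mathbf{v}}$; this is exactly what the paper's appeal to \cite[Lemma~2.4]{Warner1965} accomplishes without invoking an exponential map at all.
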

\begin{proof}
	Pick a basis $\left\{ \mathbf{x}_1, \dots, \mathbf{x}_k \right\}$ of $\mathcal{K}_{\mathbf{v}} = \ker d_{\mathbf{v}} \mathcal{E}$, and extend it to a basis $\left\{ \mathbf{x}_1,\dots, \mathbf{x}_n \right\}$ of $T_{\mathbf{v}} \hat{\nu}$. Let us write $\mathbf{v} = \ell \widehat{\mathbf{v}}$ for $\widehat{\mathbf{v}} \in S(\nu)$ and $\ell = F(\mathbf{v})$. As in the proof of \autoref{lemma:jacobiFrameAtFocalPoint}, we get $N$-Jacobi fields $J_1,\dots, J_n$ along the $N$-geodesic $\gamma_{\widehat{\mathbf{v}}}$. In particular, $J_1(\ell) = \dots = J_k(\ell) = 0$. Next, as in \autoref{eq:nonZeroMultipleOfJacobi}, in some neighborhood of $\ell$, we define the vector fields $Y_i$ for $1 \le i \le k$, and set $Y_i = J_i$ for $i > k$. As observed in \autoref{lemma:jacobiFrameAtFocalPoint}, $\left\{ Y_i(t) \right\}$ is basis for $T_{\gamma_{\widehat{\mathbf{v}}}(t)} M$ near $\ell$.
	
	Consider some $\alpha_i : (-\epsilon, \epsilon) \rightarrow \hat{\nu}$ such that $\alpha_i(0) = \mathbf{v}$ and $\dot \alpha_i(0) = \mathbf{x}_i$. Then, we have a vector field $A_i(t) \coloneqq \left. \frac{\partial}{\partial s} \right|_{s = 0} \left( \frac{t}{\ell} \alpha_i(s) \right) = \frac{t}{\ell} \dot\alpha_i(0) = \frac{t}{\ell} \mathbf{x}_i$ along the ray $R_{\widehat{\mathbf{v}}} = \left\{ t \widehat{\mathbf{v}} \;\middle|\; t > 0 \right\}$. It is immediate that 
	\[d_{t \widehat{\mathbf{v}}} \mathcal{E} \left( A_i(t) \right) = \left. \frac{\partial}{\partial s} \right|_{s=0} \mathcal{E}\left( \frac{t}{\ell} \alpha_i(s)\right) = J_i(t).\]
	By construction, $\left\{ A_i(\ell) \right\} = \left\{ \mathbf{x}_i \right\}$ is a basis at $T_{\mathbf{v}} \hat{\nu} = T_{\ell \widehat{\mathbf{v}}} \hat{\nu}$. Also, in a deleted neighborhood of $\ell$, we have $\left\{ d_{t \widehat{\mathbf{v}}}\mathcal{E} \left( A_i(t) \right) \right\} = \left\{ J_i(t) \right\}$ is a basis for $T_{\mathcal{E}(t\widehat{\mathbf{v}})} M$, and hence, $\left\{ A_i(t) \right\}$ is a basis of $T_{t \widehat{\mathbf{v}}} \hat{\nu}$ for a neighborhood of $\ell$.
	
	Now, recall that given any embedded regular curve $\alpha$ in an $n$-dimensional manifold, and a frame of vector fields $E_1,\dots , E_n$ along $\alpha$, one can construct a coordinate system $u^1,\dots u^n$ such that $\frac{\partial}{\partial u^i}|_{\alpha(t)} = E_i (t)$ holds (see \cite[Lemma 2.4]{Warner1965} for a proof). Applying this fact to the frame $\left\{ A_i \right\}$ along the ray, we have a coordinate system $(U, x^1, \dots, x^n)$ around $\mathbf{v}$ so that $\left. \frac{\partial}{\partial x^i} \right|_{t \mathbf{v}} = A_i(t \ell)$. Similarly, applying the same fact for the frame $Y_i$ along geodesic $\gamma_{\mathbf{v}}$, we get a coordinate chart $(V, y^1, \dots  , y^n)$ around $\gamma_{\mathbf{v}}(1)$ so that $\left. \frac{\partial}{\partial y^i} \right|_{\gamma_{\mathbf{v}}(t)} = Y_i(t\ell)$ holds. If we set $\tilde{f}_i(t) = t - \ell$ for $1 \le i \le k$ and $\tilde{f}_i(t) = 1$ for $i > k$, then near $\ell$ we have $J_i = \tilde{f}_i Y_i$ for $1 \le i \le n$. Hence, we get
	\[d_{t\mathbf{v}} \mathcal{E} \left( \left. \frac{\partial}{\partial x^i} \right|_{t\mathbf{v}}  \right) = d_{t\ell\widehat{\mathbf{v}}} \mathcal{E} \left( A_i(t \ell) \right) = J_i(t \ell) = \tilde{f}_i(t \ell) Y_i(t \ell) = f_i(t \ell) \left. \frac{\partial}{\partial y^i} \right|_{\gamma_{\mathbf{v}}(t)}.\]
	Setting $f_i(t) = \tilde{f}_i(t \ell)$ concludes the proof.
\end{proof}

We have the following result, which is a direct generalization of \cite[Theorem 3.1]{Warner1965}.
\begin{theorem}\label{thm:regularTangentFocalPointsSubmanifold}
	$\mathcal{F}^{\textrm{reg}}$ is open and dense in $\mathcal{F}$. Furthermore, $\mathcal{F}^{\textrm{reg}}$ is an embedded codimension $1$ submanifold of $\hat{\nu}$, with $T_{\mathbf{v}} \hat{\nu} = T_{\mathbf{v}} \mathcal{F}^{\textrm{reg}} \oplus T_{\mathbf{v}} R_{\mathbf{v}}$ for all $\mathbf{v} \in \mathcal{F}^{\textrm{reg}}$.
\end{theorem}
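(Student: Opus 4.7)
The plan is to handle the three claims---openness of $\mathcal{F}^{\textrm{reg}}$ in $\mathcal{F}$, density, and the smooth codimension-$1$ submanifold structure---in turn, with the last being the main step.

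For openness, I would argue directly from \autoref{defn:regularFocalLocus}. If $\mathbf{v} \in \mathcal{F}^{\textrm{reg}}$ is witnessed by a radially convex neighborhood $U$, then any $\mathbf{w} \in \mathcal{F} \cap U$ is, by regularity at $\mathbf{v}$, the unique critical point of $\mathcal{E}$ on $R_{\mathbf{w}} \cap U$, and $\dim \mathcal{K}_{\mathbf{w}} = \dim \mathcal{K}_{\mathbf{v}}$. Then any radially convex sub-neighborhood of $\mathbf{w}$ contained in $U$ witnesses the regularity of $\mathbf{w}$. For density, given $\mathbf{v} \in \mathcal{F}$ with multiplicity $k$, I would fix the radially convex $U$ from \autoref{R3} on which every intersection $R_{\mathbf{u}} \cap U$ carries total focal multiplicity exactly $k$. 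Since focal multiplicities lie in $\{1, \ldots, n-1\}$ by \autoref{rmk:maximumFocalMultiplicity}, the integer $k_0 \coloneqq \min\{\dim \mathcal{K}_{\mathbf{w}} : \mathbf{w} \in \mathcal{F} \cap U\}$ is attained at some $\mathbf{w}_0$. Applying \autoref{R3} at $\mathbf{w}_0$, I obtain a radially convex $U_0 \subset U$ where each ray's total focal multiplicity equals $k_0$; since every focal point in $U_0 \subset U$ has multiplicity at least $k_0 \ge 1$, no such ray can host two focal points. Thus $\mathbf{w}_0 \in \mathcal{F}^{\textrm{reg}} \cap U$, proving density.

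For the submanifold structure, let $\mathbf{v} \in \mathcal{F}^{\textrm{reg}}$ have multiplicity $k$, and pick coordinates $(U, x^1, \ldots, x^n)$ around $\mathbf{v}$ and $(V, y^1, \ldots, y^n)$ around $\mathcal{E}(\mathbf{v})$ as in \autoref{lemma:localCoordinatesAtFocalPoint}. I would consider the Jacobian
\[\Delta(\mathbf{u}) \coloneqq \det \left[\frac{\partial (y^i \circ \mathcal{E})}{\partial x^j}(\mathbf{u})\right],\]
whose zero set cuts out $\mathcal{F} \cap U$. By \autoref{lemma:localCoordinatesAtFocalPoint}, $\Delta(s\mathbf{v}) = \prod_{i=1}^n f_i(s)$ has a zero of order exactly $k$ at $s = 1$. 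Writing $\hat{\nu}$ near $\mathbf{v}$ as $\hat{U}_0 \times (0, \infty) \subset S(\nu) \times (0, \infty)$ via $\mathbf{u} \mapsto (\widehat{\mathbf{u}}, F(\mathbf{u}))$, Malgrange's preparation theorem yields a smooth non-vanishing $\mu$ and a Weierstrass polynomial
\[P(\widehat{\mathbf{u}}, t) = (t - t_0)^k + c_{k-1}(\widehat{\mathbf{u}})(t - t_0)^{k-1} + \cdots + c_0(\widehat{\mathbf{u}}), \qquad c_i(\widehat{\mathbf{v}}) = 0,\]
with $\Delta = \mu \cdot P$ locally, where $t_0 = F(\mathbf{v})$. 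Regularity of $\mathbf{v}$ forces the unique focal point on each nearby ray $R_{\widehat{\mathbf{u}}}$ to have the full multiplicity $k$, so $\Delta(\cdot\,\widehat{\mathbf{u}})$ vanishes to order at least $k$ at a single point; matching against the degree of $P$ forces all $k$ roots of $P(\widehat{\mathbf{u}}, \cdot)$ to coincide. Hence $P(\widehat{\mathbf{u}}, t) = (t - \alpha(\widehat{\mathbf{u}}))^k$ with $\alpha(\widehat{\mathbf{u}}) = t_0 - c_{k-1}(\widehat{\mathbf{u}})/k$ smooth by Vieta. Thus $\mathcal{F}^{\textrm{reg}} \cap U$ is the smooth graph $\{(\widehat{\mathbf{u}}, \alpha(\widehat{\mathbf{u}}))\}$, an embedded codimension-$1$ submanifold transverse to the ray direction, giving the claimed splitting $T_{\mathbf{v}} \hat{\nu} = T_{\mathbf{v}} \mathcal{F}^{\textrm{reg}} \oplus T_{\mathbf{v}} R_{\mathbf{v}}$. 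The main technical nuance is the equivalence between focal multiplicities $\dim \mathcal{K}_{t\widehat{\mathbf{u}}}$ and root multiplicities of $P(\widehat{\mathbf{u}}, \cdot)$; this is forced by the matching total count $k$ from \autoref{R3} on one side and the polynomial degree on the other.
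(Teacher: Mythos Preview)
Your openness and density arguments are correct; the density argument via the minimum multiplicity in $U$ is a clean variant of the paper's descending iteration.

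For the submanifold structure your route differs genuinely from the paper's. You take $\Delta$ to be the full Jacobian determinant, which vanishes to order $k$ along the central ray, and invoke Malgrange preparation to factor $\Delta = \mu P$ with $P$ a degree-$k$ Weierstrass polynomial in the radial variable. The paper instead chooses $\Delta$ to be the $(n-k+1)$-th elementary symmetric function of the eigenvalues of the Jacobian (equivalently, a coefficient of its characteristic polynomial, hence smooth even though individual eigenvalues need not be). This $\Delta$ vanishes only to \emph{first} order at $\mathbf{v}$ along the ray, so it is directly a submersion and $\mathcal{F}^{\textrm{reg}}_U = \Delta^{-1}(0)$ with no preparation theorem needed. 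Your choice of $\Delta$ is the more canonical one, but the argument is heavier; the paper's is slicker once one spots the right symmetric function.

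There is, however, a real gap at your flagged ``technical nuance.'' You need that the unique focal point on a nearby ray $R_{\widehat{\mathbf{u}}}$ (which has focal multiplicity $k$ by regularity) forces $\Delta(\,\cdot\,\widehat{\mathbf{u}})$ to vanish to order at least $k$ there. Your justification---matching the total count $k$ from \autoref{R3} against $\deg P = k$---runs the wrong way: it gives only the \emph{upper} bound, since the sum of \emph{real} root multiplicities of $P(\widehat{\mathbf{u}},\cdot)$ is at most $k$, and it cannot by itself exclude the scenario where $\Delta$ vanishes to some order $k' < k$ with the remaining $k - k'$ roots of $P$ complex. The missing pointwise statement is: at any focal point $\mathbf{w}$ with $\dim\mathcal{K}_{\mathbf{w}} = m$, the determinant of $d\mathcal{E}$ along the ray through $\mathbf{w}$ vanishes to order exactly $m$. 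This is precisely the content of \autoref{R2} (a Schur-complement expansion of $\det d_{t\widehat{\mathbf{w}}}\mathcal{E}$ shows that the leading $(t-t_*)^m$ coefficient is a nonzero multiple of the determinant of the isomorphism $\mathcal{K}_{\mathbf{w}} \to T_{\mathcal{E}(\mathbf{w})}M/\mathrm{Im}\,d_{\mathbf{w}}\mathcal{E}$); equivalently, it follows by applying \autoref{lemma:localCoordinatesAtFocalPoint} at $\mathbf{w}$ rather than only at $\mathbf{v}$ and noting that the order of vanishing of the Jacobian determinant along the ray is coordinate-independent. Once you supply this, your Malgrange argument is complete.
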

\begin{proof}
	Given any open set $U \subset \hat{\nu}$, let us denote $\mathcal{F}^{\textrm{reg}}_U = \mathcal{F}^{\textrm{reg}} \cap U$ and $\mathcal{F}^{\textrm{sing}}_U = \mathcal{F}^{\textrm{sing}} \cap U$. By definition of the regular focal locus, given any $\mathbf{v} \in \mathcal{F}^{\textrm{reg}}$ we have some (radially convex) open set $p \in U \subset \hat{\nu}$ such that every focal point in $U$ is regular. In other words, $\mathcal{F}^{\textrm{reg}}_U = \mathcal{F} \cap U$, proving that $\mathcal{F}^{\textrm{reg}}$ is open in $\mathcal{F}$.
	
	Next, let $\mathbf{v}_0 \in \mathcal{F}^{\textrm{sing}} = \mathcal{F} \setminus \mathcal{F}^{\textrm{reg}}$ be a singular focal point. By \autoref{R3}, we get an arbitrarily small radially convex neighborhood $\mathbf{v}_0 \in U \subset \hat{\nu}$ such that for each $\mathbf{u} \in U$, the number of focal points on the sub-ray $R_{\mathbf{u}} \cap U$ equals the multiplicity of $\mathbf{v}$. Since $\mathbf{v}$ is singular, there is some focal point $\mathbf{v}_1 \in U$ with at least two distinct focal points on $R_{\mathbf{v}_1} \cap U$. But then the focal multiplicity of $\mathbf{v}_1$ is strictly less than that of $\mathbf{v}_0$. If $\mathbf{v}_1$ is regular, we are done. Otherwise, we repeat this argument. Eventually, after some finitely many steps, we either get a regular focal point, or we get a focal point of order $1$. But every order $1$ focal point is automatically regular, as observed in \autoref{example:multiplicityOneIsRegular}. Thus, $U \cap \mathcal{F}^{\textrm{reg}} \ne \emptyset$, proving that $\mathcal{F}^{\textrm{reg}}$ is dense in $\mathcal{F}$.

	In order to show that $\mathcal{F}^{\textrm{reg}}$ is a codimension $1$ submanifold of $\hat{\nu}$, we locally realize $\mathcal{F}^{\textrm{reg}}$ as the zero set of some submersion. Fix $\mathbf{v} \in \mathcal{F}^{\textrm{reg}}$ with focal multiplicity, say, $k$. By \autoref{lemma:localCoordinatesAtFocalPoint}, we get coordinate charts $(U, \mathbf{x}^i)$ and $(V, \mathbf{y}^j)$, respectively around $\mathbf{v}$ and $\mathcal{E}(\mathbf{v}) = \gamma_{\mathbf{v}}(1)$. With these coordinates, we have the eigenvalues of $d\mathcal{E}|_U$ along the ray $R_{\mathbf{v}}$ as the smooth functions $f_1, \dots, f_n$. Denote $\Delta : U \rightarrow \mathbb{R}$ as the $(n-k+1)^{\text{th}}$-elementary symmetric polynomial in $n$ many variables, evaluated on the eigenvalues of $d\mathcal{E}|_{U}$. Even if $d\mathcal{E}|_U$ may not be diagonalizable everywhere, $\Delta$ can be expressed as the sum of all $(n-k+1)\times (n-k+1)$ principal minors of $d\mathcal{E}|_U$, and thus $\Delta$ is a well-defined smooth function. In particular, parametrizing the ray $R_{\mathbf{v}}$ by the curve $\sigma(t) = t \mathbf{v}$, we have
	\begin{equation}\label{eq:Delta}
		\Delta(\mathbf{u}) = \Delta(t \mathbf{v}) = \sum_{1 \le i_1 < \dots < i_{k-1} \le n} f_1(t) \dots \widehat{f_{i_1}(t)} \dots \widehat{f_{i_{k-1}}(t)} \dots f_n(t), \quad \mathbf{u} = t \mathbf{v} \in U \cap R_{\mathbf{v}}.
	\end{equation}
	It follows from \autoref{lemma:localCoordinatesAtFocalPoint} that 
	\[f_1(1) = \dots = f_k(1) = 0, \quad f_{k+1}(1) \ne 0, \dots , f_n(1) \ne 0,\]
	and moreover, we have the radial derivatives
	\[\dot \sigma(1) \left( f_i \right) = d f_i \left( \dot \sigma(1) \right) = \dot f_i(1) > 0, \quad 1 \le i \le k.\]
	Consequently, we observe $\Delta(\mathbf{v}) = 0$, and a simple computation using the Leibniz rule gives us
	\[\dot\sigma(1)\left( \Delta \right) = \left( \sum_{i=1}^k \dot f_i(1) \right) f_{k+1}(1)\dots f_n(1) \ne 0. \]
	Shrinking $U$ as necessary, while keeping it radially convex, we may assume that the radial derivative of  $\Delta$ along each of the rays intersecting $U$ is non-vanishing on $U$. In particular, $\Delta$ is a submersion on $U$. Clearly, the radial directions are transverse to $\Delta^{-1}(0)$, i.e., we have the decomposition
	\[T_{\mathbf{u}} \hat{\nu} = T_{\mathbf{u}}\left( \Delta^{-1}(0) \right) \oplus T_{\mathbf{u}} R_{\mathbf{u}}, \qquad \mathbf{u} \in U.\]
	We now show that $\Delta^{-1}(0) = \mathcal{F}^{\textrm{reg}}_U$. Indeed, for any $\mathbf{u} \in \mathcal{F}^{\textrm{reg}}_U$ we have the multiplicity equals $k$, and thus $d_{\mathbf{u}}\mathcal{E}$ has rank $n - k$. Consequently, at least one of the principal minors of $d\mathcal{E}|_U$ in each of the product terms appearing in the sum of products $\Delta$ vanishes at $\mathbf{u}$. Thus, $\mathcal{F}^{\textrm{reg}}_U \subset \Delta^{-1}(0)$. For the converse, suppose $\Delta(\mathbf{u}) = 0$ for some $\mathbf{u} \in U$. Pick the \emph{unique} focal point $\mathbf{u}_0 \in R_{\mathbf{u}} \cap U$. We have $\Delta(\mathbf{u}_0) = 0 = \Delta(\mathbf{u})$. Also, it follows from the radial convexity of $U$ that the radial line joining $\mathbf{u}$ and $\mathbf{u}_0$ is contained in $U$, and moreover, the radial derivative of $\Delta$ is nonvanishing on this line. Then, by the mean value theorem, we must have $\mathbf{u} = \mathbf{u}_0 \in \mathcal{F}^{\textrm{reg}}_U$. Thus, $\mathcal{F}^{\textrm{reg}}_U = \Delta^{-1}(0)$. The proof then follows.
\end{proof}

\subsection{Components of the Regular Tangent Focal Locus}
Let us now look at the components of the regular tangent focal locus in more detail. We introduce the following definition.
\begin{defn}\label{defn:focalPointType}
	For each tangent focal point $\mathbf{v} \in \mathcal{F}$, we associate a tuple of integers $(k, i)$, where
	\begin{itemize}
		\item $k = \dim \mathcal{K}_{\mathbf{v}}$ is the focal multiplicity, and
		\item $i \ge 1$ is the least integer such that $\mathbf{v} = \lambda_i(\widehat{\mathbf{v}})\widehat{\mathbf{v}}$. 
	\end{itemize}
	We say $\mathbf{v}$ is a tangent focal point of \emph{type $(k, i)$}.
\end{defn}
It is clear that for $\mathbf{v} \in \mathcal{F}$ of type $(k, i)$ we have $F(\mathbf{v}) = \lambda_i(\widehat{\mathbf{v}}) = \dots = \lambda_{i+k - 1}(\widehat{\mathbf{v}})$, and hence, $\mathbf{v} = \lambda_j(\widehat{\mathbf{v}})\widehat{\mathbf{v}}$ for $i \le j < i + k$. Furthermore, $\lambda_{i + k - 1}(\widehat{\mathbf{v}}) \lneq \lambda_{i + k}(\widehat{\mathbf{v}})$, and if $i \ge 2$ then $\lambda_{i-1}(\widehat{\mathbf{v}}) \lneq \lambda_{i}(\widehat{\mathbf{v}})$. We now characterize the connected components of $\mathcal{F}^{\textrm{reg}}$.

\begin{theorem}\label{thm:regularFocalLocusComponent}
	For a connected component $\mathcal{C} \subset \mathcal{F}^{\textrm{reg}}$, the following holds true.
	\begin{enumerate}[label=(\arabic*)]
		\item \label{thm:regularFocalLocusComponent:description} Each $\mathbf{v} \in \mathcal{C}$ has a constant type, say, $(k_0, i_0)$.

		\item \label{thm:regularFocalLocusComponent:boundary} The component $\mathcal{C}$ is an open subset of $\mathcal{F}^{\textrm{reg}}$, and hence, a codimension $1$ submanifold of $\hat{\nu}$. Moreover, the topological boundary $\partial \mathcal{C} \coloneqq \overline{\mathcal{C}} \setminus \mathring{\mathcal{C}} = \overline{\mathcal{C}} \setminus \mathcal{C}$ consists of singular focal points, where $\mathring{\mathcal{C}} = \mathcal{C}$ is the interior of the submanifold, and the closure $\overline{\mathcal{C}}$ is taken in $\hat{\nu}$.
		
		\item \label{thm:regularFocalLocusComponent:boundarySharing} Each $\mathbf{v} \in \partial \mathcal{C}$ is in the boundary of at least one more distinct component. Furthermore, the set 
		\[\mathcal{A}_{\mathbf{v}} \coloneqq \left\{ (k,i) \;\middle|\; \mathbf{v} \in \partial \mathcal{C}^\prime, \; \text{each element in the component $\mathcal{C}^\prime$ has the type $(k, i)$} \right\}\]
		is finite.

		\item \label{thm:regularFocalLocusComponent:embedding} The projection map $\pi : \hat{\nu} \rightarrow S(\nu)$ given by $\pi(\mathbf{v}) = \frac{\mathbf{v}}{F(\mathbf{v})}$ restricts to an embedding on $\mathcal{C}$, and $\widehat{\mathcal{C}} \coloneqq \pi(\mathcal{C})$ is open in $S(\nu)$.
	\end{enumerate}
\end{theorem}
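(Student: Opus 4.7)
The plan is to address the four claims in the stated order, drawing on \autoref{thm:normalExponentialWarnerRegular}, \autoref{thm:regularTangentFocalPointsSubmanifold}, and \autoref{cor:focalTimeContinuous}. For \autoref{thm:regularFocalLocusComponent:description}, I would show the type map $\mathbf{v} \mapsto (k(\mathbf{v}), i(\mathbf{v}))$ is locally constant on $\mathcal{F}^{\textrm{reg}}$ and conclude by connectedness. Local constancy of $k$ follows from \autoref{R3}: on the radially convex neighborhood guaranteed there, every nearby sub-ray of $U$ carries critical points of total multiplicity $\dim \mathcal{K}_{\mathbf{v}}$, and if such a critical point is regular its multiplicity equals this count. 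For the index $i$, if $\mathbf{v}$ has type $(k_0, i_0)$, the strict inequalities $\lambda_{i_0-1}(\widehat{\mathbf{v}}) < \lambda_{i_0}(\widehat{\mathbf{v}}) = \lambda_{i_0+k_0-1}(\widehat{\mathbf{v}}) < \lambda_{i_0+k_0}(\widehat{\mathbf{v}})$ (the first being vacuous when $i_0 = 1$) persist on a neighborhood of $\widehat{\mathbf{v}}$ in $S(\nu)$ by continuity, pinning the index of nearby regular focal points to $i_0$.

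For \autoref{thm:regularFocalLocusComponent:boundary}, I would use that $\mathcal{F}^{\textrm{reg}}$ is a codimension-one embedded submanifold by \autoref{thm:regularTangentFocalPointsSubmanifold}, so its connected components are clopen in $\mathcal{F}^{\textrm{reg}}$; in particular $\mathring{\mathcal{C}} = \mathcal{C}$ in the intrinsic topology, and the claimed equality $\partial \mathcal{C} = \overline{\mathcal{C}} \setminus \mathcal{C}$ follows. Since $\mathcal{F}$ is closed in $\hat{\nu}$ (as the zero set of $\det d\mathcal{E}$), $\overline{\mathcal{C}} \subset \mathcal{F}$. If a hypothetical $\mathbf{v} \in \partial\mathcal{C}$ were regular, it would lie in some component $\mathcal{C}^\prime$, which is open in $\mathcal{F}^{\textrm{reg}}$ and hence meets $\mathcal{C}$, forcing $\mathcal{C}^\prime = \mathcal{C}$ and contradicting $\mathbf{v} \notin \mathcal{C}$.

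The principal obstacle is \autoref{thm:regularFocalLocusComponent:boundarySharing}. Fix $\mathbf{v}_0 \in \partial\mathcal{C}$ of multiplicity $k$, and pick a radially convex neighborhood $U$ as in \autoref{R3}, so every ray meeting $U$ carries critical points with total multiplicity exactly $k$. Choose $\mathbf{v}_n \in \mathcal{C}$ with $\mathbf{v}_n \to \mathbf{v}_0$, all of type $(k_0, i_0)$. Since $\mathbf{v}_0$ is singular by \autoref{thm:regularFocalLocusComponent:boundary}, either $k_0 < k$ (so by mass balance each $R_{\mathbf{v}_n} \cap U$ carries another critical point $\mathbf{w}_n \ne \mathbf{v}_n$) or $k_0 = k$, in which case the failure of regularity provides rays in $U$ with more than one critical point, from which one extracts an auxiliary sequence $\mathbf{w}_n$; a limit of $\mathbf{w}_n$ away from $\mathbf{v}_0$ would produce a second critical point on $R_{\mathbf{v}_0} \cap U$, violating \autoref{R3}, so $\mathbf{w}_n \to \mathbf{v}_0$. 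Perturbing each $\mathbf{w}_n$ into $\mathcal{F}^{\textrm{reg}}$ using the density established in \autoref{thm:regularTangentFocalPointsSubmanifold}, the resulting types $(k^\prime, i^\prime)$ are constrained by $k^\prime \le k$ and $i^\prime \in \{i_0, \ldots, i_0 + k - 1\}$ (the latter because $i^\prime$ is determined by the position on the ray and only indices in this range contribute to the mass $k$ concentrating at $\mathbf{v}_0$), which gives finiteness of $\mathcal{A}$. A pigeonhole then produces a subsequence contained in a single component $\mathcal{C}^\prime$, yielding $\mathbf{v}_0 \in \overline{\mathcal{C}^\prime}$; since the perturbed points and $\mathbf{v}_n$ lie on the same rays at distinct positions, their indices differ, so $\mathcal{C}^\prime \ne \mathcal{C}$. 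The delicate point, which I expect to be the main difficulty, is rigorously carrying out the mass-balance argument in the $k_0 = k$ subcase and showing the perturbation can be chosen so that infinitely many $\mathbf{w}_n$'s land in one component.

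For \autoref{thm:regularFocalLocusComponent:embedding}, the transverse splitting $T_{\mathbf{v}}\hat{\nu} = T_{\mathbf{v}}\mathcal{F}^{\textrm{reg}} \oplus T_{\mathbf{v}}R_{\mathbf{v}}$ from \autoref{thm:regularTangentFocalPointsSubmanifold} identifies $\ker d_{\mathbf{v}}\pi$ with $T_{\mathbf{v}} R_{\mathbf{v}}$, so $d\pi|_{T_{\mathbf{v}}\mathcal{C}}$ is an isomorphism onto $T_{\widehat{\mathbf{v}}}S(\nu)$. Hence $\pi|_{\mathcal{C}}$ is a local diffeomorphism and $\widehat{\mathcal{C}} = \pi(\mathcal{C})$ is open in $S(\nu)$. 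For global injectivity, if $\mathbf{u}, \mathbf{v} \in \mathcal{C}$ satisfy $\widehat{\mathbf{u}} = \widehat{\mathbf{v}}$, then part \autoref{thm:regularFocalLocusComponent:description} gives both the common type $(k_0, i_0)$, so $F(\mathbf{u}) = \lambda_{i_0}(\widehat{\mathbf{u}}) = \lambda_{i_0}(\widehat{\mathbf{v}}) = F(\mathbf{v})$ and thus $\mathbf{u} = \mathbf{v}$, turning the local diffeomorphism into an embedding.
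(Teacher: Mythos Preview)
Your arguments for \autoref{thm:regularFocalLocusComponent:description}, \autoref{thm:regularFocalLocusComponent:boundary}, and \autoref{thm:regularFocalLocusComponent:embedding} are essentially the paper's: local constancy of the type via \autoref{R3} and continuity of the $\lambda_j$; closedness of $\mathcal{F}$ together with $\mathcal{C}$ being clopen in $\mathcal{F}^{\textrm{reg}}$; and the transversality $T_{\mathbf{v}}\hat\nu = T_{\mathbf{v}}\mathcal{F}^{\textrm{reg}} \oplus T_{\mathbf{v}}R_{\mathbf{v}}$ plus the type constraint for injectivity. These are correct.

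For \autoref{thm:regularFocalLocusComponent:boundarySharing} the paper takes a different and cleaner route than your constructive case split. Fixing $\mathbf{v}\in\partial\mathcal{C}$ of type $(k',i')$ and a neighborhood $U$ as in \autoref{R3}, the paper first observes (via continuity of the $\lambda_j$) that every tangent focal point in $U$ has type $(\kappa,\iota)$ with $\kappa\le k'$ and $i'\le\iota\le\iota+\kappa-1\le i'+k'-1$, which gives finiteness of $\mathcal{A}$ directly. For the existence of a second component, the paper argues by contradiction: assume $U\cap\mathcal{F}^{\textrm{reg}}=U\cap\mathcal{C}$. Singularity of $\mathbf{v}$ produces a ray in $U$ carrying at least two focal points, each of multiplicity strictly below $k'$; since they sit at different positions on the ray they have different types, hence (under the hypothesis) not all can be regular. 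Picking a singular one and iterating, the multiplicity strictly decreases until one reaches a singular point of multiplicity $1$, contradicting \autoref{example:multiplicityOneIsRegular}. This descent argument avoids entirely the two difficulties you flag: there is no $k_0=k$ versus $k_0<k$ split, and there is no need to pigeonhole a sequence into a single component. Your acknowledged obstacle---forcing infinitely many perturbed $\mathbf{w}_n$ into one component---is genuine: finiteness of types does not preclude infinitely many components of the same type accumulating at $\mathbf{v}_0$, so the pigeonhole step as written does not close. The paper's contradiction approach sidesteps this by only needing that \emph{some} regular point outside $\mathcal{C}$ lies in each small $U$.
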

\begin{proof}
	Suppose $\mathbf{v}_0 \in \mathcal{C} \subset \mathcal{F}^{\textrm{reg}}$ has the type $(k_0, i_0)$. Consider the set 
	\[X \coloneqq \left\{ \mathbf{u} \in \mathcal{F}^{\textrm{reg}} \;\middle|\; \text{$\mathbf{u}$ has type $(k_0, i_0)$} \right\}.\]
	Clearly $X \ne \emptyset$ as $\mathbf{v}_0 \in X$. Let us show that $X$ is both open and closed in $\mathcal{F}^{\textrm{reg}}$, whence it is a union of connected components. For any $\mathbf{v} \in X$, get a radially convex open set, say, $\mathbf{v} \in U \subset \hat{\nu}$ as in \autoref{R3}. As $\mathbf{v} \in \mathcal{F}^{\textrm{reg}}$, we may assume that for each $\mathbf{u} \in U$, there is exactly one focal point on $R_{\mathbf{u}} \cap U$, which necessarily has multiplicity $k_0$. Denote $\hat{U} = \left\{ \widehat{\mathbf{u}} \;\middle|\; \mathbf{u} \in U \right\} \subset S(\nu)$. By the continuity of the focal time maps (\autoref{cor:focalTimeContinuous}), shrinking $U$ and $\hat{U}$ if necessary, we see that $\lambda_j(\widehat{\mathbf{u}})\widehat{\mathbf{u}} \in U$ for $i_0 \le j < i_0 + k_0$. But then each focal point on $R_{\mathbf{u}} \cap U$ has the type $(k_0, i_0)$. Thus, $\mathbf{v} \in \mathcal{F}^{\textrm{reg}} \cap U \subset X$, showing that $X$ is open in $\mathcal{F}^{\textrm{reg}}$. Next, suppose $\mathbf{v}_i \in X$ is a sequence converging to some $\mathbf{v} \in \mathcal{F}^{\textrm{reg}}$. As any neighborhood of $\mathbf{v}$ contains some $\mathbf{v}_i$ of multiplicity $k_0$, it follows by the regularity that $\mathbf{v}$ has multiplicity $k_0$ as well. Also, by the continuity of the focal time maps, we have 
	\[\lambda_j(\widehat{\mathbf{v}}) = \lim_i \lambda_j(\widehat{\mathbf{v}}_i) = \lim_i F(\mathbf{v}_i) = F(\mathbf{v}), \quad i_0 \le j < i_0 + k_0.\]
	In particular, $\mathbf{v}$ is of type $(k_0, i_0)$. Hence, $\mathbf{v} \in X$, proving that $X$ is closed in $\mathcal{F}^{\textrm{reg}}$. Thus, $X$ is union of connected subsets of $\mathcal{F}^{\textrm{reg}}$. Since $\mathbf{v}_0 \in \mathcal{C} \cap X$, we have $\mathcal{C} \subset X$, which proves \autoref{thm:regularFocalLocusComponent:description}.

	As $\mathcal{F}^{\textrm{reg}}$ is a codimension $1$ submanifold of $\hat{\nu}$ (\autoref{thm:regularTangentFocalPointsSubmanifold}), any connected component must be open. Thus, $\mathcal{C}$ is open in $\mathcal{F}^{\textrm{reg}}$, and consequently, a codimension $1$ submanifold of $\hat{\nu}$. Now, note that $\mathcal{C} \subset \mathcal{F} \Rightarrow \overline{\mathcal{C}} \subset \overline{\mathcal{F}} = \mathcal{F}$, since $\mathcal{F}$ is closed in $\hat{\nu}$, being the set of critical points of $\mathcal{E}$. Now, the connected component $\mathcal{C}$ is closed in $\mathcal{F}^{\textrm{reg}}$, and hence, a limit point in $\overline{\mathcal{C}} \setminus \mathcal{C}$ cannot be regular. Thus, $\partial \mathcal{C} \subset \mathcal{F}^{\textrm{sing}}$, proving \autoref{thm:regularFocalLocusComponent:boundary}.
	
	Suppose $\mathbf{v} \in \partial \mathcal{C}$ has the type $(k^\prime, i^\prime)$. Choose some neighborhood $\mathbf{v} \in U \subset \hat{\nu}$ as in \autoref{R3}. By the continuity of $\lambda_j$ (\autoref{cor:focalTimeContinuous}), shrinking $U$ if necessary, it follows that for each $i^\prime \le j < i^\prime + k^\prime$ and for each ray that intersects $U$ must intersect in a tangent focal point $\mathbf{u} \in U$ such that $\mathbf{u} = \lambda_j(\widehat{\mathbf{u}}) \widehat{\mathbf{u}}$. But then by \autoref{R3}, every tangent focal point in $U$ are precisely of this form. Consequently, every tangent focal point in $U$ must have a type, say, $(\kappa, \iota)$ satisfying 
	\[\kappa \le k^\prime, \qquad i^\prime \le \iota \le \iota + \kappa - 1 \le i^\prime + k^\prime - 1.\]
	Since there are only finitely many such tuples, and since by \autoref{thm:regularFocalLocusComponent:description} every connected component determines one such tuple, it follows that the set $\mathcal{A}_{\mathbf{v}}$ as in \autoref{thm:regularFocalLocusComponent:boundarySharing} is finite. Let us now suppose $U \cap \mathcal{F}^{\textrm{reg}} = U \cap \mathcal{C}$, i.e., the only regular tangent focal points near $\mathbf{v}$ are from $\mathcal{C}$. Since $\mathbf{v} \in \mathcal{F}^{\textrm{sing}}$, there is some ray that intersects $U$ in at least $2$ distinct tangent focal points, and all of them then have multiplicity strictly less than that of $\mathbf{v}$. In particular, they cannot all have the same type. Hence, not all of them can be regular, as the regular ones must belong to $\mathcal{C}$ whence they must have the same type by \autoref{thm:regularFocalLocusComponent:description}. Choose some singular focal point, say, $\mathbf{v}_1 \in U$ on the ray. But then, after a finitely many steps, we get some $\mathbf{v}_l \in U$ with multiplicity $1$, which is regular (\autoref{example:multiplicityOneIsRegular}). This is a contradiction. Hence, $U$ must intersect at least two distinct components. This proves \autoref{thm:regularFocalLocusComponent:boundarySharing}.

	Next, for the projection map $\pi : \hat{\nu} \rightarrow S(\nu)$, observe that the kernel $d\pi$ is along the rays, i.e., $\ker d_{\mathbf{v}} \pi = T_{\mathbf{v}} R_{\mathbf{v}}$ for all $\mathbf{v} \in \hat{\nu}$. Then by \autoref{thm:regularTangentFocalPointsSubmanifold}, $\ker d \pi$ is transverse to the component $\mathcal{C}$ at each point. Consequently, $\pi|_{\mathcal{C}}$ is a submersion, and hence, $\widehat{\mathcal{C}} = \pi(\mathcal{C})$ is open in $S(\nu)$. Now, $\pi|_{\mathcal{C}}$ is an immersion as well, since $\dim S(\nu) = n - 1 = \dim \mathcal{C}$. Let us show that $\pi$ is injective. Suppose $\mathbf{v} \coloneqq \pi(\mathbf{v}_1) = \pi(\mathbf{v}_2)$ for some $\mathbf{v}_1, \mathbf{v}_2 \in \mathcal{C}$. Since both $\mathbf{v}_j$ has type $(k, i)$, we have $\mathbf{v}_1 = F(\mathbf{v})\mathbf{v} = \mathbf{v}_2$. But then, by the implicit function theorem, $\pi|_{\mathcal{C}}$ is an embedding, proving \autoref{thm:regularFocalLocusComponent:embedding}. This concludes the proof.
\end{proof}

\begin{defn}\label{defn:jthTangentFocalLocus}
	For $j \ge 1$, define the \emph{$j^{\text{th}}$-tangent focal locus} as 
	\[\mathcal{F}_j \coloneqq \left\{ \lambda_j(\mathbf{v})\mathbf{v} \;\middle|\; \mathbf{v} \in S(\nu), \quad \lambda_j(\mathbf{v}) < \infty \right\},\]
	where $\lambda_j : S(\nu) \rightarrow (0, \infty]$ is the $j^{\text{th}}$-focal time map. Denote $\mathcal{F}^{\textrm{reg}}_j \coloneqq \mathcal{F}_j \cap \mathcal{F}^{\textrm{reg}}$ as the \emph{$j^{\text{th}}$ regular tangent focal locus}.
\end{defn}

It is immediate that if $\mathbf{v} \in \mathcal{F}$ has type $(k, i)$ then $\mathbf{v} \in \mathcal{F}_j$ for $i \le j < i + k$. The next result generalizes \cite[Theorem A]{Bishop77}.

\begin{theorem}\label{thm:regularJthTangentFocalLocus}
	For $j \ge 1$, we have $\mathcal{F}^{\textrm{reg}}_j$ is open and closed in $\mathcal{F}^{\textrm{reg}}$, and thus $\mathcal{F}^{\textrm{reg}}_j$ is the disjoint union of connected components of $\mathcal{F}^{\textrm{reg}}$. Moreover, $\mathcal{F}^{\textrm{reg}}_j$ is open and dense in $\mathcal{F}_j$, and $\mathcal{F}_j = \overline{\mathcal{F}^{\textrm{reg}}_j}$.
\end{theorem}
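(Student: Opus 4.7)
The plan is to verify the four assertions separately, with the density claim requiring the bulk of the work. First, by \autoref{thm:regularFocalLocusComponent}, every connected component $\mathcal{C}$ of $\mathcal{F}^{\textrm{reg}}$ carries a constant type $(k_0, i_0)$ and every $\mathbf{v} \in \mathcal{C}$ satisfies $F(\mathbf{v}) = \lambda_l(\widehat{\mathbf{v}})$ for $i_0 \le l < i_0 + k_0$. Consequently, either $\mathcal{C} \subset \mathcal{F}_j$ (precisely when $i_0 \le j < i_0 + k_0$) or $\mathcal{C} \cap \mathcal{F}_j = \emptyset$; thus $\mathcal{F}^{\textrm{reg}}_j$ is a disjoint union of connected components of $\mathcal{F}^{\textrm{reg}}$, so is both open and closed in $\mathcal{F}^{\textrm{reg}}$. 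Its openness in $\mathcal{F}_j$ then follows because $\mathcal{F}^{\textrm{reg}}$ is open in $\mathcal{F}$ by \autoref{thm:regularTangentFocalPointsSubmanifold} and $\mathcal{F}_j \subset \mathcal{F}$. For the inclusion $\overline{\mathcal{F}^{\textrm{reg}}_j} \subset \mathcal{F}_j$ (closure taken in $\hat{\nu}$), I would apply continuity of $\lambda_j$ from \autoref{cor:focalTimeContinuous}: a limit $\mathbf{v}_n \to \mathbf{v}$ in $\hat{\nu}$ with $\mathbf{v}_n \in \mathcal{F}^{\textrm{reg}}_j$ satisfies $F(\mathbf{v}) = \lim F(\mathbf{v}_n) = \lim \lambda_j(\widehat{\mathbf{v}}_n) = \lambda_j(\widehat{\mathbf{v}}) < \infty$, so $\mathbf{v} \in \mathcal{F}_j$.

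The remaining task is density of $\mathcal{F}^{\textrm{reg}}_j$ in $\mathcal{F}_j$. My approach is induction on the focal multiplicity $k = \dim \mathcal{K}_{\mathbf{v}}$, which is bounded above by \autoref{rmk:maximumFocalMultiplicity}. The base case $k = 1$ is immediate from \autoref{example:multiplicityOneIsRegular}, since every multiplicity-one focal point is automatically regular. For the inductive step, take $\mathbf{v} \in \mathcal{F}_j$ of type $(k, i)$ with $i \le j < i + k$. If $\mathbf{v}$ is regular there is nothing to do; otherwise $\mathbf{v} \in \mathcal{F}^{\textrm{sing}}$. Given any open neighborhood $V \subset \hat{\nu}$ of $\mathbf{v}$, I would choose a smaller radially convex open neighborhood $U \subset V$ as in \autoref{R3}, shrunk further via continuity of $\lambda_i, \dots, \lambda_{i + k - 1}$ so that $\lambda_l(\widehat{\mathbf{u}}) \widehat{\mathbf{u}} \in U$ for every $\widehat{\mathbf{u}} \in \pi(U)$ and every $i \le l < i + k$.

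The singularity of $\mathbf{v}$ then forces some ray $R_{\widehat{\mathbf{u}}}$ to meet $U$ in at least two distinct focal points. The multiplicity-preservation of \autoref{R3} ensures that these focal points contribute a total multiplicity equal to $k$ along this ray, so each individually has multiplicity strictly less than $k$. Exactly one of them is the $j$-th focal point $\mathbf{p} \coloneqq \lambda_j(\widehat{\mathbf{u}}) \widehat{\mathbf{u}}$, which lies in $\mathcal{F}_j \cap U \subset V$ and has multiplicity $< k$. The inductive hypothesis gives $\mathbf{p} \in \overline{\mathcal{F}^{\textrm{reg}}_j}$, and since $V$ is an open neighborhood of $\mathbf{p}$ as well, one can select $\mathbf{w} \in \mathcal{F}^{\textrm{reg}}_j \cap V$. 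As $V$ was an arbitrary open neighborhood of $\mathbf{v}$, this shows $\mathbf{v} \in \overline{\mathcal{F}^{\textrm{reg}}_j}$, completing the induction.

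The main obstacle is guaranteeing that specifically the $j$-th focal point on the produced ray has multiplicity strictly smaller than $k$---without this the induction would be useless. This is precisely what the combination of \autoref{R3} (the total focal multiplicity on each nearby ray equals $k$) and the singular hypothesis (some nearby ray actually splits into several focal points inside $U$) delivers, while continuity of the focal-time maps from \autoref{cor:focalTimeContinuous} keeps the auxiliary $j$-th focal point $\mathbf{p}$ inside the prescribed neighborhood $V$. Everything else is topological bookkeeping with the types $(k_0, i_0)$.
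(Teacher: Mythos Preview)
Your proof is correct and follows essentially the same route as the paper. Both arguments deduce the open-and-closed property from the constancy of types on components (\autoref{thm:regularFocalLocusComponent}), use continuity of $\lambda_j$ for the closure inclusion, and establish density by descending through multiplicities via \autoref{R3}; you package the descent as a formal induction on $k$, while the paper phrases it as an iterative ``after finitely many steps'' argument terminating at multiplicity one, but the content is identical.
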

\begin{proof}
	Suppose $\mathbf{v} \in \mathcal{F}^{\textrm{reg}}_j$ has type $(k, i)$, whence $i \le j < i + k$. Then, for the connected component $\mathcal{C} \subset \mathcal{F}^{\textrm{reg}}$ through $\mathbf{v}$, each tangent focal point $\mathbf{u} \in \mathcal{C}$ has the type $(k, i)$ by \autoref{thm:regularFocalLocusComponent} \autoref{thm:regularFocalLocusComponent:description}. In particular, $\mathbf{u} = \lambda_j(\widehat{\mathbf{u}})\widehat{\mathbf{u}}$ for all $\mathbf{u} \in \mathcal{C}$, which implies $\mathcal{C} \subset \mathcal{F}^{\textrm{reg}}_j$. This shows $\mathcal{F}^{\textrm{reg}}_j \subset \mathcal{F}^{\textrm{reg}}$ is open as $\mathcal{C}$ is open in $\mathcal{F}^{\textrm{reg}}$. Next, suppose $\mathbf{v}_i \in \mathcal{F}^{\textrm{reg}}_j$ converges to some $\mathbf{v} \in \mathcal{F}^{\textrm{reg}}$. Then, 
	\[F(\mathbf{v}) = \lim_i F(\mathbf{v}_i) = \lim_i \lambda_j(\widehat{\mathbf{v}}_i) = \lambda_j(\widehat{\mathbf{v}}),\]
	which implies $\mathbf{v} = \lambda_j(\widehat{\mathbf{v}})\widehat{\mathbf{v}}$. Thus, $\mathbf{v} \in \mathcal{F}^{\textrm{reg}}_j$, proving that $\mathcal{F}^{\textrm{reg}}_j$ is closed in $\mathcal{F}^{\textrm{reg}}$. Clearly, $\mathcal{F}^{\textrm{reg}}_j$ is union of connected components of $\mathcal{F}^{\textrm{reg}}$.

	In order to prove the denseness, let $\mathbf{v} \in \mathcal{F}^{\textrm{sing}}_j = \mathcal{F}_j \setminus \mathcal{F}^{\textrm{reg}}_j$. Suppose $\mathbf{v}$ has the type $(k^\prime, i^\prime)$. Choose some neighborhood $\mathbf{v} \in U \subset \hat{\nu}$ as in \autoref{R3}. By the continuity of $\lambda_j$ (\autoref{cor:focalTimeContinuous}), shrinking $U$ as necessary, we may assume that each ray intersects $U$ at a unique tangent focal point $\mathbf{u}$ satisfying $\mathbf{u} \in \mathcal{F}_j$. As $\mathbf{v}$ is singular, some ray must intersect $U$ in at least two distinct tangent focal points, and exactly one of them, say, $\mathbf{v}_1$ is in $\mathcal{F}_j$. Also, the multiplicity of $\mathbf{v}_1$ is strictly less than that of $\mathbf{v}$. If $\mathbf{v}_1 \in \mathcal{F}^{\textrm{reg}}$, then we are done. Otherwise, after finitely many steps, we get some $\mathbf{v}_l \in \mathcal{F}_j$, such that either $\mathbf{v}_l \in \mathcal{F}^{\textrm{reg}}$, or $\mathbf{v}_l$ has multiplicity $1$, whence, $\mathbf{v}_l \in \mathcal{F}^{\textrm{reg}}$ as noted in \autoref{example:multiplicityOneIsRegular}. Thus, $\mathcal{F}^{\textrm{reg}}_j$ is dense in $\mathcal{F}_j$. If we have $\mathbf{v}_i \in \mathcal{F}_j$ converging to some $\mathbf{v} \in \hat{\nu}$, similarly as above, we have $F(\mathbf{v}) = \lim F(\mathbf{v}_i) = \lim \lambda_j(\widehat{\mathbf{v}}_i) = \lambda_j(\widehat{\mathbf{v}})$, showing that $\mathbf{v} \in \mathcal{F}_j$. Thus, $\mathcal{F}_j$ is closed in $\hat{\nu}$, and hence, $\mathcal{F}_j =\overline{\mathcal{F}^{\textrm{reg}}_j}$, concluding the proof.
\end{proof}

\subsection{Smoothness of the Focal Time Maps}
The discussion so far lets us identify an open set of $S(\nu)$ on which $j^{\text{th}}$-focal time map $\lambda_j : S(\nu) \rightarrow (0, \infty]$ is smooth. First, we observe the following.
\begin{prop}\label{prop:focalTimeSmooth}
	Let $\mathcal{C}$ be a connected component of $\mathcal{F}^{\textrm{reg}}$. If the tangent focal points in $\mathcal{C}$ have the type $(k_0, i_0)$, then for each $i_0 \le j < i_0 + k_0$, the focal time map $\lambda_j$ is smooth on $\widehat{\mathcal{C}} = \pi(\mathcal{C}) \subset S(\nu)$. For any $\mathbf{v} \in \mathcal{C}$ and $\mathbf{x} \in T_{\widehat{\mathbf{v}}} S(\nu)$, we have 
	\begin{equation}\label{eq:derivativeJthFocalTime}
		d_{\widehat{\mathbf{v}}}\lambda_j(\mathbf{x}) = \frac{g_{\mathbf{v}}\left( \mathbf{v}, A_{\mathbf{v}}(d\pi(\mathbf{x})) \right)}{\sqrt{\lambda_j(\widehat{\mathbf{v}})}},
	\end{equation}
	where $\pi : \hat{\nu} \rightarrow N$ is the projection map, and $A_{\mathbf{v}}$ is the shape operator of $N$ in the direction $\mathbf{v}$.
\end{prop}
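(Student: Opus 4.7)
The smoothness of $\lambda_j$ on $\widehat{\mathcal{C}}$ falls out of the structural results already proved. By \autoref{thm:regularFocalLocusComponent} \autoref{thm:regularFocalLocusComponent:embedding}, the restriction $\pi|_{\mathcal{C}}: \mathcal{C} \to \widehat{\mathcal{C}}$ is a diffeomorphism; let $\sigma: \widehat{\mathcal{C}} \to \mathcal{C}$ denote its smooth inverse. By \autoref{thm:regularFocalLocusComponent} \autoref{thm:regularFocalLocusComponent:description} together with \autoref{defn:focalPointType}, every $\mathbf{u} \in \mathcal{C}$ has type $(k_0, i_0)$ and therefore satisfies $\mathbf{u} = \lambda_j(\widehat{\mathbf{u}})\widehat{\mathbf{u}}$ for all $i_0 \le j < i_0 + k_0$. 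Hence $\sigma(\widehat{\mathbf{u}}) = \lambda_j(\widehat{\mathbf{u}})\widehat{\mathbf{u}}$, and consequently $\lambda_j|_{\widehat{\mathcal{C}}} = F \circ \sigma$ is a composition of smooth maps.

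For the derivative formula, fix $\mathbf{v} = \lambda_j(\widehat{\mathbf{v}})\widehat{\mathbf{v}} \in \mathcal{C}$, $\mathbf{x} \in T_{\widehat{\mathbf{v}}} S(\nu)$, and a smooth curve $\widehat{\mathbf{v}}_s$ in $S(\nu)$ with $\widehat{\mathbf{v}}_0 = \widehat{\mathbf{v}}$ and $\dot{\widehat{\mathbf{v}}}_0 = \mathbf{x}$. Set $T_s \coloneqq \lambda_j(\widehat{\mathbf{v}}_s)$, $T \coloneqq T_0$, and form the $N$-geodesic variation $\Lambda(s, t) \coloneqq \mathcal{E}(t\widehat{\mathbf{v}}_s)$ of $\gamma \coloneqq \gamma_{\widehat{\mathbf{v}}}$. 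The variation field $V(t) \coloneqq \partial_s \Lambda(0, t)$ is an $N$-Jacobi field along $\gamma$ whose initial data are $V(0) = d\pi(\mathbf{x}) \in T_pN$ and, from \autoref{prop:NJacobiEquation}, $\dot V(0) - A_{\widehat{\mathbf{v}}}(V(0)) \in (T_pN)^{\perp_{g_{\widehat{\mathbf{v}}}}}$. Using the smoothness just established together with standard ODE regularity, one also selects a smooth family of non-zero $N$-Jacobi fields $J_s$ along $\gamma_s(t) \coloneqq \mathcal{E}(t\widehat{\mathbf{v}}_s)$ with $J_s(T_s) = 0$; write $J \coloneqq J_0$.

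Differentiating the identity $J_s(T_s) = 0$ at $s = 0$ with the Chern covariant derivative along the curve $s \mapsto \gamma_s(T_s)$ and applying the chain rule yields
\[
    Y(T) + d_{\widehat{\mathbf{v}}}\lambda_j(\mathbf{x})\,\dot J(T) = 0,
\]
where $Y$ is an $N$-Jacobi field along $\gamma$ produced, via the commutation $\nabla_s \partial_t = \nabla_t \partial_s$, from an auxiliary three-parameter family that simultaneously generates the $J_s$. Pairing this identity against $V$ under $g_{\dot\gamma}$, applying \autoref{lemma:jacobiFieldAdjoint} to the pairs $(J, V)$ and $(Y, V)$, and using $J(T) = 0$ collapses the $t = T$ contribution and shifts the computation to data at $t = 0$. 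There the $N$-Jacobi boundary conditions bring in $V(0) = d\pi(\mathbf{x})$ and the shape operator $A_{\widehat{\mathbf{v}}}$, while the scaling identities $g_{\lambda\mathbf{v}} = g_{\mathbf{v}}$ and $A_{\lambda\mathbf{v}} = \lambda A_{\mathbf{v}}$ (for $\lambda > 0$), together with $F(\mathbf{v}) = \lambda_j(\widehat{\mathbf{v}})$, account for the explicit normalization appearing in \autoref{eq:derivativeJthFocalTime}.

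The principal obstacle will be the Finsler-specific bookkeeping of covariant derivatives. In \autoref{defn:chernConnection}, $\nabla^W$ depends on the reference vector $W$ through a Cartan-tensor correction in its almost-metric compatibility, so the $s$-derivative of $J_s$ and the identification of $Y$ as an $N$-Jacobi field must be executed with a consistent reference (most naturally $\dot\gamma_s$), and the Cartan terms must be shown to vanish at the specific points involved by means of \autoref{eq:cartanTensorRelation}. A secondary concern is the smooth selection of the family $J_s$ with $J_s(T_s) = 0$, which amounts to a parameter-dependent linear boundary-value problem whose solvability in a smooth family is underwritten by the non-degeneracy $\dot J(T) \ne 0$ from \autoref{lemma:zerosOfJacobiField}.
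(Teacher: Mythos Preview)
Your smoothness argument is correct and is exactly the paper's: $\lambda_j|_{\widehat{\mathcal C}} = F\circ(\pi|_{\mathcal C})^{-1}$.

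For the derivative, the proposed route has a genuine gap. Granting the identity $Y(T) + d_{\widehat{\mathbf v}}\lambda_j(\mathbf x)\,\dot J(T)=0$, pairing against $V$ and invoking \autoref{lemma:jacobiFieldAdjoint} on the pair $(J,V)$ does \emph{not} isolate $d_{\widehat{\mathbf v}}\lambda_j(\mathbf x)$: since $J$ and $V$ are $N$-Jacobi with $J(T)=0$, the lemma gives $g_{\dot\gamma(T)}(\dot J(T),V(T)) = g_{\dot\gamma(T)}(J(T),\dot V(T)) = 0$, so the coefficient of $d\lambda_j$ drops out. Worse, the assertion that $Y$ (the $s$-covariant derivative of the family $J_s$) is itself an $N$-Jacobi field is generally false---differentiating the Jacobi equation in $s$ yields an \emph{inhomogeneous} equation with curvature-variation source terms. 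Indeed, if $Y$ \emph{were} $N$-Jacobi, applying the same lemma to the pair $(Y,J)$ would force $g_{\dot\gamma(T)}(Y(T),\dot J(T))=0$, hence $d_{\widehat{\mathbf v}}\lambda_j(\mathbf x)\,g_{\dot\gamma(T)}(\dot J(T),\dot J(T))=0$ and so $d\lambda_j\equiv 0$, which is absurd. The ``three-parameter family'' and ``shift to $t=0$'' are not spelled out enough to repair this.

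The paper avoids all of this machinery by computing directly. Since $\lambda_j = F\circ(\pi|_{\mathcal C})^{-1}$ on $\widehat{\mathcal C}$, lift $\mathbf x$ to a curve $\sigma$ in $\mathcal C$ (with $\hat\sigma=\sigma/F(\sigma)$ satisfying $\dot{\hat\sigma}(0)=\mathbf x$) and differentiate $\tau(s)=F(\sigma(s))^2=g_{\sigma}(\sigma,\sigma)$. Almost-metric compatibility gives $\dot\tau(0)=2g_{\mathbf v}(\mathbf v,D^{\sigma}_{c}\sigma(0))$, where $c=\pi\circ\sigma$ is the base curve in $N$; the single variation $\Lambda(s,t)=\exp^\nu(t\sigma(s))$ identifies $D^{\sigma}_{c}\sigma(0)$ with $\dot J(0)$ for the $N$-Jacobi field $J=\partial_s\Lambda|_{s=0}$. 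The boundary condition in \autoref{prop:NJacobiEquation} then yields $g_{\mathbf v}(\mathbf v,\dot J(0))=g_{\mathbf v}(\mathbf v,A_{\mathbf v}(J(0)))=g_{\mathbf v}(\mathbf v,A_{\mathbf v}(d\pi(\mathbf x)))$, and the chain rule for $\sqrt{\tau}$ finishes. No family $J_s$ with moving zeros, no auxiliary three-parameter variation, and no pairing at $t=T$ are needed.
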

\begin{proof}
	By \autoref{thm:regularFocalLocusComponent} \autoref{thm:regularFocalLocusComponent:embedding}, we have $\hat{\pi} : \mathcal{C} \rightarrow \widehat{\mathcal{C}}$ is a diffeomorphism, and $\widehat{\mathcal{C}}$ is an open submanifold of $S(\nu)$. It follows that 
	\[\lambda_j(\mathbf{x}) = F\left( \lambda_j(\mathbf{x})\mathbf{x} \right) = \left( F \circ \left( \hat{\pi}|_{\mathcal{C}} \right)^{-1} \right)(\mathbf{x}), \qquad \mathbf{x} \in \widehat{\mathcal{C}}, \quad  i_0 \le j < i_0 + k_0.\]
	That is, $\lambda_j = F \circ \left( \hat{\pi}|_{\mathcal{C}} \right)^{-1}$ on $\widehat{\mathcal{C}}$ for $i_0 \le j < i_0 + k_0$. As $F$ is smooth on $\hat{\nu}$, we see that $\lambda_j$ is smooth on $\widehat{\mathcal{C}}$ for $i_0 \le j < i_0 + k_0$. 

	In order to compute the derivative, fix some $\mathbf{v} \in \mathcal{C}$ and some $\mathbf{x} \in T_{\widehat{\mathbf{v}}}S(\nu)$. We have a curve $\sigma : (-\epsilon,\epsilon) \rightarrow \mathcal{C}$ passing through $\mathbf{v}$, such that $\dot{\hat{\sigma}}(0) = \mathbf{x}$, where $\hat{\sigma}(s) = \frac{\sigma(s)}{F(\sigma(s))}$. Denote the curve $c = \pi \circ \sigma$ in $N$, so that $\sigma \in \Gamma c^* \hat{\nu}$. In particular, $\dot c(0) = d\pi(\mathbf{x})$. Now, consider the function $\tau(s) = F(\sigma(s))^2 = g_{\sigma(s)}\left( \sigma(s), \sigma(s) \right)$. Then, from \autoref{defn:covariantDerivative} we have 
	\[\dot \tau(0) = \left.\frac{d}{dt} \right|_{s=0} g_\sigma(\sigma, \sigma) = 2 g_{\mathbf{v}}\left( \mathbf{v}, D^\sigma_c \sigma(0) \right).\]
	Consider the $N$-geodesic variation $\Lambda : (-\epsilon, \epsilon) \times [0,1] \rightarrow M$ given by $\Lambda(s, t) = \exp^\nu(t \sigma(s))$. Denote the $N$-Jacobi field $J(t) = \frac{\partial}{\partial s}|_{s = 0}\Lambda(s, t)$. Set $\gamma_{s_0}(t) = \Lambda(s, t)$ and $\beta_{t_0}(s) = \Lambda(s,t_0)$. Note that $\dot\gamma_s(0) = \sigma(s)$, and $\beta_0(s) = c(s)$. We have the nonvanishing vector field $V(s,t) = \dot \gamma_s(t)$. Now, it follows from \cite[Eq (3)]{Javaloyes2015} that $D^V_{\gamma_s}\dot \beta_t = D^V_{\beta_t}\dot\gamma_s$. Evaluating at $s = 0 = t$ we get 
	\[D^\sigma_c \sigma(0) = D^V_{\beta_t}\dot\gamma_s|_{(0,0)} = D^V_{\gamma_s}\dot\beta_t|_{(0,0)} = \dot J(0).\]
	Then, it follows from \autoref{eq:NJacobiEquation} that,
	\[\dot \tau(0) = 2 g_{\mathbf{v}} \left( \mathbf{v}, \dot J(0) \right) = 2 g_{\mathbf{v}} \left( \mathbf{v}, A_{\mathbf{v}} (J(0)) \right) = 2 g_{\mathbf{v}}(\mathbf{v}, A_{\mathbf{v}}(d\pi(\mathbf{x}))).\]
	Since $\lambda_j > 0$, we have 
	\[d_{\widehat{\mathbf{v}}}\lambda_j(\mathbf{x}) = \left.\frac{d}{ds}\right|_{s=0}\lambda_j(\hat{\sigma}(s)) = \left.\frac{d}{ds}\right|_{s=0} F(\sigma(s)) = \frac{1}{2} \left( \tau(0) \right)^{-\frac{1}{2} } \dot\tau(0) = \frac{g_{\mathbf{v}}\left( \mathbf{v}, A_{\mathbf{v}}(d\pi(\mathbf{x})) \right)}{\sqrt{\lambda_j(\widehat{\mathbf{v}})}}.\]
	This concludes the proof.
\end{proof}

In particular, if for some $\mathbf{v} \in S(\nu)$ we have multiplicity of $\lambda_j(\mathbf{v})\mathbf{v}$ is constant near $\mathbf{v}$, then $\mathbf{v}$ is regular as observed in \autoref{example:constantFocalTimeIsRegular}, and hence, $\lambda_j$ is smooth near $\mathbf{v}$. This was noted in \cite{Itoh2001}. Similarly, $\lambda_j$ is differentiable near $\mathbf{v}$ if $\dim \mathcal{K}_{\lambda_j(\mathbf{v})\mathbf{v}} = 1$, which makes it regular by \autoref{example:multiplicityOneIsRegular}. This fact was used in \cite{ItohTanaka98}. Let us now identify a more general subset of $S(\nu)$ on which $\lambda_j$ is smooth. Denote,
\[\mathcal{R}_j \coloneqq \left\{ \mathbf{v} \in S(\nu) \;\middle|\; \lambda_j(\mathbf{v}) < \infty, \; \lambda_j(\mathbf{v})\mathbf{v} \in \mathcal{F}^{\textrm{reg}} \right\}.\]
\begin{theorem}\label{thm:smoothnessOfFocalTime}
	The projection $\pi : \hat{\nu} \rightarrow S(\nu)$ restricts to an embedding on $\mathcal{F}^{\textrm{reg}}_j$ with $\mathcal{R}_j = \pi(\mathcal{F}^{\textrm{reg}}_j)$. Furthermore, $\mathcal{R}_j$ is open in $S(\nu)$, and $\lambda_j$ is smooth on $\mathcal{R}_j$. If $\lambda_j$ is finite, then $\mathcal{R}_j$ is dense in $S(\nu)$ as well.
\end{theorem}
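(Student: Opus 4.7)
The plan is to assemble the conclusions of \autoref{thm:regularFocalLocusComponent}, \autoref{thm:regularJthTangentFocalLocus}, and \autoref{prop:focalTimeSmooth} on a component-by-component basis. First I would recall that by \autoref{thm:regularJthTangentFocalLocus}, the set $\mathcal{F}^{\textrm{reg}}_j$ is a disjoint union of connected components of $\mathcal{F}^{\textrm{reg}}$, say $\mathcal{F}^{\textrm{reg}}_j = \bigsqcup_\alpha \mathcal{C}_\alpha$. By \autoref{thm:regularFocalLocusComponent}\autoref{thm:regularFocalLocusComponent:embedding}, for each $\alpha$ the projection $\pi$ restricts to an embedding on $\mathcal{C}_\alpha$, with $\widehat{\mathcal{C}}_\alpha \coloneqq \pi(\mathcal{C}_\alpha)$ open in $S(\nu)$. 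Since each $\mathbf{v} \in \mathcal{C}_\alpha$ satisfies $\mathbf{v} = \lambda_j(\widehat{\mathbf{v}})\widehat{\mathbf{v}}$ (because $\mathcal{C}_\alpha$ has a constant type $(k_\alpha, i_\alpha)$ with $i_\alpha \le j < i_\alpha + k_\alpha$), the map $\pi$ is actually globally injective on $\mathcal{F}^{\textrm{reg}}_j$: if $\widehat{\mathbf{v}}_1 = \widehat{\mathbf{v}}_2$ with $\mathbf{v}_s \in \mathcal{F}^{\textrm{reg}}_j$, then $\mathbf{v}_s = \lambda_j(\widehat{\mathbf{v}}_s)\widehat{\mathbf{v}}_s$ forces $\mathbf{v}_1 = \mathbf{v}_2$. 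Combined with being a local embedding, this gives the asserted global embedding.

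Next I would identify the image. Given $\mathbf{v} \in S(\nu)$ with $\lambda_j(\mathbf{v}) < \infty$ and $\lambda_j(\mathbf{v}) \mathbf{v} \in \mathcal{F}^{\textrm{reg}}$, one has $\lambda_j(\mathbf{v})\mathbf{v} \in \mathcal{F}_j \cap \mathcal{F}^{\textrm{reg}} = \mathcal{F}^{\textrm{reg}}_j$, and $\pi(\lambda_j(\mathbf{v})\mathbf{v}) = \mathbf{v}$; conversely, if $\mathbf{v} = \pi(\mathbf{w})$ with $\mathbf{w} \in \mathcal{F}^{\textrm{reg}}_j$, then $\mathbf{w} = \lambda_j(\widehat{\mathbf{w}})\widehat{\mathbf{w}} = \lambda_j(\mathbf{v})\mathbf{v}$, so $\mathbf{v} \in \mathcal{R}_j$. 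Thus $\mathcal{R}_j = \pi(\mathcal{F}^{\textrm{reg}}_j) = \bigsqcup_\alpha \widehat{\mathcal{C}}_\alpha$, and as each $\widehat{\mathcal{C}}_\alpha$ is open in $S(\nu)$, so is $\mathcal{R}_j$.

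For smoothness of $\lambda_j$ on $\mathcal{R}_j$, I apply \autoref{prop:focalTimeSmooth} on each $\widehat{\mathcal{C}}_\alpha$: since the type $(k_\alpha, i_\alpha)$ satisfies $i_\alpha \le j < i_\alpha + k_\alpha$, the proposition gives $\lambda_j \in C^\infty(\widehat{\mathcal{C}}_\alpha)$, and these local smoothness statements patch together.

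Finally, for denseness, assume $\lambda_j$ is finite everywhere on $S(\nu)$. Fix $\mathbf{v} \in S(\nu)$ and set $\mathbf{w} = \lambda_j(\mathbf{v})\mathbf{v} \in \mathcal{F}_j$. By \autoref{thm:regularJthTangentFocalLocus}, $\mathcal{F}^{\textrm{reg}}_j$ is dense in $\mathcal{F}_j$, so there exist $\mathbf{w}_i \in \mathcal{F}^{\textrm{reg}}_j$ with $\mathbf{w}_i \rightarrow \mathbf{w}$. Then
\[
\pi(\mathbf{w}_i) = \frac{\mathbf{w}_i}{F(\mathbf{w}_i)} \longrightarrow \frac{\mathbf{w}}{F(\mathbf{w})} = \frac{\lambda_j(\mathbf{v})\mathbf{v}}{\lambda_j(\mathbf{v})} = \mathbf{v},
\]
where we used $F(\mathbf{v}) = 1$ and $\lambda_j(\mathbf{v}) > 0$. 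Since $\pi(\mathbf{w}_i) \in \mathcal{R}_j$, this shows $\mathbf{v} \in \overline{\mathcal{R}_j}$. The only subtle point in the whole argument is the global injectivity of $\pi$ on $\mathcal{F}^{\textrm{reg}}_j$ (distinct components could a priori have overlapping images in $S(\nu)$), but this is ruled out by the relation $\mathbf{v} = \lambda_j(\widehat{\mathbf{v}})\widehat{\mathbf{v}}$ valid throughout $\mathcal{F}^{\textrm{reg}}_j$. No further obstacle appears, since the heavy lifting was done in the preceding results.
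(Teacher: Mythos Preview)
Your proposal is correct and follows essentially the same approach as the paper: decompose $\mathcal{F}^{\textrm{reg}}_j$ into components via \autoref{thm:regularJthTangentFocalLocus}, apply \autoref{thm:regularFocalLocusComponent}\autoref{thm:regularFocalLocusComponent:embedding} on each, use the relation $\mathbf{v} = \lambda_j(\widehat{\mathbf{v}})\widehat{\mathbf{v}}$ for global injectivity, invoke \autoref{prop:focalTimeSmooth} for smoothness, and deduce denseness from the density of $\mathcal{F}^{\textrm{reg}}_j$ in $\mathcal{F}_j$ together with continuity of $\pi$. The paper's write-up is slightly terser (it notes injectivity holds on all of $\mathcal{F}_j$, not just $\mathcal{F}^{\textrm{reg}}_j$, and omits the explicit sequence argument for denseness), but the substance is identical.
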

\begin{proof}
	It is clear from the definition that $\mathcal{R}_j = \pi(\mathcal{F}^{\textrm{reg}}_j)$. By \autoref{thm:regularJthTangentFocalLocus}, we can write $\mathcal{F}^{\textrm{reg}}_j$ as disjoint union of connected components of $\mathcal{F}^{\textrm{reg}}$. For each such component $\mathcal{C} \subset \mathcal{F}^{\textrm{reg}}_j$, it follows from \autoref{thm:regularFocalLocusComponent} that  $\pi|_{\mathcal{C}}$ is an embedding, with $\widehat{\mathcal{C}} \coloneqq \pi(\mathcal{C})$ open in $S(\nu)$. Suppose, for some $\mathbf{v}_1, \mathbf{v}_2 \in \mathcal{F}_j$ we have, $\pi(\mathbf{v}_1) = \mathbf{v} = \pi(\mathbf{v}_2)$. But then, 
	\[\mathbf{v}_1 = \lambda_j(\widehat{\mathbf{v}}_1) \widehat{\mathbf{v}}_1 = \lambda_j (\widehat{\mathbf{v}})\widehat{\mathbf{v}} = \lambda_j(\widehat{\mathbf{v}}_2)\widehat{\mathbf{v}}_2=\mathbf{v}_2.\]
	Thus, $\pi$ is injective on $\mathcal{F}_j$, and in particular, $\pi|_{\mathcal{F}^{\textrm{reg}}_j} : \mathcal{F}^{\textrm{reg}}_j \rightarrow S(\nu)$ is then an embedding, with image $\mathcal{R}_j$. Clearly, $\mathcal{R}_j$ is open in $S(\nu)$ being the union of open sets $\widehat{\mathcal{C}}$, and $\lambda_j$ is then smooth on $\mathcal{R}_j$. If $\lambda_j$ is assumed to be finite, then it follows that $S(\nu) = \pi\left( \mathcal{F}_j \right)$. Since $\mathcal{F}^{\textrm{reg}}_j$ is dense in $\mathcal{F}_j$ by \autoref{thm:regularJthTangentFocalLocus}, it follows that $\mathcal{R}_j$ is dense in $S(\nu)$. This concludes the proof.
\end{proof}

\subsection{Local Form of the Normal Exponential Map}
Let us now fix a connected component, say, $\mathcal{C} \subset \mathcal{F}^{\textrm{reg}}$. By \autoref{thm:regularFocalLocusComponent} \autoref{thm:regularFocalLocusComponent:boundary}, we have $\mathcal{C}$ is open in $\mathcal{F}^{\textrm{reg}}$, and a codimension $1$ submanifold of $\hat{\nu}$. Furthermore, the focal multiplicities of points in $\mathcal{C}$ are constant, say, $k$. At any $\mathbf{v} \in \mathcal{C}$, we have two subspaces of $T_{\mathbf{v}}\hat{\nu}$, namely, $T_{\mathbf{v}} \mathcal{C} = T_{\mathbf{v}} \mathcal{F}^{\textrm{reg}}$ and $\mathcal{K}_{\mathbf{v}} = \ker d_{\mathbf{v}} \mathcal{E}$. Let us denote, 
\[\mathcal{N}_{\mathbf{v}} \coloneqq \mathcal{K}_{\mathbf{v}} \cap T_{\mathbf{v}}\mathcal{F}^{\textrm{reg}}, \quad \mathbf{v} \in \mathcal{C}.\]
We have $\dim \mathcal{N}_{\mathbf{v}}$ is either $k-1$ or $k$, in which case $\mathcal{K}_{\mathbf{v}} \subset T_{\mathbf{v}} \mathcal{F}^{\textrm{reg}}$. Based on $\dim \mathcal{N}_{\mathbf{v}}$, we have a decomposition 
\begin{equation}\label{eq:regularFocalDecomposiiton}
	\mathcal{C} = \mathcal{C}(k) \sqcup \mathcal{C}(k-1).
\end{equation}
Since the restriction $\mathcal{E}|_{\mathcal{C}}$ attains the maximum possible rank $n - (k-1)$ on $\mathcal{C}(k-1)$, we have $\mathcal{C}(k-1)$ is an open submanifold of $\mathcal{C}$. Also, since the union $\cup_{\mathbf{v} \in \mathcal{C}(k-1)} \mathcal{N}_{\mathbf{v}}$ is precisely the kernel of $d\left( \mathcal{E}|_{\mathcal{C}(k-1)} \right)$ and has constant rank, it is a rank $(k-1)$ involutive distribution on $\mathcal{C}(k-1)$. By a similar argument, the union $\cup_{\mathbf{v} \in \Int \mathcal{C}(k)} \mathcal{N}_{\mathbf{v}} = \cup_{\mathbf{v} \in \Int \mathcal{C}(k)} \mathcal{K}_{\mathbf{v}}$ is also an involutive distribution of rank $k$ on the open submanifold $\Int \mathcal{C}(k) \subset \mathcal{C}$. We have the following result, generalizing \cite[Theorem 3.2]{Warner1965}.

\begin{theorem}\label{thm:kernelContainedInTangent}
	For any $\mathbf{v} \in \mathcal{F}^{\textrm{reg}}$ with focal multiplicity $k \ge 2$, we have $\mathcal{K}_{\mathbf{v}} \subset T_{\mathbf{v}} \mathcal{F}^{\textrm{reg}}$, and thus $\mathcal{C}(k-1) = \emptyset$ for the component $\mathcal{C} \subset \mathcal{F}^{\textrm{reg}}$ containing $\mathbf{v}$.
\end{theorem}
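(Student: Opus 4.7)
My plan is to adapt Warner's strategy \cite[Theorem 3.2]{Warner1965} to the Finsler setting. I realize $\mathcal{F}^{\textrm{reg}}$ locally as the zero set of the defining submersion $\Delta$ built in the proof of \autoref{thm:regularTangentFocalPointsSubmanifold}, and show that $d_{\mathbf{v}}\Delta$ annihilates $\mathcal{K}_{\mathbf{v}}$. Fix $\mathbf{v} \in \mathcal{F}^{\textrm{reg}}$ of multiplicity $k \ge 2$, and work in the normal-form coordinates $(U, x^i)$ around $\mathbf{v}$ and $(V, y^j)$ around $\mathcal{E}(\mathbf{v})$ furnished by \autoref{lemma:localCoordinatesAtFocalPoint}. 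In these coordinates $\mathcal{K}_{\mathbf{v}} = \mathrm{Span}\langle \partial/\partial x^1, \ldots, \partial/\partial x^k\rangle|_{\mathbf{v}}$, and the Jacobian matrix $M(\mathbf{u})$ of $d_{\mathbf{u}}\mathcal{E}$ is diagonal at $\mathbf{u} = \mathbf{v}$ with $M_{ii}(\mathbf{v}) = 0$ precisely for $i \le k$.

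Since $\Delta$ is the $(n-k+1)$-th elementary symmetric polynomial of the eigenvalues of $M$, I can rewrite it as $\Delta(\mathbf{u}) = \sum_{|T| = n-k+1}\det M(\mathbf{u})_T$, a sum of principal $(n-k+1)$-minors. Differentiating this at $\mathbf{v}$ using the identity $\delta\det = \mathrm{tr}(\mathrm{adj}(\cdot)\,\delta)$, only minors whose index set contains exactly one element of $\{1,\ldots,k\}$ contribute at first order, yielding
\[
d_{\mathbf{v}}\Delta\bigl(\tfrac{\partial}{\partial x^j}\bigr) \;=\; f_{k+1}(1)\cdots f_n(1)\,\sum_{i=1}^{k}\tfrac{\partial M_{ii}}{\partial x^j}\bigr|_{\mathbf{v}}, \qquad 1\le j\le k.
\]
Since $f_{k+1}(1)\cdots f_n(1) \ne 0$, the task reduces to the partial-trace identity $\sum_{i=1}^k \partial M_{ii}/\partial x^j|_{\mathbf{v}} = 0$ for each $j \le k$.

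This identity is the heart of the argument and is where $k\ge 2$ enters decisively. Each basis vector $\mathbf{x}_i \coloneqq \partial/\partial x^i|_{\mathbf{v}}$ for $i \le k$ lies in $\mathcal{K}_{\mathbf{v}}$ and corresponds via \autoref{prop:tangentSpaceOfNormalConeBundle} to an $N$-Jacobi field $J_i = f_i(t)Y_i(t) \in \mathfrak{K}_{\mathbf{v}}$ vanishing to first order at $t = 1$, with $\dot J_i(1) = \dot f_i(1)\,\partial/\partial y^i|_{\mathcal{E}(\mathbf{v})}$. I interpret $\partial M_{ii}/\partial x^j|_{\mathbf{v}}$ as the $i$-component, in the $y$-frame, of the second $N$-variation of the geodesic family in the directions $(\mathbf{x}_j,\mathbf{x}_i)$, and then invoke the adjoint identity $g_{\dot\gamma}(\dot J_i, J_j) = g_{\dot\gamma}(J_i, \dot J_j)$ of \autoref{lemma:jacobiFieldAdjoint} for pairs $(J_i, J_j)$. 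Expanding both sides in powers of $(t-1)$ via $J_i \sim (t-1)\dot J_i(1)$ and matching leading coefficients couples the cross-derivatives for distinct $i \ne j$ in $\{1,\ldots,k\}$; summation over $i$ produces the desired cancellation. The hypothesis $k\ge 2$ is essential precisely because the adjoint identity yields nontrivial information only from such pairs.

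Combining these steps, $\mathcal{K}_{\mathbf{v}} \subset \ker d_{\mathbf{v}}\Delta = T_{\mathbf{v}}\mathcal{F}^{\textrm{reg}}$. Since the component $\mathcal{C}$ through $\mathbf{v}$ has constant multiplicity $k$ by \autoref{thm:regularFocalLocusComponent} \autoref{thm:regularFocalLocusComponent:description}, the inclusion holds at every point of $\mathcal{C}$, whence $\mathcal{C}(k-1) = \emptyset$. The principal technical obstacle is making the bi-variational interpretation in the previous paragraph rigorous: one must intrinsically match the coordinate partial derivatives $\partial M_{ii}/\partial x^j$ with bilinear expressions in the Jacobi fields $J_i, J_j$, for which the second-order tangent framework sketched in \autoref{sec:appendix} (cf. \autoref{rmk:warnerRegularity2}) provides the natural setting.
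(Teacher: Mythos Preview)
Your reduction is sound up to the point where you arrive at the formula
\[
d_{\mathbf{v}}\Delta\Bigl(\frac{\partial}{\partial x^j}\Bigr) \;=\; f_{k+1}(1)\cdots f_n(1)\,\sum_{i=1}^{k}\left.\frac{\partial M_{ii}}{\partial x^j}\right|_{\mathbf{v}}, \qquad 1\le j\le k,
\]
and this is indeed the right quantity to study. The gap is in your proposed justification of the partial-trace identity $\sum_{i=1}^k \partial M_{ii}/\partial x^j|_{\mathbf{v}} = 0$. The adjoint identity of \autoref{lemma:jacobiFieldAdjoint} is a relation between Jacobi fields along the \emph{single} geodesic $\gamma_{\mathbf{v}}$; it constrains derivatives in the radial parameter $t$ only. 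The quantities $\partial M_{ii}/\partial x^j|_{\mathbf{v}} = \partial^2(y^i\circ\mathcal{E})/\partial x^i\partial x^j|_{\mathbf{v}}$ for $j\le k$ are second derivatives of $\mathcal{E}$ in directions \emph{transverse} to the ray, and no Taylor expansion of $g_{\dot\gamma}(\dot J_i,J_j)=g_{\dot\gamma}(J_i,\dot J_j)$ in $(t-1)$ will produce them: both sides vanish identically to all orders at $t=1$ when $i,j\le k$, since $J_i(1)=J_j(1)=0$. You correctly sense at the end that the second-order tangent framework is needed, but that framework supplies only the \emph{language}; it does not by itself furnish the identity you need, and you have not indicated what replaces \autoref{lemma:jacobiFieldAdjoint} in the transverse directions.

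The paper sidesteps this obstacle entirely by arguing the contrapositive. Assuming $\mathcal{K}_{\mathbf{v}}\not\subset T_{\mathbf{v}}\mathcal{F}^{\textrm{reg}}$, one has $T_{\mathbf{v}}\hat\nu = \mathcal{K}_{\mathbf{v}} + T_{\mathbf{v}}\mathcal{F}^{\textrm{reg}}$, so the radial vector decomposes as $\mathbf{r}=\mathbf{x}+\mathbf{z}$ with $\mathbf{x}\in\mathcal{K}_{\mathbf{v}}\setminus T_{\mathbf{v}}\mathcal{F}^{\textrm{reg}}$ and $\mathbf{z}\in T_{\mathbf{v}}\mathcal{F}^{\textrm{reg}}\setminus\mathcal{K}_{\mathbf{v}}$. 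For any $\mathbf{y}\in\mathcal{K}_{\mathbf{v}}\cap T_{\mathbf{v}}\mathcal{F}^{\textrm{reg}}$ one then shows $d^2_{\mathbf{v}}\mathcal{E}(\mathbf{z}\odot\mathbf{y})=0$ by extending $\mathbf{y}$ to lie in $\ker d\mathcal{E}$ along $\mathcal{F}^{\textrm{reg}}$ and $\mathbf{z}$ tangentially to $\mathcal{F}^{\textrm{reg}}$; the symmetry of $d^2\mathcal{E}$ then gives $d^2_{\mathbf{v}}\mathcal{E}(\mathbf{x}\odot\mathbf{y})=d^2_{\mathbf{v}}\mathcal{E}(\mathbf{y}\odot\mathbf{x})=0$ as well. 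Hence $\dot J_{\mathbf{y}}(1)=d^2_{\mathbf{v}}\mathcal{E}(\mathbf{r}\odot\mathbf{y})=0$, and \autoref{R2} forces $\mathbf{y}=0$. Thus $\mathcal{K}_{\mathbf{v}}\cap T_{\mathbf{v}}\mathcal{F}^{\textrm{reg}}=0$, which together with $\codim T_{\mathbf{v}}\mathcal{F}^{\textrm{reg}}=1$ gives $k=1$. The point is that the contrapositive hypothesis makes the decomposition of $\mathbf{r}$ available, after which the only second-order input required is $d^2_{\mathbf{v}}\mathcal{E}(\mathbf{r}\odot\,\cdot\,)=\dot J_{(\cdot)}(1)$ from \autoref{prop:secondOrderTangent}; no transverse second-variation identity is ever needed.
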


\begin{remark}\label{rmk:expMapsLeavesToPoint}
	For a component $\mathcal{C}\subset \mathcal{F}^{\textrm{reg}}$ consisting of tangent focal points of multiplicity $k$, each leaf of the foliation associated with the distribution $\cup_{\mathbf{v} \in \Int \mathcal{C}(k)} \mathcal{K}_{\mathbf{v}}$ on $\Int \mathcal{C}(k)$ is by definition tangential to the $\ker d\mathcal{E}$. Consequently, each leaf gets mapped to a single point by the map $\mathcal{E}$. Note that for $k \ge 2$, we have $\Int \mathcal{C}(k) = \mathcal{C}$, whereas for $k = 1$ both $\mathcal{C}(k)$ and $\mathcal{C}(k-1)$ can have non-empty interiors.
\end{remark}

We are now in a position to state the local form of the normal exponential map $\mathcal{E}$ near a regular focal point, similar to \cite[Theorem 3.3]{Warner1965}. See also \cite[Section 3]{StefUhl12} for a similar discussion.

\begin{theorem}\label{thm:normalExponentialLocalForm}
	Suppose $\mathbf{v} \in \hat{\nu}$ is a regular tangent focal locus of $N$ with multiplicity $k$. Let $\mathcal{C}$ be the connected component of $\mathcal{F}^{\textrm{reg}}$ containing $\mathbf{v}$, and consider the decomposition $\mathcal{C} = \mathcal{C}(k) \sqcup \mathcal{C}(k-1)$ (\autoref{eq:regularFocalDecomposiiton}). Then, there exist coordinates $\left\{ x^1,\dots, x^n \right\}$ and $\left\{ y^1,\dots, y^n \right\}$ near $\mathbf{v} \in \hat{\nu}$ and $\mathcal{E}(\mathbf{v}) \in M$, which can be arranged so that $\mathcal{E}$ has special forms in the following cases.
	\begin{enumerate}[label=(T\arabic*)]
		\item\label{thm:normalExponentialLocalForm:1} If $\mathbf{v}$ has multiplicity $k \ge 2$, then one can arrange so that 
		\begin{equation}\label{eq:normalForm:1}
			y^i \circ \mathcal{E} = 
		\begin{cases}
			x^n \cdot x^i, &i = 1,\dots, k, \\
			x^i, &i = k + 1, \dots, n.
		\end{cases}
		\end{equation}
		Furthermore, $\mathcal{K}_{\mathbf{v}} = \mathrm{Span}\left\langle \frac{\partial}{\partial x^i}\middle|_{\mathbf{v}}, \; i = 1,\dots,k  \right\rangle$. Locally near $\mathbf{v}$ the tangent focal locus is given as $\left\{ x^n = 0 \right\}$, and the image under $\mathcal{E}$ given as $\left\{ y^1 = \dots = y^k = 0 = y^n \right\}$.

		\item\label{thm:normalExponentialLocalForm:2} If $\mathbf{v}$ has multiplicity $k = 1$ and furthermore, $\mathbf{v} \in \Int \mathcal{C}(k)$ then one can arrange so that 
		\begin{equation}\label{eq:normalForm:2}
			y^i \circ \mathcal{E} = 
		\begin{cases}
			x^n \cdot x^1, &i=1,\\
			x^i, &i = 2, \dots , n.
		\end{cases}
		\end{equation}
		Furthermore, $\mathcal{K}_{\mathbf{v}} = \mathrm{Span}\left\langle \frac{\partial}{\partial x^1}\middle|_{\mathbf{v}} \right\rangle$. Locally near $\mathbf{v}$ the tangent focal locus is given as $\left\{ x^n = 0 \right\}$, and the image under $\mathcal{E}$ given as $\left\{ y^1 = 0 = y^n \right\}$.
		
		\item\label{thm:normalExponentialLocalForm:3} If $\mathbf{v}$ has multiplicity $k = 1$ and furthermore, $\mathcal{K}_{\mathbf{v}} \not \subset T_{\mathbf{v}} \mathcal{F}^{\textrm{reg}}$ (i.e., if $\mathbf{v} \in \mathcal{C}(k-1) = \mathcal{C}(0)$), then one can arrange so that
		\begin{equation}\label{eq:normalForm:3}
			y^i \circ \mathcal{E} = 
		\begin{cases}
			x^1 \cdot x^1, &i = 1,\\
			x^i, &i = 2, \dots , n.
		\end{cases}
		\end{equation}
		Furthermore, $\mathcal{K}_{\mathbf{v}} = \mathrm{Span}\left\langle \frac{\partial}{\partial x^1}\middle|_{\mathbf{v}} \right\rangle$. Locally near $\mathbf{v}$ the tangent focal locus is given as $\left\{ x^1 = 0 \right\}$, and the image under $\mathcal{E}$ is given as $\left\{ y^1 = 0 \right\}$.
	\end{enumerate}
\end{theorem}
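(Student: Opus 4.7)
The plan is to start from the coordinates manufactured in \autoref{lemma:localCoordinatesAtFocalPoint}, in which $d\mathcal{E}$ is diagonal along the ray $R_{\mathbf{v}}$ with eigenvalues $f_1, \dots, f_n$, and then to progressively refine these coordinates via local diffeomorphisms until $\mathcal{E}$ assumes each of the prescribed forms. In the construction of that lemma I am free to extend the kernel basis $\mathbf{x}_1, \dots, \mathbf{x}_k$ of $\mathcal{K}_{\mathbf{v}}$ to a basis of $T_{\mathbf{v}} \hat{\nu}$ in any way I please. In cases \autoref{thm:normalExponentialLocalForm:1} and \autoref{thm:normalExponentialLocalForm:2}, \autoref{thm:kernelContainedInTangent} together with the definition of $\mathcal{C}(k)$ ensures $\mathcal{K}_{\mathbf{v}} \subset T_{\mathbf{v}}\mathcal{F}^{\textrm{reg}}$, so I would extend by picking $\mathbf{x}_{k+1}, \dots, \mathbf{x}_{n-1} \in T_{\mathbf{v}}\mathcal{F}^{\textrm{reg}}$ complementary to $\mathcal{K}_{\mathbf{v}}$ and $\mathbf{x}_n$ transverse to $\mathcal{F}^{\textrm{reg}}$. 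In case \autoref{thm:normalExponentialLocalForm:3} I would do the opposite: since $\mathbf{v} \in \mathcal{C}(0)$ forces $\mathcal{K}_{\mathbf{v}}$ to be transverse to $T_{\mathbf{v}}\mathcal{F}^{\textrm{reg}}$, the generator $\mathbf{x}_1 \in \mathcal{K}_{\mathbf{v}}$ already plays the transverse role, and $\mathbf{x}_2, \dots, \mathbf{x}_n$ can be chosen inside $T_{\mathbf{v}}\mathcal{F}^{\textrm{reg}}$.

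After a first adjustment of the chart on $\hat{\nu}$ (replacing the transverse coordinate by the defining submersion $\Delta$ from the proof of \autoref{thm:regularTangentFocalPointsSubmanifold}) the tangent focal locus is straightened to $\{x^n = 0\}$ in cases \autoref{thm:normalExponentialLocalForm:1},\autoref{thm:normalExponentialLocalForm:2} and to $\{x^1 = 0\}$ in case \autoref{thm:normalExponentialLocalForm:3}. In the first two cases the rank-$k$ involutive distribution $\bigcup_{\mathbf{u} \in \mathcal{C}} \mathcal{K}_{\mathbf{u}}$ on $\mathcal{F}^{\textrm{reg}}$ foliates a neighborhood of $\mathbf{v}$ by leaves each of which is collapsed to a point under $\mathcal{E}$ (\autoref{rmk:expMapsLeavesToPoint}). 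Hence $\mathcal{E}(\mathcal{F}^{\textrm{reg}} \cap U)$ is a smooth $(n-k-1)$-dimensional embedded submanifold, and a further diffeomorphism in the target can be used to rectify it to $\{y^1 = \dots = y^k = y^n = 0\}$. Writing $\psi^j \coloneqq y^j \circ \mathcal{E}$, the functions $\psi^1, \dots, \psi^k$ then vanish on $\{x^n = 0\}$, so Hadamard's lemma gives $\psi^i = x^n \cdot h^i$ with $h^i$ smooth. The matrix $\partial h^i/\partial x^j|_{\mathbf{v}}$, computed from the diagonalization on the ray, is $\delta^i_j \dot f_i(1)$ with positive diagonal entries; a linear change of $y^1, \dots, y^k$ together with an absorption of first-order terms into $x^1, \dots, x^k$ produces \autoref{eq:normalForm:1} and \autoref{eq:normalForm:2}.

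Case \autoref{thm:normalExponentialLocalForm:3} is handled by parallel reasoning but yields a fold rather than a collapse. Since $\mathcal{K}_{\mathbf{v}} \cap T_{\mathbf{v}}\mathcal{F}^{\textrm{reg}} = 0$, the restriction $\mathcal{E}|_{\mathcal{F}^{\textrm{reg}} \cap U}$ is an immersion, and target coordinates $y^2, \dots, y^n$ can be adapted along its image so that $\psi^i = x^i$ for $2 \le i \le n$. The remaining $\psi^1$ vanishes on $\{x^1 = 0\}$, and its Taylor jet in the $x^1$-direction is controlled by the single vanishing eigenvalue $f_1(t) = t - \ell$: the linear part in $x^1$ vanishes because $f_1(1) = 0$, while the quadratic part in $x^1$ is non-degenerate with positive leading coefficient $\dot f_1(1) > 0$. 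Two applications of Hadamard followed by a Morse-type rectification of $x^1$ and $y^1$ transform $\psi^1$ into $(x^1)^2$, producing \autoref{eq:normalForm:3}.

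The principal obstacle is verifying that each of the coordinate adjustments above really is a local diffeomorphism rather than a purely formal manipulation. The key technical input is the positivity $\dot f_i(1) > 0$ from \autoref{lemma:localCoordinatesAtFocalPoint}, which ensures the Hadamard factors are smooth and nonvanishing in the transverse directions, hence that the Jacobians of the intermediate changes are invertible. The structural dichotomy $\mathcal{K}_{\mathbf{v}} \subset T_{\mathbf{v}}\mathcal{F}^{\textrm{reg}}$ versus $\mathcal{K}_{\mathbf{v}} \not\subset T_{\mathbf{v}}\mathcal{F}^{\textrm{reg}}$, governed by \autoref{thm:kernelContainedInTangent} and the decomposition \autoref{eq:regularFocalDecomposiiton}, is precisely what separates the leaf-collapse forms in \autoref{thm:normalExponentialLocalForm:1},\autoref{thm:normalExponentialLocalForm:2} from the fold form in \autoref{thm:normalExponentialLocalForm:3}.
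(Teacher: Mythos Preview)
Your overall architecture matches the paper's: straighten $\mathcal{F}^{\textrm{reg}}$ to a coordinate hyperplane, use the kernel foliation (cases \autoref{thm:normalExponentialLocalForm:1}, \autoref{thm:normalExponentialLocalForm:2}) or transversality (case \autoref{thm:normalExponentialLocalForm:3}) to set up target coordinates, then apply a Hadamard-type factorization and verify the remaining functions form a chart. The gap is in this last verification.

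You assert that the $k\times k$ block $\partial h^i/\partial x^j|_{\mathbf{v}}$ equals $\delta^i_j\,\dot f_i(1)$, ``computed from the diagonalization on the ray.'' But \autoref{lemma:localCoordinatesAtFocalPoint} diagonalizes $d\mathcal{E}$ only along $R_{\mathbf{v}}$, so it controls the \emph{radial} derivative of $\partial(y^i\circ\mathcal{E})/\partial x^j$, not the $\partial/\partial x^n$-derivative. After you replace the transverse coordinate by $\Delta$, the direction $\partial/\partial x^n|_{\mathbf{v}}$ and the radial vector $\mathbf{r}$ differ by some $\mathbf{z}\in T_{\mathbf{v}}\mathcal{F}^{\textrm{reg}}$, and you have not argued that the $\mathbf{z}$-contribution vanishes. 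The paper handles precisely this point via second-order tangent vectors: writing $\mathbf{r}=a\,\partial/\partial x^n|_{\mathbf{v}}+\mathbf{z}$, one extends $\mathbf{z}$ tangentially to $\mathcal{F}^{\textrm{reg}}$ and $\partial/\partial x^j|_{\mathbf{v}}$ into $\ker d\mathcal{E}$ along $\mathcal{F}^{\textrm{reg}}$, so that $d^2_{\mathbf{v}}\mathcal{E}(\mathbf{z}\odot\partial/\partial x^j|_{\mathbf{v}})\equiv 0\bmod\operatorname{Im}d_{\mathbf{v}}\mathcal{E}$, and then invokes \autoref{R2} (in the form of \autoref{prop:secondOrderTangent}) to conclude the block has full rank. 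Without this argument, your diagonal claim is unsupported and the coordinate change is not shown to be a diffeomorphism.

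The same issue appears in your treatment of \autoref{thm:normalExponentialLocalForm:3}: you claim the quadratic coefficient of $\psi^1$ in $x^1$ is $\dot f_1(1)$, but $x^1$ is the kernel direction, not the radial one, so again the link to $\dot f_1$ requires the $\mathbf{r}=a\,\partial/\partial x^1|_{\mathbf{v}}+\mathbf{z}$ decomposition and the vanishing of the $\mathbf{z}$-term. The paper sidesteps this by recognizing $\mathcal{E}$ as a submersion with folds in the sense of Golubitsky--Guillemin and quoting their normal form theorem; your hands-on Morse approach is viable, but needs the same \autoref{R2}-based nondegeneracy argument to get off the ground.
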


The proofs of the above two theorems follow in the same vein as \cite[Theorem 3.2 and 3.3]{Warner1965}, which uses the notion of higher order tangent vectors. To keep the article self-contained, we have provided sketches of proofs in the appendix (\autoref{sec:appendix}). 

\begin{remark}\label{rmk:noNormalForm}
	The only regular tangent focal points $\mathbf{v}$ not considered in \autoref{thm:normalExponentialLocalForm} are of the form $\mathcal{C} \setminus \left( \mathcal{C}(0) \sqcup \Int \mathcal{C}(1) \right) = \mathcal{C}(1) \setminus \Int \mathcal{C}(1)$, where $\mathcal{C}$ is a connected component of $\mathcal{F}^{\textrm{reg}}$ consisting of multiplicity $1$ tangent focal points. Clearly, they form a nowhere dense subset of $\mathcal{F}^{\textrm{reg}}$, i.e., the subset of regular tangent points not considered above are not dense in any open set of $\mathcal{F}^{\textrm{reg}}$.
\end{remark}

We can now prove the following.
\begin{theorem}\label{thm:normalExponentialNonInjective}
	Let $N$ be a submanifold of a forward complete Finsler manifold $(M, F)$. Then the normal exponential map $\mathcal{E} = \exp^\nu|_{\hat{\nu}}$ is not injective on any neighborhood of a tangent focal point $\mathbf{v} \in \hat{\nu}$.
\end{theorem}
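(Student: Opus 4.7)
The plan is to leverage the explicit local normal forms for $\mathcal{E}$ developed in \autoref{thm:normalExponentialLocalForm}: once any of them is available, non-injectivity can be read off by inspection. The main work is therefore to show that any neighborhood of a tangent focal point contains a regular focal point at which one of the three normal forms applies.

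Given a tangent focal point $\mathbf{v}\in\hat{\nu}$ and an arbitrary open neighborhood $U\subset\hat{\nu}$ of $\mathbf{v}$, \autoref{thm:regularTangentFocalPointsSubmanifold} ensures $U\cap\mathcal{F}^{\textrm{reg}}\ne\emptyset$. By \autoref{rmk:noNormalForm}, the regular focal points at which none of the normal forms in \autoref{thm:normalExponentialLocalForm} is available lie in $\bigcup_{\mathcal{C}}\bigl(\mathcal{C}(1)\setminus\Int\mathcal{C}(1)\bigr)$, where the union runs over the multiplicity-one components $\mathcal{C}$ of $\mathcal{F}^{\textrm{reg}}$. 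Since each summand is the topological boundary in $\mathcal{C}$ of the closed set $\mathcal{C}(1)$, it is nowhere dense in $\mathcal{C}$, and thus in $\mathcal{F}^{\textrm{reg}}$. Combined with the density of $\mathcal{F}^{\textrm{reg}}$ in $\mathcal{F}$, this lets me choose $\mathbf{v}'\in U\cap\mathcal{F}^{\textrm{reg}}$ at which one of \autoref{thm:normalExponentialLocalForm:1}, \autoref{thm:normalExponentialLocalForm:2} or \autoref{thm:normalExponentialLocalForm:3} holds on a coordinate chart $U'\subset U$ centred at $\mathbf{v}'$.

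Next I would exhibit non-injectivity directly in each case. In case \autoref{thm:normalExponentialLocalForm:1}, since $k\ge 2$ and $y^i\circ\mathcal{E}=x^n x^i$ for $1\le i\le k$, setting $x^n=x^{k+1}=\cdots=x^{n-1}=0$ and letting $(x^1,\dots,x^k)$ vary yields a $k$-dimensional family of points all sent by $\mathcal{E}$ to the origin in $y$-coordinates. In case \autoref{thm:normalExponentialLocalForm:2}, the same collapse happens along the $x^1$-direction inside the slice $\{x^n=0,\;x^2=\cdots=x^{n-1}=0\}$. Finally, in case \autoref{thm:normalExponentialLocalForm:3}, $y^1\circ\mathcal{E}=(x^1)^2$ is an even function of $x^1$, so the two distinct points $(x^1,x^2,\dots,x^n)$ and $(-x^1,x^2,\dots,x^n)$ with $x^1\ne 0$ have the same $\mathcal{E}$-image.

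In every situation I obtain two distinct points of $U'\subset U$ that $\mathcal{E}$ identifies, proving that $\mathcal{E}$ fails to be injective on the arbitrary neighborhood $U$. The main obstacle, and what makes the first paragraph the real content, is guaranteeing the existence of $\mathbf{v}'\in U$ at which one of the three normal forms applies; this rests on the density statement in \autoref{thm:regularTangentFocalPointsSubmanifold} together with the boundary-is-nowhere-dense observation above. Once such a $\mathbf{v}'$ is located, the remaining work reduces to pure inspection of the three normal forms.
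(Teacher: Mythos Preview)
Your argument is correct and follows essentially the same approach as the paper's proof: reduce to a regular focal point where one of the three local normal forms is available, and then read off non-injectivity. The only cosmetic differences are that the paper handles cases \autoref{thm:normalExponentialLocalForm:1} and \autoref{thm:normalExponentialLocalForm:2} via the foliation language of \autoref{rmk:expMapsLeavesToPoint} rather than the explicit coordinates, and it does a case split on the original point $\mathbf{v}$ itself (only passing to a nearby $\mathbf{v}'$ in the residual nowhere-dense case), whereas you uniformly pass to a nearby $\mathbf{v}'$ from the outset.
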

\begin{proof}
	Since $\mathcal{F}^{\textrm{reg}}$ is dense in $\mathcal{F}$, we only proof the statement for $\mathbf{v} \in \mathcal{F}^{\textrm{reg}}$. Assume $\mathcal{C} \subset \mathcal{F}^{\textrm{reg}}$ is the connected component containing $\mathbf{v}$, and denote the decomposition $\mathcal{C} = \mathcal{C}(k) \sqcup \mathcal{C}(k-1)$. If $\mathbf{v}$ has multiplicity $k \ge 2$, then by \autoref{thm:kernelContainedInTangent}, we have a $k$-dimensional foliation in $\mathcal{C}$, such that $\mathcal{E}$ maps each leaf to a single point. The claim is then immediate. Suppose $k = 1$. If $\mathbf{v} \in \Int \mathcal{C}(k)$, we again have a one dimensional foliation in $\mathcal{C}$, so that each leaf is mapped to a single point by $\mathcal{E}$. If $\mathbf{v} \in \mathcal{C} (k-1)$, then by \autoref{thm:normalExponentialLocalForm} \autoref{thm:normalExponentialLocalForm:3}, we have a coordinate system for which $\mathcal{E}$ looks like \autoref{eq:normalForm:3}. Consequently, $\mathcal{E} $ is not locally injective in a neighborhood of $\mathbf{v} $. Lastly, suppose $\mathbf{v} \in \mathcal{C} \setminus \left( \mathcal{C} (k-1) \sqcup \Int \mathcal{C} (k) \right) $, which is nowhere dense by \autoref{rmk:noNormalForm}. Then in any neighborhood of $\mathbf{v} $, there exists a $\mathbf{v} ' \in \mathcal{C} (k-1) \sqcup \Int \mathcal{C} (k)$ and hence in that neighborhood, the map $\mathcal{E}$ fails to be injective. This concludes the proof. 
\end{proof}

\begin{remark}\label{rmk:alternateProofSepIsDenseInCut}
	In \cite[Theorem 4.8]{BhoPra2023}, it was proved that under hypothesis (\hyperref[eq:hypothesisH]{$\mathsf{H}$}), we have $\mathrm{Cu}(N) = \overline{\mathrm{Se}(N)}$. In the course of the proof, we had reached a dichotomy whose case (a) leads to a contradiction. We would like to point out that in view of \autoref{thm:normalExponentialNonInjective}, case (a) is immediately ruled out. We shall see a stronger result in the tangent bundle following the ideas of \cite{Bishop77} (see \autoref{rmk:cutIsClosureOfSep}).
\end{remark}

\subsection{Decomposition of the Tangent Cut Locus} \label{sec:decomposition}
Let us first observe which tangent focal points can also be cut points. We prove a more general result about the Hausdorff dimension of a large class of tangent focal points. Let $\mathfrak{F}_j$ be the set of all tangent focal points of multiplicity $j$. As noted in \autoref{example:multiplicityOneIsRegular}, we have $\mathfrak{F}_1 \subset \mathcal{F}^{\textrm{reg}}$. Now, by \autoref{thm:regularFocalLocusComponent} \autoref{thm:regularFocalLocusComponent:description}, any connected component of $\mathcal{F}^{\textrm{reg}}$ containing a focal point of multiplicity $1$, must consist of focal points of similar types. Thus, $\mathfrak{F}_1$ is a union of connected components of $\mathcal{F}^{\textrm{reg}}$. Consider the sets 
\begin{equation}\label{eq:tangetialFocalPoints}
	\mathcal{Q} \coloneqq \bigcup_{\mathcal{C} \subset \mathfrak{F}_1} \mathcal{C}(1), \qquad \mathcal{T} \coloneqq \mathcal{Q} \cup \bigcup_{j \ge 2} \mathfrak{F}_j,
\end{equation}
where the first union runs over all the connected components $\mathcal{C} \subset \mathfrak{F}_1$, and we have $\mathcal{C} = \mathcal{C}(0) \sqcup \mathcal{C}(1)$ as in \autoref{eq:regularFocalDecomposiiton}. Note that $\mathcal{T} \subset \mathcal{F}$ consists of all the tangent focal points, except of the type \autoref{thm:normalExponentialLocalForm:3}, which can be called the \emph{fold focal points} in the terminology of \cite{StefUhl12}.

\begin{prop}\label{prop:hausdorffMeasure}
	Each $\mathfrak{F}_j$ for $j \ge 2$, and $\mathcal{Q}$ have $(n-1)$-Hausdorff measure $0$, where $n = \dim M$. Consequently, the set $\exp^\nu\left( \mathcal{T} \right)$ has dimension $\le n-2$.
\end{prop}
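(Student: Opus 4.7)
The plan is to analyze $\exp^\nu$ on each connected component of $\mathcal{F}^{\textrm{reg}}$ using the local normal forms from \autoref{thm:normalExponentialLocalForm}, then transfer the resulting image bounds to the full sets $\mathcal{F}_j$ and $\mathcal{Q}$ via the denseness results. Since $\hat{\nu}$ is second countable, $\mathcal{F}^{\textrm{reg}}$ has at most countably many connected components, and by \autoref{thm:regularFocalLocusComponent} each carries a uniform type $(k, i)$, so the problem reduces to bounding $\exp^\nu(\mathcal{C})$ one component at a time and taking a countable union.

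On a component $\mathcal{C}$ with multiplicity $k \ge 2$, \autoref{thm:kernelContainedInTangent} gives $\mathcal{K}_{\mathbf{v}} \subset T_{\mathbf{v}}\mathcal{C}$ at every point, so $d\mathcal{E}|_{\mathcal{C}}$ has constant rank $n - k - 1 \le n - 3$; equivalently, normal form \autoref{thm:normalExponentialLocalForm:1} realizes $\exp^\nu(\mathcal{C})$ locally as an $(n - k - 1)$-submanifold of $M$. On a component $\mathcal{C}$ with $k = 1$, I would split $\mathcal{C} = \mathcal{C}(0) \sqcup \mathcal{C}(1)$: on $\Int \mathcal{C}(1)$, normal form \autoref{thm:normalExponentialLocalForm:2} places the local image in the $(n-2)$-submanifold $\{y^1 = y^n = 0\}$; on the closed set $\mathcal{C}(1)$ the rank of $d\mathcal{E}|_{\mathcal{C}}$ is at most $n - 2$ everywhere, so a stratified Sard argument still gives that $\exp^\nu(\mathcal{C}(1))$ has Hausdorff dimension $\le n - 2$.

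Assembling these estimates, $\exp^\nu(\mathcal{Q}) = \bigcup_{\mathcal{C} \subset \mathcal{F}_1^{\textrm{reg}}} \exp^\nu(\mathcal{C}(1))$ has Hausdorff dimension $\le n - 2$, whence $\mathcal{H}^{n-1}(\exp^\nu(\mathcal{Q})) = 0$. For $\mathcal{F}_j$ with $j \ge 2$, the density $\mathcal{F}_j = \overline{\mathcal{F}_j^{\textrm{reg}}}$ from \autoref{thm:regularJthTangentFocalLocus} together with continuity of $\exp^\nu$ reduce the claim to the regular stratum, each component of which has type $(k, i) \ne (1, 1)$ (since $i \le j < i + k$ and $j \ge 2$ force $i + k \ge 3$), so falls under one of the cases above and contributes image of dimension $\le n - 2$. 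The \emph{Furthermore} assertion is then the combined bound on $\exp^\nu(\mathcal{T}) = \exp^\nu(\mathcal{Q}) \cup \bigcup_{j \ge 2} \exp^\nu(\mathcal{F}_j)$.

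The hard step is the sub-case of a regular component of type $(1, j)$ with $j \ge 2$ whose stratum $\mathcal{C}(0)$ is non-empty --- a ``higher-order fold'' focal point --- since normal form \autoref{thm:normalExponentialLocalForm:3} would then produce an $(n-1)$-dimensional local image and spoil the bound. I would resolve this by applying the implicit function theorem to $\lambda_1$ along a slice transverse to the ray $R_{\mathbf{v}}$ to argue that $\exp^\nu(\mathbf{v})$ is also reached by an $N$-geodesic at a first-order fold focal value, so its preimage in $\hat{\nu}$ already lies in the first-order fold focal locus $\mathcal{F}_1 \setminus \mathcal{Q}$, which is excluded from $\mathcal{T}$ by definition; alternatively, following Warner's original stratification in the point case, one iterates the argument to show the higher-order fold locus is covered by countably many submanifolds of dimension $\le n - 2$ under $\exp^\nu$.
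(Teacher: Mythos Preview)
There is a genuine gap stemming from a misreading of the notation. In the paper, $\mathcal{F}^j$ (superscript) denotes the set of tangent focal points of \emph{multiplicity} exactly $j$, not the $j^{\text{th}}$-focal-time locus $\mathcal{F}_j$ (subscript) of \autoref{defn:jthTangentFocalLocus}. This is clear from the proof, where for $\mathbf{u}\in\mathcal{F}^j\cap U$ one has $\dim\mathcal{K}_{\mathbf{u}}=j$, and from the sentence following \autoref{eq:tangetialFocalPoints}: $\mathcal{T}$ consists of all tangent focal points \emph{except} those of type \autoref{thm:normalExponentialLocalForm:3}. With the correct reading, your ``hard step'' evaporates: $\bigcup_{j\ge 2}\mathcal{F}^j$ contains only points of multiplicity $\ge 2$, none of which can be fold-type, and $\mathcal{Q}$ by definition picks out only the $\mathcal{C}(1)$-stratum of multiplicity-$1$ components. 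There is no fold contribution to $\mathcal{T}$ at all, so the implicit-function manoeuvre you sketch is addressing a non-existent obstacle.

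A second gap is that the first assertion of the proposition concerns the $(n-1)$-Hausdorff measure of $\mathcal{F}^j$ and $\mathcal{Q}$ \emph{as subsets of $\hat{\nu}$}, whereas your argument only bounds the Hausdorff dimension of their \emph{images} under $\exp^\nu$. These are different claims: a subset of a hypersurface can easily have full $(n-1)$-measure while its image under a rank-deficient map is lower-dimensional. Your density step is also unsound as written: $\mathcal{H}^{n-1}(\mathcal{F}^{\textrm{reg}}_j)=0$ does not imply $\mathcal{H}^{n-1}(\overline{\mathcal{F}^{\textrm{reg}}_j})=0$, and image-dimension bounds do not pass to closures via continuity alone.

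The paper's route is quite different from yours. Rather than invoking the normal forms, it works locally near an arbitrary $\mathbf{v}\in\mathcal{F}^j$: using \autoref{lemma:localCoordinatesAtFocalPoint} one obtains the hypersurface $\Delta^{-1}(0)\subset U$ (the appropriate elementary symmetric polynomial in the eigenvalues of $d\mathcal{E}$) which contains $\mathcal{F}^j\cap U$, and then observes that on $\mathcal{F}^j\cap U$ (for $j\ge 2$) and on $\mathcal{Q}\cap U$ the kernel $\mathcal{K}_{\mathbf{u}}$ meets $T_{\mathbf{u}}\Delta^{-1}(0)$, so these are singular points of $\mathcal{E}|_{\Delta^{-1}(0)}$. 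A Sard-type argument on the restricted map, together with a countable cover, gives both the measure statement and the image-dimension bound. No stratification of $\mathcal{F}^{\textrm{reg}}$ into components, no appeal to \autoref{thm:normalExponentialLocalForm}, and no density argument is needed.
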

\begin{proof}
	Let $\mathbf{v} \in \mathfrak{F}_j$ for some $j \ge 1$. By an application of \autoref{lemma:localCoordinatesAtFocalPoint} we get charts $U$ and $V$ respectively around $\mathbf{v}$ and $\mathcal{E}(\mathbf{v})$.
	As argued in the proof of \autoref{thm:regularTangentFocalPointsSubmanifold}, we get the function $\Delta : U \rightarrow \mathbf{R}$ defined near $\mathbf{v}$, which is expressed as the sum of all $(n- j - 1) \times (n - j -1)$ principal minors of $d\mathcal{E}|_U$. Clearly, for any $\mathbf{u} \in \mathfrak{F}_j \cap U$, we have $\Delta(\mathbf{u}) = 0$ as $\dim \ker \left( d_{\mathbf{u}} \mathcal{E} \right) = j$.
	It follows from a similar argument as in \autoref{thm:regularTangentFocalPointsSubmanifold} that $d_{\mathbf{v}}\mathcal{E}$ is nonsingular, and hence, $\Delta$ is a submersion on $U$, possibly after shrinking. Thus, we see that $\mathfrak{F}_j \cap U$ is contained in the hypersurface $\Delta^{-1}(0)$. We consider the two cases.
	\begin{itemize}
		\item If $j \ge 2$, then we have $\dim \mathcal{K}_{\mathbf{u}} \ge 2$ for $\mathbf{u} \in \mathfrak{F}_j \cap U$. Consequently, $\mathcal{K}_{\mathbf{u}}$ must intersect $T_{\mathbf{u}} \left( \Delta^{-1}(0) \right)$. Hence, $\mathcal{E}|_{\Delta^{-1}(0)}$ has singularities on $\mathfrak{F}_j \cap \Delta^{-1}(0)$.
		\item If $j = 1$, for any $\mathbf{u} \in \mathfrak{F}_1 \cap U$, we have $\mathcal{C} \cap U = \Delta^{-1}(0)$, where $\mathcal{C}$ is the connected component of $\mathbf{u}$. Thus, for $\mathbf{u} \in \mathcal{Q} \cap U$ we clearly have $K_{\mathbf{u}} \subset T_{\mathbf{u}} \left( \Delta^{-1}(0) \right)$. Again, we see that $\mathcal{E}|_{\Delta^{-1}(0)}$ has singularities on $\mathcal{Q} \cap \Delta^{-1}(0)$.
	\end{itemize}
	Then, by an application of Sard's theorem \cite[Theorem 4.1]{Sard42} to the smooth map $\mathcal{E}|_{\Delta^{-1}(0)}$, we see that $\mathcal{E}\left( \mathcal{Q} \cap \Delta^{-1}(0) \right)$, and $\mathcal{E}\left( \mathfrak{F}_j \cap \Delta^{-1}(0) \right)$ for $j \ge 2$, have $(n-1)$-Hausdorff measure $0$ as $\Delta^{-1}(0)$ is an $(n-1)$-dimensional manifold. Now, each $\mathfrak{F}_j$ (and $\mathcal{Q}$ for $j = 1$) is a subset of the second countable space $\hat{\nu}$, and hence, is a Lindel\"of space. Thus, one can cover $\mathfrak{F}_j$ (or $\mathcal{Q}$ for $j = 1$) by countably many open sets $\left\{ U_\alpha \right\}$ such that each $\mathfrak{F}_j \cap U_\alpha$ (or $\mathcal{Q} \cap U_\alpha$, for $j = 1$) is contained in a hypersurface of the form $\Delta^{-1}(0)$ as above. Consequently, the images of $\mathcal{Q} \subset \mathfrak{F}_1$ and $\mathfrak{F}_j$ for $j \ge 2$ under $\mathcal{E}$ have $(n-1)$-Hausdorff measure $0$, and so $\exp^\nu\left( \mathcal{T} \right)$ has dimension $\le n - 2$ \cite[Theorem VII 3]{HurewiczWallmanDimensionTheory}. 
\end{proof}

\begin{remark}\label{rmk:barden}
	The above result was first proved for a point by Warner in \cite[Lemma 1.1, pg. 194]{Warner67}. In \cite[Proposition 1]{BarLe97}, the authors claimed that the whole conjugate locus of a point in the manifold has $(n-1)$-Hausdorff measure $0$, and in particular has codimension at least $2$. This claim seems to be incorrect. Indeed, on the ellipsoid, with the standard metric induced from $\mathbb{R}^3$, one can find a point whose conjugate locus has dimension $1$  \cite{ItohKiyohara04}. Although this does not affect the rest of the article \cite{BarLe97}, since they are only using Proposition 1 to show that \emph{the cut locus of a point has codimension $2$ in $M$}. This remains valid, as can be seen from the next result.
\end{remark}

\begin{prop}\label{prop:dimensionOfConjugateCutPoints}
	A tangent focal point of type \autoref{thm:normalExponentialLocalForm:3} cannot be a tangent cut point. Consequently, the focal cut locus of $N$ has codimension $2$ in $M$.
\end{prop}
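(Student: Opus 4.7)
The plan is to establish the main claim (type (T3) tangent focal points are not tangent cut points) by contradiction, then deduce the codimension statement from \autoref{prop:hausdorffMeasure}.

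Suppose $\mathbf{v}$ is both a type (T3) tangent focal point and a tangent cut point, and set $\ell = F(\mathbf{v}) = \rho(\widehat{\mathbf{v}}) = \lambda_1(\widehat{\mathbf{v}})$ and $q = \mathcal{E}(\mathbf{v})$. I would invoke the fold normal form from \autoref{thm:normalExponentialLocalForm} \autoref{thm:normalExponentialLocalForm:3}, giving charts $(x^i)$ near $\mathbf{v}$ and $(y^j)$ near $q$ in which $\mathcal{E}$ acts as $(x^1,\ldots,x^n)\mapsto\bigl((x^1)^2, x^2,\ldots,x^n\bigr)$; the local image of $\mathcal{E}$ is then the closed half-space $\{y^1\geq 0\}$, and the focal image is the hypersurface $\{y^1=0\}$. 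The characterizing feature of type (T3), namely $\mathcal{K}_{\mathbf{v}}\not\subset T_{\mathbf{v}}\mathcal{F}^{\textrm{reg}}$, guarantees that the radial curve $\sigma(s)=s\widehat{\mathbf{v}}$ crosses the focal hypersurface $\{x^1=0\}$ transversally at $s=\ell$, so that $\sigma(s)^1$ changes sign there. This lets me define the fold partner $\widetilde{\sigma}(s)$ by flipping the sign of the $x^1$-coordinate of $\sigma(s)$, yielding a second smooth preimage of $P_s := \gamma_{\widehat{\mathbf{v}}}(s)=\mathcal{E}(\sigma(s))$ and hence a distinct $N$-geodesic from $\pi(\widetilde{\sigma}(s))\in N$ to $P_s$ of length $F(\widetilde{\sigma}(s))$, with $\widetilde{\sigma}(s)\to\mathbf{v}$ as $s\to\ell$.

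The key step is the strict inequality $F(\widetilde{\sigma}(s)) > s$ for $s<\ell$ close to $\ell$. Indeed, for such $s$, the cut-time hypothesis gives $d(N,P_s)=s$ achieved by $\sigma(s)$; if one had $F(\widetilde{\sigma}(s))=s$, then $\widetilde{\sigma}(s)$ would also be an $N$-segment to $P_s$, forcing $\sigma(s) \in \widetilde{\mathrm{Cu}}(N)$ and consequently $\rho(\widehat{\mathbf{v}})=s<\ell$, contradicting $\rho(\widehat{\mathbf{v}})=\ell$. To close the argument, I plan to combine this with the non-image-side observation: points $P\in V$ with $y^1(P)<0$ are not in the local image $\mathcal{E}(U)$, so taking $P_n\to q$ along $\{y^1<0\}$ and applying \autoref{lemma:convergentSubsequenceNSegment} under hypothesis (\hyperref[eq:hypothesisH]{$\mathsf{H}$}) yields a second $N$-segment to $q$ whose initial vector lies in $\hat{\nu}\setminus U$, hence is distinct from $\mathbf{v}$, making $q$ a separating point. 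The local fold inequality $F(\widetilde{\sigma}(s))>s$ on $s<\ell$ forces one precise asymptotic for $d(N,\cdot)$ coming off the $\mathbf{v}$-branch, while the smooth contribution of the external branch through $\mathbf{w}$ gives a different asymptotic coming from $\{y^1<0\}$; the two cannot match continuously across the focal hypersurface, yielding the desired contradiction. The main obstacle is to carry out this incompatibility argument rigorously, especially in the degenerate subcase where the first-order derivative $(F\circ\widetilde{\sigma})'(\ell)-1$ vanishes and one must invoke the higher-order expansion of $F$ (together with the shape-operator formulas from \autoref{prop:focalTimeSmooth}) to rule out hidden cancellations.

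For the consequential codimension statement, the focal cut locus consists of points of the form $q=\mathcal{E}(\mathbf{u})$ with $\mathbf{u}\in\mathcal{F}\cap\widetilde{\mathrm{Cu}}(N)$. By the first part of the proposition, no such $\mathbf{u}$ can be of type (T3), so $\mathbf{u}\in\mathcal{T}$. Hence the focal cut locus is contained in $\exp^\nu(\mathcal{T})$, which has dimension at most $n-2$ by \autoref{prop:hausdorffMeasure}, giving codimension $2$ in $M$.
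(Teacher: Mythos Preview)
Your argument has a genuine gap at the decisive step. Showing that $q=\mathcal{E}(\mathbf{v})$ is a separating point (your Part 2) is \emph{not} a contradiction: it only says $\mathbf{v}\in\widetilde{\mathrm{Se}\kern .01cm}(N)\subset\widetilde{\mathrm{Cu}}(N)$, which is perfectly consistent with the hypothesis you are trying to refute. Likewise, the fold inequality $F(\widetilde{\sigma}(s))>s$ for $s<\ell$ is correct but never cashed in. The promised ``incompatible asymptotics'' between the $\mathbf{v}$-branch and the external $\mathbf{w}$-branch is left entirely vague; continuity of $d(N,\cdot)$ already forces the two to agree at $q$, and you give no mechanism (first-variation, gradient matching, or otherwise) that would produce an actual contradiction. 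You also invoke hypothesis~(\hyperref[eq:hypothesisH]{$\mathsf{H}$}) through \autoref{lemma:convergentSubsequenceNSegment}, but the proposition does not assume it and the paper's proof does not need it.

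The paper's route is quite different and avoids all of this. Instead of the radial curve and its fold partner, one takes a curve $\sigma:(-\epsilon,\epsilon)\to\mathcal{C}$ \emph{inside the focal hypersurface} through $\mathbf{v}$; the (T3) condition $\mathcal{K}_{\mathbf{v}}\cap T_{\mathbf{v}}\mathcal{C}=0$ guarantees $\dot\sigma(s)\notin\mathcal{K}_{\sigma(s)}$, hence the image curve $\eta=\mathcal{E}\circ\sigma$ is regular. One then computes $\dot\eta(s)=(\lambda_1\circ\hat\sigma)'(s)\,\dot\gamma_{\hat\sigma(s)}(\lambda_1(\hat\sigma(s)))$, so that $L(\eta|_{[-\delta,0]})=\lambda_1(\widehat{\mathbf{v}})-\lambda_1(\widehat{\mathbf{w}})$ with $\mathbf{w}=\sigma(-\delta)$. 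A single triangle inequality then forces the concatenation $\gamma_{\mathbf{w}}*\eta$ to be an $N$-segment, whence $\gamma_{\widehat{\mathbf{w}}}$ minimizes strictly beyond its first focal time, contradicting \autoref{eq:cutTimeFocalTime}. This is the missing idea in your approach: rather than comparing two branches at $q$, one produces a nearby direction $\widehat{\mathbf{w}}$ along which $\rho(\widehat{\mathbf{w}})>\lambda_1(\widehat{\mathbf{w}})$. Your deduction of the codimension-$2$ statement from \autoref{prop:hausdorffMeasure} is fine and matches the paper.
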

\begin{proof}
	Let $\mathbf{v} \in \mathcal{F}$ be a tangent focal locus of multiplicity $1$, and furthermore $\mathcal{K}_{\mathbf{v}} \cap T_{\mathbf{v}} \mathcal{C} = \left\{ 0 \right\}$, where $\mathcal{C}$ is the connected component of $\mathcal{F}^{\textrm{reg}}$ containing $\mathbf{v}$. Suppose, if possible, $\mathbf{v} \in \widetilde{\mathrm{Cu}}(N)$ is a tangent cut point as well. In particular, $\mathbf{v}$ is then a first tangent focal point, and furthermore, $\rho(\widehat{\mathbf{v}}) = \lambda_1(\widehat{\mathbf{v}})$ holds by \autoref{eq:cutTimeFocalTime}. As observed in \autoref{example:multiplicityOneIsRegular}, we have $\mathbf{v}$ is regular and $\mathcal{C}$ consists of multiplicity $1$ tangent focal points, which are also the first tangent focal points. As argued in the proof of \autoref{thm:regularFocalLocusComponent}, we get neighborhoods $\widehat{\mathbf{v}} \in \hat{U} \subset S(\nu)$ and $\mathbf{v} \in U \subset \hat{\nu}$, such that $U$ is radially convex as in \autoref{R3}, and $\hat{U} = \left\{ \widehat{\mathbf{u}} \coloneqq \frac{\mathbf{u}}{F(\mathbf{u})} \;\middle|\; \mathbf{u}\in U \right\}$. Moreover, we can assume that $\mathcal{K}_{\mathbf{u}} \cap T_{\mathbf{u}}\mathcal{C} = \left\{ 0 \right\}$ for $\mathbf{u} \in U \cap \mathcal{C}$. 
	By \autoref{thm:regularTangentFocalPointsSubmanifold} we have the vector bundle decomposition 
	\[T\hat{\nu}|_{U \cap \mathcal{C}} = T(U \cap \mathcal{C}) \oplus \mathcal{R}_U,\]
	where $\mathcal{R}_U \coloneqq \cup_{\mathbf{u} \in U \cap \mathcal{C}} T_{\mathbf{u}} R_{\mathbf{u}}$. Note that the rank $1$ bundle $\mathcal{K}_U \coloneqq \cup_{\mathbf{u} \in U \cap \mathcal{C}} \mathcal{K}_U$ is transverse to $T(U \cap \mathcal{C})$ by hypothesis, and to $\mathcal{R}_U$ by \autoref{R1}. Hence, we have a rank $1$ distribution, say, $\mathcal{W} \subset T(U \cap \mathcal{C})$ given as the projection of $\mathcal{K}_U$ along $\mathcal{R}_U$. Choose a curve $\sigma : (-\epsilon, \epsilon) \rightarrow U \cap \mathcal{C}$ tangential to $\mathcal{W}$, with $\sigma(0) = \mathbf{v}$. Then, we have the curves
	\[\hat{\sigma}(s) \coloneqq \frac{\sigma(s)}{F(\sigma(s))} \in S(\nu), \quad \eta(s) \coloneqq \exp^\nu(\sigma(s)) = \gamma_{\hat{\sigma}(s)}\left( F(\sigma(s)) \right) = \gamma_{\hat{\sigma}(s)} \left( \lambda_1(\hat{\sigma}(s)) \right).\]
	As $\lambda_1$ is smooth on $\hat{U}$ by \autoref{thm:smoothnessOfFocalTime}, it follows that $\eta$ is smooth. Now, by choice of $\sigma$, for each $s$ we have $0 \ne \mathbf{x}(s) \coloneqq \sigma^\prime(s) + \mathbf{r}(s) \in \mathcal{K}_{\sigma(s)}$ for some $\mathbf{r}(s) \in T_{\sigma(s)} R_{\sigma(s)}$. Consider the projection $\pi : \hat{\nu}\rightarrow S(\nu)$ given by $\mathbf{u} \mapsto \frac{\mathbf{u}}{F(\mathbf{u})}$. We have
	\[\hat{\sigma}^\prime(s) = d_{\sigma(s)}\pi\left( \sigma^\prime(s) \right) = d_{\sigma(s)}\pi \left( \mathbf{x}(s) \right),\]
	as $d\pi$ is $0$ along the rays. Then, by \autoref{prop:identificationKernelToUnitBundle}, we get 
	\[J_{\hat{\sigma}^\prime(s)} \left( \lambda_1(\hat{\sigma}(s)) \right) = J_{\hat{\sigma}^\prime(s)}(F(\sigma(s))) = J_{\mathbf{x}(s)}(1) = 0.\]
	Now, consider the $N$-geodesic variation $\Lambda(s, t) = \exp^\nu(t \hat{\sigma}(s))$, so that $\eta(s) = \Lambda(s, \lambda_1(\hat\sigma(s)))$. We compute 
	\begin{align*}
		\dot \eta(s) &= \partial_s \Lambda (s, \lambda_1(\hat{\sigma}(s))) \cdot 1 + \partial_t \Lambda(s, \lambda_1(\hat{\sigma}(s))) \cdot \left( \lambda_1 \circ \hat{\sigma} \right)^\prime(s) \\
		&= \underbrace{J_{\hat{\sigma}^\prime(s)}\left( \lambda_1 \left( \hat{\sigma}(s) \right) \right)}_0 + \dot \gamma_{\hat{\sigma}(s)}\left( \lambda_1(\hat{\sigma}(s)) \right) (\lambda_1 \circ \hat{\sigma})^\prime(s) \\
		&= \dot \gamma_{\hat{\sigma}(s)}\left( \lambda_1(\hat{\sigma}(s)) \right) (\lambda_1 \circ \hat{\sigma})^\prime(s).
	\end{align*}
	On the other hand, $\dot \eta(s) = d \mathcal{E} (\dot\sigma(s)) \ne 0$ as $\dot\sigma(s) \not \in \mathcal{K}_{\sigma(s)}$. Hence, we must have $\left( \lambda_1 \circ \hat{\sigma} \right)^\prime(s) \ne 0$. Without loss of generality, we may assume that $\left( \lambda_1 \circ \hat{\sigma} \right)^\prime(s) > 0$. Now, for some $0 < \delta < \epsilon$, we compute 
	\[L(\eta|_{[-\delta,0]}) = \int_{-\delta}^0 F(\dot\eta(s)) ds = \int_{-\delta}^0 \left( \lambda_1 \circ \hat{\sigma} \right)^\prime(s) \underbrace{F\left( \dot \gamma_{\hat{\sigma}(s)} (\lambda_1(\hat{\sigma}(s)))\right)}_1 ds = \lambda_1(\hat{\sigma}(0)) - \lambda_1(\hat{\sigma}(-\delta)).\]
	Set $\mathbf{w} = \sigma(-\delta)$, so that $L\left( \eta|_{[-\delta, 0]} \right) = \lambda_1(\widehat{\mathbf{v}}) - \lambda_1(\widehat{\mathbf{w}})$. We have the $N$-geodesics $\gamma_{\mathbf{v}}, \gamma_{\mathbf{w}} : [0, 1] \rightarrow M$ joining $N$ to the points $p \coloneqq \gamma_{\mathbf{v}}(1) = \exp^\nu(\mathbf{v})$, and $q \coloneqq \gamma_{\mathbf{w}}(1) = \exp^\nu(\mathbf{w})$ respectively. As $\mathbf{v} \in \widetilde{\mathrm{Cu}}(N)$, we have $d(N, p) = L(\gamma_{\mathbf{v}}) = \rho(\widehat{\mathbf{v}}) = \lambda_1(\widehat{\mathbf{v}})$. On the other hand, for $\mathbf{w}$ we get $d(N,q) \le L(\gamma_{\mathbf{w}}) \le \rho(\widehat{\mathbf{w}}) \le \lambda_1(\widehat{\mathbf{w}})$. It follows that
	\[\lambda_1 \left( \widehat{\mathbf{v}} \right) = d(N,p) \le d(N, q) + d(q, p) \le L(\gamma_{\mathbf{w}}) + L\left( \eta|_{[-\delta, 0]} \right) \le \lambda_1 \left( \widehat{\mathbf{w}} \right) + L\left( \eta|_{[-\delta, 0]} \right) = \lambda_1 \left( \widehat{\mathbf{v}} \right).\]
	 Consequently, all the inequalities above are, in fact, equalities. In particular, $\gamma_{\mathbf{w}}$, followed by $\eta|_{[-\delta, 0]}$ is then an $N$-segment, and thus $q$ cannot be a cut point of $N$. This forces, 
	 \[\lambda_1 \left( \widehat{\mathbf{w}} \right) = L\left( \gamma_{\mathbf{w}} \right) < \rho\left( \widehat{\mathbf{w}} \right),\]
	 which is a contradiction to \autoref{eq:cutTimeFocalTime}. Hence, $\mathbf{v} \not \in \widetilde{\mathrm{Cu}}(N)$. We conclude the proof by applying \autoref{prop:hausdorffMeasure}
\end{proof}

The above result has also been proved in the Riemannian context in \cite[Proposition 3.2]{Heb83}, and in \cite[Lemma 2]{ItohTanaka98}. Now, as mentioned earlier, the cut locus $\mathrm{Cu}(N)$ consists of points that are either a first focal locus along an $N$-geodesic, or points admitting at least two $N$-segments, i.e., points of $\mathrm{Se}(N)$. In the normal bundle $\nu$, we have the set of tangent cut points $\widetilde{\mathrm{Cu}}(N)$, which maps to $\mathrm{Cu}(N)$ under the normal exponential map.

\begin{defn}\label{defn:separatingTangentCutPoint}
    A vector $\mathbf{v} \in \widetilde{\mathrm{Cu}}(N)$ is called a \emph{separating} (or \emph{ordinary}) tangent cut point if there exists some $\mathbf{w} \in \widetilde{\mathrm{Cu}}(N)$ with $\mathbf{w} \ne \mathbf{v}$ such that $\exp^\nu(\mathbf{v}) = \exp^\nu(\mathbf{w})$. Otherwise, $\mathbf{v}$ is called a \emph{singular} tangent cut point. We denote the set of all separating tangent cut points of $N$ by $\widetilde{\mathrm{Se}\kern .01cm}(N)$.
\end{defn}

It is clear that $\widetilde{\mathrm{Se}\kern .01cm}(N)$ is mapped to $\mathrm{Se}(N)$ under the normal exponential map. Then, it follows that 
\begin{equation}\label{eq:singularTangentCutIsFocal}
    \widetilde{\mathrm{Cu}}(N) \setminus \widetilde{\mathrm{Se}\kern .01cm}(N) \subset \mathcal{F}_1(N),
\end{equation}
where $\mathcal{F}_1$ is the first tangent focal locus. Generalizing the main theorem of \cite{Bishop77}, we now prove the following.

\begin{theorem}\label{thm:tangentSepIsDense}
    Let $N$ be a closed submanifold of a forward complete Finsler manifold $(M, F)$, and suppose hypothesis (\hyperref[eq:hypothesisH]{H}) holds. Then, $\widetilde{\mathrm{Cu}}(N) = \overline{\widetilde{\mathrm{Se}\kern .01cm}(N)}$.
\end{theorem}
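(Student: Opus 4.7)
The inclusion $\overline{\widetilde{\mathrm{Se}\kern .01cm}(N)} \subset \widetilde{\mathrm{Cu}}(N)$ follows once $\widetilde{\mathrm{Cu}}(N)$ is shown to be closed in $\hat{\nu}$: a convergent sequence $\mathbf{v}_i = \rho(\widehat{\mathbf{v}}_i)\widehat{\mathbf{v}}_i \to \mathbf{v}$ has $F(\mathbf{v}) = \lim \rho(\widehat{\mathbf{v}}_i) = \rho(\widehat{\mathbf{v}}) < \infty$ by the continuity of $\rho$, forcing $\mathbf{v} \in \widetilde{\mathrm{Cu}}(N)$. For the reverse inclusion I fix $\mathbf{v} \in \widetilde{\mathrm{Cu}}(N) \setminus \widetilde{\mathrm{Se}\kern .01cm}(N)$; by \autoref{eq:singularTangentCutIsFocal}, $\mathbf{v} \in \mathcal{F}_1$ and $\gamma_\mathbf{v}$ is the unique $N$-segment to $p \coloneqq \mathcal{E}(\mathbf{v})$. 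The goal is to produce a sequence of separating tangent cut points converging to $\mathbf{v}$, which I obtain by splitting into the regular and singular focal cases.

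Suppose first $\mathbf{v} \in \mathcal{F}^{\textrm{reg}}_1$, and let $\mathcal{C}$ be the component of $\mathcal{F}^{\textrm{reg}}$ through $\mathbf{v}$. \autoref{prop:dimensionOfConjugateCutPoints} excludes the fold (T3) possibility, so either $\mathbf{v}$ falls in the T1/T2 interior of $\mathcal{C}$ or $\mathbf{v} \in \mathcal{C}(1) \setminus \Int \mathcal{C}(1)$. In the T1/T2 case, \autoref{thm:normalExponentialLocalForm} supplies coordinates in which $\mathcal{E}$ collapses a slice of the focal locus onto $p$: take $\mathbf{u}_\epsilon$ with $x^1 = \epsilon$ and the remaining coordinates zero, so $\mathbf{u}_\epsilon \to \mathbf{v}$, $\mathcal{E}(\mathbf{u}_\epsilon) = p$, and $\widehat{\mathbf{u}}_\epsilon \ne \widehat{\mathbf{v}}$ by the transversality $T_\mathbf{v} \hat{\nu} = T_\mathbf{v} \mathcal{F}^{\textrm{reg}} \oplus T_\mathbf{v} R_\mathbf{v}$ of \autoref{thm:regularTangentFocalPointsSubmanifold}. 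Uniqueness of $\gamma_\mathbf{v}$ as an $N$-segment to $p$ forces $F(\mathbf{u}_\epsilon) > F(\mathbf{v})$, hence $\rho(\widehat{\mathbf{u}}_\epsilon) < F(\mathbf{u}_\epsilon) = \lambda_1(\widehat{\mathbf{u}}_\epsilon)$, so the tangent cut point $\widetilde{\mathbf{u}}_\epsilon \coloneqq \rho(\widehat{\mathbf{u}}_\epsilon)\widehat{\mathbf{u}}_\epsilon$ is separating by \autoref{eq:singularTangentCutIsFocal}, with $\widetilde{\mathbf{u}}_\epsilon \to \mathbf{v}$ again by continuity of $\rho$. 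In the boundary subcase $\mathbf{v} \in \mathcal{C}(1) \setminus \Int \mathcal{C}(1)$ there is no normal form, but $\mathbf{v}$ is a limit of fold points $\mathbf{u}'_i \in \mathcal{C}(0)$; \autoref{prop:dimensionOfConjugateCutPoints} yields $\rho(\widehat{\mathbf{u}}'_i) < \lambda_1(\widehat{\mathbf{u}}'_i)$, so $\rho(\widehat{\mathbf{u}}'_i)\widehat{\mathbf{u}}'_i \in \widetilde{\mathrm{Se}\kern .01cm}(N)$ and converges to $\mathbf{v}$.

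For the singular case $\mathbf{v} \in \mathcal{F}^{\textrm{sing}}_1$ I argue by contradiction: if some neighborhood $V$ of $\mathbf{v}$ avoided $\widetilde{\mathrm{Se}\kern .01cm}(N)$, continuity of $\rho$ produces a neighborhood $\widehat{W}$ of $\widehat{\mathbf{v}}$ in $S(\nu)$ such that $\rho(\widehat{\mathbf{u}})\widehat{\mathbf{u}} \in V$ for all $\widehat{\mathbf{u}} \in \widehat{W}$; every such tangent cut point is then non-separating, hence a first focal point by \autoref{eq:singularTangentCutIsFocal}, so $\rho = \lambda_1$ on $\widehat{W}$. Density of $\mathcal{F}^{\textrm{reg}}_1$ in $\mathcal{F}_1$ (\autoref{thm:regularJthTangentFocalLocus}) then gives $\mathbf{u}_i \in \mathcal{F}^{\textrm{reg}}_1$ with $\mathbf{u}_i \to \mathbf{v}$; eventually $\widehat{\mathbf{u}}_i \in \widehat{W}$, and since $\mathbf{u}_i \in \mathcal{F}_1$ gives $F(\mathbf{u}_i) = \lambda_1(\widehat{\mathbf{u}}_i) = \rho(\widehat{\mathbf{u}}_i)$, each $\mathbf{u}_i$ is itself a regular non-separating tangent cut point lying in $V$. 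Applying the regular-case analysis above to $\mathbf{u}_i$ produces separating tangent cut points converging to $\mathbf{u}_i$, hence eventually in $V$, contradicting $V \cap \widetilde{\mathrm{Se}\kern .01cm}(N) = \emptyset$. The principal technical hurdle is the boundary subcase of the regular analysis, where \autoref{thm:normalExponentialLocalForm} gives no normal form; the resolution is to exploit the nearby fold points of $\mathcal{C}(0)$ through \autoref{prop:dimensionOfConjugateCutPoints}.
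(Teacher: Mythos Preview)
Your proof is correct and follows essentially the same route as the paper: case analysis on the normal-form types of \autoref{thm:normalExponentialLocalForm}, exclusion of fold points by \autoref{prop:dimensionOfConjugateCutPoints}, and density of $\mathcal{F}^{\textrm{reg}}_1$ (\autoref{thm:regularJthTangentFocalLocus}) to handle the residual points. Two small organizational differences are worth noting. First, your inclusion $\overline{\widetilde{\mathrm{Se}\kern .01cm}(N)} \subset \widetilde{\mathrm{Cu}}(N)$ via closedness of $\widetilde{\mathrm{Cu}}(N)$ (continuity of $\rho$) is cleaner than the paper's, which instead passes a pair $(\mathbf{v}_i,\mathbf{w}_i)$ to a limit and invokes \autoref{lemma:convergentSubsequenceNSegment} to identify the limit as a first focal cut point; the paper's argument extracts slightly more (the limit is focal), but this is not needed for the statement. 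Second, the paper treats the boundary case $\mathcal{C}(1)\setminus\Int\mathcal{C}(1)$ together with the singular case via a single density-and-diagonalization step, whereas you split them and handle the boundary case directly through nearby fold points---both work, and your split makes the role of \autoref{prop:dimensionOfConjugateCutPoints} in the boundary case more transparent.
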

\begin{proof}
    Since every non-focal tangent cut point is necessarily separating (\autoref{eq:singularTangentCutIsFocal}), we consider some $\mathbf{v} \in \widetilde{\mathrm{Cu}}(N) \cap \mathcal{F} = \widetilde{\mathrm{Cu}}(N) \cap \mathcal{F}_1$. We have the following cases to consider 
    \begin{enumerate}
        \item Suppose $\mathbf{v}$ is a regular focal locus, and of type \autoref{thm:normalExponentialLocalForm:1} or \autoref{thm:normalExponentialLocalForm:2} in \autoref{thm:normalExponentialLocalForm}. Then, by \autoref{rmk:expMapsLeavesToPoint}, we have an involutive distribution on $\mathcal{F}^{\textrm{reg}}$, defined near $\mathbf{v}$, such that each leaf is mapped to a single point by $\exp^\nu$. In particular, the leaf passing through $\mathbf{v}$, say, $\mathfrak{L}$ is mapped to $q \coloneqq \exp^\nu(\mathbf{v})$. Consider a sequence $\mathbf{v}_i \in \mathfrak{L} \setminus \left\{ \mathbf{v} \right\}$ with $\mathbf{v}_i \rightarrow \mathbf{u}$. We have the tangent cut points $\mathbf{w}_i \coloneqq \rho(\widehat{\mathbf{v}}_i)\widehat{\mathbf{v}}_i$, and the tangent focal points $\lambda_1(\widehat{\mathbf{v}}_i) \widehat{\mathbf{v}}_i = \mathbf{v}_i$. Now, $\rho \le \lambda_1$ by \autoref{eq:cutTimeFocalTime}. If $\rho(\widehat{\mathbf{v}}_i) = \lambda_1(\widehat{\mathbf{v}}_i)$ for some $i$, we get $\mathbf{v}_i \in \widetilde{\mathrm{Cu}}(N)$. But then $\mathbf{v} \in \widetilde{\mathrm{Se}\kern .01cm}(N)$, as $\exp^\nu(\mathbf{v}) = \exp^\nu(\mathbf{v}_i)$ and $\mathbf{v} \ne \mathbf{v}_i$. Otherwise, assume that $\rho(\widehat{\mathbf{v}}_i) < \lambda_1(\widehat{\mathbf{v}}_i)$ holds for all $i$. As the cut time map $\rho$ is continuous under hypothesis (\hyperref[eq:hypothesisH]{H}) \cite[Theorem 4.7]{BhoPra2023}, we get $\mathbf{w}_i \rightarrow \mathbf{v}$. As $\mathbf{v}_i$ is a \emph{first} tangent focal locus, we must have $\mathbf{w}_i \in \widetilde{\mathrm{Se}\kern .01cm}(N)$. But then in any neighborhood of $\mathbf{v}$ we have some element of $\widetilde{\mathrm{Se}\kern .01cm}(N)$, proving $\mathbf{v} \in \overline{\widetilde{\mathrm{Se}\kern .01cm}(N)}$.
        
        \item If $\mathbf{v}$ is a regular focal locus of type \autoref{thm:normalExponentialLocalForm:3} as in \autoref{thm:normalExponentialLocalForm}, then by \autoref{prop:dimensionOfConjugateCutPoints}, $\mathbf{v}$ cannot be a tangent cut point.
        
        \item Lastly, assume that $\mathbf{v}$ is a focal point not covered in \autoref{thm:normalExponentialLocalForm}. Since $\mathbf{v} \in \mathcal{F}_1$, by \autoref{thm:regularJthTangentFocalLocus}, we have $\mathbf{v}$ is the limit point of $\mathbf{v}_i \in \mathcal{F}^{\textrm{reg}}_1$. Also, in view of \autoref{rmk:noNormalForm}, we may assume that each $\mathbf{v}_i$ is of type \autoref{thm:normalExponentialLocalForm:1}, \autoref{thm:normalExponentialLocalForm:2} or \autoref{thm:normalExponentialLocalForm:3} as in \autoref{thm:normalExponentialLocalForm}. Denote $\mathbf{w}_i \coloneqq \rho(\widehat{\mathbf{v}}_i) \widehat{\mathbf{v}}_i$, and note that $\mathbf{w}_i \rightarrow \mathbf{v}$ as in case (1). If $\rho(\widehat{\mathbf{v}}_i) = \lambda_1(\widehat{\mathbf{v}}_i)$ holds for some $i$, we have $\mathbf{v}_i \in \widetilde{\mathrm{Cu}}(N)$, and hence by the previous two cases, $\mathbf{w}_i = \mathbf{v}_i$ is a limit point of $\widetilde{\mathrm{Se}\kern .01cm}(N)$. If $\rho(\widehat{\mathbf{v}}_i) < \lambda_1(\widehat{\mathbf{v}}_i)$, then $\mathbf{w}_i \in \widetilde{\mathrm{Cu}}(N) \setminus \mathcal{F}_1 = \widetilde{\mathrm{Se}\kern .01cm}(N)$. By a standard Cantor's diagonalization argument, we then get a sequence in $\widetilde{\mathrm{Se}\kern .01cm}(N)$ converging to $\mathbf{v}$, again showing that $\mathbf{v} \in \overline{\widetilde{\mathrm{Se}\kern .01cm}(N)}$
    \end{enumerate}
    Thus, we have obtained $\widetilde{\mathrm{Cu}}(N) \subset \overline{\widetilde{\mathrm{Se}\kern .01cm}(N)}$.
    
    To show the equality, let us consider some $\mathbf{v} \in \overline{\widetilde{\mathrm{Se}\kern .01cm}(N)} \setminus \widetilde{\mathrm{Se}\kern .01cm}(N)$. Then, we have a sequence $\mathbf{v}_i \in \widetilde{\mathrm{Se}\kern .01cm}(N)$ such that $\mathbf{v}_i \rightarrow \mathbf{v}$. Furthermore, we have $\mathbf{w}_i \ne \mathbf{v}_i$ such that $q_i \coloneqq \exp^\nu(\mathbf{w}_i) = \exp^\nu(\mathbf{v}_i)$. Clearly, $q_i \rightarrow q \coloneqq \exp^\nu(\mathbf{v})$. Also, by \autoref{prop:existenceOfNSegements}, passing to a subsequence, we have $\mathbf{w}_i \rightarrow \mathbf{w}$. Since $q = \exp^\nu(\mathbf{v}) = \lim_i \exp^\nu(\mathbf{v}_i) = \lim_i \exp^\nu(\mathbf{w}_i) = \exp^\nu(\mathbf{w})$, we must have $\mathbf{w} = \mathbf{v}$, as $\mathbf{v} \not \in \widetilde{\mathrm{Se}\kern .01cm}(N)$. But then in any neighborhood of $\mathbf{v}$, the map $\exp^\nu$ fails to be injective. By the inverse function theorem, we must have $d_{\mathbf{v}} \exp^\nu|_{\hat{\nu}}$ is singular, i.e., $\mathbf{v} \in \mathcal{F}$. Also, by \autoref{lemma:convergentSubsequenceNSegment}, we see that $\gamma_{\widehat{\mathbf{v}}}$ being a limit of $N$-segments $\gamma_{\widehat{\mathbf{v}}_i}$, is an $N$-segment itself. But then $q = \gamma_{\mathbf{v}}(1)$ is a \emph{first} focal locus of $N$ along $\gamma_{\mathbf{v}}$ \cite[Lemma 4.4]{BhoPra2023}. In particular, $q$ is then a cut point, and consequently $\mathbf{v} \in \widetilde{\mathrm{Cu}}(N)$. Hence, we have $\widetilde{\mathrm{Cu}}(N) = \overline{\widetilde{\mathrm{Se}\kern .01cm}(N)}$, concluding the proof.
\end{proof}

\begin{remark}\label{rmk:cutIsClosureOfSep}
    Since $\exp^\nu$ is continuous, we immediately get
    \[\mathrm{Cu}(N) = \exp^\nu\left( \widetilde{\mathrm{Cu}}(N) \right) = \exp^\nu\left( \overline{\widetilde{\mathrm{Se}\kern .01cm}(N)} \right) \subset \overline{\exp^\nu\left( \widetilde{\mathrm{Se}\kern .01cm}(N) \right)} = \overline{\mathrm{Se}(N)}.\]
    Consequently, $\mathrm{Se}(N)$ is dense in $\mathrm{Cu}(N)$. In fact, $\mathrm{Cu}(N) = \overline{\mathrm{Se}(N)}$ holds as well \cite[Theorem 4.8]{BhoPra2023}.
\end{remark}

\section{Open Questions}\label{sec:openQuestions}
In this section, we pose some open questions for further research. The first question is related to \autoref{thm:regularFocalLocusComponent}
\begin{question}\label{question:componentBoundary}
    Given a connected component $\mathcal{C} \subset \mathcal{F}^{\textrm{reg}}$, what can be deduced about $\partial \mathcal{C}$?
\end{question}

Let us make some easy observations. If $\mathcal{C} \subset \mathcal{F}^{\textrm{reg}}_j$ for some $j$, then it follows from \autoref{thm:regularFocalLocusComponent} and \autoref{thm:regularJthTangentFocalLocus} that $\partial \mathcal{C} \subset \mathcal{F}^{\textrm{sing}}_j$. Let $\mathbf{v} \in \partial \mathcal{C}$ have the type $(k_0, i_0)$. Suppose, we have a neighborhood $\mathbf{v} \in U \subset \hat{\nu}$ satisfying $U \cap \mathcal{F}^{\textrm{reg}}_j = U \cap \mathcal{C}$, which is the case if $\mathbf{v}$ is isolated in $\mathcal{F}^{\textrm{sing}}$. Then one can easily conclude that $U \cap \mathcal{F}^{\textrm{reg}}_i = U \cap \mathcal{C}$ for all $i_0 \le i < i_0 + k_0$. On the other hand, suppose a sufficiently small neighborhood $U$ intersects a finite list of components, say, $\mathcal{C}_1,\dots ,\mathcal{C}_{r_0}$ from $\mathcal{F}^{\textrm{reg}}_j$, with $r_0 \ge 2$. Shrinking $U$ suitably, and applying \autoref{thm:regularFocalLocusComponent}, one can see that $U \cap \mathcal{F}^{\textrm{sing}} = U \cap \mathcal{F}^{\textrm{sing}}_j$ is mapped homeomorphically to a set which is the topological boundary of finitely many open charts. This begs the question of whether the boundary is rectifiable.\medskip

Next, as mentioned earlier, in \cite{Itoh2001} it is proved that if the focal time map $\lambda_j : S(\nu) \rightarrow (0, \infty]$ of a submanifold in a Riemannian manifold is finite at $\mathbf{v}$, then $\lambda_j$ locally Lipschitz near $\mathbf{v}$. It appears that their argument can be generalized to the Finsler setup, albeit with some effort. Then, assuming $\lambda_j < \infty$, an application of Rademacher's theorem implies that $\lambda_j$ is differentiable almost everywhere. In \autoref{thm:smoothnessOfFocalTime}, we identified an open dense subset $\mathcal{R}_j \subset S(\nu)$ on which $\lambda_j$ is smooth, provided $\lambda_j < \infty$.
\begin{question}\label{question:focalTimeNonSmooth}
    On which points of $S(\nu) \setminus \mathcal{R}_j$ is the map $\lambda_j$ non-differentiable?
\end{question}

Lastly, we repeat a question originally posed by Bishop in \cite{Bishop77}, to the best of our knowledge, which is still unsolved, even for a point in a Riemannian manifold.

\begin{question}\label{question:separtingSetOpen}
    Is $\widetilde{\mathrm{Se}\kern .01cm}(N)$ open in $\widetilde{\mathrm{Cu}}(N)$? Is $\mathrm{Se}(N)$ open in $\mathrm{Cu}(N)$?
\end{question}

One can readily observe the following.
\begin{itemize}
    \item For any $\mathbf{v} \in \widetilde{\mathrm{Cu}}(N) \setminus \mathcal{F}_1 \subset \widetilde{\mathrm{Se}\kern .01cm}(N)$, we have $\rho(\widehat{\mathbf{v}}) < \lambda_1(\widehat{\mathbf{v}})$. Since both $\rho$ and $\lambda_1$ are continuous, it follows that in a neighborhood of $\widehat{\mathbf{v}}$ we have $\rho < \lambda_1$. But then, in a neighborhood of $\mathbf{v}$, every cut point is non-focal, and hence belongs to $\widetilde{\mathrm{Se}\kern .01cm}(N)$. Thus, non-focal cut points lie in the interior of $\widetilde{\mathrm{Se}\kern .01cm}(N)$.
    \item If $\mathbf{v} \in \mathrm{Cu}(N)$ is a regular tangent focal point, then by \autoref{prop:dimensionOfConjugateCutPoints} it must be of type \autoref{thm:normalExponentialLocalForm:1} or \autoref{thm:normalExponentialLocalForm:2}. Now, in a neighborhood of $\mathbf{v}$, $\mathcal{F}^{\textrm{reg}}$ is foliated in such a way that every leaf is mapped to the same point under $\exp^\nu$. In particular, for any $\mathbf{u} \in \widetilde{\mathrm{Cu}}(N)$ near $\mathbf{v}$, if $\mathbf{u} \in \mathcal{F}_1$, which is the case for $\mathbf{v}$ itself, then we must have $\mathbf{u} \in \widetilde{\mathrm{Se}\kern .01cm}(N)$. Otherwise, $\mathbf{u} \in \widetilde{\mathrm{Cu}}(N) \setminus \mathcal{F}_1 \subset \widetilde{\mathrm{Se}\kern .01cm}(N)$. It follows that $\mathbf{v}$ is in the interior of $\widetilde{\mathrm{Se}\kern .01cm}(N)$.
\end{itemize}
Thus, the only points left to consider are the regular focal cut points of multiplicity $1$ as described in \autoref{rmk:noNormalForm}, and the singular focal cut points $\widetilde{\mathrm{Cu}}(N) \cap \mathcal{F}^{\textrm{sing}} = \widetilde{\mathrm{Cu}}(N) \cap \mathcal{F}^{\textrm{sing}}_1$.

\appendix\section{Higher Order Tangent Vector}\label{sec:appendix}
 In this appendix, we introduce the notion of higher order tangent vectors (see \cite[Section 1.26]{WarnerBook} for details), and show that our notion of \autoref{R2} is comparable to that of \cite{Warner1965}. Then, we give the proofs of \autoref{thm:kernelContainedInTangent} and \autoref{thm:normalExponentialLocalForm}.\medskip

Let $M$ be a manifold with $\dim M = n$. For $p \in M$, denote by $\mathcal{I}_p$ the collection of germs at $p$ of smooth functions $M \rightarrow \mathbb{R}$. A germ $\mathfrak{f} \in \mathcal{I}_p$ is assumed to be represented by a function $f : M\rightarrow \mathbb{R}$ locally defined near $p$, and in particular, the evaluation $\mathfrak{f}(p) = f(p)$ is well-defined. $\mathcal{I}_p$ is a local ring, with the maximal ideal $\mathfrak{m}_p \coloneqq \left\{ \mathfrak{f} \;\middle|\; \mathfrak{f}(p) = 0 \right\}$. Denote the $k^{\textrm{th}}$ power ideal as $\mathfrak{m}_p^{k}$, which consists of all possible finite sums of $k$-fold products of elements of $\mathfrak{m}_p$. In particular, elements of $\mathfrak{m}_p^{k}$ are represented by functions with a $0$ of order at least $k$ at $p$. The $k^{\textrm{th}}$ order tangent space at $p$ is then defined as
\[T^k_p M \coloneqq \left( \mathfrak{m}_p / \mathfrak{m}_p^{k+1} \right)^* = \hom\left( \mathfrak{m}_p / \mathfrak{m}_p^{k+1}, \; \mathbb{R} \right).\]
Since $\mathfrak{m}_p^k \supset \mathfrak{m}_p^{k+1}$, we have a natural identification $T_p^k M \subset T_p^{k+1} M$.
	Given a smooth map $f : M \rightarrow N$, one has the $k^{\textrm{th}}$ order derivative map
	\begin{align*}
		d^k_p f : T^k_p M &\rightarrow T^k_{f(p)} N \\
		\theta &\mapsto \left( \mathfrak{g} + \mathfrak{m}_{f(p)}^{k+1} \mapsto \theta \left( f_*(\mathfrak{g}) + \mathfrak{m}^{k+1}_p \right) \right),
	\end{align*}
	where the push-forward $f_*(\mathfrak{g})$ is defined as the germ of $g \circ f$ at the point $p$.

Elements of $T_p^k M$ represents derivations of order $\le k$. Fixing a coordinate system $(x^1,\dots , x^n)$ near $p$, one can produce a basis of $T_p^k M$ as $\left\{ \partial_I \;\middle|\; I = (i_1,\dots , i_n), \, \sum i_j \leq k, \, i_j \ge 0 \right\}$, where $\partial_i(\mathfrak{f}) = \left.\frac{\partial}{\partial x^i}\right|_p f$ for $1 \le i \le n$, and more generally $\partial_I(\mathfrak{f}) = \left. \partial_{1}^{i_1}\dots \partial_n^{i_n} \right|_p(f)$ for any $I = (i_1,\dots ,i_n)$. It follows from the Leibniz rule that any such derivation $\partial_I$ of order $\le k$ vanishes on a germ $\mathfrak{f}$ which has a $0$ of order $k+1$ at $p$. Clearly, we have a (non-canonical) isomorphism $T^k_p M = \oplus_{i = 1}^k \odot^i T_p M$, where $\odot^i$ denotes the $i^{\textrm{th}}$ symmetric tensor product. On the other hand, there is a canonical isomorphism $T^{k}_p M / T_p^{k-1} M = \odot^{k} T_p M$, which represents the derivations of order purely $k$.

We are particularly interested in the identification $T_p M \odot T_p M = T^2_p M / T_p M$,
which is explicitly given as 
\[\mathbf{x}\odot \mathbf{y} \longmapsto \left( \mathfrak{f} \mapsto X\left( Y(f) \right) |_p \right) \mod T_p M\]
for arbitrary extensions $X, Y$ of $\mathbf{x}, \mathbf{y} \in T_p M$, and for $\mathfrak{f} \in \mathfrak{m}_p/\mathfrak{m}^3_p$ represented by some $f$. The Leibniz rule shows that the map is well-defined, whereas the symmetry is a consequence of the identity $[X, Y](f) = XY(f) - YX(f)$ where $\mathfrak{f} \mapsto [X, Y](f)|_p$ is a first order vector. With this identification, the second order derivative of some $f : M \rightarrow N$ induces the following natural map,
\begin{equation}\label{eq:secondOrderDerivative}
    \begin{aligned}
        d^2_p f : T_p M \odot T_p M &\longrightarrow T^2_{f(p)} N / \mathop{\mathrm{Im}} d_p f \\
        \mathbf{x} \odot \mathbf{y} &\longmapsto \left( \mathfrak{g} \mapsto X \left( Y \left( g \circ f \right) \right) |_p \right) \mod \mathop{\mathrm{Im}} d_p f,
    \end{aligned}
\end{equation}
which we still denote by $d^2 f$.

Let us now consider the restricted normal exponential map $\mathcal{E} = \left( \exp^\nu|_{\hat{\nu}} \right) : \hat{\nu} \rightarrow M$. For any $\mathbf{v} \in \hat{\nu}$, we have the geodesic $\gamma_{\mathbf{v}}(t) = \mathcal{E}(t\mathbf{v})$. Using the identification above, we have the maps
\[d_{\mathbf{v}} \mathcal{E} : T_{\mathbf{v}} \hat{\nu} \rightarrow T_{\gamma_{\mathbf{v}}(1)} M, \quad d^2_{\mathbf{v}} \mathcal{E} : T_{\mathbf{v}}\hat{\nu} \odot T_{\mathbf{v}} \hat{\nu} \rightarrow T^2_{\gamma_{\mathbf{v}}(1)}M / \mathop{\mathrm{Im}} d_{\mathbf{v}} \mathcal{E}.\]
Denote the ray $\sigma(t) = t\mathbf{v}$ and the vector $\mathbf{r} = \dot \sigma(1) \in T_{\mathbf{v}} \hat{\nu}$. As $\im d_{\mathbf{v}} \mathcal{E} \subset T_{\gamma_{\mathbf{v}}(1)} M \subset T^2_{\gamma_{\mathbf{v}}(1)}M$, one can naturally identify $T_{\gamma_{\mathbf{v}}(1)}M / \mathop{\mathrm{Im}} d_{\mathbf{v}} \mathcal{E} \subset T^2_{\gamma_{\mathbf{v}}(1)} M /\mathop{\mathrm{Im}}d_{\mathbf{v}} \mathcal{E}$. Now, the condition (R2) of \cite{Warner1965} translates to the following
\begin{enumerate}
    \item[(R2')]\label{warnerR2} The map
    \[\mathcal{K}_{\mathbf{v}} = \ker d_{\mathbf{v}} \mathcal{E} \ni \mathbf{x} \longmapsto d^2_{\mathbf{v}} \mathcal{E}(\mathbf{r} \odot \mathbf{x}) \in T_{\gamma_{\mathbf{v}}(1)}M / \mathop{\mathrm{Im}} d_{\mathbf{v}} \mathcal{E} \subset T^2_{\gamma_{\mathbf{v}}(1)} M /\mathop{\mathrm{Im}}d_{\mathbf{v}} \mathcal{E}\]
    is a linear isomorphism onto the image.
\end{enumerate}

\begin{prop}\label{prop:secondOrderTangent}
    For any $\mathbf{x} \in T_{\mathbf{v}} \hat{\nu}$, consider the $N$-Jacobi field $J_{\mathbf{x}}$ along $\gamma_{\mathbf{v}}$ (\autoref{eq:canonicalIsoJacobi}). Then, we have the following.
    \begin{enumerate}[label=(\arabic*)]
        \item \label{prop:secondOrderTangent:firstOrder} $d_{\mathbf{v}} \mathcal{E}(\mathbf{x}) = J_{\mathbf{x}}(1)$, and in particular $d_{\mathbf{v}} \mathcal{E}( \mathbf{r} ) = \dot \gamma_{\mathbf{v}}(1)$.
        \item \label{prop:secondOrderTangent:secondOrder} $d^2_{\mathbf{v}} \mathcal{E}\left( \mathbf{r} \odot \mathbf{x} \right) = \dot J_{\mathbf{x}}(1) \in T_{\mathcal{E}(\mathbf{v})} M / \im d_{\mathbf{v}} \mathcal{E}$, provided $\mathbf{x} \in \mathcal{K}_{\mathbf{v}}$.
    \end{enumerate}
    Consequently, \autoref{R2} is equivalent to (\hyperref[warnerR2]{R2'}).
\end{prop}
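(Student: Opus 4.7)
My plan is to deduce both parts directly from the variational construction of $J_{\mathbf{x}}$ given in \autoref{eq:canonicalIsoJacobi}. Fix $\mathbf{x}\in T_{\mathbf{v}}\hat{\nu}$ and a curve $\alpha:(-\epsilon,\epsilon)\to\hat{\nu}$ with $\alpha(0)=\mathbf{v}$ and $\dot\alpha(0)=\mathbf{x}$. Consider the smooth map $F(s,t)\coloneqq t\alpha(s)\in\hat{\nu}$ and the $N$-geodesic variation $\Lambda\coloneqq\mathcal{E}\circ F$. Then $F(0,1)=\mathbf{v}$, $\partial_tF(0,1)=\mathbf{r}$, $\partial_sF(0,1)=\mathbf{x}$, and $J_{\mathbf{x}}(t)=\partial_s|_{s=0}\Lambda(s,t)$ by construction. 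The pushforwards $F_*\partial_t$ and $F_*\partial_s$ are commuting vector fields along the image of $F$ extending $\mathbf{r}$ and $\mathbf{x}$, which I will use as the (arbitrary) extensions appearing in \autoref{eq:secondOrderDerivative}.

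For part \ref{prop:secondOrderTangent:firstOrder}, I would simply compute $d_{\mathbf{v}}\mathcal{E}(\mathbf{x})=\partial_s|_{s=0}\mathcal{E}(\alpha(s))=\partial_s|_{s=0}\Lambda(s,1)=J_{\mathbf{x}}(1)$. The special case $\mathbf{x}=\mathbf{r}$ follows from the rescaling choice $\alpha(s)=(1+s)\mathbf{v}$, which gives $\Lambda(s,t)=\gamma_{\mathbf{v}}((1+s)t)$, hence $J_{\mathbf{r}}(t)=t\dot\gamma_{\mathbf{v}}(t)$ and $J_{\mathbf{r}}(1)=\dot\gamma_{\mathbf{v}}(1)$.

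For part \ref{prop:secondOrderTangent:secondOrder}, I would pick coordinates $(y^1,\dots,y^n)$ near $\gamma_{\mathbf{v}}(1)$, a representative $g$ of a germ $\mathfrak{g}$, and write $\Lambda^i(s,t)\coloneqq y^i(\Lambda(s,t))$. With the extensions above, \autoref{eq:secondOrderDerivative} yields
\begin{equation*}
d^2_{\mathbf{v}}\mathcal{E}(\mathbf{r}\odot\mathbf{x})(\mathfrak{g})=\partial_t|_{t=1}\partial_s|_{s=0}(g\circ\Lambda)(s,t)=\sum_{i}\partial_ig|_{\gamma_{\mathbf{v}}(1)}\,\tfrac{dJ^i_{\mathbf{x}}}{dt}(1)+\sum_{i,j}\partial_i\partial_jg|_{\gamma_{\mathbf{v}}(1)}\,\dot\gamma_{\mathbf{v}}^j(1)\,J^i_{\mathbf{x}}(1),
\end{equation*}
where $J^i_{\mathbf{x}}(t)=dy^i(J_{\mathbf{x}}(t))$. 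The relevant case for (\hyperref[warnerR2]{R2'}) is $\mathbf{x}\in\mathcal{K}_{\mathbf{v}}$, for which $J_{\mathbf{x}}(1)=0$ and the Hessian-of-$g$ term vanishes. The coordinate formula for the Chern covariant derivative, $\tfrac{dJ^i_{\mathbf{x}}}{dt}=\dot J^i_{\mathbf{x}}-\Gamma^i_{jk}(\gamma,\dot\gamma)\dot\gamma^jJ^k_{\mathbf{x}}$, then collapses at $t=1$ to $\tfrac{dJ^i_{\mathbf{x}}}{dt}(1)=\dot J^i_{\mathbf{x}}(1)$, yielding $d^2_{\mathbf{v}}\mathcal{E}(\mathbf{r}\odot\mathbf{x})(\mathfrak{g})=dg(\dot J_{\mathbf{x}}(1))=\dot J_{\mathbf{x}}(1)(\mathfrak{g})$, which is the claim. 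For arbitrary $\mathbf{x}$, the residual $\odot^2$-component equals $\dot\gamma_{\mathbf{v}}(1)\odot J_{\mathbf{x}}(1)=d_{\mathbf{v}}\mathcal{E}(\mathbf{r})\odot d_{\mathbf{v}}\mathcal{E}(\mathbf{x})$, which is absorbed in the appropriate quotient by $\mathop{\mathrm{Im}}d_{\mathbf{v}}\mathcal{E}$.

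The equivalence of \autoref{R2} and (\hyperref[warnerR2]{R2'}) is then immediate: both assertions say that the linear map $\mathbf{x}\mapsto\dot J_{\mathbf{x}}(1)$ from $\mathcal{K}_{\mathbf{v}}$ to $T_{\gamma_{\mathbf{v}}(1)}M/\mathop{\mathrm{Im}}d_{\mathbf{v}}\mathcal{E}$, viewed inside $T^2_{\gamma_{\mathbf{v}}(1)}M/\mathop{\mathrm{Im}}d_{\mathbf{v}}\mathcal{E}$ via the canonical inclusion $T^1\hookrightarrow T^2$, is a linear isomorphism onto its image, and part \ref{prop:secondOrderTangent:secondOrder} identifies these two descriptions. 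The main subtlety I anticipate lies in the coordinate bookkeeping in the Finsler setting, where the Chern–Christoffel symbols depend on the reference vector $\dot\gamma$; the essential simplification is that the resulting correction terms vanish at $t=1$ whenever $J_{\mathbf{x}}(1)=0$, so the Riemannian derivation transfers essentially verbatim.
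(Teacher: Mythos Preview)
Your approach is essentially the same as the paper's: both use the variation $F(s,t)=t\alpha(s)$ to build commuting extensions of $\mathbf{r}$ and $\mathbf{x}$, and then compute $\partial_t\partial_s(g\circ\mathcal{E}\circ F)$ at $(0,1)$. Your treatment of part~\ref{prop:secondOrderTangent:secondOrder} for $\mathbf{x}\in\mathcal{K}_{\mathbf{v}}$ is in fact more explicit than the paper's, which jumps directly from $\partial_t|_{t=1}\,df(J_{\mathbf{x}}(t))$ to $df(\dot J_{\mathbf{x}}(1))$ without isolating the Hessian and Christoffel corrections; you correctly observe that both vanish when $J_{\mathbf{x}}(1)=0$, and this is all that is needed for the equivalence \autoref{R2}$\Leftrightarrow$(\hyperref[warnerR2]{R2'}).

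The one point that does not go through is your sentence about arbitrary $\mathbf{x}$. The residual second-order piece $\dot\gamma_{\mathbf{v}}(1)\odot J_{\mathbf{x}}(1)$ lives in $T_qM\odot T_qM\cong T^2_qM/T_qM$ and is \emph{not} killed by passing to $T^2_qM/\mathop{\mathrm{Im}}d_{\mathbf{v}}\mathcal{E}$, since $\mathop{\mathrm{Im}}d_{\mathbf{v}}\mathcal{E}\subset T_qM$ is purely first order; moreover the Christoffel correction $-\Gamma^i_{jk}\dot\gamma^j(1)J^k_{\mathbf{x}}(1)\,\partial_i$ is first order but not visibly in $\mathop{\mathrm{Im}}d_{\mathbf{v}}\mathcal{E}$. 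So the identity $d^2_{\mathbf{v}}\mathcal{E}(\mathbf{r}\odot\mathbf{x})=\dot J_{\mathbf{x}}(1)$ in $T^2_qM/\mathop{\mathrm{Im}}d_{\mathbf{v}}\mathcal{E}$ really requires $J_{\mathbf{x}}(1)=0$. The paper's own one-line computation elides the same issue, and none of the applications (\hyperref[warnerR2]{R2'}, the proofs of \autoref{thm:kernelContainedInTangent} and \autoref{thm:normalExponentialLocalForm}) ever use \ref{prop:secondOrderTangent:secondOrder} outside of $\mathbf{x}\in\mathcal{K}_{\mathbf{v}}$, so this does not affect the conclusions.
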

\begin{proof}
    Let $\alpha : (-\epsilon, \epsilon) \rightarrow \hat{\nu}$ be a curve such that $\alpha(0) = \mathbf{v}$ and $\dot \alpha(0) = \mathbf{x}$. Then, we immediately have 
    \[d_{\mathbf{v}} \mathcal{E}(\mathbf{x}) = \left. \frac{\partial}{\partial s} \right|_{s=0} \mathcal{E}\left( \alpha(s) \right) = J_{\mathbf{x}}(1),\]
    which proves \autoref{prop:secondOrderTangent:firstOrder}.
	
	Now, assume $\mathbf{x} \in \mathcal{K}_{\mathbf{v}}$, so that $J_{\mathbf{x}}(1) = d_{\mathbf{v}} \mathcal{E}(\mathbf{x}) = 0$. Consider a germ $\mathfrak{f} \in \mathfrak{m}_q / \mathfrak{m}_q^2$, represented by some $f$ defined near $q \coloneqq \mathcal{E}(\mathbf{v}) = \gamma_{\mathbf{v}}(1)$. Considering the map $\Xi(s, t) = t \alpha(s)$, we have an extension $X(t) \coloneqq \left. \frac{\partial}{\partial s} \right|_{s=0} \Xi(s, t)$ of $\mathbf{x}$ along $\sigma$. Extend this $X$ arbitrarily in a neighborhood of $\mathbf{v}$, and similarly extend $\mathbf{r}$ to some $R$. We have the Jacobi field $J_{\mathbf{x}}(t) = d\mathcal{E}(X(t))$ along $\gamma_{\mathbf{v}}$. Fix some frame $\left\{ e_i(t) \right\}_{i=1}^n$ along $\gamma_{\mathbf{v}}$, i.e, assume $T_{\gamma_{\mathbf{v}}(t)} M = \mathrm{Span}\left\langle e_i(t) \right\rangle$. Then, we can write $J_{\mathbf{x}}(t) = \sum_{i=1}^n f_i(t) e_i(t)$ for some smooth maps $f_i$ along $\gamma_{\mathbf{v}}$. Since $J_{\mathbf{x}}(1) = 0$, we get $f_i(1) = 0$ for all $i$. Thus, we have, $\dot J_{\mathbf{x}}(1) = \sum f_i^\prime(1) e_i(1)$. We now compute,
	\begin{align*}
		R X (f \circ \mathcal{E})|_{\mathbf{v}} &= \left. \frac{\partial}{\partial t} \right|_{t = 1} \left( X(f \circ \mathcal{E}) \right) (t \mathbf{v}) = \left. \frac{\partial}{\partial t} \right|_{t = 1} d(f \circ \mathcal{E}) \left( X_{t \mathbf{v}} \right) = \left. \frac{\partial}{\partial t} \right|_{t = 1} df \left( d \mathcal{E} \left( X(t) \right) \right) \\
		&= \left. \frac{\partial}{\partial t} \right|_{t = 1} df \left( J_{\mathbf{x}}(t) \right) = \left. \frac{\partial}{\partial t} \right|_{t = 1} df \left( \sum f_i(t) e_i(t) \right) = \sum \left. \frac{\partial}{\partial t} \right|_{t = 1} f_i(t) df \left( e_i(t) \right) \\
		&= \sum f_i^\prime(1) df \left( e_i(1) \right) = df \left( \sum f_i^\prime(1) e_i(1) \right) = df \left( \dot J_{\mathbf{x}}(1) \right) = \dot J_{\mathbf{x}}(1) \left( \mathfrak{f} \right)|_q \in T_q M.
	\end{align*}
	From \autoref{eq:secondOrderDerivative},  modulo $\im d_{\mathbf{v}} \mathcal{E}$ we then have
	\[d^2_{\mathbf{v}}\mathcal{E}(\mathbf{r} \odot \mathbf{x}) = \left( \mathfrak{f} \mapsto RX(f \circ \mathcal{E})|_{\mathbf{v}} \right) = \left( \mathfrak{f} \mapsto \dot{J}_{\mathbf{x}}(1)(\mathfrak{f})|_{q} \right).\]
	In other words, $d^2_{\mathbf{v}} \mathcal{E}(\mathbf{r} \odot \mathbf{x}) = \dot J_{\mathbf{x}}(1) \mod \im d_{\mathbf{v}} \mathcal{E}$, proving \autoref{prop:secondOrderTangent:secondOrder}.
	Equivalence of \autoref{R2} and (\hyperref[warnerR2]{R2'}) then follows.
\end{proof}

Let us now prove \autoref{thm:kernelContainedInTangent}.

\begin{proof}[Proof of \autoref{thm:kernelContainedInTangent}]
	Suppose $\mathbf{v} \in \mathcal{F}^{\textrm{reg}}$ be a regular focal point, such that $\mathcal{K}_{\mathbf{v}} \not\subset T_{\mathbf{v}} \mathcal{F}^{\textrm{reg}}$. Let $\mathbf{y} \in \mathcal{K}_{\mathbf{v}} \cap T_{\mathbf{v}}\mathcal{F}^{\textrm{reg}}$. We shall show that $\dot J_{\mathbf{y}}(1) = 0$, whence we get $\mathbf{y} = 0$ by \autoref{R2}.
	
	Consider the vector $\mathbf{r} \in T_{\mathbf{v}} \hat{\nu}$ given by $\mathbf{r} = \dot \sigma(1)$, where $\sigma(t) = t\mathbf{v}$ is the ray. Since $\mathbf{r}$ is transverse to both $T_{\mathbf{v}}\mathcal{F}^{\textrm{reg}}$ and $\mathcal{K}_{\mathbf{v}}$ (by \autoref{thm:regularTangentFocalPointsSubmanifold} and \autoref{R1}, respectively), and since $T_{\mathbf{v}} \hat{\nu} = \mathcal{K}_{\mathbf{v}} + T_{\mathbf{v}} \mathcal{F}^{\textrm{reg}}$ by hypothesis, we have $\mathbf{r} = \mathbf{x} + \mathbf{z}$ for some $\mathbf{x} \in \mathcal{K}_{\mathbf{v}} \setminus T_{\mathbf{v}} \mathcal{F}^{\textrm{reg}}$ and $\mathbf{z} \in T_{\mathbf{v}}\mathcal{F}^{\textrm{reg}} \setminus \mathcal{K}_{\mathbf{v}}$. Then, it follows from \autoref{prop:secondOrderTangent} \autoref{prop:secondOrderTangent:secondOrder} that 
	\[\dot J_{\mathbf{y}}(1) = d^2_{\mathbf{v}}\mathcal{E}(\mathbf{r} \odot \mathbf{y}) = d^2_{\mathbf{v}} \mathcal{E} (\mathbf{x} \odot \mathbf{y}) + d^2_{\mathbf{v}} \mathcal{E}(\mathbf{z} \odot \mathbf{y}).\]
	As $\mathbf{z} \in T_{\mathbf{v}}\mathcal{F}^{\textrm{reg}}$, we can choose an extension $Z$ of $\mathbf{z}$ around $\mathbf{v}$, such that $Z$ is tangential to $\mathcal{F}^{\textrm{reg}}$ on points of $\mathcal{F}^{\textrm{reg}}$. Similarly, we can extend $\mathbf{y}$ to some $Y$ near $\mathbf{v}$ so that $Y$ is in $\mathcal{K}_{\mathbf{u}}$ on points of $\mathcal{F}^{\textrm{reg}}$, which is possible since $\mathcal{K}_{\mathbf{u}}$ has constant rank for $\mathbf{u} \in \mathcal{F}^{\textrm{reg}}$ near $\mathbf{v}$. Then, for an arbitrary germ $\mathfrak{f} \in \mathfrak{m}_p/\mathfrak{m}_p^2$, represented by some function $f$ defined near $\mathcal{E}(\mathbf{v})$, we have
	\[Z \left( Y \left( f \circ \mathcal{E} \right) \right)(\mathbf{v}) = Z \left( df \, d\mathcal{E} (Y)\right)(\mathbf{v}) = 0,\]
	as $d\mathcal{E}(Y) = 0$ on $\mathcal{F}^{\textrm{reg}}$. But then $d^2_{\mathbf{v}} \mathcal{E}(\mathbf{z} \odot \mathbf{y}) = 0$ as $\mathfrak{f}$ is arbitrary. Since $\mathbf{y} \in T_{\mathbf{v}} \mathcal{F}^{\textrm{reg}}$ as well, we get from the symmetry, $d^2_{\mathbf{v}} \mathcal{E}(\mathbf{x} \odot \mathbf{y}) = d^2_{\mathbf{v}} \mathcal{E}(\mathbf{y} \odot \mathbf{x}) = 0$. Hence, $\dot J_{\mathbf{y}}(1) = 0$, which shows that $\mathbf{y} = 0$ by \autoref{R2}. In other words, $\mathcal{K}_{\mathbf{v}} \cap T_{\mathbf{v}} \mathcal{F}^{\textrm{reg}} = 0$, showing that $\dim \mathcal{K}_{\mathbf{v}} = 1$. Thus, for $\dim \mathcal{K}_{\mathbf{v}} \ge 2$ we must have $\mathcal{K}_{\mathbf{v}} \subset T_{\mathbf{v}} \mathcal{F}^{\textrm{reg}}$.
\end{proof}

Lastly, we give a sketch of proof of \autoref{thm:normalExponentialLocalForm}, while deferring to \cite[Theorem 3.3]{Warner1965} for details.

\begin{proof}[Proof of \autoref{thm:normalExponentialLocalForm}]
	Suppose we are in either case \autoref{thm:normalExponentialLocalForm:1} or case \autoref{thm:normalExponentialLocalForm:2}. In both the cases, we have a $k$-dimensional foliation on $\mathcal{C}$ near $\mathbf{v}$, induced by the distribution $\ker d\left( \mathcal{E}|_{\mathcal{C}} \right)$, which is of constant rank $k$ and involutive near $\mathbf{v}$ (\autoref{rmk:expMapsLeavesToPoint}). We can get a coordinate system $\left\{ u^1, \dots , u^n \right\}$ on $\hat{\nu}$ near $\mathbf{v}$ such that the codimension $1$ submanifold $\mathcal{C}$ is locally given as $\left\{ u^n = 0 \right\}$. Furthermore, we can arrange so that the foliation on $\mathcal{C}$ is given by 
	\[\left\{ u^{k+1} = \dots = u^{n-1} = \textrm{constant}, \quad u^n = 0.\right\}.\]
	Recall that each leaf of this foliation is mapped to a constant by $\mathcal{E}$, and $d_{\mathbf{v}}\mathcal{E}$ has full rank in the transverse directions (\autoref{thm:kernelContainedInTangent}). Hence, we can fix a coordinate system $\left\{ v^1,\dots ,v^n \right\}$ near $\mathcal{E}(\mathbf{v})$ so that the image of the slice $\left\{ u^1 = \dots = u^k = 0, \quad u^n = 0 \right\}$ under $\mathcal{E}$ is the slice $\left\{ v^1 = \dots = v^k = 0, \quad v^n = 0 \right\}$, and furthermore, $d_{\mathbf{v}}\mathcal{E} \left( \left. \frac{\partial}{\partial u^i} \right|_{\mathbf{v}} \right) = \left. \frac{\partial}{\partial v^i} \right|_{\mathcal{E}(\mathbf{v})}$ holds for $i = k+1,\dots , n$. In particular, for $i = 1,\dots, k$ the function $v^i \circ \mathcal{E}$ has a zero of order $1$ at $\mathbf{v}$. Moreover, we can arrange so that $\left\{ u^n = 0 \right\} = \left\{ v^n \circ \mathcal{E} = 0 \right\}$, whence it follows that $v^i \circ \mathcal{E}|_{\{ w^n = 0 \} } = 0$ for $i = 1,\dots ,k$. Define the functions 
	\[w^i = 
	\begin{cases}
		u^i, &i = 1,\dots ,k \\
		v^i \circ \mathcal{E}, &i = k + 1, \dots , n,
	\end{cases}\]
	near $\mathbf{v}$. Then, $\left\{ w^1,\dots , w^n \right\}$ is a coordinate system near $\mathbf{v}$, since we can easily compute the Jacobian matrix
	\[\begin{pmatrix} \left. \frac{\partial w^i}{\partial u^j} \right|_{\mathbf{v}} \end{pmatrix}_{n \times n} = \begin{pmatrix} \mathrm{Id}_{k \times k} & 0_{(n-k) \times k} \\ \star_{(n-k) \times k} & \mathrm{Id}_{(n-k)\times (n-k)} \end{pmatrix},\]
	which has full rank. Next, consider the projection $\pi : \mathbb{R}^n \rightarrow \mathbb{R}^n$ onto the \emph{last} $n-k$-coordinates, preceded by $k$ many zeros. Near $\mathcal{E}(\mathbf{v}) \in M$, we then have functions, 
	\[y^i = 
	\begin{cases}
		v^i - v^i \circ \mathcal{E} \circ \left( \phi^{-1} \circ \pi \circ  \psi \right), &i=1,\dots, k\\
		v^i, &i=k+1, \dots , n,
	\end{cases}\]
	where $\phi, \psi$ are respectively the coordinate charts $\left\{ w^1,\dots, w^n \right\}$ and $\left\{ v^1,\dots, v^n \right\}$ on $\hat{\nu}$ and $M$. Note that for $1 \le j \le k$, we have $\left. \frac{\partial}{\partial v^j} \right|_{\mathcal{E}(\mathbf{v})} \left( \pi \circ \psi \right) = 0$. Hence, $\left\{ y^1,\dots ,y^n \right\}$ is a coordinate system near $\mathcal{E}(\mathbf{v})$ as the Jacobian matrix
	\[\begin{pmatrix} \left. \frac{\partial y^i}{\partial v^j} \right|_{\mathcal{E}(\mathbf{v})} \end{pmatrix}_{n \times n} = \begin{pmatrix} \mathrm{Id}_{k \times k} & \star_{(n-k) \times k} \\ 0_{(n-k) \times k} & \mathrm{Id}_{(n-k)\times (n-k)} \end{pmatrix}\]
	has full rank.
	Also, for $i = 1,\dots, k$, we have $y^i \circ \mathcal{E} = 0$ on $\left\{ w^n = 0 \right\}$, which is a $0$ of order $1$. Indeed, for $i = 1,\dots , k$, we have observed $v^i \circ \mathcal{E}|_{\{ w^n = 0 \} } = 0$, and also $v^i \circ \mathcal{E} \circ  \left( \phi^{-1} \circ \pi \circ \psi \circ \mathcal{E} \right)|_{\{ w^n = 0 \} } = v^i \circ \mathcal{E}|_{\{ w^1 = \dots = w^k = 0 = w^n \} } = 0$. Hence, we have functions $x^i$ near $\mathbf{v}$ such that $y^i \circ \mathcal{E} = w^n \cdot x^i$ for $i=1,\dots, k$. Set $x^i = w^i$ for $i = k+1,\dots, n$. Thus, we have
	\[y^i \circ \mathcal{E} = 
	\begin{cases}
		x^n \cdot x^i, &i=1,\dots ,k\\
		x^i, &i=k+1,\dots, n.
	\end{cases}\]
	We check that $\left\{ x^1,\dots , x^n \right\}$ is a coordinate system near $\mathbf{v}$. As $x^i = w^i$ for $i = k+1,\dots ,n$, it follows that the $n \times n$ matrix $\left( \left. \frac{\partial x^i}{\partial w^j} \right|_{\mathbf{v}}  \right)$ has full rank if the $k \times k$ block given by $1 \le i, j \le k$ has full rank. We have 
	\[\left. \frac{\partial^2 \left( y^i \circ \mathcal{E} \right)}{\partial w^n \partial w^j} \right|_{\mathbf{v}} = \left. \frac{\partial x^i}{\partial w^j} \right|_{\mathbf{v}} , \qquad 1 \le i, j \le k,\]
	since $y^i \circ \mathcal{E} = w^n \cdot x^i$. Hence, we need to show that $\begin{pmatrix}
		\left. \frac{\partial^2(y^i \circ \mathcal{E})}{\partial w^n \partial w^j} \right|_{\mathbf{v}}
	\end{pmatrix}$ has full rank. Consider the vectors $\mathbf{w}^i = \left. \frac{\partial}{\partial w^i} \right|_{\mathbf{v}}$. By construction, $\mathcal{K}_{\mathbf{v}} = \ker d_{\mathbf{v}} \mathcal{E} = \mathrm{Span}\langle \mathbf{w}^1,\dots ,\mathbf{w}^k \rangle$. Furthermore, $\mathbf{w}^n$ is transverse to $T_{\mathbf{v}} \mathcal{C}$. Consider the radial vector $\mathbf{r} \in T_{\mathbf{v}}\hat{\nu}$ given by $\mathbf{r} = \dot \sigma(1)$, where $\sigma(t) = t\mathbf{v}$ is the ray. Then, we have $\mathbf{r} = a \mathbf{w}^n + \mathbf{z}$ for some scalar $a \ne 0$ and some $\mathbf{z} \in T_{\mathbf{v}} \mathcal{F}^{\textrm{reg}}$. As argued in the proof of \autoref{thm:kernelContainedInTangent}, we have 
	\[\dot J_{\mathbf{w}^j}(1) = d^2_{\mathbf{v}} \mathcal{E} \left( \mathbf{r} \odot \mathbf{w}^j \right) = a d^2_{\mathbf{v}} \mathcal{E}\left( \mathbf{w}^n \odot \mathbf{w}^j \right) + \underbrace{d^2_{\mathbf{v}} \mathcal{E} \left( \mathbf{z} \odot \mathbf{w}^j \right)}_0 = a d^2_{\mathbf{v}} \mathcal{E}\left( \mathbf{w}^n \odot \mathbf{w}^j \right) .\]
	As $a \ne 0$, by \autoref{R2} and \autoref{prop:secondOrderTangent}, we have $\left\{ d^2_{\mathbf{v}} \mathcal{E}(\mathbf{w}^n \odot \mathbf{w}^j) \mod \mathop{\mathrm{Im}}d_{\mathbf{v}} \mathcal{E}, \quad 1 \le j \le k \right\}$ are linearly independent. Now, by construction, we have
	\[\mathop{\mathrm{Im}} d_{\mathbf{v}} \mathcal{E} = \mathrm{Span}\left\langle \left. \frac{\partial}{\partial v^i} \right|_{\mathcal{E}(\mathbf{v})} , \; i = k+1,\dots ,n \right\rangle = \mathrm{Span}\left\langle \left. \frac{\partial}{\partial y^i} \right|_{\mathcal{E}(\mathbf{v})} , \; i = k+1,\dots ,n \right\rangle.\]
	Thus, evaluating on $y^i$ for $i = 1,\dots ,k$ does not change the rank. Now, by \autoref{eq:secondOrderDerivative}, we have $\left. \frac{\partial^2(y^i \circ \mathcal{E})}{\partial w^n \partial w^j} \right|_{\mathbf{v}} = d^2_{\mathbf{v}} \mathcal{E}(\mathbf{w}^n \odot \mathbf{w}^j)(y^i)$. Consequently, the $k \times k$ matrix 
	\[
	\begin{pmatrix}
		\left. \frac{\partial^2(y^i \circ \mathcal{E})}{\partial w^n \partial w^j} \right|_{\mathbf{v}}
	\end{pmatrix} = 
	\begin{pmatrix}
		d^2_{\mathbf{v}} \mathcal{E}(\mathbf{w}^n \odot \mathbf{w}^j)(y^i)
	\end{pmatrix}\]
	has full rank. This shows that $\left\{ x^1,\dots ,x^n \right\}$ is a coordinate system near $\mathbf{v}$, concluding the proof of \autoref{thm:normalExponentialLocalForm:1} and \autoref{thm:normalExponentialLocalForm:2}.

	Now, suppose the hypothesis of \autoref{thm:normalExponentialLocalForm:3} holds. Thus, $\mathbf{v} \in \mathcal{F}^{\textrm{reg}}$ is such that there is a neighborhood $\mathbf{v} \in U \subset \mathcal{C}$ for which $\mathcal{K}_{\mathbf{u}} \cap T_{\mathbf{u}} \mathcal{C} = 0$ for $\mathbf{u} \in U$. We show that near $\mathbf{v}$, the normal exponential map $\mathcal{E}$ is a \emph{submersion with folds} (see \cite[Defintion 4.2]{GolGui73} for terminology). This amounts to checking the following two properties.
	\begin{itemize}
		\item As in the proof of \autoref{thm:regularTangentFocalPointsSubmanifold}, near $\mathbf{v}$, the codimension $1$ submanifold $\mathcal{C}$ is given as the zero of the function $\Delta$ (\autoref{eq:Delta}). For $k = 1$, this $\Delta$ is precisely the product of eigenfunctions of $d\mathcal{E}$, and thus $\Delta = \det d \mathcal{E}$. As argued in the proof of \autoref{thm:regularTangentFocalPointsSubmanifold}, the derivative of $\Delta$ is non-vanishing. This shows that the first jet $j^1 \mathcal{E}$ is transverse to the codimension $1$ submanifold $S_1 \subset J^1(\hat{\nu}, M)$ consisting of jets of rank $n - 1$.
		\item The singularities of $\mathcal{E}$ near $\mathbf{v}$ are precisely the neighborhood $U$, along which we have $T_{\mathbf{u}}\hat{\nu} = \mathcal{K}_{\mathbf{u}} + T_{\mathbf{u}} \mathcal{C} = \ker d_{\mathbf{u}}\mathcal{E} + T_{\mathbf{u}}\left( \Delta^{-1}(0) \right)$. This shows that every singularity of $\mathcal{E}$ is a fold point.
	\end{itemize}
	Then, the normal form \autoref{eq:normalForm:3} for $\mathcal{E}$ follows immediately from \cite[Theorem 4.5]{GolGui73}, finishing the proof of \autoref{thm:normalExponentialLocalForm:3}.
\end{proof}

\section*{Acknowledgement} The authors would like to thank Prof. J. Itoh for many fruitful discussions, and also to the anonymous referee for several suggestions and improvements. They would also like to thank the International Centre for Theoretical Sciences (ICTS) for the wonderful hosting and a research-friendly environment during the discussion meeting Zero Mean Curvature Surfaces (code: ICTS/zmcs2024/09), where the bulk of this work was done. The first author was supported by the NBHM grant no. 0204/1(5)/2022/R\&D-II/5649, and the second author was supported by the Jilin University. 

\bibliographystyle{alphaurl}
\bibliography{ref}

\begin{thebibliography}{Jav14b}

\bibitem[AAJ24]{AlexAlvesJav24}
Marcos~M. Alexandrino, Benigno Alves, and Miguel~Angel Javaloyes.
\newblock On equifocal {F}insler submanifolds and analytic maps.
\newblock {\em Israel J. Math.}, 259(1):203--237, 2024.
\newblock \href {https://doi.org/10.1007/s11856-023-2524-6}
  {\path{doi:10.1007/s11856-023-2524-6}}.

\bibitem[AG11]{ArdoyGuijaro11}
Pablo~Angulo Ardoy and Luis Guijarro.
\newblock Cut and singular loci up to codimension 3.
\newblock {\em Ann. Inst. Fourier (Grenoble)}, 61(4):1655--1681 (2012), 2011.
\newblock \href {https://doi.org/10.5802/aif.2655}
  {\path{doi:10.5802/aif.2655}}.

\bibitem[AJ19]{Alves2019}
Benigno Alves and Miguel~Angel Javaloyes.
\newblock A note on the existence of tubular neighbourhoods on {F}insler
  manifolds and minimization of orthogonal geodesics to a submanifold.
\newblock {\em Proceedings of the American Mathematical Society},
  147(1):369--376, 2019.
\newblock \href {https://doi.org/10.1090/proc/14229}
  {\path{doi:10.1090/proc/14229}}.

\bibitem[AP94]{AbaPat94}
Marco Abate and Giorgio Patrizio.
\newblock {\em Finsler metrics---a global approach}, volume 1591 of {\em
  Lecture Notes in Mathematics}.
\newblock Springer-Verlag, Berlin, 1994.
\newblock With applications to geometric function theory.
\newblock \href {https://doi.org/10.1007/BFb0073980}
  {\path{doi:10.1007/BFb0073980}}.

\bibitem[BCS00]{Bao2000}
D.~Bao, S.-S. Chern, and Z.~Shen.
\newblock {\em An introduction to {R}iemann-{F}insler geometry}, volume 200 of
  {\em Graduate Texts in Mathematics}.
\newblock Springer-Verlag, New York, 2000.
\newblock \href {https://doi.org/10.1007/978-1-4612-1268-3}
  {\path{doi:10.1007/978-1-4612-1268-3}}.

\bibitem[Bej00]{Ben98}
Aurel Bejancu.
\newblock On the theory of {F}insler submanifolds.
\newblock In {\em Finslerian geometries ({E}dmonton, {AB}, 1998)}, volume 109
  of {\em Fund. Theories Phys.}, pages 111--129. Kluwer Acad. Publ., Dordrecht,
  2000.

\bibitem[Bis77]{Bishop77}
Richard~L. Bishop.
\newblock Decomposition of cut loci.
\newblock {\em Proc. Amer. Math. Soc.}, 65(1):133--136, 1977.
\newblock \href {https://doi.org/10.2307/2042008} {\path{doi:10.2307/2042008}}.

\bibitem[BK23]{BorKlin23}
Samu\"{e}l Borza and Wilhelm Klingenberg.
\newblock Regularity and continuity properties of the sub-{R}iemannian
  exponential map.
\newblock {\em J. Dyn. Control Syst.}, 29(4):1385--1407, 2023.
\newblock \href {https://doi.org/10.1007/s10883-022-09624-y}
  {\path{doi:10.1007/s10883-022-09624-y}}.

\bibitem[BK24]{BorKlin24}
Samu\"{e}l Borza and Wilhelm Klingenberg.
\newblock Local non-injectivity of the exponential map at critical points in
  sub-{R}iemannian geometry.
\newblock {\em Nonlinear Anal.}, 239:Paper No. 113421, 21, 2024.
\newblock \href {https://doi.org/10.1016/j.na.2023.113421}
  {\path{doi:10.1016/j.na.2023.113421}}.

\bibitem[BL97]{BarLe97}
Dennis Barden and Huiling Le.
\newblock Some consequences of the nature of the distance function on the cut
  locus in a {R}iemannian manifold.
\newblock {\em J. London Math. Soc. (2)}, 56(2):369--383, 1997.
\newblock \href {https://doi.org/10.1112/S002461079700553X}
  {\path{doi:10.1112/S002461079700553X}}.

\bibitem[BP24]{BhoPra2023}
Aritra Bhowmick and Sachchidanand Prasad.
\newblock On the {C}ut {L}ocus of {S}ubmanifolds of a {F}insler {M}anifold.
\newblock {\em J. Geom. Anal.}, 34(10):Paper No. 308, 2024.
\newblock \href {https://doi.org/10.1007/s12220-024-01751-1}
  {\path{doi:10.1007/s12220-024-01751-1}}.

\bibitem[Buc77]{Buc77}
Michael~A. Buchner.
\newblock Simplicial structure of the real analytic cut locus.
\newblock {\em Proc. Amer. Math. Soc.}, 64(1):118--121, 1977.
\newblock \href {https://doi.org/10.2307/2040994} {\path{doi:10.2307/2040994}}.

\bibitem[Bus55]{Bus55}
Herbert Busemann.
\newblock {\em The geometry of geodesics}.
\newblock Academic Press Inc., New York, N. Y., 1955.

\bibitem[Fin51]{Fin51}
P.~Finsler.
\newblock {\em \"{U}ber {K}urven und {F}l\"{a}chen in allgemeinen
  {R}\"{a}umen}.
\newblock Verlag Birkh\"{a}user, Basel, 1951.
\newblock \href {https://doi.org/10.1007/978-3-0348-4144-3}
  {\path{doi:10.1007/978-3-0348-4144-3}}.

\bibitem[GG73]{GolGui73}
M.~Golubitsky and V.~Guillemin.
\newblock {\em Stable mappings and their singularities}.
\newblock Graduate Texts in Mathematics, Vol. 14. Springer-Verlag, New
  York-Heidelberg, 1973.

\bibitem[Heb81]{Hebda81}
James~J. Hebda.
\newblock The regular focal locus.
\newblock {\em J. Differential Geometry}, 16(3):421--429 (1982), 1981.
\newblock URL: \url{http://projecteuclid.org/euclid.jdg/1214436221}.

\bibitem[Heb83]{Heb83}
James~J. Hebda.
\newblock The local homology of cut loci in {R}iemannian manifolds.
\newblock {\em Tohoku Math. J. (2)}, 35(1):45--52, 1983.
\newblock \href {https://doi.org/10.2748/tmj/1178229100}
  {\path{doi:10.2748/tmj/1178229100}}.

\bibitem[HW41]{HurewiczWallmanDimensionTheory}
Witold Hurewicz and Henry Wallman.
\newblock {\em Dimension {T}heory}.
\newblock Princeton Mathematical Series, vol. 4. Princeton University Press,
  Princeton, NJ, 1941.

\bibitem[IK04]{ItohKiyohara04}
Jin-ichi Itoh and Kazuyoshi Kiyohara.
\newblock The cut loci and the conjugate loci on ellipsoids.
\newblock {\em Manuscripta Math.}, 114(2):247--264, 2004.
\newblock \href {https://doi.org/10.1007/s00229-004-0455-z}
  {\path{doi:10.1007/s00229-004-0455-z}}.

\bibitem[IT98]{ItohTanaka98}
Jin-ichi Itoh and Minoru Tanaka.
\newblock The dimension of a cut locus on a smooth {R}iemannian manifold.
\newblock {\em Tohoku Math. J. (2)}, 50(4):571--575, 1998.
\newblock \href {https://doi.org/10.2748/tmj/1178224899}
  {\path{doi:10.2748/tmj/1178224899}}.

\bibitem[IT01]{Itoh2001}
Jin-ichi Itoh and Minoru Tanaka.
\newblock The {L}ipschitz continuity of the distance function to the cut locus.
\newblock {\em Transactions of the American Mathematical Society},
  353(1):21--40, 2001.
\newblock \href {https://doi.org/10.1090/S0002-9947-00-02564-2}
  {\path{doi:10.1090/S0002-9947-00-02564-2}}.

\bibitem[Jav14a]{Javaloyes2014}
Miguel~Angel Javaloyes.
\newblock Chern connection of a pseudo-{F}insler metric as a family of affine
  connections.
\newblock {\em Publicationes Mathematicae Debrecen}, 84(1-2):29--43, 2014.
\newblock \href {https://doi.org/10.5486/PMD.2014.5823}
  {\path{doi:10.5486/PMD.2014.5823}}.

\bibitem[Jav14b]{Javaloyes2014_Correction}
Miguel~Angel Javaloyes.
\newblock Corrigendum to ``{C}hern connection of a pseudo-{F}insler metric as a
  family of affine connections'' [mr3194771].
\newblock {\em Publ. Math. Debrecen}, 85(3-4):481--487, 2014.
\newblock \href {https://doi.org/10.5486/PMD.2014.7061}
  {\path{doi:10.5486/PMD.2014.7061}}.

\bibitem[Jav20]{Javaloyes2020}
Miguel~\'{A}ngel Javaloyes.
\newblock Curvature computations in {F}insler geometry using a distinguished
  class of anisotropic connections.
\newblock {\em Mediterr. J. Math.}, 17(4):Paper No. 123, 21, 2020.
\newblock \href {https://doi.org/10.1007/s00009-020-01560-0}
  {\path{doi:10.1007/s00009-020-01560-0}}.

\bibitem[JS15]{Javaloyes2015}
Miguel~Angel Javaloyes and Bruno~Learth Soares.
\newblock Geodesics and {J}acobi fields of pseudo-{F}insler manifolds.
\newblock {\em Publicationes Mathematicae Debrecen}, 87(1-2):57--78, 2015.
\newblock \href {https://doi.org/10.5486/PMD.2015.7028}
  {\path{doi:10.5486/PMD.2015.7028}}.

\bibitem[Kob67]{Kob67}
Shoshichi Kobayashi.
\newblock On conjugate and cut loci.
\newblock In {\em Studies in {G}lobal {G}eometry and {A}nalysis}, pages
  96--122. Math. Assoc. Amer. (distributed by Prentice-Hall, Englewood Cliffs,
  N.J.), 1967.

\bibitem[Li11]{Li11}
Jintang Li.
\newblock The variation formulas of {F}insler submanifolds.
\newblock {\em J. Geom. Phys.}, 61(5):890--898, 2011.
\newblock \href {https://doi.org/10.1016/j.geomphys.2011.01.003}
  {\path{doi:10.1016/j.geomphys.2011.01.003}}.

\bibitem[Lu24]{Lu24}
Guangcun Lu.
\newblock The {M}orse index theorem in the case of two variable endpoints in
  conic {F}insler manifolds.
\newblock {\em Ann. Mat. Pura Appl. (4)}, 203(2):533--562, 2024.
\newblock \href {https://doi.org/10.1007/s10231-023-01373-4}
  {\path{doi:10.1007/s10231-023-01373-4}}.

\bibitem[ML32]{MorLit32}
Marston Morse and S.~B. Littauer.
\newblock A characterization of fields in the calculus of variations.
\newblock {\em Proceedings of the National Academy of Sciences of the United
  States of America}, 18(12):724--730, 1932.
\newblock URL: \url{http://www.jstor.org/stable/86088}.

\bibitem[MS01]{MoShen01}
Xiaohuan Mo and Zhongmin Shen.
\newblock Recent developments and some open problems in {F}insler geometry.
\newblock {\em Chinese Sci. Bull.}, 46(13):1059--1061, 2001.
\newblock \href {https://doi.org/10.1007/BF02900677}
  {\path{doi:10.1007/BF02900677}}.

\bibitem[Mye35]{Mye35}
Sumner~Byron Myers.
\newblock Connections between differential geometry and topology. {I}. {S}imply
  connected surfaces.
\newblock {\em Duke Math. J.}, 1(3):376--391, 1935.
\newblock \href {https://doi.org/10.1215/S0012-7094-35-00126-0}
  {\path{doi:10.1215/S0012-7094-35-00126-0}}.

\bibitem[Mye36]{Mye36}
Sumner~Byron Myers.
\newblock Connections between differential geometry and topology {II}. {C}losed
  surfaces.
\newblock {\em Duke Math. J.}, 2(1):95--102, 1936.
\newblock \href {https://doi.org/10.1215/S0012-7094-36-00208-9}
  {\path{doi:10.1215/S0012-7094-36-00208-9}}.

\bibitem[Oht21]{Ohta2021}
Shin-ichi Ohta.
\newblock {\em Comparison Finsler Geometry}.
\newblock Springer International Publishing AG, 2021.
\newblock \href {https://doi.org/10.1007/978-3-030-80650-7}
  {\path{doi:10.1007/978-3-030-80650-7}}.

\bibitem[Pet06]{Peter06}
Ioan~Radu Peter.
\newblock On the {M}orse index theorem where the ends are submanifolds in
  {F}insler geometry.
\newblock {\em Houston Journal of Mathematics}, 32(4):995--1009, 2006.

\bibitem[Poi05]{Poin05}
Henri Poincar{\'e}.
\newblock Sur les lignes g{\'e}od{\'e}siques des surfaces convexes.
\newblock {\em Trans. Amer. Math. Soc.}, 6(3):237--274, 1905.
\newblock \href {https://doi.org/10.2307/1986219} {\path{doi:10.2307/1986219}}.

\bibitem[PPT04]{PiccionePortaluriTausk2004}
Paolo Piccione, Alessandro Portaluri, and Daniel~V. Tausk.
\newblock Spectral flow, {M}aslov index and bifurcation of semi-{R}iemannian
  geodesics.
\newblock {\em Ann. Global Anal. Geom.}, 25(2):121--149, 2004.
\newblock \href {https://doi.org/10.1023/B:AGAG.0000018558.65790.db}
  {\path{doi:10.1023/B:AGAG.0000018558.65790.db}}.

\bibitem[Rad04]{Rademacher2004}
Hans-Bert Rademacher.
\newblock A sphere theorem for non-reversible {F}insler metrics.
\newblock {\em Mathematische Annalen}, 328(3):373--387, 2004.
\newblock \href {https://doi.org/10.1007/s00208-003-0485-y}
  {\path{doi:10.1007/s00208-003-0485-y}}.

\bibitem[Run59]{Rund59}
Hanno Rund.
\newblock {\em The differential geometry of {F}insler spaces.}
\newblock Springer-Verlag, Berlin-G\"{o}ttingen-Heidelberg,, 1959.

\bibitem[Sak96]{Sak96}
Takashi Sakai.
\newblock {\em Riemannian geometry}, volume 149 of {\em Translations of
  Mathematical Monographs}.
\newblock American Mathematical Society, Providence, RI, 1996.
\newblock Translated from the 1992 Japanese original by the author.

\bibitem[Sar42]{Sard42}
Arthur Sard.
\newblock The measure of the critical values of differentiable maps.
\newblock {\em Bull. Amer. Math. Soc.}, 48:883--890, 1942.
\newblock \href {https://doi.org/10.1090/S0002-9904-1942-07811-6}
  {\path{doi:10.1090/S0002-9904-1942-07811-6}}.

\bibitem[Sav43]{Sav43}
L.~J. Savage.
\newblock On the crossing of extemals at focal points.
\newblock {\em Bull. Amer. Math. Soc.}, 49:467--469, 1943.
\newblock \href {https://doi.org/10.1090/S0002-9904-1943-07960-8}
  {\path{doi:10.1090/S0002-9904-1943-07960-8}}.

\bibitem[She01]{Shen01}
Zhongmin Shen.
\newblock {\em Differential geometry of spray and {F}insler spaces}.
\newblock Kluwer Academic Publishers, Dordrecht, 2001.
\newblock \href {https://doi.org/10.1007/978-94-015-9727-2}
  {\path{doi:10.1007/978-94-015-9727-2}}.

\bibitem[SU12]{StefUhl12}
Plamen Stefanov and Gunther Uhlmann.
\newblock The geodesic {X}-ray transform with fold caustics.
\newblock {\em Anal. PDE}, 5(2):219--260, 2012.
\newblock \href {https://doi.org/10.2140/apde.2012.5.219}
  {\path{doi:10.2140/apde.2012.5.219}}.

\bibitem[Tho72]{Thom72}
R.~Thom.
\newblock Sur le cut-locus d'une vari\'{e}t\'{e} plong\'{e}e.
\newblock {\em J. Differential Geometry}, 6:577--586, 1972.
\newblock URL: \url{http://projecteuclid.org/euclid.jdg/1214430644}.

\bibitem[War65]{Warner1965}
Frank~W. Warner.
\newblock The conjugate locus of a riemannian manifold.
\newblock {\em American Journal of Mathematics}, 87(3):575, 1965.
\newblock \href {https://doi.org/10.2307/2373064} {\path{doi:10.2307/2373064}}.

\bibitem[War67]{Warner67}
F.~W. Warner.
\newblock Conjugate loci of constant order.
\newblock {\em Ann. of Math. (2)}, 86:192--212, 1967.
\newblock \href {https://doi.org/10.2307/1970366} {\path{doi:10.2307/1970366}}.

\bibitem[War83]{WarnerBook}
Frank~W. Warner.
\newblock {\em Foundations of differentiable manifolds and {L}ie groups},
  volume~94 of {\em Graduate Texts in Mathematics}.
\newblock Springer-Verlag, New York-Berlin, 1983.
\newblock Corrected reprint of the 1971 edition.

\bibitem[Wol79]{Wol79}
Franz-Erich Wolter.
\newblock Distance function and cut loci on a complete {R}iemannian manifold.
\newblock {\em Arch. Math. (Basel)}, 32(1):92--96, 1979.
\newblock \href {https://doi.org/10.1007/BF01238473}
  {\path{doi:10.1007/BF01238473}}.

\bibitem[Zha17]{Zhao2017}
Wei Zhao.
\newblock A comparison theorem for finsler submanifolds and its applications.
\newblock 2017.
\newblock \href {https://arxiv.org/abs/1710.10682} {\path{arXiv:1710.10682}},
  \href {https://doi.org/10.48550/ARXIV.1710.10682}
  {\path{doi:10.48550/ARXIV.1710.10682}}.

\end{thebibliography}

\end{document}